\begin{document}
\newtheorem{theo}{Theorem}[section]
\newtheorem{prop}[theo]{Proposition}
\newtheorem{lemma}[theo]{Lemma}
\newtheorem{coro}[theo]{Corollary}
\theoremstyle{definition}
\newtheorem{exam}[theo]{Example}
\newtheorem{defi}[theo]{Definition}
\newtheorem{rem}[theo]{Remark}


\newcommand{\Bb}{{\bf B}}
\newcommand{\Cb}{{\bf C}}
\newcommand{\Nb}{{\bf N}}
\newcommand{\Qb}{{\bf Q}}
\newcommand{\Rb}{{\bf R}}
\newcommand{\Zb}{{\bf Z}}
\newcommand{\Ac}{{\mathcal A}}
\newcommand{\Bc}{{\mathcal B}}
\newcommand{\Cc}{{\mathcal C}}
\newcommand{\Dc}{{\mathcal D}}
\newcommand{\Ec}{{\mathcal E}}
\newcommand{\Fc}{{\mathcal F}}
\newcommand{\Ic}{{\mathcal I}}
\newcommand{\Jc}{{\mathcal J}}
\newcommand{\Kc}{{\mathcal K}}
\newcommand{\Lc}{{\mathcal L}}
\newcommand{\Mx}{{\mathcal M}}
\newcommand{\Nc}{{\mathcal N}}
\newcommand{\Oc}{{\mathcal O}}
\newcommand{\Pc}{{\mathcal P}}
\newcommand{\Qc}{{\mathcal Q}}
\newcommand{\Rc}{{\mathcal R}}
\newcommand{\Sc}{{\mathcal S}}
\newcommand{\Tb}{{\bf T}}
\newcommand{\Tc}{{\mathcal T}}
\newcommand{\TC}{{\mathcal TC}}
\newcommand{\Ub}{{\bf U}}
\newcommand{\Uc}{{\mathcal U}}
\newcommand{\Vb}{{\bf V}}
\newcommand{\Vc}{{\mathcal V}}
\newcommand{\Wb}{{\bf W}}
\newcommand{\Wc}{{\mathcal W}}
\newcommand{\btu}{\bigtriangleup}

\author{Greg Kuperberg and Nik Weaver}

\title [Quantum metrics]
       {A von Neumann algebra approach to quantum metrics}

\address {Department of Mathematics\\
          University of California\\
          Davis, CA 95616}
\address {Department of Mathematics\\
          Washington University in Saint Louis\\
          Saint Louis, MO 63130}

\email {greg@math.ucdavis.edu, nweaver@math.wustl.edu}

\subjclass{}

\date{May 3, 2010}

\begin{abstract}
We propose a new definition of quantum metric spaces, or W*-metric
spaces, in the setting of von Neumann algebras. Our definition
effectively reduces to the classical notion in the atomic abelian case,
has both concrete and intrinsic characterizations, and admits a wide
variety of tractable examples. A natural application and motivation of
our theory is a mutual generalization of the standard models of
classical and quantum error correction.
\end{abstract}

\maketitle


It has proven to be fruitful in abstract analysis to think of
various structures connected to Hilbert space as ``noncommutative''
or ``quantum'' versions of classical mathematical objects.
This point of view has been emphasized in \cite{Con} (see also \cite{W5}).
For instance, it is well established that C*-algebras and von Neumann
algebras can profitably be thought of as quantum topological and
measure spaces, respectively. The use of the word ``quantum'' is called
for if, for example, the structures in question play a role in modelling
quantum mechanical systems analogous to the role played by the
corresponding classical mathematical structures in classical physics.

Basic examples in noncommutative geometry such as the quantum tori
\cite{Rie2} clearly exhibit a metric aspect in that they carry a natural
noncommutative analog of the algebra of bounded scalar-valued Lipschitz
functions on a metric space. However, the general notion of a quantum
metric has been elusive. Possible definitions have been proposed by Connes
\cite{CL}, Rieffel \cite{Rie3}, and Weaver \cite{W2}. Connes' definition
involves his notion of spectral triples and is patterned
after the Dirac operator on a Riemannian manifold.
Possibly the right interpretation of this definition is ``quantum
Riemannian manifold'' rather than ``quantum metric space''.
Weaver proposed a definition involving
unbounded derivations of von Neumann algebras into dual operator bimodules.
This definition neatly recovers classical Lipschitz algebras in the
abelian case, but it has not led to a deeper structure theory.
Rieffel's definition, which is also called a C*-metric space \cite{Rie4},
generalizes the classical Lipschitz seminorm on functions on a metric space.
This definition has attracted the most interest recently; among other
interesting properties, it leads to a useful model of Gromov-Hausdorff
convergence.

We introduce a new definition of a quantum metric space. To distinguish
between our model and that of Rieffel, it can also be called a W*-metric
space. Recall that an
{\it operator system} is a linear subspace of $\Bc(H)$ that is self-adjoint
and contains the identity operator. We say that
a {\it W*-filtration} of $\Bc(H)$ is a one-parameter
family of weak* closed operator
systems $\Vc_t$, $t \in [0,\infty)$, such that
\begin{quote}
(i) $\Vc_s\Vc_t \subseteq \Vc_{s+t}$ for all $s,t \geq 0$

\noindent (ii) $\Vc_t = \bigcap_{s > t}\Vc_s$ for all $t \geq 0$.
\end{quote}
Notice that $\Vc_0$ is automatically a von Neumann algebra, since the
filtration
condition (i) implies that it is stable under products. We define
a {\it W*-metric} on a von Neumann algebra $\Mx \subseteq \Bc(H)$
to be a W*-filtration $\{\Vc_t\}$ such that $\Vc_0$ is the commutant of
$\Mx$. Since we interpret a W*-metric as a type of quantum metric, and
since it is the main type that we will consider in this article, we
will also just call it a {\it quantum metric}.

We will justify this definition with various constructions and results.
We can begin with a correspondence table between the usual axioms of
a metric space and some of our conditions:
\begin{eqnarray*}
d(x,x) = 0&\longleftrightarrow& I \in \Vc_0\cr
d(x,y) = d(y,x)&\longleftrightarrow& \Vc_t^* = \Vc_t\cr
d(x,z) \leq d(x,y) + d(y,z)&\longleftrightarrow& \Vc_s\Vc_t \subseteq
\Vc_{s+t}
\end{eqnarray*}
The rough intuition is that
$\Vc_t$ consists of the operators that do not displace any mass
more than $t$ units away from where it started.

We will show that quantum metrics on $\Mx$ do not depend on the
representation of $\Mx$ (Theorem \ref{metcor}). We will also
show that any quantum metric on $\Mx$ yields a C*-algebra
$UC(\Mx)$ of uniformly continuous elements and an algebra
${\rm Lip}(\Mx) \subseteq UC(\Mx)$ of (commutation) Lipschitz elements
that are both weak* dense in $\Mx$ (Proposition \ref{ucdense} and
Theorem \ref{lipdensity}).

One motivation for our approach is the standard model of quantum error
correction. Classical error correction is a theory of minimum-distance
sets in metric spaces: if $X$ is a metric space and $C \subseteq X$ is a\
subset with minimum distance $t$ (i.e., $\inf\{d(x,y): x, y \in C, x \neq y\}
= t$), then $C$ is said to be a {\it code} that
detects errors of size less than $t$ and corrects errors of size less
than $t/2$. In particular, the {\it Hamming metric} on
the space $X = \{0,1\}^n$ of $n$-bit words is defined by
letting the distance between two words be the number of bits that
differ. In quantum information theory, the {\it quantum Hamming metric}
is a quantum metric in our sense on $n$ qubits. Here a {\it qubit} is
a quantum system with von Neumann algebra $M_2(\Cb)$; thus
$M_{2^n}(\Cb) \cong M_2(\Cb)^{\otimes n}$ is the von Neumann algebra of
$n$ qubits. The filtration of the quantum Hamming metric is
\begin{eqnarray*}
\Vc_t &=& {\rm span}\{A_1 \otimes \cdots \otimes A_n: A_i \in M_2(\Cb)\cr
&&\phantom{\limsup}\hbox{ and $A_i = I_2$ for all but at most $t$
values of $i$}\} \subseteq M_{2^n}(\Cb).
\end{eqnarray*}
This filtration models the error operators that corrupt at most $t$
qubits for some $t$. (Note that an operator $C \in \Vc_t$ can be a
linear combination, or quantum superposition, of operators that
corrupt different sets of $t$ or fewer qubits.) There is a natural
definition of a code subspace $C \subseteq (\Cb^2)^{\otimes n}$ of
minimum distance $t$, which is then a {\it quantum code}. Quantum codes
both resemble classical codes and are used for the same purposes.
Various generalizations of the quantum Hamming metric on qubits have
been studied; for instance, it is routine to replace qubits by
{\it qudits} with algebra $M_d(\Cb)$. Knill, Laflamme, and Viola considered a
general operator system as an error model \cite{KLV}; this is equivalent
to a {\it quantum graph metric} with $\Mx = \Bc(H)$ (see Section
\ref{graphmetric}). However, more general W*-metrics, even with
$\Mx = \Bc(H)$, have not previously been studied to our knowledge.

A second motivation is the intermediate model of a measurable metric
space due to Weaver \cite{W0}, in which the metric set $X$ is replaced
by a measure space $(X,\mu)$. If $\mu$ is atomic, so that $\Mx =
l^\infty(X)$, then a quantum metric on $\Mx$ is equivalent to a
classical metric on $X$ (Proposition \ref{aa}). In the general
measurable setting, a measurable metric on $(X,\mu)$ is equivalent
to a reflexive quantum metric
on $L^\infty(X,\mu)$ (Theorem \ref{qmeasmet}).

Our new definition is related to the other models of quantum metric
spaces mentioned above. First, every spectral triple in Connes' sense
yields
a W*-metric (Definition \ref{tripdef}). This can be seen as encoding
the purely metric features of the spectral triple, as opposed to its
Riemannian or spinorial structure. Second, every W*-metric yields a
Leibniz Lipschitz seminorm in Rieffel's sense (Definition \ref{comlip}).
Third, as anticipated in
Weaver's earlier work, every W*-metric yields a Lipschitz algebra that
is the domain of a W*-derivation (Definition \ref{opdeLeeuw}). One
twist is that in the noncommutative case, the classical Lipschitz
condition $|f(x)-f(y)| \leq C\cdot d(x,y)$
splits into two distinct conditions, a commutation condition
and a spectral condition. The commutation version is the one with good
algebraic properties, but it is
the spectral version that admits an elegant abstract axiomatization
(Definition \ref{abspeclip}).

We establish several equivalent definitions of a W*-metric space. Our
main definition is the one stated above in terms of W*-filtrations.
This is trivially equivalent to a {\it displacement gauge} on
$\Bc(H)$ (Definition \ref{filt} (b)). A much deeper result gives an
intrinsic characterization of quantum metrics in terms of {\it quantum
distance functions} defined on pairs of projections in
$\Mx \overline{\otimes} \Bc(l^2)$ (Definition \ref{qdfdef}/Theorem \ref{abch}).
This characterization can be attractively recast in terms of
{\it quantum Lipschitz gauges} on the self-adjoint part of
$\Mx \overline{\otimes} \Bc(l^2)$ (Definition \ref{abspeclip}/Corollary \ref{lipequiv}).

We wish to thank David Blecher, Chris
Bumgardner, Renato Feres, Jerry Kaminker, Michael Kapovich, Nets Katz,
Greg Knese, Emmanuel Knill, John McCarthy, Stephen Power, Marc Rieffel,
Zhong-Jin Ruan, David Sherman, and Andr\'{a}s Vasy for helpful conversations.
We also thank the referee for suggesting many minor improvements.

We work with complex scalars throughout. ``Projection'' always means
``orthogonal projection''.

\tableofcontents

\section{Measurable and quantum relations}\label{mrs}

It is convenient to begin with a brief summary of basic results about
measurable and quantum relations. This material will be used sporadically
in subsequent sections. For a fuller treatment see \cite{W6}. The reader
is encouraged to skip this chapter and refer back to it as needed.

We first state the definition of a measurable relation.
A measure space $(X,\mu)$ is {\it finitely decomposable} if it can be
partitioned into a possibly uncountable family of finite measure subspaces
$X_\lambda$ such that a set $S \subseteq X$ is measurable if and only
if its intersection with each $X_\lambda$ is measurable, in which case 
$\mu(S) = \sum \mu(S \cap X_\lambda)$ (\cite{W4}, Definition 6.1.1).
Finitely decomposable measures generalize both $\sigma$-finite measures
and counting measures. The significance of the condition is that it
ensures $L^\infty(X,\mu) \cong L^1(X,\mu)^*$, and hence that the
projections in $L^\infty(X,\mu)$ (equivalently, the measurable subsets of
$X$ up to null sets) constitute a complete Boolean algebra.

\begin{defi}\label{measrel}
(\cite{W6}, Definition 1.2)
Let $(X,\mu)$ be a finitely decomposable measure space.
A {\it measurable relation} on $X$ is a family $\Rc$ of ordered pairs
of nonzero projections in $L^\infty(X,\mu)$ such that
$$\left(\bigvee p_\lambda,\bigvee q_\kappa\right) \in\Rc
\quad\Leftrightarrow\quad
\hbox{some }(p_\lambda, q_\kappa) \in \Rc\eqno{(*)}$$
for any pair of families of nonzero projections $\{p_\lambda\}$ and
$\{q_\kappa\}$.
\end{defi}

Now let $H$ be a complex Hilbert space, not necessarily separable. Recall
(\cite{Tak}, Definition II.2.1) that the {\it weak*} (or {\it $\sigma$-weak
operator}) topology on $\Bc(H)$ is the
weak topology arising from the pairing $\langle A,B\rangle \mapsto
{\rm tr}(AB)$ of $\Bc(H)$ with the trace class operators $\TC(H)$;
that is, it is the weakest topology that makes the map $A \mapsto
{\rm tr}(AB)$ continuous for all $B \in \TC(H)$. The weak* topology
is finer than the weak operator topology but the two agree on bounded
sets.

An {\it operator algebra} is a linear subspace of $\Bc(H)$ that is stable
under products. A subspace of $\Bc(H)$ is {\it self-adjoint} if it
is stable under adjoints and {\it unital} if it contains the identity
operator $I$. A {\it von Neumann algebra} is a weak* closed self-adjoint
unital operator algebra. We will refer to \cite{Tak} for standard facts
about von Neumann algebras. For example, the {\it commutant} of a von
Neumann algebra $\Mx$ is the von Neumann algebra
$$\Mx' = \{A \in \Bc(H): AB = BA\hbox{ for all }B \in \Mx\}$$
and von Neumann's {\it double commutant theorem} states that every
von Neumann algebra equals the commutant of its commutant, $\Mx = \Mx''$
(\cite{Tak}, Theorem II.3.9).

A {\it dual operator space} is a weak* closed subspace $\Vc$ of $\Bc(H)$;
it is a {\it W*-bimodule} over a von Neumann algebra $\Mx \subseteq \Bc(H)$
if $\Mx\Vc\Mx \subseteq \Vc$. A {\it dual operator system} is a self-adjoint
unital dual operator space.

\begin{defi}\label{quantrel}
(\cite{W6}, Definition 2.1)
A {\it quantum relation} on a von Neumann algebra $\Mx \subseteq \Bc(H)$
is a W*-bimodule over its commutant $\Mx'$, i.e., it is a weak* closed
subspace $\Vc \subseteq \Bc(H)$ satisfying $\Mx'\Vc\Mx' \subseteq \Vc$.
\end{defi}

Quantum relations are effectively representation independent.

\begin{theo}\label{repindep}
(\cite{W6}, Theorem 2.7)
Let $H_1$ and $H_2$ be Hilbert spaces and let $\Mx_1 \subseteq \Bc(H_1)$
and $\Mx_2 \subseteq \Bc(H_2)$ be isomorphic von Neumann algebras. Then
any isomorphism induces a 1-1 correspondence between the quantum relations on
$\Mx_1$ and the quantum relations on $\Mx_2$, and this correspondence
respects the conditions $\Vc \subseteq \Wc$, $\Vc = \Mx'$, $\Vc^* =
\Wc$, and $\overline{\Uc\Vc}^{wk^*} = \Wc$.
\end{theo}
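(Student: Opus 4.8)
The plan is to reduce an arbitrary $*$-isomorphism to a composition of three elementary moves and to check that each move induces a condition-preserving bijection on quantum relations. By the structure theory of normal representations (cf. \cite{Tak}), any $*$-isomorphism $\varphi \colon \Mx_1 \to \Mx_2$ factors as a composition of three types of map: (a) an \emph{amplification} $\Mx \mapsto \Mx \overline{\otimes} \Cb 1_K \subseteq \Bc(H \otimes K)$; (b) a \emph{spatial isomorphism} $\Mx \mapsto U\Mx U^*$ for a unitary $U$; and (c) a \emph{reduction} $\Mx \mapsto p\Mx p|_{pH}$ by a projection $p \in \Mx'$ whose central support in $\Mx'$ is $1$. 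Concretely, regarding the inclusion $\Mx_1 \subseteq \Bc(H_1)$ and the composite $\Mx_1 \to \Mx_2 \subseteq \Bc(H_2)$ as two faithful normal representations of the same algebra, one amplifies each by $\Bc(\ell^2)$; this raises both multiplicities to be maximal, so the amplified representations become spatially equivalent by a unitary $U$ intertwining $\varphi$. Since de-amplification $\Mx_2 \overline{\otimes} \Cb 1 \mapsto \Mx_2$ is itself a reduction (by $1 \otimes e$ for a rank-one projection $e$, which has central support $1$ in $\Mx_2' \overline{\otimes} \Bc(\ell^2)$), it suffices to produce the correspondence for each of (a), (b), (c) and compose.

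For (b) the correspondence is simply $\Vc \mapsto U\Vc U^*$, a weak* homeomorphism carrying $\Mx_1'$-bimodules to $\Mx_2' = U\Mx_1'U^*$-bimodules, and it preserves all four conditions on inspection. For (a), note that the commutant of $\Mx \overline{\otimes} \Cb 1_K$ is $\Mx' \overline{\otimes} \Bc(K)$, and take the candidate map $\Vc \mapsto \Vc \overline{\otimes} \Bc(K)$. The crux is a slice-map lemma: every weak* closed $\Wc \subseteq \Bc(H) \overline{\otimes} \Bc(K)$ satisfying $(1 \otimes \Bc(K))\,\Wc\,(1 \otimes \Bc(K)) \subseteq \Wc$ equals $\Vc \overline{\otimes} \Bc(K)$ for a unique weak* closed $\Vc \subseteq \Bc(H)$, recovered by slicing $\Wc$ with normal functionals on the $\Bc(K)$ leg. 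Granting this, a weak* closed $\Wc$ is an $\Mx' \overline{\otimes} \Bc(K)$-bimodule iff it is both a $(1 \otimes \Bc(K))$-bimodule and an $(\Mx' \otimes 1)$-bimodule, i.e.\ iff $\Wc = \Vc \overline{\otimes} \Bc(K)$ with $\Vc$ an $\Mx'$-bimodule. This yields the bijection between quantum relations on $\Mx$ and on its amplification.

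For (c), with $p \in \Mx'$ of central support $1$, reduction is a $*$-isomorphism of $\Mx$ onto $\Mx_p$ satisfying $(\Mx_p)' = (\Mx')_p$ on $pH$. The correspondence sends $\Vc$ to the weak* closure of $p\Vc p|_{pH}$, with inverse sending a quantum relation $\Wc$ on $\Mx_p$ to the smallest $\Mx'$-bimodule compressing onto it. The central-support hypothesis is what makes this a bijection: it forces the compression $A \mapsto pAp|_{pH}$ to be faithful on $\Mx'$, so that the $\Mx'$-bimodule action reconstructs $\Vc$ from its compression, with the double commutant theorem confirming $(\Mx')_p$ is the full commutant of $\Mx_p$.

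It remains to check compatibility with the four conditions $\Vc \subseteq \Wc$, $\Vc = \Mx'$, $\Vc^* = \Wc$, and $\overline{\Uc\Vc}^{wk^*} = \Wc$. For (b) all four are transparent; for (a) the slice map is positive, adjoint-preserving, and inclusion-preserving, and $\overline{(\Uc \overline{\otimes} \Bc(K))(\Vc \overline{\otimes} \Bc(K))}^{wk^*} = \overline{\Uc\Vc}^{wk^*} \overline{\otimes} \Bc(K)$ since $\Bc(K)\Bc(K)$ is weak* dense in $\Bc(K)$; and $\Mx'$ itself corresponds to $\Mx' \overline{\otimes} \Bc(K)$, the new commutant. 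The composition condition under (c) is the one delicate point, since $p\Uc p \Vc p \subseteq p\Uc\Vc p$ is immediate but the reverse inclusion, after weak* closure, again leans on central support $1$. I expect the main obstacle to lie precisely here in step (c): establishing that compression is injective on quantum relations, surjective onto all quantum relations on $\Mx_p$, and compatible with composition of relations all require genuine use of the central-support assumption, whereas the amplification slice-map lemma, though technical, is standard.
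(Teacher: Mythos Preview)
Your approach matches the paper's: the sketch following the theorem statement reduces everything to the amplification case $H_2 = K \otimes H_1$ with correspondence $\Vc \leftrightarrow \Bc(K)\overline{\otimes}\Vc$, invoking the fact (\cite{Tak}, Theorem IV.5.5) that any two faithful normal unital representations become spatially equivalent after tensoring with a sufficiently large Hilbert space. The one inefficiency in your write-up is treating de-amplification as an instance of a general reduction step (c). Once your step (a) establishes that amplification $\Vc \mapsto \Vc\overline{\otimes}\Bc(K)$ is a bijection on quantum relations preserving the four conditions, its \emph{inverse} is automatically a condition-preserving bijection as well; the paper simply uses this inverse to come back down on the $\Mx_2$ side. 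So your step (c), together with the worries you flag about central support, injectivity of compression, and compatibility of compression with products, can be dropped entirely. The only substantive work is the slice-map lemma in (a), exactly as you identify.
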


If $H_2 = K \otimes H_1$ then the 1-1 correspondence is given by
$\Vc \leftrightarrow \Bc(K) \overline{\otimes} \Vc$, where
$\overline{\otimes}$ is the {\it normal spatial tensor product}, i.e.,
the weak* closure of the algebraic tensor product in $\Bc(K \otimes H)$.
Checking this case suffices to establish the result, because
any two faithful normal unital representations of a von Neumann algebra become
spatially equivalent when each is tensored with a large enough Hilbert space
(\cite{Tak}, Theorem IV.5.5). 

Quantum relations effectively reduce to classical relations in the atomic
abelian case. Let $V_{xy}$ be the rank one operator $V_{xy}: g \mapsto
\langle g,e_y\rangle e_x$ on $l^2(X)$. Here $\{e_x\}$ is the standard
basis of $l^2(X)$.

\begin{prop}\label{atomiccase}
(\cite{W6}, Proposition 2.2)
Let $X$ be a set and let $\Mx \cong l^\infty(X)$ be the von Neumann
algebra of bounded multiplication operators on $l^2(X)$. If
$R$ is a relation on $X$ then
\begin{eqnarray*}
\Vc_R &=& \{A \in \Bc(l^2(X)): (x,y) \not\in R\quad \Rightarrow\quad
\langle Ae_y, e_x\rangle = 0\}\cr
&=& \overline{\rm span}^{wk^*}\{V_{xy}: (x,y) \in R\}
\end{eqnarray*}
is a quantum relation on $\Mx$; conversely, if $\Vc$ is a
quantum relation on $\Mx$ then
$$R_\Vc = \{(x,y) \in X^2: \langle Ae_y, e_x\rangle\neq 0
\hbox{ for some }A \in \Vc\}$$
is a relation on $X$. The two constructions are inverse to each other.
\end{prop}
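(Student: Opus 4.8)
The plan is to exploit the fact that $l^\infty(X)$ acting by multiplication on $l^2(X)$ is maximal abelian, so that $\Mx' = \Mx$ and a quantum relation on $\Mx$ is precisely a weak* closed subspace $\Vc \subseteq \Bc(l^2(X))$ that is invariant under two-sided multiplication by diagonal multiplication operators. The single computational fact that drives everything is that the rank one diagonal projections $V_{xx}\colon g \mapsto \langle g, e_x\rangle e_x$ are themselves multiplication operators (multiplication by the indicator of $\{x\}$), hence lie in $\Mx = \Mx'$, and that $V_{xx}AV_{yy} = \langle Ae_y, e_x\rangle\, V_{xy}$ for every $A \in \Bc(l^2(X))$. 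Thus compressing an operator by a pair of diagonal projections reads off a single matrix coefficient, and the bimodule axiom is thereby converted into a statement about matrix coefficients.

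First I would check that the two displayed descriptions of $\Vc_R$ coincide and that $\Vc_R$ is a quantum relation. Each coefficient functional $A \mapsto \langle Ae_y, e_x\rangle = {\rm tr}(AV_{yx})$ is weak* continuous (since $V_{yx} \in \TC(l^2(X))$), so the matrix-coefficient description exhibits $\Vc_R$ as a weak* closed linear subspace; as it contains every $V_{xy}$ with $(x,y) \in R$, it contains their weak* closed span. For the reverse inclusion, given $A$ whose coefficients are supported on $R$, I would truncate: for finite $F, G \subseteq X$ put $P_F = \sum_{x \in F} V_{xx}$, so that $P_F A P_G = \sum_{x \in F,\, y \in G} \langle Ae_y, e_x\rangle V_{xy}$ is a finite combination of the $V_{xy}$ with $(x,y) \in R$. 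These truncations are bounded by $\|A\|$ and converge to $A$ in the weak operator topology as $F, G$ increase, which on bounded sets coincides with the weak* topology; hence $A$ lies in the weak* closed span. The bimodule property is then immediate from the first description: for multiplication operators $M_f, M_g$ the coefficient $\langle M_f A M_g\, e_y, e_x\rangle = f(x)g(y)\langle Ae_y, e_x\rangle$ vanishes whenever $\langle Ae_y, e_x\rangle$ does, so $\Mx \Vc_R \Mx \subseteq \Vc_R$.

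Next, $R_\Vc$ is trivially a relation on $X$ (any subset of $X^2$ qualifies), so the substance is that the two constructions invert one another. The identity $R_{\Vc_R} = R$ is straightforward: if $(x,y) \in R$ then $V_{xy} \in \Vc_R$ has $\langle V_{xy}\,e_y, e_x\rangle = 1 \neq 0$, while if $(x,y) \notin R$ then every element of $\Vc_R$ has vanishing $(x,y)$ coefficient by definition. For $\Vc_{R_\Vc} = \Vc$, one inclusion is formal: if $A \in \Vc$ and $(x,y) \notin R_\Vc$ then $\langle Ae_y, e_x\rangle = 0$ by the definition of $R_\Vc$, so $A \in \Vc_{R_\Vc}$. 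The reverse inclusion is where the compression identity does the work. By the first part, $\Vc_{R_\Vc}$ is the weak* closed span of the $V_{xy}$ with $(x,y) \in R_\Vc$, so it suffices to show each such $V_{xy}$ lies in $\Vc$. But $(x,y) \in R_\Vc$ means some $A \in \Vc$ has $c = \langle Ae_y, e_x\rangle \neq 0$, and then $V_{xx}AV_{yy} = c\, V_{xy} \in \Vc$ by the bimodule property, forcing $V_{xy} \in \Vc$; weak* closedness of $\Vc$ then yields $\Vc_{R_\Vc} \subseteq \Vc$.

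I expect the main obstacle to be the reverse inclusion in the two-description step, namely the weak* density of the finite truncations $P_F A P_G$ in $A$; this is where a little care is needed about the topology (boundedness of the net and the agreement of the weak* topology with the weak operator topology on bounded sets, as recorded in the preliminaries). Everything else reduces to the elementary but decisive compression identity $V_{xx}AV_{yy} = \langle Ae_y, e_x\rangle V_{xy}$, which turns the abstract bimodule condition into a concrete statement about matrix entries and makes both the bimodule property of $\Vc_R$ and the inversion $\Vc_{R_\Vc} = \Vc$ fall out.
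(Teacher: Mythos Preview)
Your proof is correct. The paper does not actually prove this proposition; it is quoted from \cite{W6} without argument, so there is no in-paper proof to compare against. Your approach---using the compression identity $V_{xx}AV_{yy} = \langle Ae_y,e_x\rangle V_{xy}$ together with the maximal abelianness of $l^\infty(X)$ and the weak* convergence of finite truncations---is the standard one and is carried out correctly.
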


Before we state the fundamental result relating measurable relations
on $(X,\mu)$ and quantum relations on $L^\infty(X,\mu)$, we need the
notion of (operator) reflexivity:

\begin{defi}\label{refdef}
(\cite{W6}, Definition 2.14)
A subspace $\Vc \subseteq \Bc(H)$ is {\it (operator) reflexive} if
$$\Vc = \{B \in \Bc(H): P\Vc Q = 0\quad \Rightarrow\quad PBQ = 0\},$$
with $P$ and $Q$ ranging over projections in $\Bc(H)$.
\end{defi}

A simple observation is that if $\Vc$ is a quantum relation on $\Mx$
then $P$ and $Q$ can be restricted to range over projections in $\Mx$
in the preceding definition (\cite{W6}, Proposition 2.15).

For $f \in L^\infty(X,\mu)$ let $M_f \in B(L^2(X,\mu))$ be the
corresponding multiplication operator, $M_f: g \mapsto fg$.

\begin{theo}\label{abelianrel}
(\cite{W6}, Theorem 2.9/Corollary 2.16)
Let $(X,\mu)$ be a finitely decomposable measure space and let
$\Mx \cong L^\infty(X,\mu)$ be the von Neumann algebra
of bounded multiplication operators on $L^2(X,\mu)$.
If $\Rc$ is a measurable relation on $X$ then
$$\Vc_\Rc = \{A \in \Bc(L^2(X,\mu)): (p,q) \not\in \Rc
\quad \Rightarrow \quad M_pAM_q = 0\}$$
is a quantum relation on $\Mx$; conversely, if $\Vc$ is a quantum
relation on $\Mx$ then
$$\Rc_\Vc = \{(p,q): M_pAM_q\neq 0\hbox{ for some }A \in \Vc\}$$
is a measurable relation on $X$. We have $\Rc = \Rc_{\Vc_\Rc}$ for
any measurable relation $\Rc$ on $X$ and $\Vc \subseteq
\Vc_{\Rc_\Vc}$ for any quantum relation $\Vc$ on $\Mx$, with equality if
and only if $\Vc$ is reflexive.
\end{theo}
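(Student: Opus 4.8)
The plan is to first fix the standing setup: since $\Mx\cong L^\infty(X,\mu)$ acts by multiplication on $L^2(X,\mu)$ it is maximal abelian, so $\Mx'=\Mx$ and a quantum relation on $\Mx$ is simply a weak* closed subspace $\Vc$ with $\Mx\Vc\Mx\subseteq\Vc$. The engine for everything is one lemma: for a fixed $A\in\Bc(L^2(X,\mu))$ and families of projections $\{p_\lambda\},\{q_\kappa\}$ in $\Mx$, if $M_{p_\lambda}AM_{q_\kappa}=0$ for all $\lambda,\kappa$ then $M_{\bigvee p_\lambda}AM_{\bigvee q_\kappa}=0$. I would prove this by disjointifying: writing a finite join $\bigvee_{n\le N}p_{\lambda_n}$ as an orthogonal sum $\sum_n r_n$ with each $r_n\le p_{\lambda_n}$ kills $M_{\bigvee_{n\le N}p_{\lambda_n}}AM_{q_\kappa}$, and similarly on the right; then one passes to the strong limit $M_{P_N}\to M_{\bigvee p_\lambda}$ over a countable cofinal subfamily. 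This is where finite decomposability enters, as it reduces the essential join to a countable one (directly when $\mu$ is $\sigma$-finite, and in general after decomposing $X$ into finite-measure pieces).

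With this lemma the well-definedness statements are routine. For $\Vc_\Rc$: each condition $M_pAM_q=0$ is weak* closed in $A$, so $\Vc_\Rc$ is weak* closed; and for $M_f,M_g\in\Mx$ and $(p,q)\notin\Rc$ one reduces $M_p(M_fAM_g)M_q$ to the support projections of $pf$ and $gq$, which lie below $p,q$ and hence give a forbidden pair by the monotonicity built into condition $(*)$, so $M_fAM_g\in\Vc_\Rc$. For $\Rc_\Vc$: the direction ``some $(p_\lambda,q_\kappa)\in\Rc_\Vc$ implies $(\bigvee p_\lambda,\bigvee q_\kappa)\in\Rc_\Vc$'' is immediate from the bimodule property, since compressing a witness $A$ to $M_{p_\lambda}AM_{q_\kappa}\in\Vc$ keeps the corner nonzero, and the converse is exactly the contrapositive of the lemma applied to every $A\in\Vc$.

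The substance is the round trip $\Rc=\Rc_{\Vc_\Rc}$. The inclusion $\Rc_{\Vc_\Rc}\subseteq\Rc$ is immediate: if $(p,q)\notin\Rc$ then by definition $M_pAM_q=0$ for every $A\in\Vc_\Rc$, so $(p,q)\notin\Rc_{\Vc_\Rc}$. The reverse inclusion is the main obstacle. I would argue the contrapositive: assuming $M_p\Vc_\Rc M_q=0$, I want decompositions $p=\bigvee p_\lambda$, $q=\bigvee q_\kappa$ with every $(p_\lambda,q_\kappa)\notin\Rc$, so that $(*)$ forces $(p,q)\notin\Rc$. Using the maximal forbidden sub-projection $p^*=\bigvee\{p'\le p:(p',q)\notin\Rc\}$, which itself satisfies $(p^*,q)\notin\Rc$ by $(*)$, together with its analogue on the $q$ side, I can reduce to the case where every nonzero subrectangle is forced into $\Rc$; equivalently, I must show that when $(p,q)\in\Rc$ there is always a nonzero $A\in\Vc_\Rc$ with $M_pAM_q\neq0$. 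The naive hope, that $\Rc$ contains a ``full block'' below $(p,q)$ on which any rank-one operator can be planted, fails: for the measurable diagonal relation on $[0,1]$ there is no full block, yet the identity lies in $\Vc_\Rc$. Thus the realizing operator must be built measurably along the support of the relation, and assembling it, by selecting a graph inside $\Rc$ meeting $\mathrm{supp}(p)\times\mathrm{supp}(q)$ and forming the associated partial isometry, is the crux of the proof and the place where a measurable selection and finite decomposability do the real work.

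Finally, for the reflexivity clause I would compute $\Vc_{\Rc_\Vc}$ directly. Unwinding the definitions, $(p,q)\notin\Rc_\Vc$ says precisely $M_p\Vc M_q=0$, so $\Vc_{\Rc_\Vc}=\{A:M_p\Vc M_q=0\Rightarrow M_pAM_q=0\}$ with $p,q$ ranging over projections in $\Mx$. By the remark following Definition \ref{refdef}, that for a quantum relation the projections in the reflexivity condition may be restricted to $\Mx$, this set is exactly the operator reflexive hull of $\Vc$. Hence $\Vc\subseteq\Vc_{\Rc_\Vc}$ always (the inclusion is also transparent directly from the definition), and equality holds precisely when $\Vc$ coincides with its reflexive hull, that is, precisely when $\Vc$ is reflexive.
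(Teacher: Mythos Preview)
This theorem is not proved in the present paper; it is quoted from \cite{W6} as background (see the opening of Section~\ref{mrs}), so there is no proof here to compare your attempt against. I can only comment on your outline on its own terms.

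Your treatment of the routine parts is correct: that $\Vc_\Rc$ is a weak* closed $\Mx$-bimodule, that $\Rc_\Vc$ satisfies $(*)$ via your join lemma, the inclusion $\Rc_{\Vc_\Rc}\subseteq\Rc$, and the reflexivity clause (unwinding $\Vc_{\Rc_\Vc}$ to the reflexive hull of $\Vc$ using the remark after Definition~\ref{refdef}) are all fine.

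The gap is where you flag it, but it is larger than you indicate. First, your reduction does not reach ``every nonzero subrectangle of $(p,q)$ lies in $\Rc$.'' Passing from $p$ to $p_1=p\wedge(I-p^*)$ gives $(p',q)\in\Rc$ for every nonzero $p'\le p_1$; then passing from $q$ to $q_1$ (using this $p_1$) gives $(p_1,q')\in\Rc$ for every nonzero $q'\le q_1$. Neither conclusion yields $(p',q')\in\Rc$ for arbitrary nonzero $p'\le p_1$ and $q'\le q_1$: from $(p',q)\in\Rc$ and $q=q^*\vee q_1$ you only get $(p',q^*)\in\Rc$ or $(p',q_1)\in\Rc$, and the latter still does not descend to sub-$q'$. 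A single two-sided pass is not enough. Second, and more seriously, even granting the reduction, the construction of a nonzero $A\in\Vc_\Rc$ with $M_pAM_q\neq 0$ is the entire content of the hard direction, and ``select a graph inside $\Rc$ and form the associated partial isometry'' is not a proof: $\Rc$ is a relation on \emph{projections} in $L^\infty(X,\mu)$, not on points of $X$, and in the nonatomic case there is no underlying pointwise relation from which to run a measurable selection (your own diagonal example on $[0,1]$ already shows that the witnessing operator need not arise from any set-theoretic graph). The work that produces such an $A$ is precisely what \cite{W6} supplies; your sketch correctly locates the difficulty but does not resolve it.
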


The following basic tool will be used repeatedly.

\begin{lemma}\label{separation}
(\cite{W6}, Lemma 2.8)
Let $\Vc$ be a quantum relation on a von Neumann algebra $\Mx \subseteq
\Bc(H)$ and let $A \in \Bc(H) - \Vc$. Then there is a pair of projections
$P$ and $Q$ in $\Mx \overline{\otimes} \Bc(l^2) \subseteq \Bc(H \otimes l^2)$
such that
$$P(A \otimes I)Q \neq 0$$
but
$$P(B \otimes I)Q = 0$$
for all $B \in \Vc$.
\end{lemma}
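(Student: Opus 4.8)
The plan is to reduce this projection-separation statement to an ordinary separation by a weak* continuous functional, and then to manufacture the projections $P$ and $Q$ from that functional by exploiting the two-sided $\Mx'$-invariance of $\Vc$ together with the standard correspondence between invariant subspaces and commutants. First, since $\Vc$ is weak* closed and $A \in \Bc(H) - \Vc$, the Hahn-Banach theorem applied to the duality of $\Bc(H)$ with its predual $\TC(H)$ yields a trace class operator $T$ with ${\rm tr}(AT) \neq 0$ while ${\rm tr}(BT) = 0$ for every $B \in \Vc$. The whole difficulty is then to upgrade this single functional into a genuine pair of projections.

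The bridge is to realize the functional as a vector state on $H \otimes l^2$. Writing a singular value decomposition $Tg = \sum_n s_n \langle g, v_n\rangle u_n$ with $\{u_n\}$ and $\{v_n\}$ orthonormal and $\sum_n s_n < \infty$, I would set $\xi = \sum_n \sqrt{s_n}\, u_n \otimes e_n$ and $\eta = \sum_n \sqrt{s_n}\, v_n \otimes e_n$ in $H \otimes l^2$; trace class summability of the $s_n$ is exactly what guarantees convergence of both vectors. A direct computation then gives the identity $\langle (D \otimes I)\xi, \eta\rangle = {\rm tr}(DT)$ for every $D \in \Bc(H)$. In particular $\langle (A \otimes I)\xi, \eta\rangle \neq 0$, whereas $\langle (B \otimes I)\xi, \eta\rangle = 0$ for all $B \in \Vc$, so we have achieved a rank-one separation, which we now must ``thicken'' into projections.

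To do so I would let $Q$ be the orthogonal projection onto the closure of $(\Mx' \otimes I)\xi$ and $P$ the projection onto the closure of $(\Mx' \otimes I)\eta$. The commutation theorem for tensor products identifies the commutant of $\Mx \overline{\otimes} \Bc(l^2)$ as $\Mx' \otimes \Cb I$, and each of these subspaces is invariant under the von Neumann algebra $\Mx' \otimes \Cb I$; hence the projection onto it commutes with $\Mx' \otimes \Cb I$ and therefore lies in $(\Mx' \otimes \Cb I)' = \Mx \overline{\otimes} \Bc(l^2)$, as required. Because $I \in \Mx'$ we have $Q\xi = \xi$ and $P\eta = \eta$, so $\langle P(A \otimes I)Q\, \xi, \eta\rangle = \langle (A \otimes I)\xi, \eta\rangle = {\rm tr}(AT) \neq 0$, which already forces $P(A \otimes I)Q \neq 0$.

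It remains to verify that $P(B \otimes I)Q = 0$ for every $B \in \Vc$, and here it suffices to test the matrix element against the vectors $(B' \otimes I)\xi$ and $(C' \otimes I)\eta$ that span the ranges of $Q$ and $P$, for $B', C' \in \Mx'$. That matrix element equals $\langle ((C')^* B B' \otimes I)\xi, \eta\rangle = {\rm tr}\big((C')^* B B'\, T\big)$, and the bimodule hypothesis $\Mx' \Vc \Mx' \subseteq \Vc$ is precisely what places $(C')^* B B'$ back inside $\Vc$, so this trace vanishes. I expect the two delicate points to be the membership of $P$ and $Q$ in $\Mx \overline{\otimes} \Bc(l^2)$, which rests squarely on the tensor product commutation theorem, and the bookkeeping of the final step, where the two-sided invariance of $\Vc$ must simultaneously absorb $B'$ on the right and $(C')^*$ on the left in order to keep the operator inside $\Vc$; everything else is routine.
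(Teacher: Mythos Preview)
Your argument is correct. The paper does not actually prove this lemma --- it is quoted from \cite{W6} --- but your proof is the natural one and is essentially the argument given there: Hahn--Banach separation by a trace class functional, realization of that functional as a vector functional on $H\otimes l^2$ via a Schmidt-type decomposition, and then passage to the cyclic projections for $\Mx'\otimes I$ through the vectors $\xi$ and $\eta$, with the bimodule property of $\Vc$ absorbing the $\Mx'$ factors in the final verification.
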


We conclude this brief review of quantum relations with a result that
characterizes them intrinsically. Denote the range projection of $A$
by $[A]$.

\begin{defi}\label{abstractqrel}
(\cite{W6}, Definition 2.24)
Let $\Mx$ be a von Neumann algebra and let $\Pc$ be the set of projections
in $\Mx \overline{\otimes} \Bc(l^2)$, equipped with the restriction of the weak
operator topology. An {\it intrinsic quantum relation} on $\Mx$ is an
open subset $\Rc \subset \Pc \times \Pc$ satisfying
\begin{quote}
\noindent (i) $(0,0)\not\in \Rc$

\noindent (ii) $(\bigvee P_\lambda, \bigvee Q_\kappa) \in \Rc$
$\Leftrightarrow$ some $(P_\lambda, Q_\kappa) \in \Rc$

\noindent (iii) $(P,[BQ]) \in \Rc$ $\Leftrightarrow$ $([B^*P],Q) \in \Rc$
\end{quote}
for all projections $P,Q,P_\lambda,Q_\kappa \in \Pc$
and all $B \in I \otimes \Bc(l^2)$.
\end{defi}

\begin{theo}\label{abstractchar}
(\cite{W6}, Theorem 2.32)
Let $\Mx \subseteq \Bc(H)$ be a von Neumann algebra and let $\Pc$ be
the set of projections in $\Mx \overline{\otimes} \Bc(l^2)$. If $\Vc$ is a
quantum relation on $\Mx$ then
$$\Rc_\Vc =
\{(P,Q) \in \Pc^2: P(A \otimes I)Q \neq 0\hbox{ for some }A \in \Vc\}$$
is an intrinsic quantum relation on $\Mx$;
conversely, if $\Rc$ is an intrinsic quantum relation on $\Mx$ then
$$\Vc_\Rc = \{A \in \Bc(H): (P,Q) \not\in \Rc\quad\Rightarrow\quad
P(A \otimes I)Q = 0\}$$
is a quantum relation on $\Mx$. The two constructions are inverse to
each other.
\end{theo}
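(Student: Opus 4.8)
The plan is to check that both assignments land in the right category and that they invert one another, peeling off the three formal parts first and leaving the one substantial inclusion for last. That $\Vc_\Rc$ is a quantum relation is quick: it is weak* closed as an intersection of kernels of the weak*-continuous maps $A\mapsto P(A\otimes I)Q$, and since $\Bc(l^2)'=\Cb I$ gives $(\Mx\overline{\otimes}\Bc(l^2))'=\Mx'\otimes\Cb I$, for $C,D\in\Mx'$ the operators $C\otimes I$ and $D\otimes I$ commute with every $P,Q\in\Mx\overline{\otimes}\Bc(l^2)$; hence $P(CAD\otimes I)Q=(C\otimes I)\bigl(P(A\otimes I)Q\bigr)(D\otimes I)$ vanishes on non-related pairs whenever $A\in\Vc_\Rc$, so $\Mx'\Vc_\Rc\Mx'\subseteq\Vc_\Rc$. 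For $\Rc_\Vc$, condition (i) is trivial; condition (ii) holds because $\mathrm{ran}(\bigvee P_\lambda)$ is the closed span of the $\mathrm{ran}(P_\lambda)$, so the reverse direction is mere domination while the forward direction follows by detecting a nonzero $P(A\otimes I)Q$ on vectors approximated out of these ranges and passing to a single index on each side; and condition (iii) follows from the commutation of $B\in I\otimes\Bc(l^2)$ with $A\otimes I$ together with the two equivalences $P(A\otimes I)[BQ]=0\Leftrightarrow P(A\otimes I)BQ=0$ and $[B^*P](A\otimes I)Q=0\Leftrightarrow PB(A\otimes I)Q=0$, each expressing that an operator annihilates a range exactly when it annihilates its closure.

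The one delicate axiom for $\Rc_\Vc$ is openness, and here I would use that the weak operator and strong operator topologies agree on the set of projections: if $P_i\to P$ weakly then $\|P_i\xi-P\xi\|^2=\langle P_i\xi,\xi\rangle-2\,\mathrm{Re}\langle P_i\xi,P\xi\rangle+\langle P\xi,\xi\rangle\to 0$. Consequently $(P,Q)\mapsto P(A\otimes I)Q$ is jointly strong-operator continuous on $\Pc\times\Pc$, the two cross terms being bounded by $\|A\|\,\|(Q_i-Q)\xi\|$ and $\|(P_i-P)(A\otimes I)Q\xi\|$; so each set $\{(P,Q):P(A\otimes I)Q\neq 0\}$ is open, and $\Rc_\Vc$ is their union over $A\in\Vc$.

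Two of the inverse identities are then immediate. The equality $\Vc=\Vc_{\Rc_\Vc}$ is a restatement of the separation lemma (Lemma \ref{separation}): membership $A\in\Vc_{\Rc_\Vc}$ says $P(A\otimes I)Q=0$ whenever $P(B\otimes I)Q=0$ for all $B\in\Vc$, which by that lemma holds precisely for $A\in\Vc$. The inclusion $\Rc_{\Vc_\Rc}\subseteq\Rc$ is definitional: a witness $A\in\Vc_\Rc$ with $P(A\otimes I)Q\neq 0$ forces $(P,Q)\in\Rc$, since otherwise $A\in\Vc_\Rc$ would make $P(A\otimes I)Q$ vanish.

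The hard part is the reverse inclusion $\Rc\subseteq\Rc_{\Vc_\Rc}$, which demands a witnessing operator in $\Vc_\Rc$ for every related pair. My plan is to reduce to cyclic projections: each $P\in\Mx\overline{\otimes}\Bc(l^2)$ has $\Mx'\otimes I$-invariant range, hence equals $\bigvee_\xi[\,\overline{(\Mx'\otimes I)\xi}\,]$ over $\xi\in\mathrm{ran}(P)$, so by condition (ii) a related pair dominates a related pair of cyclic projections, and, by the same range-span argument used for (ii), the membership constraint $A\in\Vc_\Rc$ likewise reduces to the vanishing $P'(A\otimes I)Q'=0$ over all non-related \emph{cyclic} pairs. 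It thus suffices, for a cyclic related pair $(P_0,Q_0)$ generated by vectors $\xi_0,\eta_0$, to construct an explicit operator $A$—built from the $H$-components of $\xi_0$ and $\eta_0$—with $P_0(A\otimes I)Q_0\neq 0$ that annihilates every non-related cyclic pair. I expect this construction and verification to be the main obstacle: it is here that condition (iii) must be invoked to transport the ancilla and compare an arbitrary cyclic pair with the generators $\xi_0,\eta_0$, so that any cyclic pair on which the witness is nonzero can be linked back to $(P_0,Q_0)$—via (iii), (ii), and the openness of $\Rc$—and thereby shown to lie in $\Rc$, which is exactly what certifies $A\in\Vc_\Rc$.
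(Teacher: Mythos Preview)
The paper does not actually prove this theorem; it is stated in the review chapter as a citation of \cite{W6}, Theorem 2.32, with no proof given here. So there is no ``paper's own proof'' to compare against.

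On the merits of your proposal: the routine parts are correct. Your verification that $\Vc_\Rc$ is a quantum relation, that $\Rc_\Vc$ satisfies axioms (i)--(iii) and openness, the identity $\Vc=\Vc_{\Rc_\Vc}$ via Lemma~\ref{separation}, and the easy inclusion $\Rc_{\Vc_\Rc}\subseteq\Rc$ are all sound and essentially forced.

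However, the proposal has a genuine gap at exactly the point you flag. For the inclusion $\Rc\subseteq\Rc_{\Vc_\Rc}$ you correctly reduce to cyclic projections and correctly identify that one must build a witness $A\in\Vc_\Rc$ with $P_0(A\otimes I)Q_0\neq 0$, and that axiom (iii) is the tool for showing such an $A$ annihilates all non-related cyclic pairs. But you do not carry out either the construction of $A$ from the components of $\xi_0,\eta_0$ or the verification, and you yourself say ``I expect this construction and verification to be the main obstacle.'' It is. This step is the entire substance of the theorem --- everything else is bookkeeping --- and the argument you gesture at (transporting ancillas via (iii), combining with (ii) and openness) is not yet a proof: one has to specify $A$ concretely (typically as a weak* limit of finite-rank operators built from the $l^2$-coordinates of $\xi_0$ and $\eta_0$), and the verification that any cyclic pair on which $A$ acts nontrivially must lie in $\Rc$ requires a careful approximation argument using openness, not just an invocation of the axioms. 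As written, the hard direction is a plan, not a proof.
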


\section{Quantum metrics}\label{qmsn}

In this chapter we state our new definition of quantum metric spaces,
present some related definitions, and develop their basic properties.
The principal difference between our approach here and earlier work on
quantum metrics is that we do not attempt to directly model a noncommutative
version of the Lipschitz functions on a metric space, in the way that
C*-algebras and von Neumann algebras generalize $C(X)$ and $L^\infty(X,\mu)$
spaces (though we do eventually attain this goal in Corollary \ref{lipequiv}).
Instead, our definition is based on the idea of mass transport.
Operators on $L^2(X,\mu)$ are graded by the maximum distance that they
displace mass supported in localized regions of $X$, and it is this notion
of displacement that replaces Lipschitz number as the fundamental quantity.
Our motivation comes from quantum information theory, where one wants to
recover quantum mechanically encoded information that may have been
corrupted, i.e., displaced from its original state by the introduction of
errors. (We discuss quantum information theory in Section \ref{qhamming}
and Lipschitz numbers in Chapter \ref{lipschitz}.)

Operators that displace mass only a maximum distance have been widely
used in various parts of mathematics, often under the name of finite
propagation operators. Some representative references are \cite{CGT,
Roe, Sch}. It is possible that our approach could shed new light on
some of this work, or that it could point the way to noncommutative
generalizations.

\subsection{Basic definitions}\label{metricdef}
We adopt the convention that metric spaces can have infinite distances.
Thus, a metric on a set $X$ is a function $d: X^2 \to [0,\infty]$ such
that $d(x,y) = 0$ $\Leftrightarrow$ $x = y$, $d(x,y) = d(y,x)$, and
$d(x,z) \leq d(x,y) + d(y,z)$, for all $x,y,z \in X$, and a pseudometric
is defined similarly with the first condition weakened to $d(x,x) = 0$
for all $x \in X$. If $d(x,y) < \infty$ for all $x$ and $y$
then we say that {\it all distances are finite}.

We model quantum metrics using a special class of filtrations of $\Bc(H)$.
Recall that a dual operator system is a weak* closed self-adjoint unital
subspace of $\Bc(H)$.

\begin{defi}\label{filt}
(a) A {\it W*-filtration} of $\Bc(H)$ is a one-parameter family of dual
operator systems $\Vb = \{\Vc_t\}$, $t \in [0,\infty)$, such that
\begin{quote}
\noindent (i) $\Vc_s\Vc_t \subseteq \Vc_{s+t}$ for all $s,t \geq 0$

\noindent (ii) $\Vc_t = \bigcap_{s > t}\Vc_s$ for all $t \geq 0$.
\end{quote}

\noindent (b) A {\it displacement gauge} on $\Bc(H)$ is a function
$D: \Bc(H) \to [0,\infty]$ such that
\begin{quote}
\noindent (i) $D(I) = 0$

\noindent (ii) $D(aA) \leq D(A)$ for $a \in \Cb$

\noindent (iii) $D(A + B) \leq \max\{D(A),D(B)\}$

\noindent (iv) $D(A^*) = D(A)$

\noindent (v) $D(AB) \leq D(A) + D(B)$

\noindent (vi) $A_\lambda \to A$ weak operator implies
$D(A) \leq \liminf D(A_\lambda)$
\end{quote}
for all $A, B, A_\lambda \in \Bc(H)$ with $\sup \|A_\lambda\| < \infty$.
\end{defi}

In part (a) the appropriate convention for $t = \infty$ is
$\Vc_\infty = \Bc(H)$.

The notions of W*-filtration and displacement gauge are equivalent:

\begin{prop}\label{fildis}
If $\Vb$ is a W*-filtration of $\Bc(H)$ then
$$D_\Vb(A) = \inf\{t: A \in \Vc_t\}$$
(with $\inf \emptyset = \infty$)
is a displacement gauge on $\Bc(H)$. Conversely, if $D$ is a displacement
gauge on $\Bc(H)$ then $\Vb_D = \{\Vc_t^D\}$ with
$$\Vc_t^D = \{A \in \Bc(H): D(A) \leq t\}$$
is a W*-filtration. The two constructions are inverse to each other.
\end{prop}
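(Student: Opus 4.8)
The plan is to verify the two constructions separately and then check that they invert one another. Throughout I will use two elementary consequences of the filtration axioms. First, since each $\Vc_t$ is unital we have $I \in \Vc_t$, so condition (i) gives $\Vc_s = \Vc_s \cdot I \subseteq \Vc_s\Vc_{t-s} \subseteq \Vc_t$ whenever $s \leq t$; thus a W*-filtration is automatically increasing. Second, condition (ii) is exactly a right-continuity statement that forces the defining infimum to be attained: if $D_\Vb(A) = c < \infty$ then $A \in \Vc_{c+\epsilon}$ for every $\epsilon > 0$, and hence $A \in \bigcap_{s>c}\Vc_s = \Vc_c$. So $A \in \Vc_{D_\Vb(A)}$ whenever $D_\Vb(A)$ is finite.

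Granting these, the first direction is routine. Gauge axiom (i) is $I \in \Vc_0$; axioms (ii) and (iii) follow because each $\Vc_t$ is a linear subspace, using the attainment observation to place the relevant operators in a common $\Vc_t$; axiom (iv) follows because each $\Vc_t$ is self-adjoint; and axiom (v) is immediate from filtration condition (i), since $A \in \Vc_{D_\Vb(A)}$ and $B \in \Vc_{D_\Vb(B)}$ give $AB \in \Vc_{D_\Vb(A)+D_\Vb(B)}$. The one nontrivial point is the lower semicontinuity axiom (vi). Here I would set $c = \liminf D_\Vb(A_\lambda)$, pass to the cofinal subnet on which $D_\Vb(A_\lambda) < c + \epsilon$, and note that this subnet lies in $\Vc_{c+\epsilon}$ and is uniformly bounded; since weak operator and weak* convergence agree on bounded sets and $\Vc_{c+\epsilon}$ is weak* closed, the limit $A$ lies in $\Vc_{c+\epsilon}$. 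Intersecting over $\epsilon$ and applying condition (ii) gives $D_\Vb(A) \leq c$.

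For the converse direction, that each $\Vc_t^D$ is a linear self-adjoint unital subspace is a direct translation of gauge axioms (iii)/(ii), (iv), and (i) respectively, and filtration condition (i) follows at once from gauge axiom (v) together with linearity. Filtration condition (ii) is purely formal: $A \in \bigcap_{s>t}\Vc_s^D$ means $D(A) \leq s$ for all $s > t$, i.e.\ $D(A) \leq t$. I expect the main obstacle to be showing that $\Vc_t^D$ is weak* closed, since axiom (vi) only controls bounded weak-operator-convergent nets. The plan is to invoke the Krein--Smulian theorem for the dual space $\Bc(H) = \TC(H)^*$: a convex set is weak* closed as soon as its intersection with each ball $\{\|A\| \leq r\}$ is weak* closed. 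On such a ball the weak* and weak operator topologies coincide, so any weak* limit of a net in $\Vc_t^D \cap \{\|A\| \leq r\}$ is a bounded weak-operator limit, and axiom (vi) forces its gauge to stay $\leq t$.

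Finally, the two constructions invert one another. Starting from a filtration, $\Vc_t^{D_\Vb} = \{A : D_\Vb(A) \leq t\}$ equals $\Vc_t$, by the attainment observation in one inclusion and by monotonicity together with condition (ii) in the other. Starting from a gauge, $D_{\Vb_D}(A) = \inf\{t : D(A) \leq t\}$ equals $D(A)$ by inspection, including the case $D(A) = \infty$, where the defining set is empty and the infimum is $\infty$ by the stated convention.
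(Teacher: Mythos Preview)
Your proof is correct and follows exactly the approach the paper has in mind: the paper declares the result ``straightforward'' and only flags the one nontrivial point, namely that weak* closedness of $\Vc_t^D$ follows from the Krein--Smulian theorem together with the bounded-weak-operator lower semicontinuity in axiom (vi), which is precisely what you do. Your preliminary observations (monotonicity of $\{\Vc_t\}$ and attainment of the infimum via condition (ii)) are the right way to organize the routine verifications.
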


The proof of this proposition is straightforward. (Recall from the Krein-Smulian
theorem that a subspace of $\Bc(H)$ is weak* closed if and only if it
is boundedly weak operator closed.)
The equivalence between W*-filtrations and dispacement gauges is
not technically substantial, but we nonetheless find it convenient
to be able to pass between the two concepts. Broadly speaking,
W*-filtrations tend to be formally simpler but diplacement gauges may be
more intuitive.

For the sake of notational simplicity we will generally suppress the
subscript and simply write $D$ for the displacement gauge associated to a
W*-filtration $\Vb$.

We now present our definition of quantum metrics.

\begin{defi}
A {\it quantum pseudometric} on a von Neumann algebra $\Mx \subseteq
\Bc(H)$ is a W*-filtration $\Vb$ of $\Bc(H)$ satisfying
$\Mx' \subseteq \Vc_0$; it is a {\it quantum metric} if $\Vc_0 = \Mx'$.
\end{defi}

Note that $\Vc_0$ is automatically a von Neumann algebra, so that
any W*-filtration $\Vb$ is a quantum metric on $\Mx = \Vc_0'$.
If $\Vb$ is a quantum pseudometric on a von Neumann algebra
$\Mx$ then $\Vc_0'$ is a von Neumann algebra contained in $\Mx$, and passing
from $\Mx$ to $\Vc_0'$ is the quantum version of factoring out null
distances in order to turn a pseudometric into a metric.

Also, observe that the filtration condition implies that each $\Vc_t$
is a bimodule over $\Vc_0$. Thus if $\Vb$ is a quantum pseudometric on
$\Mx$ then each $\Vc_t$ is a quantum relation on $\Mx$ (Definition
\ref{quantrel}). We can say: a quantum pseudometric on $\Mx$ is a
one-parameter family of quantum relations on $\Mx$ which satisfy
conditions (i) and (ii) in Definition \ref{filt} (a). This allows us
to immediately deduce from Theorem \ref{repindep} the fact, which we
record now, that quantum pseudometrics are representation independent.
We order quantum pseudometrics by inclusion and write $\Vb \leq
\Wb$ if $\Vc_t \subseteq \Wc_t$ for all $t$.

\begin{theo}\label{metcor}
Let $H_1$ and $H_2$ be Hilbert spaces and let $\Mx_1 \subseteq \Bc(H_1)$
and $\Mx_2 \subseteq \Bc(H_2)$ be isomorphic von Neumann algebras. Then
any isomorphism induces an order
preserving 1-1 correspondence between the quantum (pseudo)metrics
on $\Mx_1$ and the quantum (pseudo)metrics on $\Mx_2$.
\end{theo}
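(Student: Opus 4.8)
The statement is an immediate consequence of Theorem \ref{repindep}, which already establishes the representation-independence of individual quantum relations. The plan is to exploit the observation, made just before the statement, that a quantum pseudometric on $\Mx$ is precisely a one-parameter family $\{\Vc_t\}$ of quantum relations on $\Mx$ satisfying conditions (i) and (ii) of Definition \ref{filt} (a), together with the condition $\Mx' \subseteq \Vc_0$ (with equality in the metric case). So I would argue that the correspondence of Theorem \ref{repindep}, applied levelwise in $t$, carries quantum pseudometrics to quantum pseudometrics and quantum metrics to quantum metrics, and is order preserving.

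First I would fix an isomorphism $\pi: \Mx_1 \to \Mx_2$ and let $\Phi$ denote the induced 1-1 correspondence between quantum relations on $\Mx_1$ and quantum relations on $\Mx_2$ supplied by Theorem \ref{repindep}. Given a quantum pseudometric $\Vb = \{\Vc_t\}$ on $\Mx_1$, I would define $\Wc_t = \Phi(\Vc_t)$ for each $t \in [0,\infty)$ and check that $\Wb = \{\Wc_t\}$ is a quantum pseudometric on $\Mx_2$. The verification breaks into the defining conditions. For condition (i), $\Vc_s\Vc_t \subseteq \Vc_{s+t}$, I would use the fact that Theorem \ref{repindep} asserts the correspondence respects the operation $\overline{\Uc\Vc}^{wk^*}$: from $\overline{\Vc_s\Vc_t}^{wk^*} \subseteq \Vc_{s+t}$ and the preservation of inclusions one obtains $\overline{\Wc_s\Wc_t}^{wk^*} \subseteq \Wc_{s+t}$, which gives (i). For the self-adjointness built into ``dual operator system'' I would invoke that the correspondence respects $\Vc^* = \Wc$, so each $\Wc_t$ is again self-adjoint; unitality and the condition on $\Vc_0$ both follow from the fact that the correspondence respects $\Vc = \Mx'$ (so $\Vc_0 = \Mx_1'$ maps to $\Wc_0 = \Mx_2'$ in the metric case, and $\Mx_1' \subseteq \Vc_0$ maps to $\Mx_2' \subseteq \Wc_0$ in the pseudometric case, again using preservation of inclusions).

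The one condition not handled directly by the listed properties of Theorem \ref{repindep} is the intersection condition (ii), $\Vc_t = \bigcap_{s>t}\Vc_s$. Here I would reduce to the concrete model: by the remark following Theorem \ref{repindep}, it suffices to treat the case $H_2 = K \otimes H_1$ with $\Phi(\Vc) = \Bc(K)\overline{\otimes}\Vc$, since any two faithful normal unital representations become spatially equivalent after tensoring with a large enough Hilbert space. In this concrete form the correspondence manifestly commutes with arbitrary intersections of weak* closed subspaces, because $\Bc(K)\overline{\otimes}(\bigcap_\alpha \Vc_\alpha) = \bigcap_\alpha (\Bc(K)\overline{\otimes}\Vc_\alpha)$; thus $\Wc_t = \Bc(K)\overline{\otimes}\Vc_t = \Bc(K)\overline{\otimes}\bigcap_{s>t}\Vc_s = \bigcap_{s>t}\Wc_s$, giving (ii). Order preservation is exactly the statement that $\Phi$ respects inclusions, which is part of Theorem \ref{repindep}, applied in each coordinate $t$. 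Finally, applying the inverse isomorphism $\pi^{-1}$ yields the inverse correspondence on pseudometrics, so the map $\Vb \mapsto \Wb$ is a bijection. I expect the main (and only mildly technical) point to be the handling of condition (ii), since it is the sole axiom not explicitly among the structural operations preserved by Theorem \ref{repindep}; everything else is a direct transcription of that theorem's conclusions.
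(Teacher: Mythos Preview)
Your proposal is correct and follows the same approach as the paper, which does not give a detailed proof but simply states that the result is immediately deduced from Theorem \ref{repindep} via the observation that a quantum pseudometric is a one-parameter family of quantum relations satisfying conditions (i) and (ii) of Definition \ref{filt} (a). Your writeup fleshes out exactly this argument; the only point the paper leaves entirely implicit is your handling of condition (ii), and your reduction to the concrete model $\Phi(\Vc)=\Bc(K)\overline{\otimes}\Vc$ is a legitimate way to check it (alternatively, one can note that since both $\Phi$ and $\Phi^{-1}$ preserve inclusions and arbitrary intersections of quantum relations are again quantum relations, the intersection condition transfers without invoking the tensor identity).
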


We will give intrinsic characterizations of quantum pseudometrics in
Definition \ref{qdfdef}/Theorem \ref{abch} and
Definition \ref{abspeclip}/Corollary \ref{lipequiv} below.

The interpretation of the $\Vc_t$ as quantum relations corresponds
to the classical fact that
a metric $d$ on a set $X$ gives rise to a family of relations
$$R_t = \{(x,y) \in X^2: d(x,y) \leq t\},$$
one for each value of $t \in [0,\infty)$. The usual metric axioms
can be recast as properties of this family of relations:
\begin{quote}
\noindent (i) $R_0$ is the diagonal relation $\Delta$

\noindent (ii) $R_t = R_t^T$ for all $t$

\noindent (iii) $R_sR_t \subseteq R_{s+t}$ for all $s$ and $t$
\end{quote}
corresponding to the metric axioms $d(x,y) = 0$ $\Leftrightarrow$
$x = y$, $d(x,y) = d(y,x)$, $d(x,z) \leq d(x,y) + d(y,z)$. (In
(ii), $R^T$ is the transpose of $R$.) The
relations are also nested such that $R_t = \bigcap_{s > t} R_s$
for all $t$. Conversely, it is easy to check that any family of
relations with the preceding properties arises from a unique
metric defined by
$$d(x,y) = \inf\{t: (x,y) \in R_t\}.$$
Pseudometrics are characterized similarly, with condition (i)
weakened to
\begin{quote}
\noindent (i') $R_0$ contains the diagonal relation $\Delta$.
\end{quote}
Thus classical metrics and pseudometrics have an alternative
axiomatization as one-parameter families of relations satisfying
the above conditions. A moment's thought shows that replacing
classical relations with quantum relations
yields our definitions of quantum metrics and pseudometrics,
the only (inessential) difference being that we do not explicitly specify
that $\Vc_t$ be a bimodule over $\Mx' \subseteq \Vc_0$, because this
follows anyway from the filtration property.

In light of Proposition \ref{atomiccase}, the above
implies that Definition \ref{filt} should reduce to the classical notions
of pseudometric and metric in the atomic abelian case. The proof of the
following proposition is essentially just this observation.
Recall that $V_{xy} \in \Bc(l^2(X))$ is the
rank one operator $V_{xy}: g \mapsto \langle g,e_y\rangle e_x$.

We emphasize that in the following result, although $X$ is in effect given
the discrete topology, the metric $d$ is completely arbitrary. There is
no restriction on the metrics which can be handled by our theory.

\begin{prop}\label{aa}
Let $X$ be a set and let $\Mx \cong l^\infty(X)$ be the von Neumann algebra
of bounded multiplication operators on $l^2(X)$. If $d$ is a pseudometric
on $X$ then $\Vb_d = \{\Vc_t^d\}$ with
\begin{eqnarray*}
\Vc_t^d &=& \{A \in \Bc(l^2(X)): d(x,y) > t\quad\Rightarrow\quad
\langle Ae_y,e_x\rangle = 0\}\cr
&=& \overline{\rm span}^{wk^*}\{V_{xy}: d(x,y) \leq t\}
\end{eqnarray*}
($t \in [0,\infty)$) is a quantum pseudometric on $\Mx$; conversely,
if $\Vb$ is a quantum pseudometric on $\Mx$ then
$$d_\Vb(x,y) = \inf\{t: \langle Ae_y,e_x\rangle \neq 0\hbox{ for some }
A \in \Vc_t\}$$
(with $\inf \emptyset = \infty$) is a pseudometric on $X$. The two
constructions are inverse to each other, and this correspondence between
pseudometrics and quantum pseudometrics restricts to a correspondence
between metrics and quantum metrics.
\end{prop}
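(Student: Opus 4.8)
The plan is to reduce everything to the bijection between relations on $X$ and quantum relations on $\Mx \cong l^\infty(X)$ furnished by Proposition \ref{atomiccase}, together with the classical reformulation of (pseudo)metrics as one-parameter families of relations recorded in the text just before the statement. Since $l^\infty(X)$ is maximal abelian in $\Bc(l^2(X))$, its commutant $\Mx'$ is $\Mx$ itself, which is precisely the algebra $\Vc_\Delta$ of bounded diagonal operators, where $\Delta = \{(x,x)\}$ is the diagonal relation. I would first record a dictionary between operations on a quantum relation $\Vc_R$ and operations on the underlying relation $R$: the adjoint corresponds to the transpose, $\Vc_R^* = \Vc_{R^T}$; intersections correspond to intersections, $\bigcap_\lambda \Vc_{R_\lambda} = \Vc_{\bigcap_\lambda R_\lambda}$; and the weak* closed product corresponds to composition, $\overline{\Vc_R\Vc_S}^{wk^*} = \Vc_{R \circ S}$, where $R \circ S = \{(x,z): (x,y) \in R \text{ and } (y,z) \in S \text{ for some } y\}$.

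The first two entries are immediate from the matrix-entry description of $\Vc_R$ in Proposition \ref{atomiccase}. The product entry is the one genuinely computational point, and I would prove it using the rank one generators: the identities $V_{xy}V_{yz} = V_{xz}$ and $V_{xy}V_{zw} = 0$ for $y \neq z$, together with the expansion $\langle ABe_z, e_x\rangle = \sum_y \langle Ae_y, e_x\rangle\langle Be_z, e_y\rangle$, show that $AB \in \Vc_{R\circ S}$ for all $A \in \Vc_R$ and $B \in \Vc_S$ (each summand vanishes unless $(x,y)\in R$ and $(y,z)\in S$), so $\overline{\Vc_R\Vc_S}^{wk^*} \subseteq \Vc_{R\circ S}$; conversely each generator $V_{xz}$ with $(x,z) \in R \circ S$ factors as $V_{xy}V_{yz} \in \Vc_R\Vc_S$, giving the reverse inclusion. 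Consequently $\Vc_s\Vc_t \subseteq \Vc_{s+t}$ is equivalent, under the correspondence, to $R_s \circ R_t \subseteq R_{s+t}$, which is the classical triangle-type condition.

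With the dictionary in hand the two directions follow formally. Given a pseudometric $d$, set $R_t^d = \{(x,y): d(x,y) \le t\}$, so $\Vc_t^d = \Vc_{R_t^d}$ is a quantum relation by Proposition \ref{atomiccase}; filtration condition (i) comes from the triangle inequality via the product entry and monotonicity of $R \mapsto \Vc_R$, condition (ii) from the classical nesting $R_t^d = \bigcap_{s>t} R_s^d$ via the intersection entry, self-adjointness of each $\Vc_t^d$ from $R_t^d = (R_t^d)^T$, and $\Mx' \subseteq \Vc_0^d$ from $\Delta \subseteq R_0^d$ (i.e. from $d(x,x) = 0$). Conversely, given a quantum pseudometric $\Vb$, each $\Vc_t$ is a quantum relation on $\Mx$ (as noted in the text, the filtration property makes it a $\Vc_0$-bimodule and $\Mx' \subseteq \Vc_0$), so $R_t := R_{\Vc_t}$ is defined; the dictionary converts the filtration axioms and self-adjointness of $\Vc_t$ into exactly the classical conditions and the nesting property, whence the cited classical fact produces a pseudometric $d_\Vb(x,y) = \inf\{t: (x,y) \in R_t\}$, which unwinds to the stated formula.

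Finally, mutual inversion is the composition of two bijections: Proposition \ref{atomiccase} gives a clean (reflexivity-free) bijection $R \leftrightarrow \Vc_R$ in the atomic abelian case, and the classical correspondence gives a bijection between pseudometrics and admissible families $\{R_t\}$; the identities $R_{\Vc_t^d} = R_t^d$ and $\Vc_{R_t} = \Vc_t$ show the round trips are the identity. The metric refinement follows from the single equivalence $R_0 = \Delta \Leftrightarrow \Vc_0 = \Vc_\Delta = \Mx'$, so $d$ is a metric exactly when $\Vb_d$ is a quantum metric. I expect the only real work to be the product--composition entry of the dictionary, including the weak* closure bookkeeping; the remainder is a transcription of the classical argument already sketched in the text.
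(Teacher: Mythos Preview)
Your proof is correct and follows essentially the same route as the paper: both reduce everything to Proposition \ref{atomiccase} and the classical reformulation of (pseudo)metrics as families $\{R_t\}$. The only organizational difference is that you package the product--composition correspondence $\overline{\Vc_R\Vc_S}^{wk^*} = \Vc_{R\circ S}$ as a standalone dictionary entry and apply it symmetrically in both directions, whereas the paper invokes the inclusion $\Vc_R\Vc_S \subseteq \Vc_{R\circ S}$ implicitly for the forward direction and, for the triangle inequality of $d_\Vb$, argues directly by using the bimodule property $M_{e_x}AM_{e_y} \in \Vc_s$ to extract $V_{xy}$ from any $A \in \Vc_s$ with $\langle Ae_y,e_x\rangle \neq 0$, then multiplies $V_{xy}V_{yz} = V_{xz} \in \Vc_{s+t}$. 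Your version is slightly more conceptual; the paper's is slightly more hands-on; the mathematical content is the same.
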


\begin{proof}
Let $d$ be a pseudometric on $X$ and for each $t \in [0,\infty)$
let $R_t = \{(x,y) \in X^2: d(x,y) \leq t\}$. Then $\Vc_t^d =
\Vc_{R_t}$, the quantum relation associated to $R_t$ as in Proposition
\ref{atomiccase}. It follows from Proposition \ref{atomiccase} that
the two expressions for $\Vc_t^d$ agree and that
each $\Vc_t^d$ is a weak* closed linear subspace of $\Bc(l^2(X))$
that contains $\Mx$, and since $R_t^T = R_t$ it follows
that $\Vc_t^d$ is self-adjoint. Thus each $\Vc_t^d$
is a dual operator system and $\Vc_0^d$ contains $\Mx = \Mx'$.
Since $R_sR_t \subseteq R_{s+t}$ (the
classical triangle inequality), it follows
that $\Vc_s^d\Vc_t^d \subseteq \Vc_{s+t}^d$. Finally,
$\Vc_t^d = \bigcap_{s > t}\Vc_s^d$ because
$d(x,y) \leq t$ $\Leftrightarrow$ $d(x,y) \leq s$ for all $s > t$.
Thus $\Vb_d$ is a quantum pseudometric on $\Mx$. If $d$ is
a metric then $R_0$ is the diagonal relation, hence $\Vc_0^d = \Mx'$,
hence $\Vb_d$ is a quantum metric.

Next let $\Vb$ be a quantum pseudometric on $\Mx$. Then $I \in \Vc_0$
and $\langle Ie_x,e_x\rangle \neq 0$ imply $d_\Vb(x,x) = 0$, for any
$x \in X$. We have $d_\Vb(x,y) = d_\Vb(y,x)$ because each $\Vc_t$ is
self-adjoint and
$$\langle Ae_y,e_x\rangle \neq 0\quad\Leftrightarrow\quad
\langle A^*e_x,e_y\rangle \neq 0.$$
The triangle inequality holds by the following argument. Suppose
$d_\Vb(x,y) < s$ and $d_\Vb(y,z) < t$. Then there exist
$A \in \Vc_s$ and $B \in \Vc_t$ such that $\langle Ae_y,e_x\rangle
\neq 0$ and $\langle Be_z,e_y\rangle \neq 0$. Since $\Vc_s$ and
$\Vc_t$ are bimodules over $\Mx$ we then have $M_{e_x}AM_{e_y} \in
\Vc_s$ and $M_{e_y}BM_{e_z} \in \Vc_t$. These are nonzero scalar
multiples of the rank one operators $V_{xy}$ and $V_{yz}$,
so $V_{xy} \in \Vc_s$ and
$V_{yz} \in \Vc_t$. Then $V_{xz} = V_{xy}V_{yz} \in \Vc_{s+t}$,
which implies that $d_\Vb(x,z) \leq s + t$. Taking the infimum over
$s$ and $t$ yields $d_\Vb(x,z) \leq d_\Vb(x,y) + d_\Vb(y,z)$. So
$d_\Vb$ is a pseudometric. By similar
reasoning, if $d_\Vb(x,y) = 0$ then $V_{xy} \in \Vc_s$ for all $s > 0$
and therefore $V_{xy} \in \Vc_0$. So if $\Vb$ is a quantum metric,
i.e., $\Vc_0 = \Mx$, then
$$d_\Vb(x,y) = 0\quad\Rightarrow\quad V_{xy} \in \Vc_0
\quad\Rightarrow\quad x = y,$$
and hence $d_\Vb$ is a metric.

Now let $d$ be a pseudometric on $X$, let $\Vb = \Vb_d$, and let
$\tilde{d} = d_\Vb$. Then $\Vc_t$ is the quantum relation
$\Vc_{R_t}$ associated to the relation $R_t = \{(x,y) \in X^2:
d(x,y) \leq t\}$ and $\tilde{R}_t = \{(x,y) \in X^2: \tilde{d}(x,y)
\leq t\}$ is the relation associated to $\Vc_t$, both as in Proposition
\ref{atomiccase}. So $R_t = \tilde{R}_t$ for all $t$ by Proposition
\ref{atomiccase} and we conclude that $d = \tilde{d}$.

Finally, let $\Vb$ be a quantum pseudometric on $\Mx$, let
$d = d_\Vb$, and let $\tilde{\Vb} = \Vb_d$. Then $R_t = \{(x,y)
\in X^2: d(x,y) \leq t\}$ is the relation associated
to $\Vc_t$ and $\tilde{\Vc}_t$ is the quantum relation associated to
$R_t$, both as in Proposition \ref{atomiccase}.
So $\tilde{\Vc}_t = \Vc_t$ for all $t$ by Proposition \ref{atomiccase} and we
conclude that $\tilde{\Vb} = \Vb$.
\end{proof}

Pseudometrics on $X$ correspond to displacement
gauges on $\Bc(l^2(X))$ satisfying $D(A) = 0$ for all $A \in \Mx \cong
l^\infty(X)$ via the formulas
$$D(A) = \sup\{d(x,y): \langle A e_y, e_x \rangle \neq 0\}$$
and
$$d(x,y) = \inf\{D(A): \langle Ae_y, e_x\rangle \neq 0\}.$$

\subsection{More definitions}
The next definition, of distances between pairs of projections in
$\Mx \overline{\otimes} \Bc(l^2)$, is fundamental for later work.
To some extent it replaces classical distances between points. In the
atomic abelian case it corresponds to the usual notion of minimal
distance between sets, $d(S,T) = \inf\{d(x,y): x \in S, y \in T\}$.
There is a version of the triangle inequality which holds in this
setting; see Definition \ref{qdfdef} (v) below.

We give basic properties of the distance function in Proposition
\ref{distances}; we will show later (Theorem \ref{abch}) that these
properties characterize quantum distance functions.

\begin{defi}\label{projdist}
Let $\Vb$ be a quantum pseudometric on a von Neumann algebra $\Mx
\subseteq \Bc(H)$. We define the {\it distance} between any projections
$P$ and $Q$ in $\Mx \overline{\otimes} \Bc(l^2)$ by
\begin{eqnarray*}
\rho_\Vb(P,Q) &=& \inf\{t: P(A \otimes I)Q \neq 0\hbox{ for some }
A \in \Vc_t\}\cr
&=& \inf\{D(A): A \in\Bc(H)\hbox{ and }P(A \otimes I)Q \neq 0\}
\end{eqnarray*}
(with $\inf \emptyset = \infty$).
\end{defi}

By identifying $\Mx$ with $\Mx \otimes I \subseteq \Mx \overline{\otimes}
\Bc(l^2)$ we can consider the restriction of $\rho_\Vb$ to projections in
$\Mx$. Equivalently, for projections $P$ and $Q$ in $\Mx$ we have
\begin{eqnarray*}
\rho_\Vb(P,Q) &=& \inf\{t: PAQ \neq 0\hbox{ for some }
A \in \Vc_t\}\cr
&=& \inf\{D(A): A \in \Bc(H)\hbox{ and } PAQ \neq 0\}.
\end{eqnarray*}

For the sake of notational simplicity we will generally suppress the
subscript and simply write $\rho$ for the distance function
associated to a quantum pseudometric $\Vb$.

It will be convenient later (in Theorem \ref{abch}) to introduce the
following terminology. Recall that $[A]$ denotes
the range projection of $A$.

\begin{defi}\label{qdfdef}
Let $\Mx$ be a von Neumann algebra and let $\Pc$ be the set of
projections in $\Mx \overline{\otimes} \Bc(l^2)$. A {\it quantum
distance function} on $\Mx$ is a function $\rho: \Pc^2 \to [0,\infty]$
such that
\begin{quote}
\noindent (i) $\rho(P,0) = \infty$

\noindent (ii) $PQ \neq 0$ $\Rightarrow$ $\rho(P,Q) = 0$

\noindent (iii) $\rho(P,Q) = \rho(Q,P)$

\noindent (iv) $\rho(P \vee Q,R) = \min\{\rho(P,R),\rho(Q,R)\}$

\noindent (v) $\rho(P,R) \leq \rho(P,Q) + \sup\{\rho(\tilde{Q},R): Q\tilde{Q} \neq 0\}$

\noindent (vi) $\rho(P, [BQ]) = \rho([B^*P],Q)$

\noindent (vii) if $P_\lambda \to P$ and $Q_\lambda \to Q$ weak
operator then $\rho(P,Q) \geq \limsup \rho(P_\lambda,Q_\lambda)$
\end{quote}
for all projections $P,Q,R,P_\lambda,Q_\lambda \in
\Mx\overline{\otimes}\Bc(l^2)$ and all $B \in I \otimes \Bc(l^2)$.
In (v) we take the supremum over all projections $\tilde{Q}$ such
that $Q\tilde{Q} \neq 0$.
\end{defi}

Property (iv) can be strengthened to $\rho(\bigvee P_\lambda,
\bigvee Q_\kappa) = \inf \rho(P_\lambda, Q_\kappa)$. This can
easily be proven directly for $\rho_\Vb$, or it can be deduced from the
stated properties. (Property (iv) implies that $\rho$ is monotone
in the sense that $P \leq \tilde{P}$ implies $\rho(\tilde{P},Q)
\leq \rho(P,Q)$; this plus (iii) yields the inequality $\leq$. For the
reverse inequality, first check that $\rho(\bigvee P_\lambda,
\bigvee Q_\kappa) = \inf \rho(P_\lambda, Q_\kappa)$ holds for
finite joins using (iii) and (iv), and then take limits using
(vii) to pass to arbitrary joins.)

\begin{prop}\label{distances}
Let $\Vb$ be a quantum pseudometric on a von Neumann algebra $\Mx
\subseteq \Bc(H)$. Then $\rho_\Vb$ is a quantum distance function.
\end{prop}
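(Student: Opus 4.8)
The plan is to verify the seven conditions directly from the concrete formula $\rho_\Vb(P,Q) = \inf\{D(A): P(A\otimes I)Q\neq 0\}$, treating (i)--(iii), (vi), (vii) as warmups and concentrating on (iv) and (v). Conditions (i)--(iii) are immediate: (i) holds because $P(A\otimes I)\cdot 0 = 0$ forces an infimum over the empty set; (ii) follows by taking $A = I \in \Mx' \subseteq \Vc_0$, so that $P(I\otimes I)Q = PQ\neq 0$ gives $\rho \le 0$; and (iii) follows from $\Vc_t^* = \Vc_t$ together with the fact that $P(A\otimes I)Q\neq 0$ iff its adjoint $Q(A^*\otimes I)P\neq 0$. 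For (vi) I would use that $B \in I\otimes\Bc(l^2)$ commutes with $A\otimes I$ and that $P(A\otimes I)[BQ]\neq 0 \Leftrightarrow P(A\otimes I)BQ\neq 0 \Leftrightarrow [B^*P](A\otimes I)Q\neq 0$, which equates the two infima. For (vii), the key point is that a net of projections converging weak operator to a projection in fact converges strongly; hence $(A\otimes I)Q_\lambda \to (A\otimes I)Q$ and $P_\lambda \to P$ strongly, so $P_\lambda(A\otimes I)Q_\lambda \to P(A\otimes I)Q$ strongly. Thus if $\rho_\Vb(P,Q) < t$, witnessed by some $A \in \Vc_t$ with $P(A\otimes I)Q\neq 0$, then $P_\lambda(A\otimes I)Q_\lambda\neq 0$ eventually, giving $\rho_\Vb(P_\lambda,Q_\lambda)\le t$ eventually and hence $\limsup \rho_\Vb(P_\lambda,Q_\lambda) \le \rho_\Vb(P,Q)$.

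For (iv) I would first record monotonicity: if $P\le\tilde P$ then $P = \tilde P P$, so $P(A\otimes I)Q\neq 0$ implies $\tilde P(A\otimes I)Q\neq 0$, i.e.\ $\rho_\Vb(\tilde P,Q)\le\rho_\Vb(P,Q)$. This yields $\le$ in (iv). For $\ge$, the observation is that if a vector $v$ satisfies $Pv = Qv = 0$ then $v\perp\mathrm{ran}(P)+\mathrm{ran}(Q)$, so $(P\vee Q)v = 0$; contrapositively $(P\vee Q)(A\otimes I)R\neq 0$ forces $P(A\otimes I)R\neq 0$ or $Q(A\otimes I)R\neq 0$ for the same witness $A$, which gives $\min\{\rho_\Vb(P,R),\rho_\Vb(Q,R)\}\le\rho_\Vb(P\vee Q,R)$.

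The triangle inequality (v) is the crux. The natural idea is propagation: given $A\in\Vc_s$ with $P(A\otimes I)Q\neq 0$ and $B\in\Vc_u$, one has $AB\in\Vc_{s+u}$, and setting $\tilde Q_0 = [(A^*\otimes I)P]$ the identity $\tilde Q_0 Y = 0 \Leftrightarrow P(A\otimes I)Y = 0$ shows simultaneously that $Q\tilde Q_0\neq 0$ and that $\tilde Q_0(B\otimes I)R\neq 0$ forces $P((AB)\otimes I)R\neq 0$. So a contradiction argument suggests itself: if $\rho_\Vb(P,R) > s+u$ with $s > \rho_\Vb(P,Q)$ and $u$ exceeding the supremum in (v), then $P((AB)\otimes I)R = 0$ for all $B\in\Vc_u$, i.e.\ $\tilde Q_0(B\otimes I)R = 0$ for all such $B$, while $Q\tilde Q_0\neq 0$ should contradict the definition of the supremum. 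The main obstacle is that $\tilde Q_0$ need not lie in $\Mx\overline{\otimes}\Bc(l^2)$ --- because $A\notin\Mx$ --- so it is not a legal competitor for the supremum, which ranges only over projections $\tilde Q\in\Pc$.

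To overcome this I would pass from $\tilde Q_0$ to a genuine element of $\Pc$ without losing distance from $R$. Choose a vector $w_0\in\mathrm{ran}(\tilde Q_0)$ with $Qw_0\neq 0$ (possible since $Q\tilde Q_0\neq 0$) and set $\tilde Q = [\overline{(\Mx'\otimes I)w_0}]$, the projection onto the closed $\Mx'\otimes I$-invariant subspace generated by $w_0$. Being a cyclic projection for $\Mx'\otimes I$, it lies in $(\Mx'\otimes I)' = \Mx\overline{\otimes}\Bc(l^2) = \Pc$, and it still satisfies $Q\tilde Q\neq 0$ since $w_0$ is in its range. The decisive point is that each $\Vc_u$ is a bimodule over $\Mx'$: for $D\in\Mx'$ and $B\in\Vc_u$ we have $D^*B\in\Vc_u$, so
$$\langle(B\otimes I)R\eta,(D\otimes I)w_0\rangle = \langle((D^*B)\otimes I)R\eta, w_0\rangle = 0,$$
the last equality because $w_0\in\mathrm{ran}(\tilde Q_0)$ and $\tilde Q_0((D^*B)\otimes I)R = 0$. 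Hence $\tilde Q(B\otimes I)R = 0$ for all $B\in\Vc_u$, so $\rho_\Vb(\tilde Q,R)\ge u$, contradicting $u > \sup\{\rho_\Vb(\tilde Q,R): Q\tilde Q\neq 0\}$. I expect this interplay between the cyclic-subspace construction and the $\Mx'$-bimodule property of the filtration to be the technical heart of the proof; the remaining conditions are comparatively mechanical.
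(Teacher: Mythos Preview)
Your proposal is correct and follows essentially the same approach as the paper. The only cosmetic difference is in (v): the paper takes $\tilde Q$ to be the projection onto the closure of $(\Mx'\otimes I)(\mathrm{ran}((A^*\otimes I)P))$ (the $\Mx'\otimes I$-saturation of the full range) and argues directly rather than by contradiction, whereas you saturate a single vector $w_0$; both rely on exactly the same key point, namely that the $\Mx'$-bimodule property of $\Vc_t$ lets one enlarge $[(A^*\otimes I)P]$ to a projection in $\Mx\overline{\otimes}\Bc(l^2)$ without increasing its distance to $R$.
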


\begin{proof}
Verification of properties (i) through (iv) is easy. Property
(vi) holds because
\begin{eqnarray*}
P(A\otimes I)[BQ] \neq 0&\Leftrightarrow& P(A\otimes I)BQ \neq 0\cr
&\Leftrightarrow& PB(A\otimes I)Q \neq 0\cr
&\Leftrightarrow&[B^*P](A\otimes I)Q \neq 0.
\end{eqnarray*}
One can check property (vii) using the fact that
$P_\lambda \to P$ and $Q_\lambda \to Q$ weak operator implies
$P_\lambda \to P$ and $Q_\lambda \to Q$ strong operator (so
$P_\lambda(A\otimes I)Q_\lambda = 0$ for all $\lambda$ implies
$P(A \otimes I)Q = 0$).

For (v) assume $\rho(P,Q) < \infty$, let
$\epsilon > 0$, and find $A \in \Bc(H)$ such that $P(A \otimes I)Q
\neq 0$ and $D(A) \leq \rho(P,Q) + \epsilon$. Then $Q\tilde{Q} \neq 0$ where
$\tilde{Q}$ is the projection onto the closure of
$$(\Mx'\otimes I)({\rm ran}((A^*\otimes I)P)$$
and $\tilde{Q} \in \Mx \overline{\otimes} \Bc(l^2)$ since its range is
invariant for $\Mx'\otimes I = (\Mx \overline{\otimes} \Bc(l^2))'$.
Now assume $\rho(\tilde{Q}, R) < \infty$ and find $C \in \Bc(H)$ such that
$\tilde{Q}(C \otimes I)R \neq 0$ and $D(C) \leq \rho(\tilde{Q},R) + \epsilon$.
It follows that $R(C^* \otimes I)\tilde{Q} \neq 0$, so
$R(C^*B^*A^* \otimes I)P \neq 0$ for some $B \in \Mx'$, and hence
$P(ABC\otimes I)R \neq 0$ for some $B \in \Mx'$. Then
$$\rho(P,R) \leq D(ABC) \leq D(A) + D(B) + D(C)
\leq \rho(P,Q) + \rho(\tilde{Q},R) + 2\epsilon.$$
Taking $\epsilon \to 0$ yields the desired inequality.
\end{proof}

We now show that distance between projections is representation
independent and that the W*-filtration $\Vb$ can be recovered from
the quantum distance function $\rho_\Vb$. Generally speaking, this means
that any representation independent notion defined in terms of
$\Vb$ will have an equivalent definition in terms of $\rho_\Vb$.

\begin{prop}
Let $\pi_1: \Mx \to \Bc(H_1)$ and $\pi_2: \Mx \to \Bc(H_2)$ be
faithful normal unital representations of a von Neumann algebra $\Mx$,
let $\Vb_1$ be a quantum pseudometric on $\pi_1(\Mx)$, and let $\Vb_2$ be
the corresponding quantum pseudometric on $\pi_2(\Mx)$ as in Theorem
\ref{metcor}. Then the quantum distance functions $\rho_{\Vb_1}$ and
$\rho_{\Vb_2}$ on projections in $\Mx \overline{\otimes} \Bc(l^2)$
associated to $\Vb_1$ and $\Vb_2$ are equal.
\end{prop}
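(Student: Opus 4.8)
The plan is to follow the reduction used to establish Theorem \ref{repindep}. By (\cite{Tak}, Theorem IV.5.5) any two faithful normal unital representations become spatially equivalent after tensoring with a large enough Hilbert space, so it suffices to verify the assertion in two cases: (a) $\pi_2 = U\pi_1(\cdot)U^*$ for a unitary $U: H_1 \to H_2$, and (b) the amplification $H_2 = K \otimes H_1$, $\pi_2 = \mathrm{id}_K \otimes \pi_1$, under which $\Vc_t^{(2)} = \Bc(K)\overline{\otimes}\Vc_t^{(1)}$ for every $t$. Case (a) is immediate: conjugation by $U \otimes I_{l^2}$ carries the images $P_1,Q_1$ of $P,Q \in \Mx\overline{\otimes}\Bc(l^2)$ in the first representation to their images $P_2,Q_2$ in the second, and carries $\{A\otimes I_{l^2}: A \in \Vc_t^{(1)}\}$ onto $\{A'\otimes I_{l^2}: A' \in \Vc_t^{(2)}\}$, so the condition $P(A\otimes I_{l^2})Q \neq 0$ is preserved and hence $\rho_{\Vb_1} = \rho_{\Vb_2}$. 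It remains to treat case (b).

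In case (b), write $P_1,Q_1 \in \pi_1(\Mx)\overline{\otimes}\Bc(l^2) \subseteq \Bc(H_1\otimes l^2)$ for the images of $P,Q$ under $\pi_1\otimes\mathrm{id}$; since $\pi_2\otimes\mathrm{id} = \mathrm{id}_K\otimes\pi_1\otimes\mathrm{id}$, their images in the second representation are $P_2 = I_K\otimes P_1$ and $Q_2 = I_K\otimes Q_1$, acting on $K\otimes H_1\otimes l^2$. The inequality $\rho_{\Vb_2}\leq\rho_{\Vb_1}$ is easy: if $A_0\in\Vc_t^{(1)}$ witnesses $P_1(A_0\otimes I_{l^2})Q_1\neq 0$, then $A := I_K\otimes A_0$ lies in $\Bc(K)\overline{\otimes}\Vc_t^{(1)} = \Vc_t^{(2)}$ and $P_2(A\otimes I_{l^2})Q_2 = I_K\otimes\bigl(P_1(A_0\otimes I_{l^2})Q_1\bigr)\neq 0$. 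For the reverse inequality I would use slice maps on the $K$-leg. Given $\omega\in\TC(K)$, let $R_\omega:\Bc(K\otimes H_1\otimes l^2)\to\Bc(H_1\otimes l^2)$ be the associated normal slice map, determined by $R_\omega(B\otimes M)=\omega(B)M$ for $B\in\Bc(K)$ and $M\in\Bc(H_1\otimes l^2)$. Two properties are needed: $R_\omega$ is a bimodule map over $I_K\otimes\Bc(H_1\otimes l^2)$, and it intertwines amplification, $R_\omega(A\otimes I_{l^2}) = R_\omega^K(A)\otimes I_{l^2}$, where $R_\omega^K:\Bc(K\otimes H_1)\to\Bc(H_1)$ is the corresponding slice map over $\Bc(H_1)$. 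Granting these, if $P_2(A\otimes I_{l^2})Q_2\neq 0$ with $A\in\Vc_t^{(2)}$, then applying $R_\omega$ gives $P_1\bigl(R_\omega^K(A)\otimes I_{l^2}\bigr)Q_1 = R_\omega\bigl(P_2(A\otimes I_{l^2})Q_2\bigr)$.

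The main obstacle is to turn this identity into a genuine witness. First, normality of $R_\omega^K$ together with $R_\omega^K(B\otimes C)=\omega(B)C\in\Vc_t^{(1)}$ for $C\in\Vc_t^{(1)}$ shows, by weak* continuity and weak* closedness of $\Vc_t^{(1)}$, that $R_\omega^K$ maps $\Bc(K)\overline{\otimes}\Vc_t^{(1)}$ into $\Vc_t^{(1)}$; thus $A_0 := R_\omega^K(A)\in\Vc_t^{(1)}$. Second, since $P_2(A\otimes I_{l^2})Q_2\neq 0$, choosing $\omega$ to be a vector functional $\omega_{v,u}(B)=\langle Bu,v\rangle$ adapted to a nonzero matrix entry makes $R_\omega\bigl(P_2(A\otimes I_{l^2})Q_2\bigr)\neq 0$, because these functionals satisfy $\langle R_{\omega_{v,u}}(T)\xi,\eta\rangle = \langle T(u\otimes\xi),v\otimes\eta\rangle$ and hence separate operators on $K\otimes H_1\otimes l^2$. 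Then $P_1(A_0\otimes I_{l^2})Q_1\neq 0$ with $A_0\in\Vc_t^{(1)}$, so $\rho_{\Vb_1}(P,Q)\leq t$; taking the infimum over such $t$ gives $\rho_{\Vb_1}\leq\rho_{\Vb_2}$ and completes case (b). The delicate point is precisely the compatibility of the slice map with the fixed factors $P_1$, $Q_1$ and the $l^2$-leg, i.e.\ verifying the two displayed properties of $R_\omega$; everything else is formal. Alternatively, one can bypass this computation by observing that $\rho_\Vb(P,Q)=\inf\{t:(P,Q)\in\Rc_{\Vc_t}\}$, where $\Rc_{\Vc_t}$ is the intrinsic quantum relation of Theorem \ref{abstractchar}, and invoking the representation independence of that construction.
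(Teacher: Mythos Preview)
Your proof is correct and follows the same reduction as the paper: reduce to the amplification $H_2 = K\otimes H_1$, $\pi_2 = \mathrm{id}_K\otimes\pi_1$, then check both inequalities for $\rho$. The only difference is in the reverse inequality: you argue directly via slice maps $R_\omega$ on the $K$-leg to extract a witness $A_0 = R_\omega^K(A)\in\Vc_t^{(1)}$, whereas the paper argues contrapositively, observing that if $P_1(A\otimes I_{l^2})Q_1=0$ for all $A\in\Vc_t^{(1)}$ then $(I_K\otimes P_1)(B\otimes A\otimes I_{l^2})(I_K\otimes Q_1)=B\otimes\bigl(P_1(A\otimes I_{l^2})Q_1\bigr)=0$ for all elementary tensors $B\otimes A$, hence for all of $\Vc_t^{(2)}=\Bc(K)\overline{\otimes}\Vc_t^{(1)}$ by weak* density. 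These two arguments are dual to each other; the paper's version is a touch shorter since it avoids naming slice maps, but yours is equally valid and perhaps more explicit about why the witness lands in $\Vc_t^{(1)}$. Your alternative via Theorem~\ref{abstractchar} would also work but is heavier than needed here.
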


\begin{proof}
As in the proof of Theorem \ref{repindep}, it is sufficient
to consider the case where $\pi_1 = {\rm id}$,
$H_2 = K \otimes H_1$, and $\pi_2: A \mapsto I_K \otimes A$.
Given projections $P,Q \in \Mx \overline{\otimes}\Bc(l^2)$, the
corresponding projections in $\pi_2(\Mx) \overline{\otimes}\Bc(l^2)$
are then $I_K \otimes P$ and $I_K \otimes Q$. Also, if $\Vb_1 =
\{\Vc^1_t\}$ then $\Vb_2 = \{\Vc^2_t\}$ with $\Vc^2_t = \Bc(K)
\overline{\otimes} \Vc^1_t$. Now if $P(A \otimes I_{l^2})Q \neq 0$
for some $A \in \Vc^1_t$ then $I_K \otimes A \in \Vc^2_t$ and
$$(I_K \otimes P)(I_K \otimes A \otimes I_{l^2})(I_K \otimes Q) \neq 0;$$
conversely, if $P(A \otimes I_{l^2})Q = 0$ for all $A \in \Vc^1_t$ then
$$(I_K \otimes P)(B \otimes A \otimes I_{l^2})(I_K \otimes Q) = 0$$
for all $A \in \Vc^1_t$ and all $B \in \Bc(K)$ and hence
$$(I_K \otimes P)(\tilde{A} \otimes I_{l^2})(I_K \otimes Q) = 0$$
for all $\tilde{A} \in \Vc^2_t$. So $\rho_{\Vb_1}(P,Q) =
\rho_{\Vb_2}(I_K \otimes P, I_K \otimes Q)$.
\end{proof}

\begin{prop}\label{recover}
Let $\Vb$ be a quantum pseudometric on a von Neumann algebra $\Mx
\subseteq \Bc(H)$. Then
$$\Vc_t = \{A \in \Bc(H): \rho(P,Q) > t\quad\Rightarrow\quad
P(A\otimes I)Q = 0\}$$
for all $t \in [0,\infty)$, with $P$ and $Q$ ranging over
projections in $\Mx \overline{\otimes} \Bc(l^2)$.
\end{prop}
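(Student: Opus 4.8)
The plan is to prove the two inclusions separately, writing $\Wc_t$ for the set on the right-hand side. The inclusion $\Vc_t \subseteq \Wc_t$ follows directly from the definition of $\rho$. If $A \in \Vc_t$ and $P(A \otimes I)Q \neq 0$ for some projections $P,Q$, then $t$ lies in the set $\{s : P(B \otimes I)Q \neq 0 \hbox{ for some } B \in \Vc_s\}$ whose infimum defines $\rho(P,Q)$, so $\rho(P,Q) \leq t$. Contrapositively, $\rho(P,Q) > t$ forces $P(A \otimes I)Q = 0$, which is exactly the defining condition for membership in $\Wc_t$.

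For the reverse inclusion $\Wc_t \subseteq \Vc_t$ I would argue the contrapositive: given $A \notin \Vc_t$, I must produce projections $P,Q$ in $\Mx \overline{\otimes} \Bc(l^2)$ with $\rho(P,Q) > t$ yet $P(A \otimes I)Q \neq 0$, exhibiting $A \notin \Wc_t$. The tool is Lemma \ref{separation}, which applies to each $\Vc_s$ because $\Vc_s$ is a bimodule over $\Vc_0 \supseteq \Mx'$ and hence a quantum relation on $\Mx$. Applying the lemma directly to $\Vc_t$ would only yield $\rho(P,Q) \geq t$, which is too weak; I need the strict inequality. This is precisely where condition (ii) of Definition \ref{filt}(a), namely $\Vc_t = \bigcap_{s>t}\Vc_s$, is used: since $A \notin \Vc_t$, there must exist some $s_0 > t$ with $A \notin \Vc_{s_0}$.

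I then apply Lemma \ref{separation} to the quantum relation $\Vc_{s_0}$, obtaining projections $P,Q$ with $P(A \otimes I)Q \neq 0$ and $P(B \otimes I)Q = 0$ for all $B \in \Vc_{s_0}$. Since each $\Vc_s$ is unital, the filtration is increasing: for $s \leq s_0$ one has $A' = A' I \in \Vc_s \Vc_{s_0 - s} \subseteq \Vc_{s_0}$ for every $A' \in \Vc_s$, so $\Vc_s \subseteq \Vc_{s_0}$ and therefore $P(B \otimes I)Q = 0$ for every $B \in \Vc_s$ with $s \leq s_0$. Consequently no $s \leq s_0$ belongs to the set defining $\rho(P,Q)$, so $\rho(P,Q) \geq s_0 > t$. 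This pair $P,Q$ witnesses $A \notin \Wc_t$, completing the proof.

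The main obstacle is obtaining the strict inequality $\rho(P,Q) > t$ rather than merely $\geq t$; the right-continuity condition (ii) of the W*-filtration resolves it by permitting separation against the slightly larger operator system $\Vc_{s_0}$.
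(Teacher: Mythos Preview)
Your proof is correct and follows essentially the same approach as the paper: both use the right-continuity condition $\Vc_t = \bigcap_{s>t}\Vc_s$ to pass to some $s_0 > t$ with $A \notin \Vc_{s_0}$, then apply Lemma \ref{separation} to $\Vc_{s_0}$ to obtain projections $P,Q$ with $\rho(P,Q) \geq s_0 > t$ and $P(A\otimes I)Q \neq 0$. Your explicit verification of monotonicity $\Vc_s \subseteq \Vc_{s_0}$ and your remark isolating right-continuity as the source of the strict inequality are welcome elaborations, but the argument is otherwise identical to the paper's.
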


\begin{proof}
Let $\tilde{\Vc}_t = \{A \in \Bc(H): \rho(P,Q) > t$ $\Rightarrow$
$P(A\otimes I)Q = 0\}$. Then $\Vc_t \subseteq \tilde{\Vc}_t$ is
immediate from the definition of $\rho$.
Conversely, let $A \in \Bc(H) -\Vc_t$; then we must have
$A \not\in \Vc_s$ for some $s > t$ and by Lemma
\ref{separation} there exist projections $P,Q \in
\Mx \overline{\otimes} \Bc(l^2)$ such that $P(B \otimes I)Q = 0$
for all $B \in \Vc_s$, and hence $\rho(P,Q) \geq s > t$, but
$P (A \otimes I)Q \neq 0$. This shows that $A \not\in \tilde{\Vc}_t$.
We conclude that $\Vc_t = \tilde{\Vc}_t$.
\end{proof}

A W*-filtration $\Vb$ generally cannot be recovered from the restriction
of $\rho_\Vb$ to projections in $\Mx$; see Example \ref{counterdiam}.

Next we introduce a von Neumann algebra which detects the
existence of infinite distances.

\begin{defi}
Let $\Vb = \{\Vc_t\}$ be a W*-filtration. For $t \in (0,\infty]$ we
define
$$\Vc_{<t} = \overline{\bigcup_{s < t}\Vc_s}^{wk^*}.$$
In particular,
$$\Vc_{<\infty} = \overline{\bigcup_{t\geq 0} \Vc_t}^{wk^*} =
\overline{\{A: D(A) < \infty\}}^{wk^*}.$$
\end{defi}

Note that $\Vc_{<\infty}$ is a von Neumann algebra that contains
$\Mx' \subseteq \Vc_0$. Thus, it is the commutant of
a von Neumann subalgebra of $\Mx$. We show next that in the atomic
abelian case $\Vc_{<\infty}$ corresponds to the equivalence relation
$x \sim y$ $\Leftrightarrow$ $d(x,y) < \infty$ on $X$. The condition
$\Vc_{<\infty} = \Bc(H)$ is the quantum equivalent of all distances
being finite.

\begin{prop}
Let $X$ be a set and let $\Mx \cong l^\infty(X)$ be the von Neumann algebra
of bounded multiplication operators on $l^2(X)$. Also let $d$ be a
pseudometric on $X$ and define $\Vb_d$ as in Proposition \ref{aa}.
Then $\Vc^d_{<\infty} = \Mx_\infty'$ where
$$\Mx_\infty = \{M_f: f \in l^\infty(X)\quad\hbox{ and }\quad
d(x,y) < \infty \Rightarrow f(x) = f(y)\} \subseteq \Mx.$$
In particular, all distances in $X$ are finite if and only if
$\Vc_{<\infty} = \Bc(l^2(X))$.
\end{prop}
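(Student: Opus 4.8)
The plan is to reduce the computation to Proposition~\ref{atomiccase} by recognizing $\Vc^d_{<\infty}$ as the quantum relation attached to a single classical relation. First I would note that
$$R = \{(x,y) \in X^2 : d(x,y) < \infty\}$$
is an equivalence relation on $X$: it is reflexive since $d(x,x) = 0$, symmetric since $d(x,y) = d(y,x)$, and transitive by the triangle inequality. Writing $\{X_\alpha\}$ for its equivalence classes and $\Vc_R$ for the associated quantum relation as in Proposition~\ref{atomiccase}, I claim that $\Vc^d_{<\infty} = \Vc_R$. Indeed, every generator $V_{xy}$ with $d(x,y) < \infty$ lies in $\Vc^d_{d(x,y)}$ and hence in the increasing union $\bigcup_t \Vc^d_t$; since $\Vc_R = \overline{\rm span}^{wk^*}\{V_{xy} : (x,y) \in R\}$, this gives $\Vc_R \subseteq \Vc^d_{<\infty}$. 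Conversely each $\Vc^d_t$ is the weak* closed span of the $V_{xy}$ with $d(x,y) \leq t$, all of which satisfy $(x,y) \in R$, so $\bigcup_t \Vc^d_t \subseteq \Vc_R$; as $\Vc_R$ is weak* closed, taking closures yields $\Vc^d_{<\infty} \subseteq \Vc_R$.

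It then remains to identify $\Vc_R$ with $\Mx_\infty'$. A function $f \in l^\infty(X)$ lies in the defining set of $\Mx_\infty$ exactly when it is constant on each class $X_\alpha$; in particular each block projection $P_\alpha = M_{\chi_{X_\alpha}}$ belongs to $\Mx_\infty$. I would verify $\Vc_R = \Mx_\infty'$ by working with matrix coefficients against the standard basis $\{e_x\}$. For $\Vc_R \subseteq \Mx_\infty'$, given $A \in \Vc_R$ and $M_f \in \Mx_\infty$ one computes
$$\langle (M_fA - AM_f)e_y, e_x\rangle = (f(x) - f(y))\langle Ae_y, e_x\rangle,$$
which vanishes for all $x,y$, since a nonzero matrix entry forces $(x,y) \in R$ and hence $f(x) = f(y)$. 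For $\Mx_\infty' \subseteq \Vc_R$, given $A \in \Mx_\infty'$ and a pair with $d(x,y) = \infty$, the points $x$ and $y$ lie in distinct classes $X_\alpha \neq X_\beta$; commuting $A$ with $P_\alpha$ gives $\langle Ae_y, e_x\rangle = \langle AP_\alpha e_y, e_x\rangle = 0$ because $P_\alpha e_y = 0$. Thus $A \in \Vc_R$, and combining with the first paragraph, $\Vc^d_{<\infty} = \Mx_\infty'$.

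For the final assertion, all distances in $X$ are finite precisely when $R = X^2$, i.e.\ when there is a single equivalence class, which is exactly the condition that the only functions constant on the classes are the constant functions, namely $\Mx_\infty = \Cb I$. Since $\Mx_\infty = \Cb I$ is equivalent to $\Mx_\infty' = \Bc(l^2(X))$ --- one implication is immediate, and the other follows from $\Mx_\infty \subseteq (\Mx_\infty')' = \Bc(l^2(X))' = \Cb I$ together with $I \in \Mx_\infty$ --- the identity $\Vc_{<\infty} = \Mx_\infty'$ gives the claim. I expect the bidirectional commutant computation to be the main point of the argument; phrasing it entrywise against $\{e_x\}$ is what keeps it clean in the face of a possibly uncountable family of, and possibly infinite, equivalence classes, whereas the topological bookkeeping of the first paragraph is routine once one observes that $\Vc_R$ is already weak* closed.
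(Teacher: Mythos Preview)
Your proof is correct and follows essentially the same approach as the paper: both identify $\Vc^d_{<\infty}$ and $\Mx_\infty'$ with $\overline{\rm span}^{wk^*}\{V_{xy}: d(x,y) < \infty\}$. The paper's proof is a terse two-sentence assertion of this fact, whereas you have unpacked it by naming the equivalence relation $R$, invoking Proposition~\ref{atomiccase} explicitly to get $\Vc^d_{<\infty} = \Vc_R$, and then verifying $\Vc_R = \Mx_\infty'$ entrywise; the block-projection argument for $\Mx_\infty' \subseteq \Vc_R$ is exactly the content behind the paper's bare claim that $\Mx_\infty'$ is generated by the $V_{xy}$ with $d(x,y) < \infty$.
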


\begin{proof}
$\Mx_\infty'$ is the von Neumann algebra generated by the rank one operators
$V_{xy}$ such that $d(x,y) < \infty$. Since $\Vc_t^d =
\overline{\rm span}^{wk^*}\{V_{xy}: d(x,y) \leq t\}$ it follows that
$\Vc^d_{<\infty} = \overline{\bigcup \Vc_t}^{wk^*}$
is also generated by $\{V_{xy}: d(x,y) < \infty\}$.
So $\Vc^d_{<\infty} = \Mx_\infty'$.
\end{proof}

The condition $\Vc_{<\infty} = \Bc(H)$ can also be characterized
in terms of distances between projections (and hence it is
representation independent). Say that the projections
$P$ and $Q$ in $\Mx \overline{\otimes} \Bc(l^2)$ are {\it unlinkable} if there
exist projections $\tilde{P},\tilde{Q} \in I \otimes \Bc(l^2)$ with
$P \leq \tilde{P}$, $Q \leq \tilde{Q}$, and $\tilde{P}\tilde{Q} = 0$;
otherwise they are {\it linkable}. The motivation for these terms comes
from the following proposition, which shows that $P$ and $Q$ are linkable
if and only if there is an operator $A \in \Bc(H)$ such that
$(A \otimes I)v$ is not orthogonal to the range of $P$, for some vector
$v$ in the range of $Q$.

\begin{prop}\label{findist}
Let $\Vb$ be a quantum pseudometric on a von Neumann algebra $\Mx
\subseteq \Bc(H)$. Two projections $P$ and $Q$ in $\Mx \overline{\otimes}
\Bc(l^2)$ are linkable if and only if there exists $A \in \Bc(H)$ such
that $P(A \otimes I)Q \neq 0$. If $P$ and $Q$ are unlinkable
then $\rho(P,Q) = \infty$. The following are equivalent:
\begin{quote}
\noindent (i) $\Vc_{<\infty} = \Bc(H)$

\noindent (ii) $\rho(P,Q) < \infty$ for any linkable projections $P$ and $Q$
in $\Mx \overline{\otimes} \Bc(l^2)$

\noindent (iii) $\rho(P,Q) < \infty$ for any nonzero projections $P$ and $Q$
in $\Mx$.
\end{quote}
\end{prop}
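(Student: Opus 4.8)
The plan is to treat the three assertions in turn, with the linkability characterization carrying essentially all of the content and the equivalence of (i)--(iii) following from it together with a weak* density argument and a commutant argument. I would run the chain (i)$\Rightarrow$(ii)$\Rightarrow$(iii)$\Rightarrow$(i) for the equivalence.

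First I would prove the characterization ``$P,Q$ linkable $\Leftrightarrow$ $P(A\otimes I)Q\neq 0$ for some $A\in\Bc(H)$'' by establishing its contrapositive in both directions. The key object is the \emph{smallest} projection of $I\otimes\Bc(l^2)$ dominating a given $P$, namely the projection $\tilde P$ onto the closure of $(\Bc(H)\otimes I)\,{\rm ran}(P)$; since this subspace is invariant under $\Bc(H)\otimes I=(I\otimes\Bc(l^2))'$, we have $\tilde P\in I\otimes\Bc(l^2)$ and $P\leq\tilde P$, and symmetrically for $\tilde Q$. If $P,Q$ are unlinkable, witnessed by any $\tilde P,\tilde Q\in I\otimes\Bc(l^2)$ with $\tilde P\tilde Q=0$, then using $P=P\tilde P$, $Q=\tilde Q Q$ and the fact that $A\otimes I$ commutes with $I\otimes\Bc(l^2)$ one computes $P(A\otimes I)Q=P(A\otimes I)\tilde P\tilde Q Q=0$ for every $A$. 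Conversely, if $P(A\otimes I)Q=0$ for all $A$, I would show that the canonical $\tilde P,\tilde Q$ built above satisfy $\tilde P\tilde Q=0$: orthogonality ${\rm ran}\,\tilde P\perp{\rm ran}\,\tilde Q$ says $\langle(A\otimes I)\xi,(C\otimes I)\eta\rangle=0$ for all $A,C\in\Bc(H)$, $\xi\in{\rm ran}(P)$, $\eta\in{\rm ran}(Q)$, and since $\langle(A\otimes I)\xi,(C\otimes I)\eta\rangle=\langle(C^*A\otimes I)\xi,\eta\rangle$ this is exactly the hypothesis $P(B\otimes I)Q=0$ for all $B$ (rewritten via adjoints, using that $C^*A$ ranges over all of $\Bc(H)$). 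The second assertion is then immediate: if $P,Q$ are unlinkable then $P(A\otimes I)Q=0$ for every $A$, so the set defining $\rho(P,Q)$ is empty and $\rho(P,Q)=\infty$.

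For (i)$\Rightarrow$(ii), note that $A\mapsto P(A\otimes I)Q$ is weak* continuous (the amplification $A\mapsto A\otimes I$ is a normal $*$-homomorphism and $C\mapsto PCQ$ is weak* continuous), so $\Kc=\{A\in\Bc(H): P(A\otimes I)Q=0\}$ is weak* closed. Given linkable $P,Q$, if $\rho(P,Q)=\infty$ then $\Vc_t\subseteq\Kc$ for all finite $t$, hence $\Vc_{<\infty}=\overline{\bigcup_t\Vc_t}^{wk^*}\subseteq\Kc$; with (i) this gives $\Bc(H)\subseteq\Kc$, i.e. $P(A\otimes I)Q=0$ for all $A$, contradicting linkability. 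For (ii)$\Rightarrow$(iii), observe that nonzero projections $P,Q\in\Mx$, identified with $P\otimes I,Q\otimes I$, are always linkable: the rank-one operator $A\colon g\mapsto\langle g,\eta\rangle\xi$ with $0\neq\xi\in{\rm ran}(P)$, $0\neq\eta\in{\rm ran}(Q)$ satisfies $PAQ=A\neq0$, so the first assertion applies and $\rho(P,Q)<\infty$ by (ii).

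For (iii)$\Rightarrow$(i) I would argue by contradiction. Suppose $\Vc_{<\infty}\neq\Bc(H)$. Since $\Mx'\subseteq\Vc_0\subseteq\Vc_{<\infty}$, the commutant $\Nc:=\Vc_{<\infty}'$ is a von Neumann algebra contained in $\Mx$ with $\Vc_{<\infty}=\Nc'$; as $\Vc_{<\infty}\neq\Bc(H)$ we have $\Nc\neq\Cb I$, so $\Nc$ contains a projection $E$ with $0\neq E\neq I$. Then $E$ and $I-E$ are nonzero projections in $\Mx$, so by (iii) there are a finite $t$ and $A\in\Vc_t$ with $EA(I-E)\neq0$. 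But $A\in\Vc_t\subseteq\Vc_{<\infty}$ while $E\in\Vc_{<\infty}'$, so $E$ and $A$ commute and $EA(I-E)=AE(I-E)=0$, a contradiction. The main obstacle is the ``if'' half of the first assertion: manufacturing, from the purely algebraic vanishing $P(A\otimes I)Q=0$ for all $A$, explicit witnessing projections $\tilde P,\tilde Q\in I\otimes\Bc(l^2)$ with $\tilde P\tilde Q=0$. Everything hinges on correctly identifying $(I\otimes\Bc(l^2))'=\Bc(H)\otimes I$ and on the bookkeeping that translates $\tilde P\tilde Q=0$ into the family of scalar identities $\langle(B\otimes I)\xi,\eta\rangle=0$; once this is secured, the remaining implications are routine normality and commutant arguments.
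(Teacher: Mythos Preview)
Your proposal is correct and follows essentially the same route as the paper's proof: the same construction of $\tilde P,\tilde Q$ as projections onto $\overline{(\Bc(H)\otimes I)\,{\rm ran}(P)}$ and $\overline{(\Bc(H)\otimes I)\,{\rm ran}(Q)}$, the same weak* closure argument for (i)$\Rightarrow$(ii), and the same nontrivial projection $E\in\Vc_{<\infty}'\subseteq\Mx$ for (iii)$\Rightarrow$(i). Your write-up is somewhat more explicit than the paper's (you spell out the weak* continuity of $A\mapsto P(A\otimes I)Q$, the reason nonzero $P,Q\in\Mx$ are linkable, and the commutation step $EA(I-E)=AE(I-E)=0$), but there is no substantive difference in strategy.
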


\begin{proof}
Suppose $P \leq \tilde{P}$ and $Q \leq \tilde{Q}$ where
$\tilde{P},\tilde{Q}$ are projections in $I \otimes \Bc(l^2)$ which satisfy
$\tilde{P}\tilde{Q} = 0$. Then for any $A \in \Bc(H)$ we have $A \otimes I
\in \Bc(H) \otimes I = (I \otimes \Bc(l^2))'$ and therefore
$$P(A \otimes I)Q = P(\tilde{P}(A \otimes I)\tilde{Q})Q =
P((A \otimes I)\tilde{P}\tilde{Q})Q = 0.$$
Conversely, if $P(A \otimes I)Q = 0$ for all $A \in \Bc(H)$ then the
projections $\tilde{P}$ and $\tilde{Q}$ onto the closures of
$(\Bc(H) \otimes I)({\rm ran}(P))$ and $(\Bc(H) \otimes I)({\rm ran}(Q))$
satisfy $P \leq \tilde{P}$, $Q \leq \tilde{Q}$, $\tilde{P}\tilde{Q} = 0$, and
$$\tilde{P},\tilde{Q} \in (\Bc(H) \otimes I)' = I \otimes \Bc(l^2),$$
so $P$ and $Q$ are unlinkable.

It immediately follows that $\rho(P,Q) = \infty$ for any
unlinkable projections $P$ and $Q$ in $\Mx \overline{\otimes} \Bc(l^2)$.

(i) $\Rightarrow$ (ii): Suppose $\Vc_{<\infty} = \Bc(H)$ and
$\rho(P,Q) = \infty$. Then $P(A \otimes I)Q = 0$ for all
$A \in \Bc(H)$ with $D(A) < \infty$, and hence, by taking weak* limits,
for all $A \in \Bc(H)$. Thus $P$ and $Q$ are unlinkable.

(ii) $\Rightarrow$ (iii): Trivial.

(iii) $\Rightarrow$ (i): If $\Vc_{<\infty} \neq \Bc(H)$ then there
exists a nontrivial projection $P$ in its commutant. Then $P$ and
$I - P$ are both nonzero but $\rho(P,I-P) = \infty$.
\end{proof}

Next we present a variety of basic definitions that can be made
directly in terms of the W*-filtration $\Vb$, followed by a series
of propositions giving basic facts about them.

\begin{defi}\label{various}
Let $\Vb$ be a quantum pseudometric on a von Neumann algebra $\Mx
\subseteq \Bc(H)$.

\noindent (a) The {\it diameter} of $\Vb$ is the quantity
$${\rm diam}(\Vb) = \inf\{t: \Vc_t = \Bc(H)\}
= \sup\{D(A): A \in \Bc(H)\}$$
(with $\inf \emptyset = \infty$).

\noindent (b) For $\epsilon > 0$ and $P$ a projection in
$\Mx$, the {\it open $\epsilon$-neighborhood
of $P$} is the projection $(P)_\epsilon$ onto
$$\overline{\Vc_{<\epsilon}({\rm ran}(P))} =
\bigvee_{t < \epsilon} \overline{\Vc_t({\rm ran}(P))}.$$
Equivalently, $(P)_\epsilon = \bigvee \{[AP]: D(A) < \epsilon\}$.

\noindent (c) The {\it closure} of a projection $P \in \Mx$ is the
projection $\overline{P}$ onto
$$\bigcap_{t > 0} \overline{\Vc_t({\rm ran}(P))}$$
and $P$ is {\it closed} if $P = \overline{P}$.

\noindent (d) $\Vb$ is {\it uniformly discrete} if there exists
$t > 0$ such that $\Vc_t = \Vc_0$, or equivalently there exists
$t > 0$ such that
$$D(A) > 0\quad\Rightarrow\quad D(A) \geq t.$$

\noindent (e) $\Vb$ is a {\it path quantum pseudometric} if
$$\bigcap_{\epsilon > 0}
\overline{\Vc_{s+\epsilon}\Vc_{t+\epsilon}}^{wk^*} = \Vc_{s + t}$$
for all $s,t \geq 0$.
\end{defi}

We will show below (Propositions \ref{nbhds} and \ref{closures}) that
$(P)_\epsilon$ and $\overline{P}$ belong to $\Mx$.

The notion of an open $\epsilon$-neighborhood immediately suggests a
definition of Hausdorff distance: if $P$ and $Q$ are projections in $\Mx$,
we can define their {\it Hausdorff distance} to be
$$\inf\{\epsilon: P \leq (Q)_\epsilon\quad{\rm and}\quad
Q \leq (P)_\epsilon\}.$$
Observe that this is an actual pseudometric; it satisfies the triangle
inequality because $((P)_\epsilon)_\delta \subseteq (P)_{\epsilon + \delta}$.
If $\Vb$ and $\Wb$ are quantum pseudometrics on von Neumann algebras
$\Mx \subseteq \Bc(H)$ and $\Nc \subseteq \Bc(K)$ then we can define
their {\it Gromov-Hausdorff distance} to be the infimum of the
Hausdorff distance between $I_H$ and $I_K$ over all quantum
pseudometrics on $\Mx \oplus \Nc$ that restrict to $\Vb$ on $\Mx$
and $\Wb$ on $\Nc$, i.e., W*-filtrations of $\Bc(H \oplus K)$ whose
intersection with $\Bc(H) \subseteq \Bc(H \oplus K)$ equals $\Vb$ and
whose intersection with $\Bc(K) \subseteq \Bc(H \oplus K)$ equals $\Wb$.
(Cf.\ Definition \ref{easycon} (b) and the discussion following it,
which shows that the Gromov-Hausdorff distance is always at most
$\max\{{\rm diam}(\Vb), {\rm diam}(\Wb)\}/2$.) This too is a pseudometric,
the key observation here being that if we are given a W*-filtration on
$\Bc(H \oplus H')$ which restricts to $\Vb$ and $\Vb'$, and a W*-filtration
on $\Bc(H' \oplus H'')$ which restricts to $\Vb'$ and $\Vb''$, then after
embedding both into $\Bc(H \oplus H' \oplus H'')$ their meet (see Definition
\ref{easycon} (c) below) restricts to a W*-filtration on $\Bc(H \oplus H'')$
which restricts to $\Vb$ and $\Vb''$. However, generally speaking
this does not seem to be a good tool for analyzing convergence of quantum
metrics (e.g., the analog of Theorem \ref{cvgs} below fails). A better
candidate may be the notion of local convergence introduced
below in Definition \ref{localconv}.

We first observe that the preceding definitions reduce to the corresponding
classical notions in the atomic abelian case. All parts of the next
proposition are straightforward consequences of the characterization
$\Vc_t^d = \overline{\rm span}^{wk^*}\{V_{xy}: d(x,y) \leq t\}$
(Proposition \ref{aa}). Denote the characteristic function of the
set $S$ by $\chi_S$.

\begin{prop}
Let $X$ be a set and let $\Mx \cong l^\infty(X)$ be the von Neumann algebra
of bounded multiplication operators on $l^2(X)$. Also let $d$ be a
pseudometric on $X$ and define $\Vb_d$ as in Proposition \ref{aa}.

\noindent (a) ${\rm diam}(\Vb_d) = \sup\{d(x,y): x,y \in X\}$.

\noindent (b) For any $S \subseteq X$ we have
$(M_{\chi_S})_\epsilon =  M_{\chi_{N_\epsilon(S)}}$
where $N_\epsilon(S) = \{x \in X: d(x,S) < \epsilon\}$.

\noindent (c) For any $S \subseteq X$ the closure of $M_{\chi_S}$
is $M_{\chi_{\overline{S}}}$.

\noindent (d) $\Vb_d$ is uniformly discrete if and only if there
exists $t > 0$ such that $d(x,y) > 0$ $\Rightarrow$ $d(x,y) \geq t$.

\noindent (e) $\Vb_d$ is a path quantum pseudometric if and only if
$d(x,y)$ is the infimum of the lengths of paths from $x$ to $y$ in
the completion of $X$, for all $x,y \in X$.
\end{prop}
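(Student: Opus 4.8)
The plan is to reduce every assertion to the weak*-spanning description $\Vc_t^d = \overline{\rm span}^{wk^*}\{V_{xy}: d(x,y)\le t\}$ from Proposition \ref{aa}, together with the bijectivity of the relation-to-quantum-relation map $R\mapsto\Vc_R$ in Proposition \ref{atomiccase}. Before treating the individual parts I would record three dictionary facts in the atomic abelian case. First, since $\langle A e_y,e_x\rangle = 0$ whenever $d(x,y)>t$ for $A\in\Vc_t^d$, every vector $A e_y$ is supported on $\{x: d(x,y)\le t\}$, while $V_{xy}e_y = e_x$ realizes each such coordinate; this elementary computation underlies all five parts. Second, from $V_{xy}V_{zw} = \delta_{yz}V_{xw}$ one gets $\overline{\Vc_R\Vc_{R'}}^{wk^*} = \Vc_{R\circ R'}$, where $R\circ R' = \{(x,z): \exists y,\ (x,y)\in R,\ (y,z)\in R'\}$. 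Third, because membership in $\Vc_R$ is the coordinatewise condition $\langle A e_y,e_x\rangle=0$ off $R$, intersections are computed relationwise: $\bigcap_\alpha\Vc_{R_\alpha} = \Vc_{\bigcap_\alpha R_\alpha}$. I would also note that $\Vc_0^d=\Mx'=\Mx$ is the algebra of diagonal operators, since $l^\infty(X)$ is maximal abelian.

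Parts (a)--(d) then follow by short computations. For (a), $\Vc_t^d = \Bc(l^2(X))$ holds exactly when every $V_{xy}\in\Vc_t^d$, i.e. when $d(x,y)\le t$ for all $x,y$; hence $\{t:\Vc_t^d=\Bc(l^2(X))\}=[\sup d(x,y),\infty)$ and its infimum is $\sup\{d(x,y)\}$. For (b), the first dictionary fact shows $[A M_{\chi_S}]\le M_{\chi_{N_\epsilon(S)}}$ whenever $D(A)<\epsilon$, while for each $x$ with $d(x,S)<\epsilon$ a suitable $V_{xy}$ with $y\in S$ contributes $e_x$ to the join, giving $(M_{\chi_S})_\epsilon = M_{\chi_{N_\epsilon(S)}}$. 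Part (c) is the same computation with the intersection over $t>0$: $\overline{\Vc_t^d({\rm ran}\,M_{\chi_S})}$ is the coordinate projection onto $\{x:\exists y\in S,\ d(x,y)\le t\}$, and intersecting over $t>0$ leaves $\{x: d(x,S)=0\}=\overline S$. For (d), $\Vc_t^d=\Vc_0^d$ is equivalent, by injectivity of $R\mapsto\Vc_R$, to $R_t=R_0$, i.e. to the absence of pairs with $0<d(x,y)\le t$; quantifying over $t>0$ matches the stated condition $d(x,y)>0\Rightarrow d(x,y)\ge t$.

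The substance is in (e). Using the dictionary facts, $\overline{\Vc_{s+\epsilon}^d\Vc_{t+\epsilon}^d}^{wk^*} = \Vc_{R_{s+\epsilon}\circ R_{t+\epsilon}}$, and then $\bigcap_{\epsilon>0}\overline{\Vc_{s+\epsilon}^d\Vc_{t+\epsilon}^d}^{wk^*} = \Vc_{\bigcap_\epsilon R_{s+\epsilon}\circ R_{t+\epsilon}}$, so the path condition becomes the purely combinatorial identity $\bigcap_{\epsilon>0}R_{s+\epsilon}\circ R_{t+\epsilon} = R_{s+t}$ for all $s,t\ge0$. The inclusion $\subseteq$ always holds, since $R_{s+\epsilon}\circ R_{t+\epsilon}\subseteq R_{s+t+2\epsilon}$ by the triangle inequality and $\bigcap_{\epsilon}R_{s+t+2\epsilon}=R_{s+t}$ by nesting. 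Thus the path condition is equivalent to the reverse inclusion for all $s,t$, i.e. to the approximate midpoint property: whenever $d(x,z)\le s+t$, for every $\epsilon>0$ there is $y\in X$ with $d(x,y)\le s+\epsilon$ and $d(y,z)\le t+\epsilon$.

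It then remains to identify the approximate midpoint property with the assertion that $d$ agrees on $X$ with the length metric $\ell$ of the completion $\hat X$. One direction is easy: if $d=\ell$ on $X$ and $d(x,z)\le s+t$, pick a path in $\hat X$ from $x$ to $z$ of length just above $d(x,z)$, split it at a point $\hat y$ dividing its length in ratio $s:t$, and approximate $\hat y$ by a point $y\in X$ using density; this yields the required $y$. The reverse direction is the main obstacle and is the standard metric-geometry construction of a geodesic from approximate midpoints: iterate the approximate midpoint property dyadically to produce points $y_{k/2^n}\in X$ whose consecutive gaps at level $n$ are at most $2^{-n}d(x,z)$ up to summable errors, observe that this family is uniformly continuous in the dyadic parameter, and invoke completeness of $\hat X$ to extend it to a continuous path from $x$ to $z$ of length at most $d(x,z)$. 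Since lengths always dominate distances, this gives $\ell(x,z)\le d(x,z)\le\ell(x,z)$, hence $d=\ell$ on $X$. I expect the bookkeeping of the accumulated errors in this dyadic construction, and the verification that the limiting path has length at most $d(x,z)$, to be the only delicate point; everything else is a routine translation through the atomic abelian dictionary.
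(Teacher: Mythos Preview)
Your proposal is correct and follows the same approach the paper indicates, namely reducing everything to the weak*-spanning description $\Vc_t^d=\overline{\rm span}^{wk^*}\{V_{xy}:d(x,y)\le t\}$ from Proposition \ref{aa}; the paper itself gives no proof beyond declaring all five parts ``straightforward consequences'' of that characterization. Your treatment of (e)---translating the path condition to the relationwise identity $\bigcap_{\epsilon>0}R_{s+\epsilon}\circ R_{t+\epsilon}=R_{s+t}$, recognizing this as the approximate midpoint property, and then invoking the standard dyadic construction in the completion to identify it with the length-metric condition---is precisely the metric-geometry argument the paper is tacitly appealing to, and your caveat about the error bookkeeping is apt but not a gap.
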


Now we establish some basic properties of the concepts introduced in
Definition \ref{various}. In particular, we provide some
alternative characterizations in terms of projection distances.

\begin{prop}\label{diameter}
Let $\Vb$ be a quantum pseudometric on a von Neumann algebra $\Mx
\subseteq \Bc(H)$. Then ${\rm diam}(\Vb) = \sup\{\rho(P,Q): P$ and $Q$
are linkable projections in $\Mx \overline{\otimes} \Bc(l^2)\}$.
\end{prop}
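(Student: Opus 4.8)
The plan is to establish the two inequalities separately. Write $d = {\rm diam}(\Vb)$ and $s = \sup\{\rho(P,Q) : P, Q \text{ linkable projections in } \Mx \overline{\otimes} \Bc(l^2)\}$, and recall the two formulas $d = \inf\{t : \Vc_t = \Bc(H)\} = \sup\{D(A) : A \in \Bc(H)\}$ and $\rho(P,Q) = \inf\{D(B) : P(B \otimes I)Q \neq 0\}$, together with $\Vc_t = \{B : D(B) \le t\}$.

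First I would prove $s \le d$. Given any linkable pair $P, Q$, Proposition \ref{findist} guarantees an operator $A \in \Bc(H)$ with $P(A \otimes I)Q \neq 0$, so the set over which $\rho(P,Q)$ is an infimum is nonempty; since every operator satisfies $D(A) \le \sup_C D(C) = d$, this forces $\rho(P,Q) \le D(A) \le d$. Taking the supremum over linkable pairs gives $s \le d$.

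Next I would prove $d \le s$ by producing, for each $t < d$, a linkable pair at distance at least $t$. Since $t < d = \inf\{t : \Vc_t = \Bc(H)\}$ we have $\Vc_t \neq \Bc(H)$, so I can pick $A \in \Bc(H) - \Vc_t$ and feed it to Lemma \ref{separation}, obtaining projections $P, Q \in \Mx \overline{\otimes} \Bc(l^2)$ with $P(A \otimes I)Q \neq 0$ but $P(B \otimes I)Q = 0$ for every $B \in \Vc_t$. The operator $A$ witnesses, via Proposition \ref{findist}, that $P$ and $Q$ are linkable. Moreover any $B$ achieving $P(B \otimes I)Q \neq 0$ cannot lie in $\Vc_t$, i.e.\ must satisfy $D(B) > t$; hence $\rho(P,Q) \ge t$. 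Thus $s \ge t$ for all $t < d$, and therefore $s \ge d$. (If $d = \infty$ the same argument runs over all finite $t$ and yields $s = \infty$.)

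The hard part will be the inequality $d \le s$, and the crux is converting the analytic fact $D(A) > t$ into a genuine projection distance. Lemma \ref{separation} is exactly the tool that performs this conversion: it turns a separation at the level of the filtration ($A \notin \Vc_t$) into a separation by projections ($P(B \otimes I)Q = 0$ for $B \in \Vc_t$), and the very operator realizing the nonvanishing $P(A \otimes I)Q \neq 0$ doubles as the certificate of linkability through Proposition \ref{findist}. Once the separating projections are in hand, the estimate $\rho(P,Q) \ge t$ is purely formal, following from $\Vc_t = \{B : D(B) \le t\}$.
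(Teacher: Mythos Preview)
Your proof is correct and follows essentially the same approach as the paper: both directions use the same two ingredients (Proposition \ref{findist} for $s \le d$ and Lemma \ref{separation} for $d \le s$) in the same way, with only cosmetic differences in the order of presentation.
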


\begin{proof}
Let $t \geq 0$ and suppose $\Vc_t \neq \Bc(H)$. Let $A \in \Bc(H) - \Vc_t$;
then by Lemma \ref{separation} there exist projections $P$ and
$Q$ in $\Mx \overline{\otimes} \Bc(l^2)$ such that $P(A \otimes I)Q \neq 0$
but $P(B \otimes I)Q = 0$ for all $B \in \Vc_t$. Thus $\rho(P,Q) \geq t$,
and $P(A \otimes I)Q \neq 0$ implies that $P$ and $Q$ are linkable
(Proposition \ref{findist}). Since ${\rm diam}(\Vb) =
\sup\{t: \Vc_t \neq \Bc(H)\}$, taking the supremum over $t$ yields the
inequality $\leq$. For the reverse inequality, let $P$ and $Q$ be
linkable projections in $\Mx \overline{\otimes} \Bc(l^2)$. Then there
exists $A \in \Bc(H)$ such that $P(A \otimes I)Q \neq 0$ by
Proposition \ref{findist}, and we have $\rho(P,Q) \leq D(A)
\leq {\rm diam}(\Vb)$. Taking the supremum over $P$ and $Q$ yields
the inequality $\geq$.
\end{proof}

Restricting the supremum in Proposition \ref{diameter} only to nonzero
projections in $\Mx$ would not suffice in general; see Example
\ref{counterdiam}.

\begin{prop}\label{nbhds}
Let $\Vb$ be a quantum pseudometric on a von Neumann algebra
$\Mx \subseteq \Bc(H)$, let $\epsilon > 0$, and let $P$ be a
projection in $\Mx$. Then
$$(P)_\epsilon = I - \bigvee\{Q: \rho(P,Q) \geq \epsilon\} \in \Mx$$
with $Q$ ranging over projections in $\Mx$. The open $\epsilon$-neighborhood
of the join of any family of projections in $\Mx$
equals the join of their open $\epsilon$-neighborhoods.
\end{prop}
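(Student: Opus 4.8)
The plan is to reduce everything to two reformulations that follow immediately from the definitions. For projections $P,Q \in \Mx$, the formula $\rho(P,Q) = \inf\{D(A) : PAQ \neq 0\}$ shows that $\rho(P,Q) \geq \epsilon$ if and only if $PBQ = 0$ for every $B \in \Bc(H)$ with $D(B) < \epsilon$. On the other side, since ${\rm ran}((P)_\epsilon) = \overline{\rm span}\{B({\rm ran}(P)) : D(B) < \epsilon\}$, a vector $v$ lies in ${\rm ran}(I - (P)_\epsilon)$ precisely when $\langle v, BPw\rangle = 0$ for all $w \in H$ and all $B$ with $D(B) < \epsilon$; moving $B$ across the inner product and using $D(B^*) = D(B)$ (axiom (iv) of a displacement gauge), this says exactly that $PBv = 0$ for all $B$ with $D(B) < \epsilon$. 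Thus $I - (P)_\epsilon$ is the projection onto $\{v : PBv = 0 \text{ whenever } D(B) < \epsilon\}$.

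First I would check $(P)_\epsilon \in \Mx$. Each $\Vc_t$ is a bimodule over $\Mx' \subseteq \Vc_0$, so for $U \in \Mx'$ and $B$ with $D(B) < \epsilon$ we have $UB \in \Vc_{D(B)}$ and hence $D(UB) \leq D(B) < \epsilon$. Consequently $U$ carries each generating subspace $B({\rm ran}(P))$ into $(UB)({\rm ran}(P))$, again a member of the spanning family, so ${\rm ran}((P)_\epsilon)$ is invariant under $\Mx'$; as $\Mx'$ is self-adjoint this range is reducing, and therefore $(P)_\epsilon$ commutes with $\Mx'$, i.e.\ $(P)_\epsilon \in \Mx'' = \Mx$.

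With $R := \bigvee\{Q : \rho(P,Q) \geq \epsilon\}$ (over projections $Q \in \Mx$), the equality $(P)_\epsilon = I - R$ then splits into two containments. If $Q \in \Mx$ satisfies $\rho(P,Q) \geq \epsilon$, then for $v \in {\rm ran}(Q)$ and $D(B) < \epsilon$ we get $PBv = PBQv = 0$, so $Q \leq I - (P)_\epsilon$; taking the join gives $R \leq I - (P)_\epsilon$, hence $(P)_\epsilon \leq I - R$. Conversely, the reformulation of ${\rm ran}(I-(P)_\epsilon)$ shows $PB(I-(P)_\epsilon) = 0$ for all $B$ with $D(B) < \epsilon$, so $\rho(P, I - (P)_\epsilon) \geq \epsilon$; since $I - (P)_\epsilon \in \Mx$ by the previous paragraph, it is one of the competing $Q$'s, whence $I - (P)_\epsilon \leq R$ and $I - R \leq (P)_\epsilon$. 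Combining yields $(P)_\epsilon = I - R \in \Mx$.

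For the final assertion, write $P = \bigvee_i P_i$. Monotonicity of the range projection gives $[BP_i] \leq [BP]$, so $\bigvee_i (P_i)_\epsilon \leq (P)_\epsilon$. For the reverse, boundedness of $B$ gives $B({\rm ran}(P)) = B\big(\overline{\sum_i {\rm ran}(P_i)}\big) \subseteq \overline{\sum_i B({\rm ran}(P_i))}$, and each $B({\rm ran}(P_i)) \subseteq {\rm ran}((P_i)_\epsilon)$ when $D(B) < \epsilon$; hence ${\rm ran}((P)_\epsilon) \subseteq {\rm ran}(\bigvee_i (P_i)_\epsilon)$ and $(P)_\epsilon \leq \bigvee_i (P_i)_\epsilon$. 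I expect the only genuinely structural step to be the membership $(P)_\epsilon \in \Mx$: it is the one place where the filtration/bimodule axioms are invoked to control the auxiliary operators $B$, which need not lie in $\Mx$ or $\Mx'$. The remaining arguments are formal manipulations of the two reformulations above.
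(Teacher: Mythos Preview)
Your proof is correct and follows essentially the same route as the paper's: both first verify $(P)_\epsilon \in \Mx$ via $\Mx'$-invariance of its range, then establish the two inclusions $(P)_\epsilon \leq I-R$ and $I-(P)_\epsilon \leq R$ by the same ``$QAP=0$ whenever $D(A)<\epsilon$'' reasoning, and finally handle the join statement via $[A\bigvee P_\lambda] = \bigvee [AP_\lambda]$ (which your image-inclusion argument unpacks). Your explicit reformulations at the outset make the logic slightly more transparent, but the content is identical.
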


\begin{proof}
First we check that $(P)_\epsilon \in \Mx$. For
each $t < \epsilon$ the subspace $\overline{\Vc_t({\rm ran}(P))}
\subseteq H$ is invariant for $\Mx' \subseteq
\Vc_0$. Hence the join of these subspaces, which is the range
of $(P)_\epsilon$, is also invariant for $\Mx'$, and this shows
that $(P)_\epsilon \in \Mx$.

Define $\tilde{P} = I - \bigvee\{Q \in \Mx: Q$ is a projection and
$\rho(P,Q) \geq \epsilon\}$. If $\rho(P,Q) \geq
\epsilon$ and $D(A) < \epsilon$ then $QAP = 0$;
it follows that $Q(P)_\epsilon = 0$, and this shows that
$(P)_\epsilon \leq \tilde{P}$. Conversely, let $Q = I - (P)_\epsilon$.
Then $QAP = 0$ for all $A \in \Bc(H)$ with $D(A) < \epsilon$,
so $\rho(P,Q) \geq \epsilon$, and this plus the result of the last
paragraph shows that $Q$ belongs to the join
used to define $\tilde{P}$. Thus $I - (P)_\epsilon \leq
I - \tilde{P}$. We conclude that $(P)_\epsilon = \tilde{P}$.

The last assertion follows from the fact that
$$\left[ A\cdot \bigvee P_\lambda\right] = \bigvee [AP_\lambda]$$
for any family of projections $\{P_\lambda\}$ and any $A \in \Bc(H)$.
Taking the join over $D(A) < \epsilon$ yields the open $\epsilon$-neighborhood
of $\bigvee P_\lambda$ on the left and the join of the open
$\epsilon$-neighborhoods of the $P_\lambda$ on the right.
\end{proof}

\begin{prop}\label{closures}
Let $\Vb$ be a quantum pseudometric on a von Neumann algebra
$\Mx \subseteq \Bc(H)$ and let $P$ be a projection in $\Mx$. Then
$$\overline{P} = \bigwedge_{\epsilon > 0} (P)_\epsilon
= I - \bigvee\{Q: \rho(P,Q) > 0\} \in \Mx$$
with $Q$ ranging over projections in $\Mx$.
We have $\rho(P,Q) = \rho(\overline{P},Q)$ for any projection $Q$ in
$\Mx$. The projection
$\overline{P}$ is closed, as is $I - (P)_\epsilon$ for any $\epsilon > 0$.
The meet of any family of closed projections is closed.
\end{prop}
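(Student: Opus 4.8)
The plan is to reduce every assertion to two dictionary facts. The first is the formula for the open neighborhood from Proposition \ref{nbhds}, namely $I-(P)_\epsilon=\bigvee\{Q:\rho(P,Q)\geq\epsilon\}$ (with $Q$ ranging over projections in $\Mx$). The second, which I would derive from it, is the translation
$$\rho(P,Q)\geq\epsilon\quad\Longleftrightarrow\quad Q\leq I-(P)_\epsilon$$
for projections $P,Q\in\Mx$: the forward direction is immediate, and the reverse follows because the strengthened form of Definition \ref{qdfdef}(iv) gives $\rho(P,I-(P)_\epsilon)=\inf_Q\rho(P,Q)\geq\epsilon$ (infimum over the $Q$ in the join), after which monotonicity of $\rho$ finishes it.

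I would first record that the filtration is monotone, $\Vc_s\subseteq\Vc_t$ for $s\leq t$, which is immediate since each $\Vc_t$ is unital and $A=AI\in\Vc_s\Vc_{t-s}\subseteq\Vc_t$. Writing $W_t=\overline{\Vc_t({\rm ran}(P))}$, the spaces $W_t$ are nested increasing, so $\bigvee_{t<\epsilon}W_t\subseteq W_\epsilon$, and a routine check gives $\bigcap_{\epsilon>0}\bigvee_{t<\epsilon}W_t=\bigcap_{t>0}W_t$; this is the equality $\bigwedge_\epsilon(P)_\epsilon=\overline{P}$. The second displayed equality is then pure lattice algebra: complementing the formula for $(P)_\epsilon$ and applying De Morgan, $I-\bigwedge_\epsilon(P)_\epsilon=\bigvee_\epsilon\bigvee\{Q:\rho(P,Q)\geq\epsilon\}=\bigvee\{Q:\rho(P,Q)>0\}$. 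That $\overline{P}\in\Mx$ follows from Proposition \ref{nbhds} (each $(P)_\epsilon\in\Mx$) together with closure of $\Mx$ under meets of projections, and $P\leq\overline{P}$ because $I\in\Vc_t$ forces ${\rm ran}(P)\subseteq W_t$.

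For $\rho(P,Q)=\rho(\overline{P},Q)$ the key intermediate claim is $(\overline{P})_\epsilon=(P)_\epsilon$ for all $\epsilon$. The inequality $\geq$ is monotonicity of the neighborhood in its argument; for $\leq$ I would use $\Vc_t(W_s)\subseteq W_{s+t}$ (from $\Vc_t\Vc_s\subseteq\Vc_{s+t}$ and boundedness of operators) to get, for $t<\epsilon$ and $s>0$ small enough that $s+t<\epsilon$, $\Vc_t({\rm ran}(\overline{P}))\subseteq\Vc_t(W_s)\subseteq W_{s+t}\subseteq{\rm ran}((P)_\epsilon)$, hence $(\overline{P})_\epsilon\leq(P)_\epsilon$. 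Given the claim, the dictionary yields $\rho(P,Q)\geq\epsilon\Leftrightarrow Q\leq I-(P)_\epsilon=I-(\overline{P})_\epsilon\Leftrightarrow\rho(\overline{P},Q)\geq\epsilon$ for every $\epsilon>0$, so the two distances agree. Closedness of $\overline{P}$ is then immediate, since $\overline{\overline{P}}=\bigwedge_\epsilon(\overline{P})_\epsilon=\bigwedge_\epsilon(P)_\epsilon=\overline{P}$; and stability under meets is a one-line monotonicity argument, since ${\rm ran}(P)\subseteq{\rm ran}(P_i)$ gives $\bigcap_t W_t\subseteq\bigcap_t\overline{\Vc_t({\rm ran}(P_i))}={\rm ran}(P_i)$ when each $P_i$ is closed, whence $\overline{P}\leq\bigwedge_i P_i$.

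The one genuinely nonformal point is closedness of $R:=I-(P)_\epsilon$: here $R$ and $(P)_\epsilon$ can be at distance $0$, so one cannot simply exhibit $(P)_\epsilon$ as a single projection at positive distance from $R$. Instead I would decompose $(P)_\epsilon=\bigvee_{t<\epsilon}(P)_t$ and prove the quantitative estimate $\rho((P)_t,R)\geq\epsilon-t$ for each $t<\epsilon$; by the dictionary this forces each $(P)_t\leq I-\overline{R}$, hence $(P)_\epsilon\leq I-\overline{R}$, i.e. $\overline{R}\leq R$, giving equality. The estimate is the crux. To prove $\rho((P)_t,Q)\geq\rho(P,Q)-t$ (applied with $Q=R$, for which $\rho(P,R)\geq\epsilon$), I would take any $A$ with $(P)_tAQ\neq0$, pick $v\in{\rm ran}(Q)$ with $(P)_tAv\neq0$, and unwind ${\rm ran}((P)_t)=\overline{\Vc_{<t}({\rm ran}(P))}$ to produce $s<t$, $B\in\Vc_s$ and $w\in{\rm ran}(P)$ with $\langle Av,Bw\rangle\neq0$, i.e. $PB^*AQ\neq0$. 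Then $\rho(P,Q)\leq D(B^*A)\leq D(B)+D(A)\leq s+D(A)<t+D(A)$, using displacement-gauge axioms (iv) and (v); taking the infimum over such $A$ gives the bound. This is the only step where the product inequality $D(AB)\leq D(A)+D(B)$ is essential.
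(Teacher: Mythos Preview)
Your proof is correct, and the overall architecture matches the paper's, but two of the individual arguments differ in interesting ways.

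For $\rho(P,Q)=\rho(\overline{P},Q)$ you go via the intermediate identity $(\overline{P})_\epsilon=(P)_\epsilon$ and then invoke your dictionary; the paper instead argues directly from the definition of $\rho$: if $QA\overline{P}\neq0$ then, since $\overline{P}\leq(P)_\delta$, one can find $B\in\Vc_{2\delta}$ with $QABP\neq0$, whence $\rho(P,Q)\leq D(AB)\leq D(A)+2\delta$. The two arguments are close cousins (both ultimately rest on $\Vc_t\Vc_s\subseteq\Vc_{s+t}$), and your neighborhood identity is a pleasant byproduct.

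The real divergence is in showing $I-(P)_\epsilon$ is closed. You establish the quantitative estimate $\rho((P)_t,Q)\geq\rho(P,Q)-t$ from scratch and then feed $(P)_\epsilon=\bigvee_{t<\epsilon}(P)_t$ into it. The paper's route is shorter: having already proved $\rho(\overline{R},P)=\rho(R,P)$ for $R=I-(P)_\epsilon$, and knowing $\rho(R,P)\geq\epsilon$, one immediately gets $\overline{R}$ into the join defining $I-(P)_\epsilon$, hence $\overline{R}\leq R$. So the paper recycles the $\rho$-invariance result rather than proving a new inequality. Your estimate is independently useful, but here it is more work than needed.

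Conversely, your argument that meets of closed projections are closed is cleaner than the paper's: you use the definitional form $\overline{P}=\bigcap_t\overline{\Vc_t({\rm ran}(P))}$ and monotonicity directly, while the paper goes through the distance formula $I-\overline{P}=\bigvee\{Q:\rho(P,Q)>0\}$ and an auxiliary join $\tilde{Q}$.
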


\begin{proof}
For any $\epsilon > 0$ we have
$$\bigvee_{t < \epsilon} \overline{\Vc_t({\rm ran}(P))}
\leq \overline{\Vc_\epsilon({\rm ran}(P))}
\leq \bigvee_{t < 2\epsilon} \overline{\Vc_t({\rm ran}(P))};$$
taking the meet over $\epsilon > 0$, the first inequality yields
$\bigwedge_{\epsilon > 0} (P)_\epsilon \leq \overline{P}$ and the
second yields $\overline{P} \leq \bigwedge_{\epsilon > 0}(P)_\epsilon$.
So $\overline{P} = \bigwedge_{\epsilon > 0} (P)_\epsilon$.
Then by Proposition \ref{nbhds}
\begin{eqnarray*}
I - \overline{P} &=& I - \bigwedge_{\epsilon > 0} (P)_\epsilon\cr
&=& \bigvee_{\epsilon > 0} (I - (P)_\epsilon)\cr
&=& \bigvee_{\epsilon > 0}\bigvee\{Q: \rho(P,Q) \geq \epsilon\}\cr
&=& \bigvee\{Q: \rho(P,Q) > 0\},
\end{eqnarray*}
which proves the second formula for $\overline{P}$.

It is clear that $\rho(\overline{P},Q) \leq \rho(P,Q)$ since $P \leq
\overline{P}$. To prove the reverse inequality suppose
$QA\overline{P} \neq 0$ and let $\delta > 0$.
Since $\overline{P} \leq (P)_\delta$ there must exist $B \in \Vc_{2\delta}$
such that $QABP \neq 0$.
But $D(AB) \leq D(A) + 2\delta$, so taking the infimum over $A$
and letting $\delta \to 0$ yields
$\rho(P,Q) \leq \rho(\overline{P},Q)$, as desired. We conclude that
$\rho(\overline{P},Q) = \rho(P,Q)$. It follows that
$$\overline{\overline{P}} = I - \bigvee\{Q: \rho(\overline{P},Q) > 0\}
= I - \bigvee\{Q: \rho(P,Q) > 0\} = \overline{P},$$
so that $\overline{P}$ is closed.

Next let $\epsilon > 0$; then
$I - (P)_\epsilon = \bigvee\{Q: \rho(P,Q) \geq \epsilon\}$
implies $\rho(I-(P)_\epsilon, P) \geq \epsilon$, and hence
$\rho(\overline{I - (P)_\epsilon}, P) \geq \epsilon$, so
$\overline{I - (P)_\epsilon}$ belongs to the join that defines
$I - (P)_\epsilon$. Since $I - (P)_\epsilon \leq \overline{I - (P)_\epsilon}$
is trivial, this shows that $I - (P)_\epsilon = \overline{I - (P)_\epsilon}$,
i.e., $I - (P)_\epsilon$ is closed.

Finally, let $\{P_\lambda\}$ be any family of closed projections in
$\Mx$ and let $P = \bigwedge P_\lambda$. Let
$$\tilde{Q} = \bigvee\{Q: \rho(P_\lambda, Q) > 0\hbox{ for some }\lambda\}.$$
Every $Q$ that contributes to this join satisfies
$\rho(P, Q) > 0$ and hence is orthogonal to $\overline{P}$. So
$\overline{P} \leq I - \tilde{Q}$. However, since each $P_\lambda$
is closed we have $I - P_\lambda = \bigvee \{Q: \rho(P_\lambda, Q) > 0\}
\leq \tilde{Q}$ for all $\lambda$, and hence $I - \tilde{Q}
\leq \bigwedge P_\lambda = P$. Thus
$\overline{P} \leq P$, and we conclude that $P$ is closed.
\end{proof}

\begin{prop}\label{gaps}
Let $\Vb$ be a quantum pseudometric on a von Neumann algebra
$\Mx \subseteq \Bc(H)$. Then $\Vb$ is uniformly discrete if and
only if there exists $t > 0$ such that
$$\rho(P,Q) > 0\quad\Rightarrow\quad \rho(P,Q) \geq t,$$
with $P$ and $Q$ ranging over projections in $\Mx\overline{\otimes} \Bc(l^2)$.
\end{prop}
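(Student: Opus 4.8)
The plan is to derive both implications from Proposition \ref{recover}, which expresses each $\Vc_t$ intrinsically as $\Vc_t = \{A \in \Bc(H): \rho(P,Q) > t \Rightarrow P(A\otimes I)Q = 0\}$. Since $\Vb$ is uniformly discrete precisely when $\Vc_s = \Vc_0$ for some $s > 0$, and the gap condition is a statement purely about $\rho$, this recovery result is exactly the bridge between the two formulations. I will phrase uniform discreteness as: there exists $s > 0$ with $\Vc_s = \Vc_0$, and recall that $\Vc_0 \subseteq \Vc_s$ always holds by monotonicity of the filtration.

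For the forward direction, I would assume $\Vc_t = \Vc_0$ for some $t > 0$. Because $\Vc_0 \subseteq \Vc_s \subseteq \Vc_t = \Vc_0$ for every $s \in [0,t]$, all of these coincide with $\Vc_0 = \Mx'$. Now take projections $P,Q$ in $\Mx\overline{\otimes}\Bc(l^2)$ with $\rho(P,Q) < t$. By the definition of $\rho_\Vb$ there is some $s < t$ and some $A \in \Vc_s = \Vc_0$ with $P(A\otimes I)Q \neq 0$; since $A \in \Vc_0$ has $D(A) = 0$, the definition of $\rho$ forces $\rho(P,Q) = 0$. Thus no distance lies in the open interval $(0,t)$, which is exactly the gap condition.

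For the reverse direction, I would assume the gap condition holds for some $t > 0$ and fix any $s$ with $0 < s < t$; the goal is to show $\Vc_s = \Vc_0$. It suffices to prove $\Vc_s \subseteq \Vc_0$. Let $A \in \Vc_s$, and let $P,Q$ be any projections in $\Mx\overline{\otimes}\Bc(l^2)$ with $\rho(P,Q) > 0$. The gap hypothesis gives $\rho(P,Q) \geq t > s$, so applying the recovery formula of Proposition \ref{recover} for $\Vc_s$ to the element $A$ yields $P(A\otimes I)Q = 0$. Since $P,Q$ were arbitrary subject to $\rho(P,Q) > 0$, the recovery formula for $\Vc_0$ now shows $A \in \Vc_0$. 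Hence $\Vc_s = \Vc_0$ and $\Vb$ is uniformly discrete.

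The step I expect to be the main obstacle is the reverse direction, where one is tempted to argue directly: if $A \in \Vc_s \setminus \Vc_0$ then Lemma \ref{separation} produces $P,Q$ with $P(A\otimes I)Q \neq 0$ and $P(B\otimes I)Q = 0$ for all $B \in \Vc_0$, giving $\rho(P,Q) \leq s < t$. The difficulty is that this separation does \emph{not} guarantee $\rho(P,Q) > 0$, because $\rho(P,Q) = 0$ is an infimum that need not be attained by an element of $\Vc_0$; so one cannot apply the gap hypothesis to reach a contradiction. Routing the argument through the two instances of Proposition \ref{recover} (for $\Vc_0$ and for $\Vc_s$) neatly circumvents this, since the recovery formulas trade the delicate ``$\rho = 0$'' condition for the robust implication ``$\rho(P,Q) > 0 \Rightarrow P(A\otimes I)Q = 0$''.
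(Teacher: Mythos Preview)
Your proof is correct. The forward direction is essentially the same as the paper's. For the reverse direction, however, you take a different route: you argue directly by applying Proposition~\ref{recover} twice (once for $\Vc_s$ and once for $\Vc_0$), whereas the paper proceeds by contrapositive. Assuming $\Vc_0 \neq \Vc_t$ for all $t>0$, the paper uses the right-continuity condition $\Vc_0 = \bigcap_{s>0}\Vc_s$ to find, for a given $t$, some $s\in(0,t)$ with $\Vc_s \neq \Vc_t$; it then applies Lemma~\ref{separation} to an element of $\Vc_t\setminus\Vc_s$, producing projections with $0 < s \le \rho(P,Q) \le t$. This is precisely the paper's way of circumventing the obstacle you identify in your final paragraph: rather than separating from $\Vc_0$ (which, as you note, would not guarantee $\rho(P,Q)>0$), it separates from $\Vc_s$ with $s>0$. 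Your approach is arguably cleaner, since Proposition~\ref{recover} has already absorbed both the separation lemma and the right-continuity step; the paper's argument makes those ingredients explicit.
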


\begin{proof}
If $\Vc_0 = \Vc_t$ for some $t > 0$ then it immediately follows from
the definition of $\rho(P,Q)$ that it cannot
lie in the interval $(0,t)$. Conversely, suppose $\Vc_0 \neq \Vc_t$ for
all $t > 0$ and fix $t$. Then there must exist $s \in (0,t)$ such that
$\Vc_s \neq \Vc_t$ because otherwise $\Vc_0 = \bigcap_{s > 0} \Vc_s$
would equal $\Vc_t$. Letting $A \in \Vc_t - \Vc_s$, Lemma
\ref{separation} then yields the existence of projections
$P,Q \in \Mx\overline{\otimes} \Bc(l^2)$ such that $P(A \otimes I)Q
\neq 0$ but $P(B \otimes I)Q = 0$ for all $B \in \Vc_s$. We conclude
that $0 < s \leq \rho(P,Q) \leq t$. Since $t > 0$ was arbitrary, we
have shown that $\inf\{\rho(P,Q): \rho(P,Q) > 0\} = 0$.
\end{proof}

\subsection{The abelian case}
Measurable metric spaces were introduced in \cite{W0} and have
subsequently been studied in connection with derivations
\cite{W1, W2, W3} and local Dirichlet forms \cite{H, H1, H2}.
We recall the basic definition:

\begin{defi}\label{measpm}
(\cite{W4}, Definition 6.1.3)
Let $(X,\mu)$ be a finitely decomposable measure space and let
$\Pc$ be the set of nonzero projections in $L^\infty(X,\mu)$.
A {\it measurable pseudometric} on $(X,\mu)$ is
a function $\rho: \Pc^2 \to [0,\infty]$ such that
\begin{quote}
\noindent (i) $\rho(p, p) = 0$

\noindent (ii) $\rho(p,q) = \rho(q,p)$

\noindent (iii) $\rho(\bigvee p_\lambda, \bigvee q_\kappa) =
\inf_{\lambda,\kappa} \rho(p_\lambda, q_\kappa)$

\noindent (iv) $\rho(p,r) \leq \sup_{q' \leq q} (\rho(p,q') + \rho(q',r))$
\end{quote}
for all $p,q,r,p_\lambda,q_\kappa \in \Pc$. It is a
{\it measurable metric} if for all
disjoint $p$ and $q$ there exist nets $\{p_\lambda\}$ and $\{q_\lambda\}$
such that $p_\lambda \to p$ and $q_\lambda \to q$ weak*
and $\rho(p_\lambda, q_\lambda) > 0$ for all $\lambda$.
\end{defi}

If either $p$ or $q$ is (or both are) the zero projection then the
appropriate convention is $\rho(p,q) = \infty$. (Note that in the
measurable triangle inequality, property (iv), $q'$ ranges over
nonzero projections.) Basic properties of measurable metrics are
summarized in Section 1.5 of \cite{W6}.

In the atomic case measurable metrics reduce to pointwise metrics
in the expected way:

\begin{prop}\label{atomicmm}
(\cite{W4}, Proposition 6.1.4)
Let $\mu$ be counting measure on a set $X$. If $d$ is a pseudometric
on $X$ then
$$\rho_d(\chi_S,\chi_T) = \inf\{d(x,y): x \in S,y \in T\}$$
is a measurable pseudometric on $X$; conversely, if $\rho$ is a
measurable pseudometric on $X$ then
$$d_\rho(x,y) = \rho(e_x,e_y)$$
is a pseudometric on $X$. The two constructions are inverse to
each other, and this correspondence between pseudometrics and
measurable pseudometrics restricts to a correspondence between
metrics and measurable metrics.
\end{prop}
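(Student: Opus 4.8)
The plan is to exploit the concrete dictionary supplied by counting measure: nonzero projections in $l^\infty(X)$ are exactly the characteristic functions $\chi_S$ of nonempty subsets $S \subseteq X$, the distinguished projections $e_x$ are the singletons $\chi_{\{x\}}$, and joins of projections are unions of sets. Under this dictionary $\rho_d(\chi_S,\chi_T) = \inf\{d(x,y): x \in S, y \in T\}$, so verifying axioms (i)--(iv) of Definition \ref{measpm} reduces to elementary facts about infima of $d$. I would first check the forward direction, then the converse, then the two inverse identities, and finally the refinement to metrics.

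For the forward direction, (i) holds because $d(x,x)=0$ for any $x \in S$, (ii) is symmetry of $d$, and (iii) is just the identity $\inf\{d(x,y): x \in \bigcup S_\lambda, y \in \bigcup T_\kappa\} = \inf_{\lambda,\kappa}\inf\{d(x,y): x \in S_\lambda, y \in T_\kappa\}$. The only step with content is the measurable triangle inequality (iv). Here I would fix a single point $y$ in the middle set $T$ and take $q' = e_y$; the pointwise inequality $d(x,z) \leq d(x,y)+d(y,z)$ together with the decoupling $\inf_{x \in S,\, z \in U}(d(x,y)+d(y,z)) = \inf_{x \in S} d(x,y) + \inf_{z \in U} d(y,z)$ gives $\rho_d(\chi_S,\chi_U) \leq \rho_d(\chi_S,e_y)+\rho_d(e_y,\chi_U)$, which is dominated by the supremum over $q' \leq \chi_T$ demanded by (iv).

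For the converse, $d_\rho(x,y)=\rho(e_x,e_y)$ inherits $d_\rho(x,x)=0$ and symmetry from (i) and (ii); its triangle inequality follows from (iv) applied with $q = e_y$, since $e_y$ is a minimal projection and the supremum over nonzero $q' \leq e_y$ collapses to the single term $q'=e_y$. That the constructions are mutually inverse is then almost formal: $d_{\rho_d}(x,y)=\rho_d(e_x,e_y)=d(x,y)$ directly, while $\rho_{d_\rho}(\chi_S,\chi_T)=\inf\{\rho(e_x,e_y): x\in S, y\in T\}$ equals $\rho(\chi_S,\chi_T)$ precisely because $\chi_S=\bigvee_{x\in S}e_x$, $\chi_T=\bigvee_{y\in T}e_y$, and axiom (iii) recovers $\rho$ on joins from its values on singletons.

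The refinement to metrics is where I expect the real work, since it invokes the net condition in Definition \ref{measpm} rather than the lattice axioms. If $d$ is a metric, then for disjoint nonempty $S,T$ I would take the net of pairs $(F,G)$ of finite nonempty subsets $F\subseteq S$, $G\subseteq T$, directed by inclusion; since $\chi_F \to \chi_S$ and $\chi_G \to \chi_T$ weak* (pairing against any $f\in l^1(X)$ and using summability), and since $F,G$ are finite and disjoint with $d$ a metric, $\rho_d(\chi_F,\chi_G)=\min\{d(x,y):x\in F, y\in G\}>0$, which exhibits the required nets. For the reverse implication I would argue that weak* convergence forces eventual membership: if $\rho_d$ is a measurable metric and $x\neq y$, choose nets $p_\lambda=\chi_{P_\lambda}\to e_x$ and $q_\lambda=\chi_{Q_\lambda}\to e_y$ with $\rho_d(p_\lambda,q_\lambda)>0$; pairing $p_\lambda$ against the point mass at $x$ shows $\chi_{P_\lambda}(x)\to 1$, hence $x\in P_\lambda$ eventually, and likewise $y\in Q_\lambda$ eventually, so for large $\lambda$ we get $0<\rho_d(p_\lambda,q_\lambda)=\inf\{d(u,v):u\in P_\lambda, v\in Q_\lambda\}\leq d(x,y)$, proving $d(x,y)>0$. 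The subtle point, and the one I would take care to spell out, is exactly this last observation that a $\{0,1\}$-valued net converging weak* to a singleton must eventually take the value $1$ on that point.
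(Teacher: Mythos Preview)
Your proof is correct and complete. The paper actually omits the proof entirely, offering only the parenthetical hint that the metric case is handled ``by approximating disjoint positive measure subsets $S,T \subseteq X$ by finite subsets'' --- precisely the net of finite subsets you construct --- so your write-up fills in exactly what the paper leaves to the reader, including the converse metric direction via pointwise convergence that the paper does not even hint at.
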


We omit the easy proof. (If $d$ is a metric, we show that $\rho_d$
is a measurable metric by approximating disjoint positive measure
subsets $S,T \subseteq X$ by finite subsets.)

The relation between quantum metrics and measurable metrics is
explained in the following theorem.

\begin{theo}\label{qmeasmet}
Let $(X,\mu)$ be a finitely decomposable measure space and let
$\Mx \cong L^\infty(X,\mu)$ be the von Neumann algebra of bounded
multiplication operators on $L^2(X,\mu)$. If $\rho$ is a measurable
pseudometric on $X$ then $\Vb_\rho = \{\Vc_t^\rho\}$ with
$$\Vc_t^\rho = \{A \in \Bc(L^2(X,\mu)): \rho(p,q) > t\quad\Rightarrow\quad
M_pAM_q = 0\}$$
is a quantum pseudometric on $\Mx$; conversely, if $\Vb$ is a
quantum pseudometric on $\Mx$ then
$$\rho_\Vb(p,q) = \inf\{D(A): M_pAM_q \neq 0\}$$
is a measurable pseudometric on $\Mx$. We have $\rho = \rho_{\Vb_\rho}$
for any measurable pseudometric $\rho$ on $X$ and $\Vb \leq
\Vb_{\rho_\Vb}$ for any quantum pseudometric $\Vb$ on $\Mx$, with
equality if and only if each $\Vc_t$ is reflexive (Definition \ref{refdef}).
A measurable pseudometric $\rho$ is a measurable metric if and only if
$\Vb_\rho$ is a quantum metric.
\end{theo}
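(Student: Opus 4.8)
The plan is to prove the theorem in three stages: first that $\Vb_\rho$ is a quantum pseudometric when $\rho$ is a measurable pseudometric, second that $\rho_\Vb$ is a measurable pseudometric when $\Vb$ is a quantum pseudometric, and third that the two constructions are inverse in the sense asserted, with the reflexivity and metric refinements handled last. Throughout I would lean heavily on Theorem \ref{abelianrel}, which already tells us that for a fixed measurable relation $\Rc$ the space $\Vc_\Rc = \{A : (p,q) \notin \Rc \Rightarrow M_pAM_q = 0\}$ is a quantum relation on $\Mx$, that $\Rc_{\Vc_\Rc} = \Rc$, and that $\Vc \subseteq \Vc_{\Rc_\Vc}$ with equality iff $\Vc$ is reflexive. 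The key observation is that for a measurable pseudometric $\rho$, each level set $\Rc_t = \{(p,q) : \rho(p,q) \leq t\}$ should be a measurable relation in the sense of Definition \ref{measrel}, and that $\Vc_t^\rho = \Vc_{\Rc_t}$.

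First I would verify that $\Rc_t$ is a measurable relation. The key closure property $(*)$ of Definition \ref{measrel}, namely $(\bigvee p_\lambda, \bigvee q_\kappa) \in \Rc_t \Leftrightarrow$ some $(p_\lambda, q_\kappa) \in \Rc_t$, is precisely a restatement of the measurable-metric axiom (iii), $\rho(\bigvee p_\lambda, \bigvee q_\kappa) = \inf_{\lambda,\kappa}\rho(p_\lambda,q_\kappa)$. Granting this, Theorem \ref{abelianrel} immediately gives that each $\Vc_t^\rho$ is a quantum relation on $\Mx$, hence a weak* closed $\Mx'$-bimodule. I then check the W*-filtration axioms directly at the level of the relations $\Rc_t$: self-adjointness $(\Vc_t^\rho)^* = \Vc_t^\rho$ follows from the symmetry axiom (ii) $\rho(p,q)=\rho(q,p)$; the containment $\Vc_0^\rho \supseteq \Mx'$ uses $\rho(p,p)=0$ (axiom (i)) so that $\Rc_0$ contains the ``diagonal''; the nesting condition $\Vc_t^\rho = \bigcap_{s>t}\Vc_s^\rho$ follows because $\rho(p,q) \leq t \Leftrightarrow \rho(p,q) \leq s$ for all $s > t$. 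The filtration product condition $\Vc_s^\rho \Vc_t^\rho \subseteq \Vc_{s+t}^\rho$ is the one genuinely new verification: it should come from the measurable triangle inequality (iv), $\rho(p,r) \leq \sup_{q' \leq q}(\rho(p,q') + \rho(q',r))$, translated into a statement about compositions of relations $\Rc_s \Rc_t \subseteq \Rc_{s+t}$, and this is where I expect the main subtlety to lie, since the supremum over subprojections $q' \leq q$ must be manipulated carefully to produce, from $M_pAM_q \neq 0$ and $M_qBM_r \neq 0$, a single intermediate projection witnessing $\rho(p,r) \leq s+t$.

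Next, for the reverse construction, I would show $\rho_\Vb(p,q) = \inf\{D(A) : M_pAM_q \neq 0\}$ satisfies axioms (i)--(iv). Axioms (i) and (ii) follow as in Proposition \ref{aa}, from $I \in \Vc_0$ and from self-adjointness of each $\Vc_t$. Axiom (iii) follows because $M_{\bigvee p_\lambda}AM_{\bigvee q_\kappa} \neq 0$ iff $M_{p_\lambda}AM_{q_\kappa}\neq 0$ for some $\lambda,\kappa$, using that the $M_{p_\lambda}$ increase strongly to $M_{\bigvee p_\lambda}$. The triangle inequality (iv) mirrors the argument for property (v) in the proof of Proposition \ref{distances}: given $M_pAM_q \neq 0$ with $D(A)$ near $\rho_\Vb(p,q)$, one produces a nonzero subprojection $q'$ of $q$ (the range projection of a suitable $M_qA^*M_p$-type operator) so that for any $C$ with $M_{q'}CM_r \neq 0$ the product $M_pACM_r$ is nonzero, giving $\rho_\Vb(p,r) \leq D(A)+D(C)$ and hence the supremum-over-$q'$ bound after taking infima.

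Finally I would establish the inverse relationships. The identity $\rho = \rho_{\Vb_\rho}$ reduces, via $\Vc_t^\rho = \Vc_{\Rc_t}$ and the fact $\Rc_{\Vc_{\Rc_t}} = \Rc_t$ from Theorem \ref{abelianrel}, to the observation that the level sets $\{(p,q):\rho_{\Vb_\rho}(p,q)\leq t\}$ recover $\Rc_t$, which pins down $\rho_{\Vb_\rho} = \rho$. For the other direction, $\Vb \leq \Vb_{\rho_\Vb}$ is immediate since $A \in \Vc_t$ forces $M_pAM_q = 0$ whenever $\rho_\Vb(p,q) > t$; and equality holds exactly when each $\Vc_t = \Vc_{\Rc_{\Vc_t}}$, which by the last clause of Theorem \ref{abelianrel} is precisely the condition that each $\Vc_t$ be reflexive. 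For the metric refinement, I would compare the measurable-metric condition (existence of nets $p_\lambda \to p$, $q_\lambda \to q$ with $\rho(p_\lambda,q_\lambda)>0$ for disjoint $p,q$) with the quantum-metric condition $\Vc_0 = \Mx'$; using Proposition \ref{atomicmm} as orientation, the net condition says $\rho$ does not vanish on disjoint projections after the appropriate approximation, which translates exactly into $\Vc_0^\rho$ containing no $M_pAM_q$-witnesses for disjoint $p,q$ beyond those in $\Mx'$, i.e. $\Vc_0 = \Mx'$. I expect the filtration product step and this final metric equivalence to require the most care, the former for the relation-composition manipulation and the latter for correctly matching the net-based nondegeneracy with the algebraic condition $\Vc_0 = \Mx'$.
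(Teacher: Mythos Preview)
Your overall structure matches the paper's, but there is one concrete error and one underdeveloped step.

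The error is your claim that $\Rc_t = \{(p,q) : \rho(p,q) \leq t\}$ is a measurable relation. Condition $(*)$ of Definition \ref{measrel} requires that $\left(\bigvee p_\lambda, \bigvee q_\kappa\right) \in \Rc_t$ imply some $(p_\lambda,q_\kappa) \in \Rc_t$; with closed sublevel sets this says $\inf_{\lambda,\kappa}\rho(p_\lambda,q_\kappa) \leq t$ implies some $\rho(p_\lambda,q_\kappa) \leq t$, which fails whenever the infimum is not attained. The paper avoids this by working with the \emph{open} sublevel sets $\Rc_t = \{(p,q):\rho(p,q) < t\}$, which are genuine measurable relations (this is cited as \cite{W6}, Lemma 1.16), and then writing $\Vc_t^\rho = \bigcap_{s>t}\Vc_{\Rc_s}$. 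This forces $\epsilon$'s into every subsequent step: the filtration condition is obtained from $\Vc_s^\rho\Vc_t^\rho \subseteq \Vc_{\Rc_{s+\epsilon}}\Vc_{\Rc_{t+\epsilon}} \subseteq \Vc_{\Rc_{s+t+2\epsilon}}$ (using the relation-composition result \cite{W6}, Theorem 2.9(d)) and then intersecting over $\epsilon>0$; the identity $\rho = \rho_{\Vb_\rho}$ is obtained from sandwiching $\tilde{\Rc}_t \subseteq \Rc_t \subseteq \tilde{\Rc}_{t+\epsilon}$; and the reflexivity equivalence uses $\Wc_t \subseteq \tilde{\Vc}_t \subseteq \Wc_{t+\epsilon}$ rather than the direct equality $\tilde{\Vc}_t = \Vc_{\Rc_{\Vc_t}}$ you asserted. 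Your invocations of Theorem \ref{abelianrel} for $\Rc_{\Vc_{\Rc_t}} = \Rc_t$ are therefore not justified as written.

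The underdeveloped step is the metric equivalence. Your sketch that the net condition ``translates exactly'' into $\Vc_0^\rho = \Mx'$ is too fast. The paper's argument for the harder direction (if $\rho$ is not a measurable metric then $\Vc_0^\rho \neq \Mx'$) goes through the notion of closure $\bar q = X - \bigvee\{p:\rho(p,q)>0\}$: failure of the measurable metric condition means the closed projections do not generate $L^\infty(X,\mu)$, so there exists $A \notin \Mx$ commuting with every $M_{\bar q}$, and one then checks such an $A$ lies in $\Vc_0^\rho$. This is a nontrivial argument that your outline does not capture.
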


\begin{proof}
Let $\rho$ be a measurable pseudometric.
It follows from Definition \ref{measpm} (i) and (iii) that $pq \neq 0$
implies $\rho(p,q) = 0$ (since $p = p \vee pq$ and $q = q \vee pq$).
Thus $\rho(p,q) > t$ $\Rightarrow$ $M_pM_fM_q
= M_{pfq} = 0$ for all $f \in L^\infty(X,\mu)$,
which shows that $\Mx \subseteq \Vc_t^\rho$ for all $t$. Each $\Vc_t^\rho$ is
self-adjoint because $\rho$ is symmetric and is clearly weak operator,
and hence weak*, closed. So each $\Vc_t^\rho$ is a dual operator system
and $\Mx \subseteq \Vc_0^\rho$. Condition (ii) in the definition of a
W*-filtration (Definition \ref{filt}) is easy. For condition (i)
let $\Rc_t$ be the measurable relation $\Rc_t = \{(p,q): \rho(p,q) < t\}$
(\cite{W6}, Lemma 1.16) and observe that the corresponding
quantum relations $\Vc_{\Rc_t} = \{A \in \Bc(L^2(X,\mu)): \rho(p,q) \geq t$
$\Rightarrow$ $M_pAM_q = 0\}$ satisfy $\Vc_t^\rho = \bigcap_{s > t}
\Vc_{\Rc_s}$. Now let $\Vc_{s,t,\epsilon} =
\Vc_{\Rc_{s + \epsilon}}\Vc_{\Rc_{t + \epsilon}}$. We have
$\Rc_{\Vc_{s,t,\epsilon}} \subseteq \Rc_{s + t + 2\epsilon}$
by  Lemma 1.16 and Theorem 2.9 (d) of \cite{W6}, so
$$\Vc_s^\rho\Vc_t^\rho \subseteq
\Vc_{\Rc_{s + \epsilon}}\Vc_{\Rc_{t + \epsilon}} = \Vc_{s,t,\epsilon}
\subseteq \Vc_{\Rc_{\Vc_{s,t,\epsilon}}}
\subseteq \Vc_{\Rc_{s + t + 2\epsilon}};$$
intersecting over $\epsilon > 0$ yields $\Vc_s^\rho\Vc_t^\rho \subseteq
\Vc_{s + t}^\rho$. This completes the proof that $\Vb_\rho$ is a quantum
pseudometric on $\Mx$.

Next, let $\Vb$ be a quantum pseudometric on $\Mx$. Verification of
conditions (i), (ii), and (iii) of Definition \ref{measpm} for $\rho_\Vb$
is straightforward. For (iv), let $p$, $q$, and $r$ be nonzero projections
in $L^\infty(X,\mu)$ and let $\epsilon > 0$. We may assume
$\rho_\Vb(q,r) < \infty$. Find $A \in \Bc(L^2(X,\mu))$ such that
$D(A) \leq \rho_\Vb(q,r) + \epsilon$ and $M_qAM_r \neq 0$. Then
$$Q = \bigvee\{[M_{fq}AM_r]: f \in L^\infty(X,\mu)\}$$
is invariant for $\Mx$ and hence $Q = M_{q'}$ for some nonzero $q' \leq q$.
We may assume $\rho(p,q') < \infty$. Now find $B \in \Bc(L^2(X,\mu))$ such
that $D(B) \leq \rho_\Vb(p,q') + \epsilon$ and $M_pBM_{q'} \neq 0$, so that
$M_pBM_fAM_r \neq 0$ for some $f \in L^\infty(X,\mu)$. Then
$$\rho_\Vb(p,r) \leq D(BM_fA) \leq D(B) + D(A) \leq \rho_\Vb(p,q') +
\rho_\Vb(q',r) + 2\epsilon$$
since $\rho_\Vb(q,r) \leq \rho_\Vb(q',r)$. Taking the infimum over
$\epsilon$ yields
$$\rho_\Vb(p,r) \leq \sup_{q' \leq q}(\rho_\Vb(p,q') +
\rho_\Vb(q',r)).$$
This completes the proof that $\rho_\Vb$ is a measurable metric.

Now let $\rho$ be a measurable pseudometric, let $\Vb = \Vb_\rho$,
and let $\tilde{\rho} = \rho_\Vb$. Applying the formula $\Rc = \Rc_{\Vc_\Rc}$
(Theorem \ref{abelianrel}) to $\Rc_t = \{(p,q): \rho(p,q) < t\}$ yields
\begin{eqnarray*}
\Rc_t &=& \{(p,q): (\exists A \in \Bc(L^2(X,\mu)))\cr
&&\phantom{\liminf\liminf}(\rho(p',q') \geq t
\Rightarrow M_{p'}AM_{q'} = 0\quad{\rm and}\quad M_pAM_q \neq 0)\}.
\end{eqnarray*}
Letting
\begin{eqnarray*}
\tilde{\Rc}_t &=& \{(p,q): \tilde{\rho}(p,q) < t\}\cr
&=& \{(p,q): (\exists A \in \Bc(L^2(X,\mu)))(
D(A) < t\quad{\rm and}\quad M_pAM_q \neq 0)\},
\end{eqnarray*}
we then have
$$\tilde{\Rc_t} \subseteq \Rc_t \subseteq \tilde{\Rc}_{t + \epsilon}$$
for all $t$ and all $\epsilon > 0$. This shows that $\rho = \tilde{\rho}$.

Next let $\Vb$ be any quantum pseudometric on $\Mx$, let
$\rho = \rho_\Vb$, and let $\tilde{\Vb} = \Vb_\rho$. The inequality
$\Vb \leq \tilde{\Vb}$ is straightfoward. Conversely, let
$$\Wc_t = \{A \in \Bc(L^2(X,\mu)): M_p\Vc_tM_q = 0\quad\Rightarrow\quad
M_pAM_q = 0\},$$
so that $\Vc_t \subseteq \Wc_t$ and
$\Vc_t$ is reflexive if and only if $\Vc_t = \Wc_t$. We have
$$\tilde{\Vc}_t = \{A \in \Bc(L^2(X,\mu)): M_p\Vc_{t + \epsilon}M_q = 0
\hbox{ for some } \epsilon > 0\quad\Rightarrow\quad M_pAM_q =0\},$$
and
$$\Wc_t \subseteq \tilde{\Vc_t} \subseteq \Wc_{t + \epsilon}$$
for any $\epsilon > 0$. Thus if each $\Vc_t$ is reflexive then
$$\Vc_t = \Wc_t \subseteq \tilde{\Vc}_t \subseteq \bigcap_{s > t} \Wc_s
= \bigcap_{s > t} \Vc_s = \Vc_t$$
for all $t$,
so that $\Vc_t = \tilde{\Vc}_t$, and if some $\Vc_t$ is not reflexive then
$$\Vc_t \subsetneq \Wc_t \subseteq \tilde{\Vc}_t$$
for that $t$, so that $\Vc_t \neq \tilde{\Vc}_t$.
So $\Vb = \tilde{\Vb}$ if and only if each $\Vc_t$ is reflexive.

Finally, let $\rho$ be a measurable metric. If $A \in \Vc_0^\rho$
then $\rho(p,q) > 0$ implies $M_pAM_q = 0$, so the measurable metric
condition implies that $M_pAM_q = 0$ for any disjoint projections
$p$ and $q$ in $\Mx$. But this implies that $A$ belongs to $\Mx$, so we
have shown that if $\rho$ is a measurable metric then $\Vc_0^\rho =
\Mx$, i.e., $\Vb_\rho$ is a quantum metric. For the converse,
let the {\it closure} of $q$ be $\bar{q} = X -
\bigvee\{p: \rho(p,q) > 0\}$. If $\rho$ is not a measurable metric
then the closed projections in $L^\infty(X,\mu)$ do not generate
$L^\infty(X,\mu)$ as a von Neumann algebra (see Section 1.5 of
\cite{W6}). There must therefore exist
an operator $A \in \Bc(L^2(X,\mu))$ that commutes with $M_q$ for
every closed projection $q$ in $L^\infty(X,\mu)$ but does not belong
to $\Mx$. Now if $A \not\in \Vc_0^\rho$ then there
exist projections $p,q \in L^\infty(X,\mu)$ with $\rho(p,q) > 0$
and $M_pAM_q \neq 0$, but then $A$ cannot commute with $M_{\bar{q}}$,
a contradiction. So we conclude
that $A \in \Vc_0^\rho$, and this shows that $\Vc_0^\rho \neq \Mx$.
So if $\rho$ is not a measurable metric then $\Vb_\rho$ is not a
quantum metric.
\end{proof}


\subsection{Reflexivity and stabilization}\label{refstab}
We have seen the value of working with projections in $\Mx \overline{\otimes}
\Bc(l^2)$, and we will give simple examples in Section \ref{opsys} showing
that projections in $\Mx$ generally do not suffice in the basic results
of the theory. However, by
inflating $\Mx$ to $\tilde{\Mx} = \Mx \overline{\otimes} \Bc(l^2)$
and $\{\Vc_t\}$ to $\{\Vc_t \otimes I\}$ we can ensure that projections
in $\tilde{\Mx}$ do suffice for the basic theory. This is a consequence
of the general principle that projections in $\Mx$ suffice if $\Vb$ is
reflexive.

\begin{defi}\label{rfst}
Let $\Vb = \{\Vc_t\}$ be a quantum pseudometric on a von Neumann algebra $\Mx
\subseteq \Bc(H)$.

\noindent (a) $\Vb$ is {\it reflexive} if each $\Vc_t$ is reflexive
(Definition \ref{refdef}).

\noindent (b) The {\it stabilization} of $\Vb$ is the quantum
pseudometric $\Vb\otimes I = \{\Vc_t \otimes I\}$ on the von Neumann algebra
$\Mx \overline{\otimes} \Bc(l^2)$.
\end{defi}

We just give one illustration of the sufficiency of projections in $\Mx$
when $\Vb$ is reflexive; cf.\ Proposition \ref{recover} and Example
\ref{counterdiam}. The reader will not have any difficulty in supplying
analogous versions of, e.g., Propositions \ref{diameter} and \ref{gaps}.
See also Proposition \ref{refcolip} below.

\begin{prop}
Let $\Vb$ be a reflexive quantum pseudometric on a von Neumann algebra $\Mx
\subseteq \Bc(H)$. Then
$$\Vc_t = \{A \in \Bc(H): \rho(P,Q) > t
\quad\Rightarrow\quad PAQ = 0\}$$
for all $t \in [0,\infty)$, with $P$ and $Q$ ranging over projections in $\Mx$.
\end{prop}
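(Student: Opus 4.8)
The plan is to follow the template of Proposition \ref{recover}, but to replace its appeal to the separation lemma (Lemma \ref{separation})---which only manufactures separating projections in the stabilization $\Mx \overline{\otimes} \Bc(l^2)$---by an appeal to reflexivity, which manufactures separating projections already in $\Mx$. Write $\tilde{\Vc}_t = \{A \in \Bc(H): \rho(P,Q) > t \Rightarrow PAQ = 0\}$ with $P,Q$ ranging over projections in $\Mx$; the goal is to prove $\Vc_t = \tilde{\Vc}_t$.

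First I would dispatch the easy inclusion $\Vc_t \subseteq \tilde{\Vc}_t$, which uses only the restricted distance formula $\rho(P,Q) = \inf\{D(A): PAQ \neq 0\}$ valid for $P,Q \in \Mx$. Indeed, if $A \in \Vc_t$ and $PAQ \neq 0$ then $\rho(P,Q) \leq D(A) \leq t$, so contrapositively $\rho(P,Q) > t$ forces $PAQ = 0$. This inclusion does not need reflexivity.

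For the reverse inclusion I would argue contrapositively: suppose $A \notin \Vc_t$. By condition (ii) of Definition \ref{filt}(a) we have $\Vc_t = \bigcap_{s>t}\Vc_s$, so there is some $s > t$ with $A \notin \Vc_s$. Now each level $\Vc_s$ of the filtration is a bimodule over $\Vc_0 \supseteq \Mx'$, hence a quantum relation on $\Mx$, and it is reflexive by hypothesis. The crucial point, and the one place reflexivity genuinely enters, is the observation following Definition \ref{refdef}: for a quantum relation on $\Mx$, reflexivity may be tested with $P$ and $Q$ ranging only over projections in $\Mx$. Applying this to $A \notin \Vc_s$ yields projections $P,Q \in \Mx$ with $P\Vc_s Q = 0$ but $PAQ \neq 0$.

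It then remains to convert the separation $P\Vc_s Q = 0$ into the distance estimate $\rho(P,Q) > t$. Since $PBQ = 0$ for every $B \in \Vc_s$, that is, for every $B$ with $D(B) \leq s$, any $B$ with $PBQ \neq 0$ must satisfy $D(B) > s$; hence $\rho(P,Q) = \inf\{D(B): PBQ \neq 0\} \geq s > t$. Together with $PAQ \neq 0$ this shows $A \notin \tilde{\Vc}_t$, completing $\tilde{\Vc}_t \subseteq \Vc_t$. I do not expect any real obstacle here: the whole content is the reflexivity step that keeps the separating projections inside $\Mx$, and everything else is a routine transcription of the unstabilized argument of Proposition \ref{recover}.
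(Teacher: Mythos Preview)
Your proposal is correct and matches the paper's own proof in substance: both establish the easy inclusion $\Vc_t \subseteq \tilde{\Vc}_t$ directly from the definition of $\rho$, and both obtain the reverse inclusion by using reflexivity of each $\Vc_s$ (tested with projections in $\Mx$) together with the right-continuity condition $\Vc_t = \bigcap_{s>t}\Vc_s$. The only cosmetic difference is that the paper phrases the reverse inclusion directly (taking $A \in \tilde{\Vc}_t$ and showing $A \in \Vc_s$ for every $s>t$) while you phrase it contrapositively.
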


\begin{proof}
Fix $t$ and let $\tilde{\Vc}_t = \{A \in \Bc(H): \rho(P,Q) > t
\Rightarrow PAQ = 0\}$. Then $\Vc_t \subseteq \tilde{\Vc}_t$ follows
immediately from the definition of $\rho$ (Definition \ref{projdist}).
Conversely, let $A \in \tilde{\Vc}_t$. For any $s > t$ we have
$$P\Vc_sQ = 0\quad\Rightarrow\quad\rho(P,Q) \geq s
\quad\Rightarrow\quad PAQ = 0$$
for any projections $P,Q \in \Mx$.
By reflexivity we conclude that $A$ belongs to $\Vc_s$ for all $s > t$,
and hence that $A$ belongs to $\Vc_t$. Thus $\Vc_t = \tilde{\Vc}_t$.
\end{proof}

Next we observe that reflexivity can always be achieved by stabilization.

\begin{prop}
Let $\Vb$ be a quantum pseudometric on a von Neumann algebra $\Mx
\subseteq \Bc(H)$. Then $\Vb \otimes I$ is a reflexive quantum pseudometric
on $\Mx \overline{\otimes} \Bc(l^2)$ and
$\Vb$ is a quantum metric if and only if $\Vb\otimes I$ is a quantum
metric.
\end{prop}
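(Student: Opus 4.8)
The plan is to verify three things in turn: that $\Vb \otimes I$ is a quantum pseudometric on $\Mx \overline{\otimes} \Bc(l^2)$, that it is reflexive, and that the metric condition transfers. The first and third are formal, the reflexivity is the substantive part. For the pseudometric claim I would exploit that $\phi\colon \Bc(H) \to \Bc(H \otimes l^2)$, $\phi(A) = A \otimes I$, is a normal unital $*$-isomorphism onto the von Neumann algebra $\Bc(H) \otimes I = (I \otimes \Bc(l^2))'$, hence a weak* homeomorphism onto a weak* closed image. Consequently each $\Vc_t \otimes I = \phi(\Vc_t)$ is weak* closed, self-adjoint, and unital, i.e.\ a dual operator system; and since $\phi$ is multiplicative and injective it transfers the filtration conditions $\Vc_s\Vc_t \subseteq \Vc_{s+t}$ and $\Vc_t = \bigcap_{s>t}\Vc_s$ verbatim to $\{\Vc_t \otimes I\}$. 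The commutation theorem gives $(\Mx \overline{\otimes} \Bc(l^2))' = \Mx' \otimes I$, so from $\Mx' \subseteq \Vc_0$ we get $\Mx' \otimes I \subseteq \Vc_0 \otimes I$, establishing the pseudometric. The metric equivalence is then immediate from injectivity of $\phi$: $\Vc_0 \otimes I = \Mx' \otimes I$ holds precisely when $\Vc_0 = \Mx'$, so $\Vb \otimes I$ is a quantum metric exactly when $\Vb$ is.

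The heart of the matter is showing $\mathrm{Ref}(\Vc_t \otimes I) = \Vc_t \otimes I$, which I would do in two complementary steps after fixing $B \in \Bc(H \otimes l^2)$ in the reflexive hull (the inclusion $\Vc_t \otimes I \subseteq \mathrm{Ref}(\Vc_t \otimes I)$ being trivial). First I test against the ampliation projections $P = I \otimes p$ and $Q = I \otimes q$ with $p,q$ orthogonal projections on $l^2$: since $(I \otimes p)(C \otimes I)(I \otimes q) = C \otimes pq = 0$, we have $P(\Vc_t \otimes I)Q = 0$, so reflexivity forces $(I \otimes p)B(I \otimes q) = 0$ for all $p \perp q$. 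Writing $B$ as an operator matrix $(B_{ij})$ over $\Bc(H)$ and evaluating this vanishing on rank-one projections onto $\xi \perp \eta$, the choices $\xi = e_i,\eta = e_j$ kill the off-diagonal entries while $\xi = e_i + e_j,\eta = e_i - e_j$ force $B_{ii} = B_{jj}$ (the operator-valued analogue of ``every vector is an eigenvector''). Hence $B = A \otimes I$ for a single $A \in \Bc(H)$.

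Second, I would show $A \in \Vc_t$ by contradiction, and this is where the one genuinely nontrivial input enters. If $A \notin \Vc_t$, then, because $\Vc_t$ is a quantum relation on $\Mx$, Lemma \ref{separation} supplies projections $P_0, Q_0 \in \Mx \overline{\otimes} \Bc(l^2) \subseteq \Bc(H \otimes l^2)$ with $P_0(A \otimes I)Q_0 \neq 0$ yet $P_0(C \otimes I)Q_0 = 0$ for all $C \in \Vc_t$. The latter says exactly $P_0(\Vc_t \otimes I)Q_0 = 0$, so reflexivity would force $P_0 B Q_0 = P_0(A \otimes I)Q_0 = 0$, contradicting the former. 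Therefore $A \in \Vc_t$ and $B \in \Vc_t \otimes I$, completing the reverse inclusion.

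I expect the reflexivity argument to be the only real obstacle, and the decisive observation is that the two families of test projections play separate roles that together suffice: the ampliation projections $I \otimes p$ (with $pq=0$) pin $B$ down to the form $A \otimes I$, after which the separation Lemma \ref{separation} certifies membership of $A$ in $\Vc_t$ using projections drawn from $\Mx \overline{\otimes} \Bc(l^2)$. Neither family alone would suffice, which is why the stabilization to $\Mx \overline{\otimes} \Bc(l^2)$ is exactly what manufactures reflexivity.
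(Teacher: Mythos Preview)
Your proof is correct. The paper does not actually prove this proposition: it defers the reflexivity assertion to \cite{W6}, Proposition 2.20, and calls the metric equivalence ``easy.'' Your argument is a self-contained proof of the reflexivity claim using only the separation Lemma \ref{separation}, which is already stated in the paper. The two-step strategy---first using ampliation projections $I \otimes p$ to force $B = A \otimes I$, then invoking Lemma \ref{separation} to place $A$ in $\Vc_t$---is exactly the natural unpacking of what the cited external result would establish, so there is no substantive divergence, only a difference in how much is made explicit.
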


The first assertion follows from (\cite{W6}, Proposition 2.20) and the
second is easy.

We can transfer results and constructions from $\Mx$ to
$\Mx \overline{\otimes} \Bc(l^2)$.
For example, given a quantum pseudometric $\Vb$ on a von Neumann algebra
$\Mx \subseteq \Bc(H)$ and $\epsilon > 0$ we define the open
$\epsilon$-neighborhood of a projection $P \in \Mx \overline{\otimes}
\Bc(l^2)$ to be the projection onto
$$\overline{(\Vc_{<\epsilon}\otimes I)({\rm ran}(P))} =
\bigvee_{t < \epsilon} \overline{(\Vc_t\otimes I)({\rm ran}(P))},$$
which is just its open $\epsilon$-neighborhood in
$\Mx \overline{\otimes} \Bc(l^2)$ relative to the quantum
pseudometric $\Vb\otimes I$. We similarly define the closure of $P$
to be the projection onto
$$\bigcap_{t > 0} \overline{(\Vc_t \otimes I)({\rm ran}(P))},$$
which again is just its closure in $\Mx \overline{\otimes} \Bc(l^2)$
relative to the quantum pseudometric $\Vb\otimes I$. We still say that
$P$ is closed if it equals its closure. Note that Propositions
\ref{nbhds} and \ref{closures} hold for projections in
$\Mx \overline{\otimes} \Bc(l^2)$. Thus the above concepts all have
manifestly representation independent reformulations in terms of
projection distances in $\Mx \overline{\otimes} \Bc(l^2)$.

This notion of closure in $\Mx \overline{\otimes} \Bc(l^2)$ can be
used to give an intrinsic characterization of
the metric/pseudometric distinction.

\begin{prop}
Let $\Vb$ be a quantum pseudometric on a von Neumann algebra
$\Mx \subseteq \Bc(H)$. Then $\Vb$ is a quantum metric if and only if
the closed projections in $\Mx \overline{\otimes} \Bc(l^2)$
generate $\Mx \overline{\otimes} \Bc(l^2)$ as a von Neumann algebra.
\end{prop}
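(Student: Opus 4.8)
The plan is to reduce the entire statement to a single algebra identity: that the von Neumann algebra $\Ac$ generated by the closed projections in $N := \Mx \overline{\otimes} \Bc(l^2)$ equals $(\Vc_0 \otimes I)'$. Throughout, closures and distances refer to the stabilization $\Vb \otimes I$, which is a reflexive quantum pseudometric on $N$; its distance function, restricted to projections in $N$, coincides with $\rho = \rho_\Vb$, and Proposition \ref{closures} applied to $\Vb \otimes I$ gives $\overline{P} = I - \bigvee\{Q : \rho(P,Q) > 0\}$ with $Q$ ranging over projections in $N$. Since $\Vc_0$ is a von Neumann algebra, so is $\Vc_0 \otimes I$. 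Granting the identity $\Ac = (\Vc_0 \otimes I)'$, the proposition is immediate: the closed projections generate $N$ exactly when $\Ac = N$, i.e. $(\Vc_0 \otimes I)' = N$, i.e. $\Vc_0 \otimes I = N' = \Mx' \otimes I$, i.e. $\Vc_0 = \Mx'$, which is precisely the condition that $\Vb$ be a quantum metric.

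For the inclusion $\Ac \subseteq (\Vc_0 \otimes I)'$ I would show that every closed projection $P$ commutes with $\Vc_0 \otimes I$. Fix $A = B \otimes I$ with $B \in \Vc_0$. Since $P$ is closed, $I - P = \bigvee\{Q : \rho(P,Q) > 0\}$. For each $Q$ in this family, $0 < \rho(P,Q) = \inf\{t : P(C\otimes I)Q \neq 0,\ C \in \Vc_t\}$, so taking $t = 0$ gives $P(C \otimes I)Q = 0$ and, by symmetry of $\rho$, $Q(C \otimes I)P = 0$ for every $C \in \Vc_0$; in particular $PAQ = QAP = 0$. Passing to ranges and joining over all such $Q$ yields $PA(I-P) = 0$ and $(I-P)AP = 0$, whence $AP = PAP = PA$. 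Thus $\Vc_0 \otimes I \subseteq \Ac'$, and taking commutants gives $\Ac \subseteq (\Vc_0 \otimes I)'$.

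For the reverse inclusion I would take $T \in \Ac'$, so that $T$ commutes with every closed projection in $N$, and show $T \in \Vc_0 \otimes I$. Because $\Vb \otimes I$ is reflexive, the preceding proposition characterizes $\Vc_0 \otimes I$ as the set of operators $T$ with $PTQ = 0$ whenever $\rho(P,Q) > 0$, with $P,Q$ projections in $N$, so it suffices to verify this condition. Given $\rho(P,Q) > 0$, the closure $\overline{P}$ is closed, hence $\overline{P}T = T\overline{P}$; moreover $\rho(\overline{P},Q) = \rho(P,Q) > 0$ (Proposition \ref{closures}) forces $Q \leq I - \overline{P}$, i.e. $\overline{P}Q = 0$. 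Using $P = P\overline{P}$ we then compute
$$PTQ = P\overline{P}TQ = PT\overline{P}Q = 0.$$
Therefore $T \in \Vc_0 \otimes I$, so $\Ac' \subseteq \Vc_0 \otimes I$ and, taking commutants, $(\Vc_0 \otimes I)' \subseteq \Ac$. Combining the two inclusions gives $\Ac = (\Vc_0 \otimes I)'$, completing the reduction.

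The main obstacle is the reverse inclusion, where one must pass from the purely order-theoretic hypothesis that $T$ commutes with all closed projections to the analytic conclusion that $T$ lies in $\Vc_0 \otimes I$ (equivalently $D(T) = 0$). This step relies essentially on two inputs supplied earlier: the reflexivity of the stabilization, which guarantees that projections in $N$ alone detect membership in $\Vc_0 \otimes I$, and the geometric content of Proposition \ref{closures}, namely that $\rho(P,Q) > 0$ places $Q$ beneath $I - \overline{P}$. I would also confirm at the outset that the distance function of $\Vb \otimes I$ restricted to projections in $N$ agrees with $\rho_\Vb$, so that both the closure formula and the reflexive characterization can be read off in terms of a single distance function.
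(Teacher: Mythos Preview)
Your proof is correct and follows the same overall strategy as the paper: both establish the identity $\Ac' = \Vc_0 \otimes I$ (equivalently $\Ac = (\Vc_0 \otimes I)'$), from which the proposition follows immediately. Your first inclusion matches the paper's argument that closed projections commute with $\Vc_0 \otimes I$.

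The reverse inclusion is handled slightly differently. The paper first observes that every projection in $I \otimes \Bc(l^2)$ is closed, forcing $\Ac' \subseteq \Bc(H) \otimes I$; it then takes $A \in \Bc(H) \setminus \Vc_0$ and invokes Lemma~\ref{separation} directly to produce $P,Q$ with $\rho(P,Q) > 0$ and $P(A \otimes I)Q \neq 0$, so that $A \otimes I$ fails to commute with the closed projection $\overline{Q}$. You instead take an arbitrary $T \in \Ac'$ and appeal to the reflexivity of the stabilization (the proposition immediately preceding this one) to reduce membership in $\Vc_0 \otimes I$ to the condition $PTQ = 0$ whenever $\rho(P,Q) > 0$, which you verify cleanly via $PTQ = P\overline{P}TQ = PT\overline{P}Q = 0$. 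Your route is a bit more streamlined since it handles general $T$ without the preliminary reduction to $\Bc(H) \otimes I$, at the cost of invoking the reflexivity characterization (which itself encodes the separation lemma).
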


\begin{proof}
Let $\Nc \subseteq \Mx \overline{\otimes} \Bc(l^2)$ be the von Neumann
algebra generated by the closed projections. We will show that
$\Nc' = \Vc_0 \otimes I$; thus $\Nc = \Vc_0' \overline{\otimes} \Bc(l^2)$,
so that $\Nc = \Mx \overline{\otimes} \Bc(l^2)$ if and only if $\Vc_0 =
\Mx'$, i.e., $\Vb$ is a quantum metric.

Observe first that every projection in $I \otimes \Bc(l^2)$ is closed. Thus
$\Nc' \subseteq (I \otimes \Bc(l^2))' = \Bc(H) \otimes I$. Now if
$A \in \Vc_0$ then the range of any closed projection is clearly
invariant for $A \otimes I$. Since $A^*$ also belongs to $\Vc_0$
it follows that $A \otimes I$ commutes with every closed projection,
and therefore $\Vc_0 \otimes I \subseteq \Nc'$. Conversely, let
$A \in \Bc(H) - \Vc_0$. Then there exists $t > 0$ such that $A \not\in
\Vc_t$, and by Lemma \ref{separation} we can then find projections
$P,Q \in \Mx \overline{\otimes} \Bc(l^2)$ such that $P(B \otimes I)Q =0$
for all $B \in \Vc_t$ but $P(A \otimes I)Q \neq 0$. It follows that
$P$ is orthogonal to $\overline{Q}$, and thus the range of
$\overline{Q}$ cannot be invariant for $A \otimes I$. So $A\otimes I$
does not commute with $\overline{Q}$, and hence $A \otimes I
\not\in \Nc'$. This completes the proof that $\Nc' = \Vc_0 \otimes I$.
\end{proof}

\subsection{Constructions with quantum metrics}\label{constqm}
In this section we describe some simple constructions that can be
performed on quantum metrics. We start by identifying the appropriate
morphisms in the category.

\begin{defi}\label{lipmaps}
Let $\Vb$ and $\Wb$ be quantum pseudometrics on von Neumann algebras
$\Mx$ and $\Nc$. A {\it co-Lipschitz morphism} from $\Mx$ to $\Nc$ is
a unital weak* continuous $*$-homomorphism $\phi: \Mx \to \Nc$ for
which there exists a number $C \geq 0$ such that
$$\rho(P,Q) \leq
C\cdot\rho((\phi \otimes {\rm id})(P), (\phi\otimes{\rm id})(Q))$$
for all projections $P,Q \in \Mx\overline{\otimes}\Bc(l^2)$.
The minimum value of $C$ is the {\it co-Lipschitz number} of $\phi$,
denoted $L(\phi)$, and $\phi$ is a {\it co-contraction morphism} if
$L(\phi) \leq 1$. It is a {\it co-isometric morphism} if it is surjective and
$$\rho(\tilde{P},\tilde{Q}) = \sup\{\rho(P,Q): (\phi \otimes {\rm id})(P)
= \tilde{P}, (\phi\otimes{\rm id})(Q) = \tilde{Q}\}$$
for all projections $\tilde{P},\tilde{Q} \in \Nc\overline{\otimes} \Bc(l^2)$.
We set $L(\phi) = \infty$ if $\phi$ is not co-Lipschitz. Thus
$$L(\phi) = \sup_{P,Q}\frac{\rho(P,Q)}{\rho((\phi \otimes {\rm id})(P),
(\phi\otimes{\rm id})(Q))}$$
with $P$ and $Q$ ranging over projections in $\Mx \overline{\otimes} \Bc(l^2)$
and using the convention $\frac{0}{0} = \frac{\infty}{\infty} = 0$.
\end{defi}

We immediately record the most important property of co-Lipschitz
morphisms, which follows directly from their definition:

\begin{prop}\label{colipcomp}
Let $\Vb_1$, $\Vb_2$, and $\Vb_3$ be quantum pseudometrics on von Neumann
algebras $\Mx_1$, $\Mx_2$, and $\Mx_3$ and let $\phi: \Mx_1 \to \Mx_2$
and $\psi: \Mx_2 \to \Mx_3$ be co-Lipschitz morphisms. Then
$\psi\circ\phi: \Mx_1 \to \Mx_3$ is a co-Lipschitz morphism and
$L(\psi\circ\phi) \leq L(\psi)L(\phi)$.
\end{prop}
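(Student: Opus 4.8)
The plan is to chain the two defining inequalities together, exploiting the functoriality of the operation $\otimes\,\mathrm{id}$. First I would observe that $\psi\circ\phi$ is again a unital weak* continuous $*$-homomorphism, since each of these three properties is preserved under composition. Thus the only substantive task is to exhibit a finite co-Lipschitz constant.

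The central observation is the identity
$$(\psi\circ\phi)\otimes\mathrm{id} = (\psi\otimes\mathrm{id})\circ(\phi\otimes\mathrm{id})$$
as normal unital $*$-homomorphisms from $\Mx_1\overline{\otimes}\Bc(l^2)$ to $\Mx_3\overline{\otimes}\Bc(l^2)$. Tensoring a normal unital $*$-homomorphism with the identity on $\Bc(l^2)$ yields a normal unital $*$-homomorphism between the corresponding von Neumann tensor products, and this assignment is functorial. In particular $\phi\otimes\mathrm{id}$ carries a projection $P\in\Mx_1\overline{\otimes}\Bc(l^2)$ to a projection in $\Mx_2\overline{\otimes}\Bc(l^2)$, so that the defining inequality for $\psi$ is legitimately applicable to it.

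Then I would fix arbitrary projections $P,Q\in\Mx_1\overline{\otimes}\Bc(l^2)$ and set $P'=(\phi\otimes\mathrm{id})(P)$ and $Q'=(\phi\otimes\mathrm{id})(Q)$, which are projections in $\Mx_2\overline{\otimes}\Bc(l^2)$. Applying the definition of a co-Lipschitz morphism first to $\phi$ and then to $\psi$ gives
$$\rho(P,Q)\leq L(\phi)\cdot\rho(P',Q')\leq L(\phi)L(\psi)\cdot\rho\bigl((\psi\otimes\mathrm{id})(P'),(\psi\otimes\mathrm{id})(Q')\bigr),$$
and the functoriality identity rewrites the final arguments as $((\psi\circ\phi)\otimes\mathrm{id})(P)$ and $((\psi\circ\phi)\otimes\mathrm{id})(Q)$. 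Since $P$ and $Q$ are arbitrary, this exhibits $L(\phi)L(\psi)$ as an admissible co-Lipschitz constant for $\psi\circ\phi$; because $L(\psi\circ\phi)$ is by definition the least such constant, we conclude $L(\psi\circ\phi)\leq L(\psi)L(\phi)$.

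There is essentially no hard step here, which is why the statement advertises that it follows directly from the definition. The only point deserving a word of care is the functoriality of $\otimes\,\mathrm{id}$ together with its preservation of projections, since this is precisely what licenses substituting $P'$ and $Q'$ into the inequality for $\psi$. The conventions $\frac{0}{0}=\frac{\infty}{\infty}=0$ and $L(\phi)=\infty$ for a non-co-Lipschitz $\phi$ require only that the displayed chain of inequalities be read in $[0,\infty]$, where it holds automatically whenever either factor is infinite.
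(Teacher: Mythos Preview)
Your proposal is correct and matches the paper's approach: the paper gives no proof at all, merely stating that the result ``follows directly from their definition,'' and your argument is precisely the direct chaining of the defining inequalities that this remark points to.
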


Definition \ref{lipmaps} is motivated by the atomic abelian case, where
the unital weak* continuous $*$-homomorphisms from $l^\infty(X)$
to $l^\infty(Y)$ are precisely the maps given by composition
with functions from $Y$ to $X$. If $X$ and $Y$ are pseudometric
spaces, let $L(f)$ denote the Lipschitz number of $f: Y \to X$,
$$L(f) = \sup_{x',y' \in Y}\frac{d_X(f(x'), f(y'))}{d_Y(x',y')}$$
(with the convention $\frac{0}{0}= 0$).

\begin{prop}\label{aacolip}
Let $X$ and $Y$ be pseudometric spaces and equip
$l^\infty(X)$ and $l^\infty(Y)$ with the corresponding quantum
pseudometrics (Proposition \ref{aa}).
If $f: Y \to X$ is a Lipschitz function then
$\phi: g \mapsto g\circ f$ is a co-Lipschitz morphism from $l^\infty(X)$
to $l^\infty(Y)$, and $L(\phi) = L(f)$. Every co-Lipschitz morphism from
$l^\infty(X)$ to $l^\infty(Y)$ is of this form.
\end{prop}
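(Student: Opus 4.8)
The plan is to first pin down the projection distance $\rho$ explicitly on the stabilized algebra, and then read off both inequalities $L(\phi)\le L(f)$ and $L(\phi)\ge L(f)$ from that formula. Write $d_X,d_Y$ for the two pseudometrics and recall from Proposition \ref{aa} that the filtration is $\Vc_t = \overline{\rm span}^{wk^*}\{V_{xy}: d_X(x,y)\le t\}$ on $\Bc(l^2(X))$. Since $l^\infty(X)$ is atomic, $l^\infty(X)\overline{\otimes}\Bc(l^2)$ is the $l^\infty$-direct product $\prod_{x\in X}\Bc(l^2)$, so a projection $P$ in it is just a bounded family $(P_x)_{x\in X}$ of projections in $\Bc(l^2)$. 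A direct matrix-element computation on $l^2(X)\otimes l^2$ gives, for $A\in\Bc(l^2(X))$,
$$\langle P(A\otimes I)Q\,(e_y\otimes f),\, e_x\otimes g\rangle = \langle Ae_y,e_x\rangle\,\langle P_xQ_yf,g\rangle,$$
so $P(A\otimes I)Q\ne0$ iff $\langle Ae_y,e_x\rangle\ne0$ and $P_xQ_y\ne0$ for some $x,y$. Inserting $A=V_{xy}$ for the ``$\le$'' direction, and using that $A\in\Vc_t$ forces $\langle Ae_y,e_x\rangle\ne0\Rightarrow d_X(x,y)\le t$ for the ``$\ge$'' direction, yields the key formula $\rho(P,Q)=\inf\{d_X(x,y): P_xQ_y\ne0\}$. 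Establishing this formula is the main step; the rest is bookkeeping.

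Next I record the action of $\phi\otimes{\rm id}$: since $\phi(g)=g\circ f$, on families it is $(P_x)_{x\in X}\mapsto(P_{f(y)})_{y\in Y}$, so $(\phi\otimes{\rm id})(P)=(P_{f(y)})_y$. For the upper bound, suppose $L(f)=L<\infty$. If $P_{f(y)}Q_{f(y')}\ne0$ then, taking $x=f(y)$ and $x'=f(y')$ in the formula on $l^\infty(X)\overline{\otimes}\Bc(l^2)$, I get $\rho(P,Q)\le d_X(f(y),f(y'))\le L\,d_Y(y,y')$; taking the infimum over all such pairs $(y,y')$ and applying the same formula on $l^\infty(Y)\overline{\otimes}\Bc(l^2)$ gives $\rho(P,Q)\le L\cdot\rho((\phi\otimes{\rm id})(P),(\phi\otimes{\rm id})(Q))$. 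Hence $\phi$ is co-Lipschitz with $L(\phi)\le L(f)$.

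For the lower bound I test against the simplest projections. Given $y_0,y_0'\in Y$, put $P=\chi_{\{f(y_0)\}}\otimes I$ and $Q=\chi_{\{f(y_0')\}}\otimes I$ in $\Mx\otimes I$. The formula gives $\rho(P,Q)=d_X(f(y_0),f(y_0'))$, while $(\phi\otimes{\rm id})(P)=\chi_{f^{-1}(f(y_0))}\otimes I$ and similarly for $Q$, so $\rho((\phi\otimes{\rm id})(P),(\phi\otimes{\rm id})(Q))=d_Y(f^{-1}(f(y_0)),f^{-1}(f(y_0')))$. Since $y_0$ and $y_0'$ lie in these respective preimages, that distance is $\le d_Y(y_0,y_0')$, so the ratio in the definition of $L(\phi)$ is at least $d_X(f(y_0),f(y_0'))/d_Y(y_0,y_0')$; taking the supremum over $y_0,y_0'$ gives $L(\phi)\ge L(f)$, and with the upper bound $L(\phi)=L(f)$. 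A reassuring check is that the preimage distance can only shrink below $d_Y(y_0,y_0')$, pushing the ratio toward $L(f)$ rather than past it: whatever pair realizes the preimage distance is itself an admissible pair for $L(f)$, so consistency with the upper bound is automatic.

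Finally, for the characterization: any co-Lipschitz morphism is in particular a unital weak* continuous $*$-homomorphism $l^\infty(X)\to l^\infty(Y)$, hence equals $g\mapsto g\circ f$ for some $f:Y\to X$ by the standard correspondence noted just before the proposition. Being co-Lipschitz means $L(\phi)<\infty$, and the lower-bound argument above never used that $f$ was Lipschitz, so it yields $L(f)\le L(\phi)<\infty$; thus $f$ is Lipschitz and $\phi$ is of the asserted form. The only genuine obstacle is the reduction to the concrete formula for $\rho$ on the stabilized algebra; the pseudometric (as opposed to metric) generality and the $\frac00=\frac\infty\infty=0$ conventions are then absorbed without incident.
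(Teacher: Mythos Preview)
Your proof is correct and follows essentially the same approach as the paper's: both identify projections in $l^\infty(X)\overline{\otimes}\Bc(l^2)$ with projection-valued families $(P_x)_{x\in X}$, use the formula $\rho(P,Q)=\inf\{d_X(x,y):P_xQ_y\neq 0\}$, and test against the atomic projections $\chi_{\{f(y_0)\}}\otimes I$ for the lower bound. The paper asserts the projection-distance formula without proof and argues the upper bound via an $\epsilon$-approximation, whereas you prove the formula explicitly from the matrix-element computation and then read off the upper bound by direct infimum; this makes your presentation slightly cleaner but does not constitute a different route.
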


\begin{proof}
Let $f$ be any function from $Y$ to $X$ and let $\phi: l^\infty(X) \to
l^\infty(Y)$ be composition with $f$.
The projections in $l^\infty(X) \overline{\otimes} \Bc(l^2)$ can be
identified with projection-valued functions from $X$ into $\Bc(l^2)$,
and similarly for $Y$. Taking $P = e_x\cdot I$ and $Q = e_y \cdot I$
for $x,y \in X$, we have $(\phi\otimes{\rm id})(P) = \chi_{f^{-1}(x)}\cdot I$
and $(\phi\otimes {\rm id})(Q) = \chi_{f^{-1}(y)} \cdot I$. So
$$\frac{d(x,y)}{d(f^{-1}(x),f^{-1}(y))}
=\frac{\rho(P,Q)}{\rho((\phi\otimes{\rm id})(P),(\phi\otimes{\rm id})(Q))}
\leq L(\phi)$$
for all $x,y \in X$, and hence
$$\frac{d(f(x'),f(y'))}{d(x',y')} \leq
\frac{d(f(x'),f(y'))}{d(f^{-1}(f(x')), f^{-1}(f(y')))} \leq L(\phi)$$
for all $x',y' \in Y$; taking the supremum over $x'$ and $y'$ then
yields $L(f) \leq L(\phi)$. Conversely, let $P$ and $Q$ be any
projection-valued functions from $X$ into $\Bc(l^2)$. Then
$$\rho(P,Q) = \inf\{d(x,y): P(x)Q(y) \neq 0\}.$$
We may assume that $\rho((\phi\otimes {\rm id})(P),
(\phi\otimes {\rm id})(Q)) < \infty$.
Given $\epsilon > 0$, find $x',y' \in Y$ such that
$$(\phi\otimes {\rm id})(P)(x') = P(f(x'))$$
and
$$(\phi\otimes {\rm id})(Q)(y') = Q(f(y'))$$
have nonzero product and $d(x',y') \leq \rho((\phi\otimes {\rm id})(P),
(\phi\otimes {\rm id})(Q)) + \epsilon$. If $d(x',y') \geq
\rho((\phi\otimes {\rm id})(P), (\phi\otimes {\rm id})(Q)) >
\epsilon > 0$ then
$$\frac{\rho(P,Q)}{\rho((\phi\otimes {\rm id})(P),
(\phi\otimes {\rm id})(Q))} \leq
\frac{d(f(x'),f(y'))}{d(x',y') - \epsilon},$$
and taking $\epsilon \to 0$ and the supremum over $P$ and $Q$
yields $L(\phi) \leq L(f)$.
If $\rho((\phi\otimes {\rm id})(P), (\phi\otimes {\rm id})(Q)) = 0$
then either $\rho(P,Q) = 0$ and the pair $P,Q$ does not contribute to
$L(\phi)$, or else $\rho(P,Q) > 0$ and the above implies
$$\frac{d(f(x'),f(y'))}{d(x',y')} \geq \frac{\rho(P,Q)}{\epsilon}.$$
In that case taking $\epsilon \to 0$ yields $L(f) = \infty$, which again
implies $L(\phi) \leq L(f)$. So we have
shown that $L(\phi) = L(f)$, and hence $\phi$ is co-Lipschitz if and only
if $f$ is Lipschitz. Since every unital weak* continuous $*$-homomorphism
from $l^\infty(X)$ to $l^\infty(Y)$ is given by composition with
a function $f: Y \to X$, every co-Lipschitz morphism must arise in the
above manner.
\end{proof}

Under the assumption of reflexivity the co-Lipschitz number can be
computed using only projections in $\Mx$. The proof of this result is
notable for its use of the hard direction of Theorem \ref{abstractchar}.

\begin{prop}\label{refcolip}
Let $\Vb$ and $\Wb$ be quantum pseudometrics on von Neumann algebras
$\Mx \subseteq \Bc(H)$ and $\Nc \subseteq \Bc(K)$ and let
$\phi: \Mx \to \Nc$ be a unital weak* continuous $*$-homomorphism.
Suppose $\Vb$ is reflexive (Definition \ref{rfst} (a)). Then
$$L(\phi) = \sup_{P,Q} \frac{\rho(P,Q)}{\rho(\phi(P),\phi(Q))},$$
with $P$ and $Q$ ranging over projections in $\Mx$.
\end{prop}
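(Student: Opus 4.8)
The easy inequality is that the displayed supremum over projections in $\Mx$ is at most $L(\phi)$. Indeed, for projections $p,q \in \Mx$ the embedding $p \mapsto p \otimes I$ into $\Mx \overline{\otimes} \Bc(l^2)$ satisfies $\rho_\Vb(p\otimes I, q\otimes I) = \rho_\Vb(p,q)$ and $(\phi \otimes {\rm id})(p \otimes I) = \phi(p) \otimes I$, so each ratio in the displayed supremum already occurs among the ratios defining $L(\phi)$. Write $C$ for this supremum. The whole content is the reverse inequality $L(\phi) \leq C$, i.e.\ that the bound extends from projections in $\Mx$ to all projections in $\Mx \overline{\otimes} \Bc(l^2)$. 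Assuming $C < \infty$ (otherwise there is nothing to prove), the defining inequality $\rho_\Vb(p,q) \leq C\cdot\rho_\Wb(\phi(p),\phi(q))$ for $p,q \in \Mx$ rephrases as
$$\rho_\Vb(p,q) > Ct \quad\Longrightarrow\quad \rho_\Wb(\phi(p),\phi(q)) > t$$
for all $t \geq 0$ and all projections $p,q \in \Mx$.

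The plan is, for each fixed $t > 0$, to produce a quantum relation on $\Mx$ that captures the pulled-back metric and to compare it with $\Vc_{Ct}$. First I would set
$$\Sc_t = \{(P,Q) \in \Pc^2 : \rho_\Wb((\phi\otimes{\rm id})(P),(\phi\otimes{\rm id})(Q)) < t\},$$
where $\Pc$ is the set of projections in $\Mx \overline{\otimes} \Bc(l^2)$, and check that $\Sc_t$ is an intrinsic quantum relation in the sense of Definition \ref{abstractqrel}. Axioms (i)--(iii) transfer directly from the corresponding properties of the quantum distance function $\rho_\Wb$ (Definition \ref{qdfdef}, via Proposition \ref{distances}): property (i) gives $(0,0) \notin \Sc_t$; the strengthened join property (iv) together with the fact that $\phi\otimes{\rm id}$ is a normal unital $*$-homomorphism (so it preserves arbitrary joins of projections, preserves range projections, and carries $I \otimes \Bc(l^2)$ into $I \otimes \Bc(l^2)$) gives axiom (ii); and property (vi) gives axiom (iii). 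Openness of $\Sc_t$ follows from the upper semicontinuity of $\rho_\Wb$ (property (vii)) together with the weak-operator continuity of $\phi \otimes {\rm id}$ on the bounded set of projections.

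With $\Sc_t$ established as an intrinsic quantum relation, Theorem \ref{abstractchar} yields a quantum relation $\Vc_{\Sc_t}$ on $\Mx$ with $\Rc_{\Vc_{\Sc_t}} = \Sc_t$. I would then show $\Vc_{\Sc_t} \subseteq \Vc_{Ct}$ using reflexivity: if $A \in \Vc_{\Sc_t}$ and $p,q \in \Mx$ are projections with $\rho_\Vb(p,q) > Ct$, then the rephrased bound gives $\rho_\Wb(\phi(p),\phi(q)) > t$, so $(p\otimes I, q\otimes I) \notin \Sc_t$ and hence $pAq = 0$; the recovery formula for reflexive pseudometrics established above (expressing $\Vc_{Ct}$ as exactly those $A$ with $pAq = 0$ whenever $\rho_\Vb(p,q) > Ct$, $p,q \in \Mx$) then places $A$ in $\Vc_{Ct}$. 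Now, given arbitrary $P,Q \in \Pc$ with $\rho_\Wb((\phi\otimes{\rm id})(P),(\phi\otimes{\rm id})(Q)) < t$, we have $(P,Q) \in \Sc_t = \Rc_{\Vc_{\Sc_t}}$, so by the hard direction of Theorem \ref{abstractchar} there is $A \in \Vc_{\Sc_t} \subseteq \Vc_{Ct}$ with $P(A\otimes I)Q \neq 0$; thus $\rho_\Vb(P,Q) \leq Ct$. Letting $t$ decrease to $\rho_\Wb((\phi\otimes{\rm id})(P),(\phi\otimes{\rm id})(Q))$ gives $\rho_\Vb(P,Q) \leq C\cdot\rho_\Wb((\phi\otimes{\rm id})(P),(\phi\otimes{\rm id})(Q))$ for all $P,Q$, i.e.\ $L(\phi) \leq C$.

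The main obstacle is the step that realizes the abstract pair $(P,Q) \in \Sc_t$ by a genuine operator $A$ with $P(A\otimes I)Q \neq 0$: this is precisely the hard direction of Theorem \ref{abstractchar}, and it is where all the real difficulty is concentrated. Reflexivity enters at only one point, but an essential one — it is what allows the vanishing conditions $pAq = 0$ recorded against projections $p,q \in \Mx$ to certify membership in $\Vc_{Ct}$, and the statement genuinely fails without it.
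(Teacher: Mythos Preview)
Your proof is correct and follows essentially the same approach as the paper: both arguments pull back the intrinsic quantum relation coming from $\Wb$ (your $\Sc_t$ is the paper's $\phi^*(\Rc_{\Wc_t})$, up to the $\leq$/$<$ convention), invoke the hard direction of Theorem \ref{abstractchar}, and use reflexivity of the $\Vc_t$ to reduce to projections in $\Mx$. The only difference is organizational --- you first establish the containment $\Vc_{\Sc_t} \subseteq \Vc_{Ct}$ and then produce the witnessing $A$, whereas the paper produces $A$ first and then extracts a witnessing pair $\tilde P,\tilde Q \in \Mx$ via reflexivity --- but the content is the same.
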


\begin{proof}
Let $\tilde{L}(\phi) = \sup \rho(P,Q)/\rho(\phi(P),\phi(Q))$ be the
supremum with $P$ and $Q$ ranging over projections in $\Mx$. Then it
is immediate that $\tilde{L}(\phi) \leq L(\phi)$ since the supremum
defining the former is effectively contained in the supremum defining
the latter. For the reverse inequality, fix projections $P$ and $Q$
in $\Mx \overline{\otimes} \Bc(l^2)$. We may assume that
$\rho(P,Q) > 0$ and $\rho((\phi\otimes {\rm id})(P),(\phi \otimes {\rm id})(Q))
< \infty$, so let $0 < s < \rho(P,Q)$ and let
$t > \rho((\phi\otimes {\rm id})(P),(\phi \otimes {\rm id})(Q))$.
Then the pair $((\phi\otimes {\rm id})(P),(\phi \otimes {\rm id})(Q))$
belongs to the intrinsic quantum relation $\Rc_{\Wc_t}$ (Definition
\ref{abstractqrel}) associated to
the quantum relation $\Wc_t$ (Theorem \ref{abstractchar}) and hence
the pair $(P,Q)$ belongs to its pullback $\Rc = \phi^*(\Rc_{\Wc_t})$
(\cite{W6}, Proposition 2.25 (b)). Let $\Vc = \Vc_\Rc$ be the quantum relation
associated to $\Rc$; then since $(P,Q) \in \Rc = \Rc_{\Vc}$ (Theorem
\ref{abstractchar}) there exists $A \in \Vc$ such that $P(A \otimes I)Q
\neq 0$. But $\rho(P,Q) > s$, so $A \not\in \Vc_s$, and this shows that
$\Vc \not\subseteq \Vc_s$. Since $\Vc_s$ is reflexive we can then find
projections $\tilde{P}, \tilde{Q} \in \Mx$ such that
$\tilde{P}\Vc_s\tilde{Q} = 0$ but $\tilde{P}\Vc\tilde{Q}
\neq 0$. Then $\rho(\tilde{P},\tilde{Q}) \geq s$ but $(\tilde{P},\tilde{Q})
\in \Rc$ so $\rho(\phi(\tilde{P}),\phi(\tilde{Q})) \leq t$, so we have
$$\frac{\rho(\tilde{P},\tilde{Q})}{\rho(\phi(\tilde{P}),\phi(\tilde{Q}))}
\geq \frac{s}{t}.$$
Taking $s \to \rho(P,Q)$ and $t \to
\rho((\phi\otimes {\rm id})(P),(\phi \otimes {\rm id})(Q))$
shows that
$$\frac{\rho(P,Q)}{\rho((\phi\otimes {\rm id})(P),(\phi \otimes {\rm id})(Q))}
\leq \tilde{L}(\phi)$$
and taking the supremum over $P$ and $Q$ finally yields
$L(\phi) \leq \tilde{L}(\phi)$.
\end{proof}

The co-Lipschitz number is formulated in terms of the projection
distances introduced in Definition \ref{projdist} but
it has an equivalent version in terms of W*-filtrations.
We use the general form of a unital weak* continuous
$*$-homomorphism $\phi: \Mx \to \Nc$ (\cite{Tak}, Theorem IV.5.5) which
states that every such map can be expressed as an inflation followed
by a restriction followed by an isomorphism. Since this expression is
not unique, if we defined $L(\phi)$ in the concrete way indicated
below then the definition would appear to be ambiguous. But the fact that
this definition is equivalent to the intrinsic one given above means
that there is actually no real ambiguity.

\begin{prop}\label{concretelip}
Let $\Vb$ and $\Wb$ be quantum pseudometrics on von Neumann algebras
$\Mx \subseteq \Bc(H)$ and $\Nc \subseteq \Bc(K)$ and let
$\phi: \Mx \to \Nc$ be a unital weak* continuous $*$-homomorphism.
Let $\tilde{K}$ be a Hilbert space, $R$ a projection in
$\Bc(\tilde{K})\overline{\otimes}\Mx'$, and $U$ an isometry
from $K$ to ${\rm ran}(R)$ such that
$\phi(A) = U^*(I_{\tilde{K}}\otimes A)U$
for all $A \in \Mx$. Then
$$L(\phi) = \inf\{C \geq 0: \Wc_t \subseteq
U^*(\Bc(\tilde{K})\overline{\otimes}\Vc_{Ct})U\hbox{ for all }t\geq 0\}$$
(with $\inf \emptyset = \infty$).
\end{prop}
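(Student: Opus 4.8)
The plan is to translate the intrinsic co-Lipschitz number into the concrete inclusion by passing through the inflated filtration, where Proposition \ref{recover} becomes available. Write $C_0$ for the infimum on the right-hand side of the asserted formula. I would first reformulate the inclusion condition in a symmetric form. Since $U$ is an isometry onto ${\rm ran}(R)$ we have $U^*U = I$ and $UU^* = R$; because $\Bc(\tilde K)\overline{\otimes}\Vc_{Ct}$ is a bimodule over $\Bc(\tilde K)\overline{\otimes}\Mx' \ni R$, the identities $UBU^* = R(U^*TU)^{\sim}$ lead to
$$\Wc_t\subseteq U^*(\Bc(\tilde K)\overline{\otimes}\Vc_{Ct})U \quad\Longleftrightarrow\quad U\Wc_t U^*\subseteq\Bc(\tilde K)\overline{\otimes}\Vc_{Ct}.$$
Indeed, if $B = U^*TU$ with $T\in\Bc(\tilde K)\overline{\otimes}\Vc_{Ct}$ then $UBU^* = RTR\in\Bc(\tilde K)\overline{\otimes}\Vc_{Ct}$, while conversely $B = U^*(UBU^*)U$. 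I will work with the right-hand condition throughout, and I will also record that the set of admissible $C$ is upward closed since $t\mapsto\Vc_t$ is monotone.

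The decisive observation is that $\{\Bc(\tilde K)\overline{\otimes}\Vc_t\}$ is a quantum (pseudo)metric not on $\Bc(\tilde K)\overline{\otimes}\Mx$ but on $I_{\tilde K}\otimes\Mx = (\Bc(\tilde K)\overline{\otimes}\Mx')'$, and that it is exactly the metric corresponding to $\Vb$ under the inflation $A\mapsto I_{\tilde K}\otimes A$ (the Proposition preceding Proposition \ref{recover}). Consequently $\rho_\Vb(P,Q) = \rho_{{\rm infl}}(I_{\tilde K}\otimes P, I_{\tilde K}\otimes Q)$, and — this is the point — Proposition \ref{recover} for the inflated metric ranges only over the \emph{split} projections $I_{\tilde K}\otimes P$, because the relevant ambient algebra $(I_{\tilde K}\otimes\Mx)\overline{\otimes}\Bc(l^2)$ equals $I_{\tilde K}\otimes(\Mx\overline{\otimes}\Bc(l^2))$. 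Thus for any $T\in\Bc(\tilde K\otimes H)$,
$$T\in\Bc(\tilde K)\overline{\otimes}\Vc_s \quad\Longleftrightarrow\quad \bigl(\rho_\Vb(P,Q)>s\ \Rightarrow\ (I_{\tilde K}\otimes P)(T\otimes I)(I_{\tilde K}\otimes Q)=0\bigr)$$
for all projections $P,Q\in\Mx\overline{\otimes}\Bc(l^2)$. The computational bridge between the two pictures is obtained with $\hat U = U\otimes I_{l^2}$ and $\hat R = \hat U\hat U^* = R\otimes I_{l^2}$: one has $(\phi\otimes{\rm id})(P) = \hat U^*(I_{\tilde K}\otimes P)\hat U$, and using $\hat U^*Z\hat U\neq 0 \Leftrightarrow \hat R Z\hat R\neq 0$, the fact that $\hat R$ commutes with $I_{\tilde K}\otimes P$, and $\hat R\hat U = \hat U$, $\hat U(B\otimes I)\hat U^* = (UBU^*)\otimes I$, I obtain
$$(\phi\otimes{\rm id})(P)\,(B\otimes I)\,(\phi\otimes{\rm id})(Q)\neq 0 \quad\Longleftrightarrow\quad (I_{\tilde K}\otimes P)\bigl((UBU^*)\otimes I\bigr)(I_{\tilde K}\otimes Q)\neq 0.$$

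With these two facts both inequalities are short. For $L(\phi)\leq C_0$: fix $C>C_0$, so $U\Wc_t U^*\subseteq\Bc(\tilde K)\overline{\otimes}\Vc_{Ct}$ for all $t$; given $P,Q$ and any $t>\rho_\Wb((\phi\otimes{\rm id})(P),(\phi\otimes{\rm id})(Q))$, choose $B\in\Wc_t$ witnessing the distance, transport it by the displayed identity, and read off from $UBU^*\in\Bc(\tilde K)\overline{\otimes}\Vc_{Ct}$ that $\rho_\Vb(P,Q) = \rho_{{\rm infl}}(I_{\tilde K}\otimes P, I_{\tilde K}\otimes Q)\leq Ct$; letting $t\to\rho_\Wb$ and then $C\to C_0$ gives $\rho_\Vb(P,Q)\leq C_0\,\rho_\Wb(\ldots)$. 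For $C_0\leq L(\phi)$: put $C = L(\phi)$, take $B\in\Wc_t$, and check the membership criterion for $T=UBU^*$ at level $Ct$. If $\rho_\Vb(P,Q)>Ct$ then $\rho_\Wb((\phi\otimes{\rm id})(P),(\phi\otimes{\rm id})(Q))>t$ by the definition of $L(\phi)$, so Proposition \ref{recover} applied to $\Wb$ on $\Nc$ forces $(\phi\otimes{\rm id})(P)(B\otimes I)(\phi\otimes{\rm id})(Q)=0$; by the identity the transported product vanishes, so $UBU^*\in\Bc(\tilde K)\overline{\otimes}\Vc_{Ct}$ and hence $B\in U^*(\Bc(\tilde K)\overline{\otimes}\Vc_{Ct})U$. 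The degenerate cases $L(\phi)\in\{0,\infty\}$ and the $\inf\emptyset=\infty$, $\frac{0}{0}=0$ conventions are handled routinely.

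The main obstacle is entirely the identification in the second paragraph: one must recognize that the inflated filtration lives on $I_{\tilde K}\otimes\Mx$, not on $\Bc(\tilde K)\overline{\otimes}\Mx$. This is what makes Proposition \ref{recover} for the inflated metric quantify over split projections alone, and thereby removes the apparent obligation to test the supported operator $UBU^*$ against arbitrary projections of $\Bc(\tilde K)\overline{\otimes}\Mx\overline{\otimes}\Bc(l^2)$ — projections which in general do \emph{not} compress under $\hat U$ to projections of $\Nc\overline{\otimes}\Bc(l^2)$ and so could not be fed back into the co-Lipschitz inequality. Everything else (the bimodule reformulation of the inclusion, the commutation of $\hat R$ with $I_{\tilde K}\otimes P$, and the limits in $C$) is mechanical.
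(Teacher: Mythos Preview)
Your proof is correct and follows essentially the same route as the paper's. Both arguments rest on the same two facts you isolate: the transport identity linking $(\phi\otimes{\rm id})(P)(B\otimes I)(\phi\otimes{\rm id})(Q)$ to $(I_{\tilde K}\otimes P)((UBU^*)\otimes I)(I_{\tilde K}\otimes Q)$, and the observation that the inflated filtration $\{\Bc(\tilde K)\overline{\otimes}\Vc_t\}$ is the quantum pseudometric on $I_{\tilde K}\otimes\Mx$, so that membership is tested only against split projections $I_{\tilde K}\otimes P$. The only organizational difference is in the second inequality: the paper argues by contrapositive, taking $C<\tilde L(\phi)$ and invoking Lemma~\ref{separation} (with $I_{\tilde K}\otimes\Mx$ in place of $\Mx$) to manufacture witnessing $P,Q$, while you argue directly, taking $C=L(\phi)$ and invoking Proposition~\ref{recover} (itself a consequence of Lemma~\ref{separation}) as a membership criterion. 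These are two formulations of the same idea.
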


\begin{proof}
Let $\tilde{L}(\phi) = \inf\{C \geq 0: \Wc_t \subseteq
U^*(\Bc(\tilde{K})\overline{\otimes}\Vc_{Ct})U$ for all $t\geq 0\}$.
Let $P$ and $Q$ be projections in
$\Mx \overline{\otimes} \Bc(l^2)$ and set
$$\tilde{P} = (\phi\otimes {\rm id})(P) =
(U^*\otimes I_{l^2})(I_{\tilde{K}}\otimes P)(U\otimes I_{l^2})$$
and
$$\tilde{Q} = (\phi\otimes {\rm id})(Q) =
(U^*\otimes I_{l^2})(I_{\tilde{K}}\otimes Q)(U\otimes I_{l^2}),$$
both in $\Nc\overline{\otimes} \Bc(l^2)$.

Assuming $\rho(\tilde{P}, \tilde{Q}) < \infty$,
let $t > \rho(\tilde{P},\tilde{Q})$ and find $A \in \Wc_t$ such that
$\tilde{P}(A \otimes I_{l^2})\tilde{Q} \neq 0$. Then $A \in
U^*(\Bc(\tilde{K})\overline{\otimes} \Vc_{\tilde{L}(\phi)t})U$, so
$$UAU^* \in R(\Bc(\tilde{K})\overline{\otimes}\Vc_{\tilde{L}(\phi)t})R
\subseteq \Bc(\tilde{K})\overline{\otimes} \Vc_{\tilde{L}(\phi)t}$$
(since $\Mx'\Vc_{\tilde{L}(\phi)t}\Mx' \subseteq \Vc_{\tilde{L}(\phi)t}$), and
$$(R\otimes I_{l^2})(I_{\tilde{K}}\otimes P)(UAU^* \otimes I_{l^2})
(I_{\tilde{K}}\otimes Q)(R \otimes I_{l^2})
=(U\otimes I_{l^2})\tilde{P}(A\otimes I_{l^2})\tilde{Q}(U^*\otimes I_{l^2})
\neq 0.$$
Thus $\rho(P,Q) \leq \tilde{L}(\phi)t$, and taking $t \to
\rho(\tilde{P}, \tilde{Q})$ and then taking the supremum over $P$ and $Q$
shows that $L(\phi) \leq \tilde{L}(\phi)$.

Conversely, assume $\tilde{L}(\phi) > 0$, let
$C < \tilde{L}(\phi)$, and find $t$ such that
$\Wc_t \not\subseteq U^*(\Bc(\tilde{K})\overline{\otimes}\Vc_{Ct})U$,
or equivalently, such that $U\Wc_tU^* \not\subseteq
\Bc(\tilde{K})\overline{\otimes} \Vc_{Ct}$. By Lemma \ref{separation}
with $I_{\tilde{K}}\otimes \Mx$ in place of $\Mx$ we can find
projections $P,Q \in \Mx\overline{\otimes}\Bc(l^2)$ such that
$$(I_{\tilde{K}}\otimes P)(UAU^* \otimes I_{l^2})
(I_{\tilde{K}}\otimes Q) \neq 0$$
for some $A \in \Wc_t$, and hence $\tilde{P}(A\otimes I_{l^2})\tilde{Q}
\neq 0$ with $\tilde{P} = (\phi \otimes {\rm id})(P)$ and $\tilde{Q} =
(\phi \otimes {\rm id})(Q)$ as in the first part of the proof, but
$$(I_{\tilde{K}}\otimes P)(B \otimes I_{l^2})(I_{\tilde{K}}\otimes Q) = 0$$
for all $B \in \Bc(\tilde{K})\overline{\otimes} \Vc_{Ct}$. It follows
that $\rho(P,Q) \geq Ct$ but $\rho(\tilde{P},\tilde{Q}) \leq t$. So
$L(\phi) \geq C$, and we conclude that $\tilde{L}(\phi) \leq L(\phi)$.
Thus we have shown that $L(\phi) = \tilde{L}(\phi)$.
\end{proof}

The formula for $L(\phi)$ in Proposition \ref{concretelip} may be
more transparent if the map $\phi$ is explicitly decomposed as follows.
Let $\Mx_1 = I_{\tilde{K}}\otimes \Mx \subseteq
\Bc(\tilde{K}\otimes H)$, $\Mx_2 = \Mx_1R$, and $\Nc_1 = U\Nc U^* \subseteq
\Bc({\rm ran}(R))$, equipped with quantum pseudometrics $\Vb_1 =
\{\Vc^1_t\}$, $\Vb_2 = \{\Vc^2_t\}$, and $\Wb_1 = \{\Wc^1_t\}$ where
$$\Vc^1_t = \Bc(\tilde{K})\overline{\otimes} \Vc_t
\qquad \Vc^2_t = R\Vc^1_tR\qquad\Wc^1_t = U\Wc_tU^*.$$
Then $\phi = \phi_r\circ\phi_3\circ\phi_2\circ\phi_1$ where
$\phi_1: \Mx \to \Mx_1$, $\phi_2: \Mx_1 \to \Mx_2$, $\phi_3:
\Mx_2 \to \Nc_1$, and $\phi_4: \Nc_1 \to \Nc$ are defined by
$\phi_1:A \mapsto I_{\tilde{K}}\otimes A$, $\phi_2: A \mapsto AR$,
$\phi_3 = {\rm id}$, and $\phi_4: A \mapsto U^*AU$.
Unless some degeneracy occurs such that $L(\phi) = L(\phi_i) = 0$
for some $i$, we have
\begin{eqnarray*}
L(\phi_1) &=& \inf\{C \geq 0: \Vc^1_t \subseteq
\Bc(\tilde{K})\overline{\otimes}\Vc_{Ct}\hbox{ for all }t\geq 0\} = 1\cr
L(\phi_2) &=& \inf\{C \geq 0: \Vc^2_t \subseteq R\Vc^1_tR
\hbox{ for all }t \geq 0\} = 1\cr
L(\phi_3) &=& \inf\{C \geq 0: \Wc^1_t \subseteq
\Vc^2_{Ct}R\hbox{ for all }t\geq 0\}\cr
L(\phi_4) &=& \inf\{C \geq 0: \Wc_t \subseteq U^*\Wc^1_{Ct}U
\hbox{ for all }t\geq 0\} = 1
\end{eqnarray*}
and $L(\phi) = L(\phi_3)$.

Next we present three easy constructions.

\begin{defi}\label{easycon}
(a) Let $\Vb$ be a quantum pseudometric on a von Neumann algebra
$\Mx \subseteq \Bc(H)$ and let $C \geq 0$. Then the {\it truncation}
of $\Vb$ to $C$ is the quantum pseudometric $\tilde{\Vb} =(\tilde{\Vc}_t)$
defined by
$$\tilde{\Vc}_t =
\begin{cases}
\Vc_t& \hbox{if }t < C\cr
\Bc(H)& \hbox{if }t \geq C.
\end{cases}$$

\noindent (b) Let $\Vb$ and $\Wb$ be quantum pseudometrics on von Neumann algebras
$\Mx \subseteq \Bc(H)$ and $\Nc \subseteq \Bc(K)$. Their {\it direct
sum} is the von Neumann algebra $\Mx \oplus \Nc \subseteq \Bc(H \oplus K)$
equipped with the quantum pseudometric $\Vb \oplus \Wb =
\{\Vc_t \oplus \Wc_t\}$.

\noindent (c) Let $\{\Vb_\lambda\}$ with $\Vb_\lambda = \{\Vc^\lambda_t\}$
be a family of quantum pseudometrics on a von Neumann algebra
$\Mx \subseteq \Bc(H)$. Their {\it meet} is the quantum
pseudometric $\bigwedge \Vb_\lambda = \{\bigcap_\lambda \Vc^\lambda_t\}$.
\end{defi}

In the atomic abelian case truncations reduce to the classical
construction
$$\tilde{d}(x,y) = \min\{d(x,y),C\},$$
direct sums reduce to the disjoint union construction $X \coprod Y$
with $d(x,y) = \infty$ for all $x \in X$ and $y \in Y$, and meets
reduce to the supremum of a family of pseudometrics. In the case of
direct sums note that if ${\rm diam}(\Vb),{\rm diam}(\Wb) \leq C$ then
we can truncate their disjoint union to $C$ without affecting the embedded
copies of $\Vb$ and $\Wb$. This corresponds to setting $d(x,y) = C$
for all $x \in X$ and all $y \in Y$ in the classical case. More generally,
for any $r \geq \max\{{\rm diam}(\Vb), {\rm diam}(\Wb)\}/2$ we can replace
$\Vc_t \oplus \Wc_t$ with
$$\left\{\left[
\begin{matrix}
A&B\cr
C&D
\end{matrix}\right]: A \in \Vc_t, B \in \Bc(K,H), C \in \Bc(H,K),
D \in \Wc_t\right\}$$
for all $t \geq r$. This corresponds to setting $d(x,y) = r$ for
all $x \in X$ and all $y \in Y$ in the classical case.

The meet construction in general is not obtained at the level
of projections by setting $\rho(P,Q) = \sup \rho_\lambda(P,Q)$;
see Example \ref{countersup}. However, truncations and direct
sums do satisfy the obvious formulas at the projection level.

\begin{prop}\label{trunc}
Let $\Vb$ be a quantum pseudometric on a von Neumann algebra
$\Mx \subseteq \Bc(H)$ and let $\tilde{\Vb}$ be its truncation to $C \geq 0$.
Then
$$\rho_{\tilde{\Vb}}(P,Q) = \min\{\rho_{\Vb}(P,Q),C\}$$
for all linkable projections $P,Q \in \Mx \overline{\otimes} \Bc(l^2)$.
\end{prop}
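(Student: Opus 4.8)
The plan is to compute $\rho_{\tilde{\Vb}}(P,Q)$ directly from Definition \ref{projdist}, exploiting the fact that the truncated filtration agrees with the original one below the threshold and is maximal above it: $\tilde{\Vc}_t = \Vc_t$ for $t < C$ and $\tilde{\Vc}_t = \Bc(H)$ for $t \geq C$. Write $s = \rho_{\tilde{\Vb}}(P,Q)$ and $m = \min\{\rho_{\Vb}(P,Q),C\}$; the goal is $s = m$. Throughout I use that the set $\{t : P(A \otimes I)Q \neq 0 \text{ for some } A \in \Vc_t\}$ and its truncated analog are upward closed, so each distance is the infimum of an interval and it suffices to test values $t$ slightly larger than the claimed distance.

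For the inequality $s \leq m$ there are two independent upper bounds. Since $\Vc_t \subseteq \tilde{\Vc}_t$ for every $t$, any operator witnessing $\rho_{\Vb}(P,Q) < t$ also witnesses $\rho_{\tilde{\Vb}}(P,Q) < t$, giving $s \leq \rho_{\Vb}(P,Q)$. Separately, because $P$ and $Q$ are linkable, Proposition \ref{findist} supplies $A \in \Bc(H) = \tilde{\Vc}_C$ with $P(A \otimes I)Q \neq 0$, so $s \leq C$. Combining these yields $s \leq m$.

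For the reverse inequality $s \geq m$ I may assume $s < C$, since otherwise $s \geq C \geq m$ is immediate. Then for every $t$ with $s < t < C$ the definition of the infimum produces $A \in \tilde{\Vc}_t = \Vc_t$ with $P(A \otimes I)Q \neq 0$, whence $\rho_{\Vb}(P,Q) \leq t$; letting $t \to s$ gives $\rho_{\Vb}(P,Q) \leq s$, so $m \leq \rho_{\Vb}(P,Q) \leq s$. This closes the argument.

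The proof is essentially routine; the only point demanding care is the boundary at $t = C$, and this is exactly where the linkability hypothesis enters. Indeed, if $P$ and $Q$ were unlinkable then $P(A \otimes I)Q = 0$ for all $A \in \Bc(H)$, forcing $\rho_{\tilde{\Vb}}(P,Q) = \infty$ while $\min\{\rho_{\Vb}(P,Q),C\} = C$; thus the assumption that $P$ and $Q$ are linkable is not merely convenient but necessary for the stated formula, and I would flag this explicitly.
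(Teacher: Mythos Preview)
Your proof is correct and follows precisely the approach the paper indicates: the paper remarks only that the proof is straightforward and that one uses Proposition \ref{findist} to produce, for linkable $P$ and $Q$, an $A \in \Bc(H)$ with $P(A\otimes I)Q \neq 0$. Your argument supplies exactly this detail in the right place (the bound $s \leq C$), and the remaining inequalities are the obvious ones coming from $\Vc_t \subseteq \tilde{\Vc}_t$ and $\tilde{\Vc}_t = \Vc_t$ for $t < C$.
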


(Recall from Proposition \ref{findist} that the distance between
unlinkable projections is always $\infty$.)

\begin{prop}\label{directsum}
Let $\Vb$ and $\Wb$ be quantum pseudometrics on von Neumann algebras
$\Mx \subseteq \Bc(H)$ and $\Nc \subseteq \Bc(K)$ and let $P = P_1 \oplus P_2$
and $Q = Q_1 \oplus Q_2$
be projections in $(\Mx \oplus \Nc)\overline{\otimes}\Bc(l^2) \cong
(\Mx \overline{\otimes}\Bc(l^2)) \oplus (\Nc \overline{\otimes}\Bc(l^2))$.
Then
$$\rho_{\Vb \oplus \Wb}(P,Q) = \min\{\rho_\Vb(P_1,Q_1), \rho_\Wb(P_2,Q_2)\}.$$
\end{prop}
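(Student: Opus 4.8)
The plan is to compute $\rho_{\Vb \oplus \Wb}(P,Q)$ straight from Definition \ref{projdist}, exploiting the fact that everything in sight is block diagonal with respect to the decomposition $H \oplus K$. First I would record the structural facts: under the identification $(H \oplus K) \otimes l^2 \cong (H \otimes l^2) \oplus (K \otimes l^2)$, the projections become $P = P_1 \oplus P_2$ and $Q = Q_1 \oplus Q_2$, while a filtration element $E \in \Vc_t \oplus \Wc_t$ has the block-diagonal form $E = A \oplus D$ with $A \in \Vc_t$ and $D \in \Wc_t$. Consequently $E \otimes I = (A \otimes I) \oplus (D \otimes I)$ and
$$P(E \otimes I)Q = \big(P_1(A \otimes I)Q_1\big) \oplus \big(P_2(D \otimes I)Q_2\big),$$
so this operator is nonzero precisely when at least one of the two summands is nonzero.

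The second step is to translate this into a statement about the index sets appearing in the infimum. Writing $S = \{t : P(E \otimes I)Q \neq 0$ for some $E \in \Vc_t \oplus \Wc_t\}$, and similarly $S_1$, $S_2$ for the two component problems (with $A \in \Vc_t$ and $P_1(A\otimes I)Q_1 \neq 0$, resp.\ $D \in \Wc_t$ and $P_2(D \otimes I)Q_2 \neq 0$), I claim $S = S_1 \cup S_2$. The inclusion $S_1 \cup S_2 \subseteq S$ uses that each $\Vc_t \oplus \Wc_t$ is a linear subspace containing the block-diagonal operators $A \oplus 0$ and $0 \oplus D$: a witness $A$ for $S_1$ yields the witness $E = A \oplus 0$ for $S$, and symmetrically for $S_2$. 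The reverse inclusion is immediate from the displayed identity, since a witness $E = A \oplus D$ for $S$ forces at least one block to be nonzero, producing a witness for $S_1$ or for $S_2$.

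Finally, since $\inf(S_1 \cup S_2) = \min\{\inf S_1, \inf S_2\}$, and the three infima are by definition $\rho_{\Vb \oplus \Wb}(P,Q)$, $\rho_\Vb(P_1,Q_1)$, and $\rho_\Wb(P_2,Q_2)$, the desired formula follows. There is no real obstacle here; the only points requiring care are the distributivity identification $(\Mx \oplus \Nc)\overline{\otimes}\Bc(l^2) \cong (\Mx \overline{\otimes} \Bc(l^2)) \oplus (\Nc \overline{\otimes}\Bc(l^2))$ (so that $P$ and $Q$ genuinely split as asserted, and $P(E \otimes I)Q$ is correctly tracked through the two tensorings), together with the observation that the two blocks of the filtration element may be chosen independently. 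It is precisely this independence that makes the answer a minimum rather than, say, a maximum.
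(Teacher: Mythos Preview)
Your proof is correct and is precisely the straightforward block-diagonal computation the paper has in mind; the paper itself does not spell out the argument but simply declares the proof ``straightforward.'' The only cosmetic point is that the identity $\inf(S_1 \cup S_2) = \min\{\inf S_1,\inf S_2\}$ should be read with the paper's convention $\inf\emptyset = \infty$, which you are implicitly using and which handles the degenerate cases correctly.
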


The proofs are straightforward. In the proof of Proposition \ref{trunc}
we use the fact that if $P$ and $Q$ are linkable then there exists
$A \in \Bc(H)$ such that $P(A \otimes I)Q \neq 0$ (Proposition
\ref{findist}).

We now turn to quotients, subobjects, and products. Quotients
are simplest. If $\phi: \Mx \to \Nc$ is a
surjective unital weak* continuous $*$-homomorphism then ${\rm ker}(\phi)$
is a weak* closed ideal of $\Mx$, and hence ${\rm ker}(\phi) = R\Mx$ for
some central projection $R \in \Mx$. Thus $\Mx = R\Mx \oplus (I-R)\Mx$ with
$(I - R)\Mx \cong \Nc$. So metric quotients are modelled by von Neumann
algebra direct summands.

\begin{defi}\label{quot}
Let $\Vb$ be a quantum pseudometric on a von Neumann algebra $\Mx
\subseteq \Bc(H)$. A {\it metric quotient} of $\Mx$ is a direct
summand $\Nc = R\Mx \subseteq \Bc(K)$ of $\Mx$, where $R$ is a
central projection in $\Mx$ and $K = {\rm ran}(R)$, together with
the quantum pseudometric $\Wb = \{\Wc_t\}$ on $\Nc$ defined by
$$\Wc_t = R\Vc_tR \subseteq \Bc(K).$$
\end{defi}

In this definition note that $\Wc_t \subseteq \Vc_t$ because $R \in \Mx'$
and $\Vc_t$ is a bimodule over $\Mx' \subseteq \Vc_0$.

For example, in Definition \ref{easycon} (b)
$\Mx$ and $\Nc$ are metric quotients of $\Mx \oplus \Nc$,
and this remains true after truncation to $\max\{{\rm diam}(\Vb),
{\rm diam}(\Wb)\}$.

\begin{prop}\label{quotdist}
Let $\Vb$ be a quantum pseudometric on a von Neumann algebra $\Mx
\subseteq \Bc(H)$ and let $\Nc = R\Mx$ be a metric quotient of $\Mx$
with quantum pseudometric $\Wb$.
Then the quantum distance function $\rho_\Wb$ on $\Nc$ is the restriction
of the quantum distance function $\rho_\Vb$ on $\Mx$ to $\Nc \overline{\otimes}
\Bc(l^2) \subseteq \Mx \overline{\otimes} \Bc(l^2)$.
\end{prop}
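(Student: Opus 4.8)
The plan is to realize $\Nc \overline{\otimes} \Bc(l^2)$ as a corner of $\Mx \overline{\otimes} \Bc(l^2)$ and then match up the operators that witness the two distance functions. Since $R$ is a central projection in $\Mx$, the projection $R \otimes I$ is central in $\Mx \overline{\otimes} \Bc(l^2)$, and cutting by it yields
$$(R \otimes I)(\Mx \overline{\otimes} \Bc(l^2))(R \otimes I) = \Nc \overline{\otimes} \Bc(l^2)$$
acting on $(R \otimes I)(H \otimes l^2) = K \otimes l^2$. Under this identification the projections $P, Q \in \Nc \overline{\otimes} \Bc(l^2)$ are exactly the projections in $\Mx \overline{\otimes} \Bc(l^2)$ with $P, Q \leq R \otimes I$, so that $P = (R \otimes I)P = P(R \otimes I)$ and likewise for $Q$. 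I would first record the resulting compression identity: for any $A \in \Bc(H)$,
$$P(A \otimes I)Q = P(R \otimes I)(A \otimes I)(R \otimes I)Q = P(RAR \otimes I)Q.$$
The range of this operator lies in $K \otimes l^2$, so it is nonzero in $\Bc(H \otimes l^2)$ if and only if the corresponding operator $P(RAR \otimes I)Q$, with $RAR$ regarded as an operator on $K$, is nonzero in $\Bc(K \otimes l^2)$.

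The second step is to exploit the relation $\Wc_t = R\Vc_t R$, which says precisely that the elements of $\Wc_t \subseteq \Bc(K)$ are exactly the compressions $RAR$ of elements $A \in \Vc_t$. Combined with the compression identity, this gives a matching of witnesses: for each $t$, there exists $A \in \Vc_t$ with $P(A \otimes I)Q \neq 0$ (computed in $\Bc(H \otimes l^2)$) if and only if there exists $B \in \Wc_t$ with $P(B \otimes I)Q \neq 0$ (computed in $\Bc(K \otimes l^2)$). Indeed, given such an $A$, take $B = RAR \in \Wc_t$; conversely, given such a $B$, write $B = RAR$ with $A \in \Vc_t$ and apply the same identity. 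Taking the infimum over $t$ of the two sets of witnessing levels then yields $\rho_\Vb(P, Q) = \rho_\Wb(P, Q)$ for all projections $P, Q \in \Nc \overline{\otimes} \Bc(l^2)$, which is the assertion.

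The argument is essentially bookkeeping, and no single step is genuinely hard; the point requiring the most care is the compression identity together with the observation that nonvanishing is preserved in both directions. This works precisely because $P$ and $Q$ are dominated by $R \otimes I$, so that replacing $A$ by $RAR$ alters neither $P(A \otimes I)Q$ nor its vanishing. One should also note in passing that $R\Vc_t R \subseteq \Vc_t$, since $R \in \Mx' \subseteq \Vc_0$ and each $\Vc_t$ is an $\Mx'$-bimodule; this is what makes $\Wc_t$ a legitimate family of operators on $K$ to begin with, as already remarked after Definition \ref{quot}.
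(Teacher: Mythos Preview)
Your proof is correct and follows essentially the same approach as the paper: the key observation is the compression identity $P(A \otimes I)Q = P(RAR \otimes I)Q$ for projections $P, Q \leq R \otimes I$, together with $\Wc_t = R\Vc_t R$, which matches up witnesses on both sides. Your write-up is more detailed than the paper's one-line justification, but the argument is the same.
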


This proposition follows from the observation
that if $P$ and $Q$ are projections in $\Nc \overline{\otimes}
\Bc(l^2) \subseteq \Mx \overline{\otimes} \Bc(l^2)$ and $A \in
\Vc_t$ then $RAR \in \Wc_t \subseteq \Vc_t$ and
$$P(A \otimes I)Q \neq 0\quad\Leftrightarrow\quad
P(RAR\otimes I)Q \neq 0.$$

\begin{coro}
Let $\Vb$ be a quantum pseudometric on a von Neumann algebra $\Mx
\subseteq \Bc(H)$ and let $\Nc = R\Mx$ be a metric quotient of $\Mx$
with quantum pseudometric $\Wb$.
Then the map $\phi: A \mapsto AR$ is a co-isometric morphism from
$\Mx$ to $\Nc$. Up to isomorphism of the range every co-isometric
morphism has this form.
\end{coro}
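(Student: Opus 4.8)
The plan is to prove the two assertions in turn, leaning on Proposition \ref{quotdist} (which identifies $\rho_\Wb$ with the restriction of $\rho_\Vb$) and Proposition \ref{recover} (which recovers a W*-filtration from its distance function).

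First, to see that $\phi\colon A \mapsto AR$ is co-isometric, I would begin by observing that it is a surjective unital weak* continuous $*$-homomorphism onto $\Nc = R\Mx$, since $R$ is a central projection in $\Mx$. Because $R$ is central, the projection $R \otimes I$ is central in $\Mx \overline{\otimes} \Bc(l^2)$, so for any projection $P$ there the image $(\phi \otimes {\rm id})(P) = (R \otimes I)P(R \otimes I) = P \wedge (R \otimes I)$ is again a projection, lying below $R \otimes I$. Identifying the projections of $\Nc \overline{\otimes} \Bc(l^2)$ with those of $\Mx \overline{\otimes} \Bc(l^2)$ that lie below $R \otimes I$, the condition $(\phi \otimes {\rm id})(P) = \tilde P$ becomes $P \wedge (R \otimes I) = \tilde P$, which forces $\tilde P \leq P$. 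The key point is then that $\rho_\Vb$ is monotone (the remark following Definition \ref{qdfdef}): from $\tilde P \leq P$ and $\tilde Q \leq Q$ we get $\rho_\Vb(P,Q) \leq \rho_\Vb(\tilde P, \tilde Q)$, while $P = \tilde P$, $Q = \tilde Q$ is itself an admissible choice. Hence the supremum in Definition \ref{lipmaps} is attained and equals $\rho_\Vb(\tilde P, \tilde Q)$, which in turn equals $\rho_\Wb(\tilde P, \tilde Q)$ by Proposition \ref{quotdist}. This simultaneously yields $L(\phi) \leq 1$ and the co-isometric identity, so $\phi$ is co-isometric.

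For the converse, let $\phi\colon \Mx \to \Nc$ be any co-isometric morphism. As recorded before Definition \ref{quot}, surjectivity and weak* continuity give $\ker(\phi) = R_0\Mx$ for a central projection $R_0 \in \Mx$; setting $R = I - R_0$, the map $\phi$ factors as $\phi = \bar\phi \circ q$, where $q\colon A \mapsto RA$ is the canonical quotient map onto $R\Mx$ and $\bar\phi\colon R\Mx \to \Nc$ is a $*$-isomorphism. I would equip $R\Mx$ with the metric quotient $\Wb' = \{R\Vc_t R\}$; by the first part $q$ is co-isometric from $\Vb$ to $\Wb'$. Since $\phi \otimes {\rm id} = (\bar\phi \otimes {\rm id}) \circ (q \otimes {\rm id})$ and $\bar\phi \otimes {\rm id}$ is a bijection on projections, the fibre of $\phi \otimes {\rm id}$ over a pair $(\tilde P, \tilde Q)$ in $\Nc \overline{\otimes} \Bc(l^2)$ is carried by $\bar\phi \otimes {\rm id}$ onto the fibre of $q \otimes {\rm id}$ over the corresponding pair in $R\Mx \overline{\otimes} \Bc(l^2)$. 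Comparing the co-isometric formulas for $\phi$ and for $q$ then shows that $\bar\phi$ preserves distance functions.

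Finally, I would invoke Proposition \ref{recover}: since the filtration is recovered from its distance function, a distance-preserving isomorphism carries $\Wb'$ to $\Wb$, so $\bar\phi$ is an isometric isomorphism and $\phi$ is, up to this isomorphism of its range, exactly the quotient map $q$ onto the metric quotient $R\Mx$. The main obstacle I anticipate lies in the first part: one must verify that the supremum defining the co-isometric condition is genuinely attained, which is precisely where the centrality of $R$ (making $(\phi \otimes {\rm id})(P) = P \wedge (R\otimes I) \leq P$) combines with the monotonicity of $\rho_\Vb$. Once this is secured, the bookkeeping needed to transfer the co-isometric identity across $\bar\phi$ in the converse is routine.
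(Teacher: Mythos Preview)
Your proof is correct and follows essentially the same approach as the paper. The paper phrases the first part via the explicit direct sum decomposition $\Mx \overline{\otimes} \Bc(l^2) \cong (R\Mx \overline{\otimes} \Bc(l^2)) \oplus ((I-R)\Mx \overline{\otimes} \Bc(l^2))$, writing the preimages of $\tilde P$ as $\tilde P \oplus P'$, whereas you phrase it via $(\phi\otimes{\rm id})(P) = P \wedge (R\otimes I)$ and monotonicity of $\rho_\Vb$; these are equivalent. For the converse the paper is more terse---it simply cites Proposition \ref{quotdist} after computing the supremum---while you spell out the factorization $\phi = \bar\phi \circ q$ and the appeal to Proposition \ref{recover} to identify $\Wb$ with $\Wb'$, which is exactly the implicit step the paper leaves to the reader.
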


\begin{proof}
Let $\tilde{P}$ and $\tilde{Q}$ be projections in $\Nc \overline{\otimes}
\Bc(l^2)$. The projections in $\Mx \overline{\otimes} \Bc(l^2)
\cong (R\Mx \overline{\otimes} \Bc(l^2)) \oplus ((I-R)\Mx
\overline{\otimes} \Bc(l^2))$ that map
to $\tilde{P}$ and $\tilde{Q}$ are just those of the form
$\tilde{P} \oplus P$ and $\tilde{Q} \oplus Q$ for arbitrary projections
$P$ and $Q$ in $(I-R)\Mx \overline{\otimes} \Bc(l^2)$. We have
$$\rho_\Vb(\tilde{P} \oplus P, \tilde{Q} \oplus Q) \leq
\rho_\Vb(\tilde{P} \oplus 0, \tilde{Q} \oplus 0)
= \rho_\Wb(\tilde{P},\tilde{Q}),$$
with equality if $P = Q = 0$. So $\phi$ is a co-isometric morphism.

Any co-isometric morphism is a surjective weak* continuous $*$-homomorphism
and hence up to isomorphism of the range is of the form $\phi: A \mapsto AR$
from $\Mx$ to $\Nc = R\Mx$ where $R$ is a central projection in $\Mx$.
The condition
$$\rho_\Wb(\tilde{P},\tilde{Q}) = \sup \rho_\Vb(\tilde{P}\oplus P,
\tilde{Q}\oplus Q) = \rho_\Vb(\tilde{P} \oplus 0, \tilde{Q}\oplus 0)$$
then implies that $\Nc$ is a metric quotient of $\Mx$ by Proposition
\ref{quotdist}.
\end{proof}

Next we consider subobjects. Even in the classical setting the dual
construction is slightly subtle; this is the metric quotient, discussed
in Section 1.4 of \cite{W4}.

\begin{defi}\label{subobj}
Let $\Vb$ be a quantum pseudometric on a von Neumann algebra $\Mx
\subseteq \Bc(H)$. A {\it metric subobject} of $\Mx$ is a unital von Neumann
subalgebra $\Nc \subseteq \Mx$ together with the quantum pseudometric
$$\Vb_\Nc = \bigwedge\{\Wb: \Vb \leq \Wb\hbox{ and }
\Nc' \subseteq \Wc_0\}$$
where $\Wb$ ranges over W*-filtrations of $\Bc(H)$.
In other words, $\Vb_\Nc$ is the meet of all quantum
pseudometrics on $\Nc$ that dominate $\Vb$.
\end{defi}

Metric subobjects have an obvious universal property:

\begin{prop}\label{univquot}
Let $\Vb$ be a quantum pseudometric on a von Neumann algebra $\Mx
\subseteq \Bc(H)$ and let $\Nc \subseteq \Mx$ be a metric subobject
of $\Mx$.

\noindent (a) The inclusion map $\iota: \Nc \to \Mx$ is a co-contraction
morphism (equipping $\Nc$ with the quantum pseudometric $\Vb_\Nc$).

\noindent (b) If $\Wb$ is any quantum pseudometric on $\Nc$ which makes
the inclusion map a co-contraction morphism then the
identity map from $\Nc$ to itself is a co-contraction morphism from the
pseudometric $\Wb$ to the pseudometric $\Vb_\Nc$.
\end{prop}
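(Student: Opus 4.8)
The plan is to reduce everything to a monotonicity dictionary relating filtration inclusion, displacement gauges, and distance functions, together with the recovery formula of Proposition \ref{recover}. First I would record the relevant structural facts about $\Vb_\Nc$. The collection $\{\Wb : \Vb \leq \Wb,\ \Nc' \subseteq \Wc_0\}$ is nonempty (the filtration constant at $\Bc(H)$ belongs to it), and its meet $\Vb_\Nc$ is itself a member: it dominates $\Vb$ because $\Vc_t \subseteq \bigcap_\lambda \Wc^\lambda_t = (\Vb_\Nc)_t$, and $\Nc' \subseteq (\Vb_\Nc)_0$ because $\Nc'$ lies in every $\Wc^\lambda_0$. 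Hence $\Vb_\Nc$ is the minimum element of the collection. I would also record the elementary fact that $\Vb \leq \Wb$ forces $D_\Wb \leq D_\Vb$ pointwise on $\Bc(H)$, and therefore $\rho_\Wb \leq \rho_\Vb$ on any projections for which both sides are defined.

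For part (a), the pushforward $\iota \otimes {\rm id}$ is simply the inclusion $\Nc \overline{\otimes} \Bc(l^2) \hookrightarrow \Mx \overline{\otimes} \Bc(l^2)$, so the inequality to be verified is exactly $\rho_{\Vb_\Nc}(P,Q) \leq \rho_\Vb(P,Q)$ for projections $P, Q \in \Nc \overline{\otimes} \Bc(l^2)$. This is immediate from $\Vb \leq \Vb_\Nc$ by the monotonicity just noted, so $L(\iota) \leq 1$.

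For part (b), I would work in the common representation $\Nc \subseteq \Bc(H)$, regarding $\Wb$ as a W*-filtration of $\Bc(H)$ (legitimate since co-Lipschitz data is representation independent, Theorem \ref{metcor}). The hypothesis that $\Wb$ makes $\iota$ a co-contraction unwinds to $\rho_\Wb \leq \rho_\Vb$ on projections in $\Nc \overline{\otimes} \Bc(l^2)$, and the desired conclusion is $\rho_\Wb \leq \rho_{\Vb_\Nc}$. The clean route is to prove $\Vb_\Nc \leq \Wb$ as filtrations, which by minimality of $\Vb_\Nc$ reduces to checking that $\Wb$ lies in the defining collection, i.e. that $\Nc' \subseteq \Wc_0$ (automatic, since $\Wb$ is a quantum pseudometric on $\Nc$) and that $\Vb \leq \Wb$. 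For the latter I would fix $t$ and $A \in \Vc_t$ and test $A \in \Wc_t$ through the recovery formula for $\Wb$ (Proposition \ref{recover} applied to $\Nc$): given projections $P, Q \in \Nc \overline{\otimes} \Bc(l^2)$ with $\rho_\Wb(P,Q) > t$, the hypothesis gives $\rho_\Vb(P,Q) \geq \rho_\Wb(P,Q) > t$, and then $A \in \Vc_t$ forces $P(A \otimes I)Q = 0$ straight from the definition of $\rho_\Vb$ (otherwise $\rho_\Vb(P,Q) \leq t$). Hence $A \in \Wc_t$, giving $\Vc_t \subseteq \Wc_t$. With $\Vb_\Nc \leq \Wb$ in hand, monotonicity yields $\rho_\Wb \leq \rho_{\Vb_\Nc}$, i.e. the identity $\Nc \to \Nc$ is a co-contraction from $\Wb$ to $\Vb_\Nc$.

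I expect the main obstacle to be exactly this implication in part (b): converting the representation-level inequality $\rho_\Wb \leq \rho_\Vb$ back into the filtration inclusion $\Vc_t \subseteq \Wc_t$, which is where Proposition \ref{recover} does the real work. The one point requiring care is that the recovery formula for $\Wb$ quantifies over projections in $\Nc \overline{\otimes} \Bc(l^2)$ rather than $\Mx \overline{\otimes} \Bc(l^2)$, but this is precisely the set of projections on which the co-contraction hypothesis supplies control, so the two ranges match. Everything else is routine translation between $\leq$ on filtrations, displacement gauges, and distance functions.
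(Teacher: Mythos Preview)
Your proposal is correct and follows essentially the same strategy as the paper: both arguments hinge on the equivalence ``$L(\iota)\leq 1$ if and only if $\Vb\leq\Wb$,'' from which (a) follows because $\Vb\leq\Vb_\Nc$ by construction, and (b) follows because the co-contraction hypothesis puts $\Wb$ into the defining collection for the meet. The only difference is in how that equivalence is obtained: the paper cites Proposition~\ref{concretelip} (specialized to $\tilde K=\Cb$, $R=I$, $U=I$), whereas you re-derive it by hand, using the easy monotonicity $\Vb\leq\Wb\Rightarrow\rho_\Wb\leq\rho_\Vb$ for one direction and the recovery formula of Proposition~\ref{recover} for the other. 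Your route is slightly more self-contained; the paper's is a one-line citation.
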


This holds because $L(\iota) \leq 1$ if and only if $\Vb \leq \Wb$,
by Proposition \ref{concretelip}. However, even basic questions
such as ``Under what conditions will a metric subobject of a quantum metric
be a quantum metric (not just a pseudometric)?''\ in general have no simple
answer. But this is already true in the classical case for the dual
question about quotients of metric spaces.

In order to define products of quantum metrics we need to be able to
take tensor products of dual operator systems. The most useful
generalization of the spatial tensor product of von Neumann algebras
to dual operator spaces is the {\it normal Fubini tensor product}
\cite{Ham}. If $\Vc \subseteq \Bc(H)$ and $\Wc \subseteq \Bc(K)$ are
dual operator spaces then their normal Fubini tensor product
can be defined concretely as
$$\Vc \overline{\otimes}_\Fc \Wc =
(\Vc \overline{\otimes} \Bc(K)) \cap (\Bc(H) \overline{\otimes} \Wc)$$
where $\overline{\otimes}$ is, as before, the normal spatial tensor
product (i.e., the weak* closure of the algebraic tensor product).
Abstractly, $\Vc \overline{\otimes}_\Fc \Wc$ is characterized as the
dual of the projective tensor product of the preduals of $\Vc$ and
$\Wc$ \cite{Ble, Rua}:
$$\Vc \overline{\otimes}_\Fc \Wc \cong (\Vc_* \hat{\otimes} \Wc_*)^*.$$
The normal spatial tensor product is always contained in the normal
Fubini tensor product but this inclusion may be strict. Thus, the equality
$\Vc \overline{\otimes}_\Fc \Wc =
(\Vc \overline{\otimes} \Bc(K)) \cap (\Bc(H) \overline{\otimes} \Wc)$,
which is crucial for the following proof, fails in general for the normal
spatial tensor product.

\begin{prop}
Let $\Vb = \{\Vc_t\}$ and $\Wb = \{\Wc_t\}$ be W*-filtrations of
$\Bc(H)$ and $\Bc(K)$, repectively. Then $\Vb \overline{\otimes}_\Fc \Wb =
\{\Vc_t \overline{\otimes}_\Fc \Wc_t\}$ is a W*-filtration of
$\Bc(H \otimes K)$.
\end{prop}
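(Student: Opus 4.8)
The plan is to verify the three requirements of Definition \ref{filt}(a) for the family $\{\Vc_t \overline{\otimes}_\Fc \Wc_t\}$, after first checking that each term is a dual operator system. Writing $\Vc_t \overline{\otimes}_\Fc \Wc_t = (\Vc_t \overline{\otimes} \Bc(K)) \cap (\Bc(H) \overline{\otimes} \Wc_t)$, weak* closedness is immediate since this is an intersection of two weak* closed subspaces. Each spatial factor is self-adjoint, being the weak* closure of a self-adjoint algebraic tensor product (as $\Vc_t$, $\Wc_t$, $\Bc(H)$, $\Bc(K)$ are self-adjoint and adjunction is weak* continuous), so the intersection is self-adjoint; and it is unital because $I_H \in \Vc_t$ and $I_K \in \Wc_t$ force $I_{H \otimes K} = I_H \otimes I_K$ into both factors of the intersection.

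For the filtration condition (i) I would first prove the spatial statement $(\Vc_s \overline{\otimes} \Bc(K))(\Vc_t \overline{\otimes} \Bc(K)) \subseteq \Vc_{s+t} \overline{\otimes} \Bc(K)$ and its mirror image on the other leg. On elementary tensors this is just $(A \otimes B)(C \otimes D) = AC \otimes BD$ combined with $\Vc_s\Vc_t \subseteq \Vc_{s+t}$, so the inclusion holds on algebraic tensor products. To pass to the weak* closures I would use that for a fixed operator the maps $X \mapsto XY$ and $X \mapsto YX$ are weak* continuous (their preadjoints are left and right multiplication on the trace class). Fixing an algebraic $Y \in \Vc_t \otimes \Bc(K)$, the set $\{X : XY \in \Vc_{s+t}\overline{\otimes}\Bc(K)\}$ is weak* closed and contains $\Vc_s \otimes \Bc(K)$, hence contains $\Vc_s \overline{\otimes} \Bc(K)$; fixing such an $X$ and repeating in the second variable extends the inclusion to all $Y \in \Vc_t \overline{\otimes} \Bc(K)$. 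Then for $X \in \Vc_s \overline{\otimes}_\Fc \Wc_s$ and $Y \in \Vc_t \overline{\otimes}_\Fc \Wc_t$, both lie in the relevant spatial product on each leg, so $XY$ lies in $\Vc_{s+t}\overline{\otimes}\Bc(K)$ and in $\Bc(H)\overline{\otimes}\Wc_{s+t}$, hence in their intersection $\Vc_{s+t}\overline{\otimes}_\Fc\Wc_{s+t}$.

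For condition (ii) the inclusion $\Vc_t\overline{\otimes}_\Fc\Wc_t \subseteq \bigcap_{s>t}(\Vc_s\overline{\otimes}_\Fc\Wc_s)$ follows from monotonicity of the Fubini product in each argument. For the reverse inclusion I would invoke the slice map characterization of the normal Fubini tensor product: for normal functionals $\psi \in \Bc(K)_*$ and $\phi \in \Bc(H)_*$ the right and left slice maps $R_\psi, L_\phi$ are weak* continuous, and $X \in \Vc \overline{\otimes}_\Fc \Wc$ exactly when $R_\psi(X) \in \Vc$ and $L_\phi(X) \in \Wc$ for all $\psi, \phi$; in particular $\Vc \overline{\otimes} \Bc(K) = \{X : R_\psi(X) \in \Vc \text{ for all } \psi\}$, and similarly for the other leg. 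Granting this, $X \in \bigcap_{s>t}(\Vc_s\overline{\otimes}_\Fc\Wc_s)$ iff $R_\psi(X) \in \Vc_s$ and $L_\phi(X) \in \Wc_s$ for all $s > t$ and all $\psi, \phi$, which by condition (ii) for $\Vb$ and $\Wb$ is equivalent to $R_\psi(X) \in \bigcap_{s>t}\Vc_s = \Vc_t$ and $L_\phi(X) \in \bigcap_{s>t}\Wc_s = \Wc_t$, i.e. to $X \in \Vc_t \overline{\otimes}_\Fc \Wc_t$.

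The main obstacle is precisely the reverse inclusion in (ii): it amounts to the claim that tensoring with $\Bc(K)$ commutes with the intersection of the nested family $\{\Vc_s\}_{s>t}$, which fails for tensor products in general and depends essentially on the slice map property enjoyed by $\Bc(K)$ (equivalently, on the fact that the normal Fubini product is detected by slice maps). By contrast, the multiplicativity in (i) is comparatively routine once the separate weak* continuity of multiplication is used to reduce to elementary tensors, and the dual-operator-system verifications are purely formal.
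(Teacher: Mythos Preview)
Your proof is correct and follows essentially the same structural approach as the paper: verify (i) by reducing to the spatial factors on each leg and intersecting, and verify (ii) by showing that the intersection $\bigcap_{s>t}$ commutes with $\overline{\otimes}\,\Bc(K)$ on each leg. The paper's proof is terser---it simply writes $\bigcap_{s>t}(\Vc_s\overline{\otimes}\Bc(K)) = \Vc_t\overline{\otimes}\Bc(K)$ without comment---whereas you supply the slice-map justification for this step, which is exactly the right way to fill in that gap and is implicitly what the paper's preceding remark about the Fubini product (``this equality \ldots\ fails in general for the normal spatial tensor product'') is pointing toward.
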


\begin{proof}
We have
\begin{eqnarray*}
(\Vc_s\overline{\otimes}_\Fc \Wc_s)(\Vc_t\overline{\otimes}_\Fc \Wc_t)
&=&((\Vc_s\overline{\otimes} \Bc(K))\cap(\Bc(H)\overline{\otimes} \Wc_s))
((\Vc_t\overline{\otimes} \Bc(K))\cap(\Bc(H)\overline{\otimes} \Wc_t))\cr
&\subseteq&
((\Vc_s\overline{\otimes} \Bc(K))(\Vc_t\overline{\otimes} \Bc(K))) \cap
((\Bc(H) \overline{\otimes} \Wc_s)(\Bc(H)\overline{\otimes} \Wc_t))\cr
&\subseteq& (\Vc_{s+t}\overline{\otimes} \Bc(K)) \cap
(\Bc(H)\overline{\otimes} \Wc_{s+t})\cr
&=&\Vc_{s+t}\overline{\otimes}_\Fc \Wc_{s+t}
\end{eqnarray*}
and
\begin{eqnarray*}
\bigcap_{s > t} \Vc_s\overline{\otimes}_\Fc \Wc_s
&=& \bigcap_{s > t} (\Vc_s \overline{\otimes} \Bc(K)) \cap
(\Bc(H) \overline{\otimes} \Wc_s)\cr
&=& \left(\bigcap_{s > t} \Vc_s \overline{\otimes} \Bc(K)\right)
\cap \left(\bigcap_{s > t} \Bc(H) \overline{\otimes} \Wc_s\right)\cr
&=& (\Vc_t \overline{\otimes} \Bc(K)) \cap (\Bc(H) \overline{\otimes} \Wc_t)\cr
&=& \Vc_t \overline{\otimes}_\Fc \Wc_t
\end{eqnarray*}
so both conditions of Definition \ref{filt} (a) are satisfied.
\end{proof}

\begin{defi}
Let $\Vb$ and $\Wb$ be quantum pseudometrics on von Neumann algebras
$\Mx \subseteq \Bc(H)$ and $\Nc \subseteq \Bc(K)$. Their {\it metric
product} is the von Neumann algebra $\Mx \overline{\otimes} \Nc
\subseteq \Bc(H \otimes K)$ equipped with the quantum pseudometric
$\Vb \overline{\otimes}_\Fc \Wb = \{\Vc_t \overline{\otimes}_\Fc \Wc_t\}$.
\end{defi}

\begin{prop}\label{productprop}
Let $\Vb$ and $\Wb$ be quantum pseudometrics on von Neumann algebras
$\Mx \subseteq \Bc(H)$ and $\Nc \subseteq \Bc(K)$.

\noindent (a) The metric product $\Vb \overline{\otimes}_\Fc \Wb$ is the
meet of the quantum pseudometrics $\Vb \overline{\otimes} \Wb_0
= \{\Vc_t \overline{\otimes} \Wc^0_t\}$
and $\Vb_0 \overline{\otimes} \Wb = \{\Vc^0_t \overline{\otimes} \Wc_t\}$
where $\Vb_0 = \{\Vc^0_t\}$
and $\Wb_0 = \{\Wc^0_t\}$ are the trivial quantum pseudometrics with
$\Vc^0_t = \Bc(H)$ and $\Wc^0_t = \Bc(K)$ for all $t$.

\noindent (b) The natural embeddings $\iota_1: A \mapsto A \otimes I_K$
and $\iota_2: B \mapsto I_H \otimes B$ of $\Mx$ and $\Nc$ into
$\Mx \overline{\otimes} \Nc$ realize $\Mx$ and $\Nc$ as metric
subobjects of the metric product.

\noindent (c) $\Vb \overline{\otimes}_\Fc \Wb$ is a quantum metric
if and only if both $\Vb$ and $\Wb$ are quantum metrics.
\end{prop}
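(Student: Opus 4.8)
The plan is to handle the three parts in order, using throughout the concrete description $\Vc \overline{\otimes}_\Fc \Wc = (\Vc \overline{\otimes} \Bc(K)) \cap (\Bc(H) \overline{\otimes} \Wc)$, and writing $\Ub = \Vb \overline{\otimes}_\Fc \Wb$ with $\Uc_t = \Vc_t \overline{\otimes}_\Fc \Wc_t$. For part (a) I would first verify that $\Vb \overline{\otimes} \Wb_0$ and $\Vb_0 \overline{\otimes} \Wb$ really are quantum pseudometrics on $\Mx \overline{\otimes} \Nc$. Since $\Vc_t \overline{\otimes}_\Fc \Bc(K) = (\Vc_t \overline{\otimes} \Bc(K)) \cap \Bc(H \otimes K) = \Vc_t \overline{\otimes} \Bc(K)$, the family $\Vb \overline{\otimes} \Wb_0$ coincides with the Fubini product of $\Vb$ with the trivial W*-filtration $\Wb_0$, hence is a W*-filtration by the preceding proposition; and its $0$-term $\Vc_0 \overline{\otimes} \Bc(K)$ contains $\Mx' \overline{\otimes} \Nc' = (\Mx \overline{\otimes} \Nc)'$ because $\Mx' \subseteq \Vc_0$. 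The symmetric argument disposes of $\Vb_0 \overline{\otimes} \Wb$. Their meet, by Definition \ref{easycon} (c), has $t$-th term $(\Vc_t \overline{\otimes} \Bc(K)) \cap (\Bc(H) \overline{\otimes} \Wc_t) = \Uc_t$, giving (a) with essentially no computation.

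For part (b), under the representation $A \mapsto A \otimes I_K$ the pseudometric $\Vb$ on $\Mx$ corresponds via Theorem \ref{metcor} to $\Vb \overline{\otimes} \Wb_0 = \{\Vc_t \overline{\otimes} \Bc(K)\}$ on $\Mx \otimes I_K$, so I would show that the subobject pseudometric $\Ub_{\Mx \otimes I_K}$ equals this. One inclusion is formal: by (a) $\Ub \leq \Vb \overline{\otimes} \Wb_0$, and $\Vb \overline{\otimes} \Wb_0$ is itself a quantum pseudometric on $\Mx \otimes I_K$ (its $0$-term contains $(\Mx \otimes I_K)' = \Mx' \overline{\otimes} \Bc(K)$), so it is one of the filtrations in the meet defining $\Ub_{\Mx \otimes I_K}$, whence $\Ub_{\Mx \otimes I_K} \leq \Vb \overline{\otimes} \Wb_0$. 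For the reverse inclusion I must show $\Vc_t \overline{\otimes} \Bc(K) \subseteq \Wc'_t$ for every W*-filtration $\Wb'$ with $\Ub \leq \Wb'$ and $\Mx' \overline{\otimes} \Bc(K) \subseteq \Wc'_0$. Here $\Vc_t \otimes I_K \subseteq \Uc_t \subseteq \Wc'_t$ (using $I_K \in \Wc_0$), and since $\Wc'_t$ is a bimodule over $\Wc'_0 \supseteq \Mx' \overline{\otimes} \Bc(K) \supseteq I_H \otimes \Bc(K)$, it contains $(I_H \otimes D_1)(B \otimes I_K)(I_H \otimes D_2) = B \otimes D_1 D_2$ for all $B \in \Vc_t$ and $D_1, D_2 \in \Bc(K)$; as the products $D_1 D_2$ fill $\Bc(K)$ and $\Wc'_t$ is weak* closed, this forces $\Vc_t \overline{\otimes} \Bc(K) \subseteq \Wc'_t$. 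The statement for $\iota_2$ follows identically. This generation step is the main obstacle: everything else in (b) is bookkeeping, but one must correctly exploit the $\Wc'_0$-bimodule property to enlarge $\Vc_t \otimes I_K$ back up to the full spatial tensor product.

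For part (c), I would compute $\Uc_0 = (\Vc_0 \overline{\otimes} \Bc(K)) \cap (\Bc(H) \overline{\otimes} \Wc_0)$ and compare with $(\Mx \overline{\otimes} \Nc)' = \Mx' \overline{\otimes} \Nc'$. If both $\Vb$ and $\Wb$ are quantum metrics then $\Vc_0 = \Mx'$ and $\Wc_0 = \Nc'$, so $\Uc_0 = \Mx' \overline{\otimes}_\Fc \Nc'$; the commutant of this Fubini product is the von Neumann algebra generated by $\Mx \otimes I_K$ and $I_H \otimes \Nc$, namely $\Mx \overline{\otimes} \Nc$, so by the commutation theorem $\Uc_0 = (\Mx \overline{\otimes} \Nc)' = \Mx' \overline{\otimes} \Nc'$ and $\Ub$ is a quantum metric. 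Conversely, assume $\Uc_0 = \Mx' \overline{\otimes} \Nc'$ and let $A \in \Vc_0$; then $A \otimes I_K$ lies in $\Vc_0 \overline{\otimes} \Bc(K)$ and, since $I_K \in \Nc' \subseteq \Wc_0$, also in $\Bc(H) \overline{\otimes} \Wc_0$, hence $A \otimes I_K \in \Uc_0 = \Mx' \overline{\otimes} \Nc' \subseteq (\Mx \otimes I_K)'$, which forces $A \in \Mx'$. Thus $\Vc_0 = \Mx'$, and $\Wc_0 = \Nc'$ by symmetry. The only nonformal ingredient here is the coincidence of the normal Fubini and normal spatial products for von Neumann algebras, which is standard.
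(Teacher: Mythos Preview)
Your proof is correct and follows essentially the same approach as the paper's. In part (b) both you and the paper reduce to showing $\Vc_t \overline{\otimes} \Bc(K) \subseteq \tilde{\Wc}_t$ by exploiting $\Vc_t \otimes I_K \subseteq \tilde{\Wc}_t$ together with the $\tilde{\Wc}_0$-module structure (the paper uses the one-sided inclusion $\tilde{\Wc}_0\tilde{\Wc}_t \subseteq \tilde{\Wc}_t$ where you use both sides, but this is cosmetic), and in part (c) both arguments hinge on the equality $\Mx' \overline{\otimes}_\Fc \Nc' = \Mx' \overline{\otimes} \Nc'$ for von Neumann algebras.
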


\begin{proof}
(a) Trivial.

(b) By symmetry it is enough to consider the embedding of $\Mx$ into
$\Mx \overline{\otimes} \Nc$. The quantum pseudometric on $\Mx \otimes I
\subseteq \Bc(H\otimes K)$ corresponding to $\Vb$ is $\Vb \overline{\otimes}
\Wb_0$ where $\Wb_0$ is the trivial quantum pseudometric as in part (a)
(Theorem \ref{repindep}), so we have to show that
$$\Vb \overline{\otimes} \Wb_0 =
\bigwedge\{\tilde{\Wb}: \Vb \overline{\otimes}_\Fc \Wb
\leq \tilde{\Wb}\hbox{ and }\Mx' \overline{\otimes} \Bc(K) \subseteq
\tilde{\Wc}_0\}$$
where $\tilde{\Wb}$ ranges over W*-filtrations of $\Bc(H \otimes K)$.
It is easy to check that $\Vb \overline{\otimes} \Wb_0$ belongs to the
meet on the right, which verifies the inequality $\leq$.
For the reverse inequality, let $\tilde{\Wb}$ be any W*-filtration
satisfying $\Vb \overline{\otimes}_\Fc \Wb \leq \tilde{\Wb}$
and $\Mx' \overline{\otimes} \Bc(K) \subseteq \tilde{\Wc}_0$. Then
$\Vc_t \otimes I \subseteq \tilde{\Wc}_t$ for all $t$ and
$I \otimes \Bc(K) \subseteq \tilde{\Wc}_0$. Since
$\tilde{\Wc}_0\tilde{\Wc}_t \subseteq \tilde{\Wc}_t$,
it follows that $\Vc_t \overline{\otimes} \Bc(K) \subseteq \tilde{\Wc}_t$
for all $t$, i.e., that $\Vb \overline{\otimes} \Wb_0 \leq
\tilde{\Wb}$. This verifies the inequality $\geq$.

(c) If either $\Vb$ or $\Wb$ is not a quantum metric then either
$\Mx' \subsetneq \Vc_0$ or $\Nc' \subsetneq \Wc_0$,
and it follows that $\Mx' \overline{\otimes} \Nc'
\subsetneq \Vc_0 \overline{\otimes}_\Fc \Wc_0$, so that
$\Vb \overline{\otimes}_\Fc \Wb$ is not a quantum metric. The reverse
implication follows from the fact that
$$\Mx'\overline{\otimes}\Nc' = \Mx' \overline{\otimes}_\Fc \Nc'$$
since $\Mx'$ and $\Nc'$ are von Neumann algebras \cite{ER}.
\end{proof}

The definition of the metric product can be varied. For instance,
an $l^p$ product ($1 \leq p < \infty$) could be defined as the
smallest W*-filtration $\tilde{\Wb} = \{\tilde{\Wc}_t\}$ of
$\Bc(H \otimes K)$ satisfying
$$\Vc_s \overline{\otimes}_\Fc \Wc_t
\subseteq \tilde{\Wc}_{(s^p + t^p)^{1/p}}$$
for all $s,t \geq 0$. This product also has the properties proven
for the metric product in Proposition \ref{productprop} (b) and (c);
the first holds by essentially the same proof given for metric products,
and the second follows from the fact that the $l^p$ product W*-filtration
is contained in the metric product W*-filtration. However, it is not
clear that the $l^p$ product W*-filtration has any more explicit
description than the one just given.

Finally, we note that the quotient, subobject, and product constructions
discussed above reduce to the standard notions in the atomic abelian case.

\begin{prop}
Let $X$ and $Y$ be pseudometric spaces with pseudometrics $d$ and $d'$ and
let $\Mx \cong l^\infty(X)$ and $\Nc \cong l^\infty(Y)$ be the von Neumann
algebras of bounded multiplication operators on $l^2(X)$ and $l^2(Y)$,
equipped with the corresponding quantum pseudometrics (Proposition
\ref{aa}).

\noindent (a) For any subset $X_0$ of $X$ with inherited pseudometric $d_0$,
the von Neumann algebra
$$\Mx_0 = \{M_f: {\rm supp}(f) \subseteq X_0\} \subseteq \Mx$$
equipped with the quantum pseudometric $\Vb_{d_0}$ (Proposition \ref{aa})
is a metric quotient of $\Mx$. Every metric quotient of $\Mx$ is of this
form.

\noindent (b) For any equivalence relation $\sim$ on $X$ with quotient
pseudometric $\tilde{d}$ (\cite{W4}, Definition 1.4.2), the von Neumann
algebra
$$\tilde{\Mx} = \{M_f: f \in l^\infty(X),\quad
x\sim y\Rightarrow f(x) = f(y)\} \subseteq \Mx$$
equipped with  the quantum pseudometric $\Vb_{\tilde{d}}$ is a metric
subobject of $\Mx$. Every metric subobject of $\Mx$ is of this form.

\noindent (c) The metric product of $\Mx$ and $\Nc$ is the von Neumann algebra
$\Mx \overline{\otimes}\Nc \cong l^\infty(X\times Y)$ equipped with
the quantum pseudometric $\Vb_{d_{X \times Y}}$ associated to the
pseudometric
$$d_{X\times Y}((x_1,y_1), (x_2,y_2)) = \max\{d(x_1,x_2), d'(y_1,y_2)\}.$$
\end{prop}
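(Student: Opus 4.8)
The plan is to handle each of the three constructions at the level of the W*-filtration and then identify the associated classical pseudometric through the inverse correspondence of Proposition \ref{aa}, using repeatedly the span description $\Vc_t^d = \overline{\rm span}^{wk^*}\{V_{xy}: d(x,y)\le t\}$. For part (a), since $\Mx\cong l^\infty(X)$ is abelian every projection is central, and the central projections are exactly $R = M_{\chi_{X_0}}$ with $X_0\subseteq X$; this already gives the ``every metric quotient is of this form'' clause. The only computation needed is that compressing a rank one operator yields $RV_{xy}R = V_{xy}$ when $x,y\in X_0$ and $RV_{xy}R = 0$ otherwise. Feeding this into the definition $\Wc_t = R\Vc_t^dR$ (Definition \ref{quot}) gives $\Wc_t = \overline{\rm span}^{wk^*}\{V_{xy}: x,y\in X_0,\ d(x,y)\le t\}$ inside $\Bc(l^2(X_0)) = R\Bc(l^2(X))R$, which is precisely $\Vc_t^{d_0}$ for the inherited pseudometric $d_0$. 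I expect this part to be routine.

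For part (c), identify $\Mx\overline{\otimes}\Nc\cong l^\infty(X\times Y)$ acting on $l^2(X)\otimes l^2(Y)\cong l^2(X\times Y)$, under which $V_{(x_1,y_1),(x_2,y_2)} = V_{x_1x_2}\otimes V_{y_1y_2}$, and observe that $d_{X\times Y}((x_1,y_1),(x_2,y_2))\le t$ holds exactly when $d(x_1,x_2)\le t$ and $d'(y_1,y_2)\le t$. I would then prove $\Vc_t^d\overline{\otimes}_\Fc\Wc_t^{d'} = \Vc_t^{d_{X\times Y}}$ by two inclusions. For $\supseteq$, each spanning operator $V_{x_1x_2}\otimes V_{y_1y_2}$ of $\Vc_t^{d_{X\times Y}}$ is an elementary tensor with $V_{x_1x_2}\in\Vc_t^d$ and $V_{y_1y_2}\in\Wc_t^{d'}$, hence lies in $\Vc_t^d\overline{\otimes}\Wc_t^{d'}\subseteq\Vc_t^d\overline{\otimes}_\Fc\Wc_t^{d'}$; as the latter is weak* closed it contains the whole span. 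For $\subseteq$ I would use the concrete description $\Vc_t^d\overline{\otimes}_\Fc\Wc_t^{d'} = (\Vc_t^d\overline{\otimes}\Bc(l^2(Y)))\cap(\Bc(l^2(X))\overline{\otimes}\Wc_t^{d'})$: the matrix coefficient at $((x_1,y_1),(x_2,y_2))$ is a weak* continuous functional that vanishes on every elementary tensor $A\otimes B$ with $A\in\Vc_t^d$ whenever $d(x_1,x_2)>t$, hence on all of $\Vc_t^d\overline{\otimes}\Bc(l^2(Y))$; symmetrically it vanishes on $\Bc(l^2(X))\overline{\otimes}\Wc_t^{d'}$ whenever $d'(y_1,y_2)>t$. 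Thus any $T$ in the intersection has vanishing coefficient whenever $\max\{d(x_1,x_2),d'(y_1,y_2)\}>t$, which is the defining condition for $\Vc_t^{d_{X\times Y}}$. The point worth stressing is that $\subseteq$ needs only the easy half of the block analysis, and it is exactly here that the Fubini rather than spatial tensor product is essential.

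Part (b) is the main obstacle, for two reasons. First, one must translate the algebra-to-space duality: a unital von Neumann subalgebra $\Nc\subseteq l^\infty(X)$ is recovered from the complete Boolean algebra of its projections, whose atoms $[x] = \bigcap\{S: \chi_S\in\Nc,\ x\in S\}$ partition $X$ and define the equivalence relation $\sim$, so that $\Nc = \{M_f: f\text{ constant on }\sim\text{-classes}\}\cong l^\infty(X/{\sim})$; this handles the ``every metric subobject is of this form'' clause. Second, the metric on the subobject is defined abstractly as a meet (Definition \ref{subobj}), so instead of a direct filtration computation I would exploit the universal property (Proposition \ref{univquot}) together with Proposition \ref{aacolip} and the representation-independence of the theory (Theorem \ref{metcor}, which also lets me transport $\Vb_{\tilde d}$ onto $l^2(X)$). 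As an abstract normal unital $*$-homomorphism $l^\infty(X/{\sim})\to l^\infty(X)$, the inclusion $\iota:\Nc\hookrightarrow\Mx$ is composition with the quotient map $q:X\to X/{\sim}$, so $L(\iota) = L(q)$ and $\iota$ is a co-contraction precisely when $q$ is $1$-Lipschitz, i.e. $e(qx,qy)\le d(x,y)$, where $e$ denotes the pseudometric attached to a given quantum pseudometric on $\Nc$. Writing $e$ for the pseudometric of $\Vb_\Nc$: Proposition \ref{univquot}(a) makes $\iota$ a co-contraction, so $q$ is $1$-Lipschitz for $e$ and hence $e\le\tilde d$ by maximality of the quotient pseudometric; conversely $\tilde d$ makes $q$ $1$-Lipschitz, so $\Vb_{\tilde d}$ makes $\iota$ a co-contraction and Proposition \ref{univquot}(b) yields $\tilde d\le e$. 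Therefore $e = \tilde d$ and $\Vb_\Nc = \Vb_{\tilde d}$.

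The single classical input I am relying on in part (b) is that the quotient pseudometric of Definition 1.4.2 of \cite{W4} is the largest pseudometric on $X/{\sim}$ for which $q$ is contractive; this is what lets the abstract meet match the classical construction, and it fits the paper's earlier remark that meets of quantum pseudometrics reduce to suprema of classical pseudometrics. The harder ``converse'' block characterization that one might fear is needed never arises: part (c) is arranged so that only the easy half is used, and part (b) is routed entirely through the universal property rather than through an explicit description of the meet's filtration.
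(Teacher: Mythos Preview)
Your proposal is correct and follows essentially the same approach as the paper: the paper declares the proof ``straightforward'' and singles out only part (b), saying that $\Vb_{\tilde d}$ is identified as the metric subobject pseudometric via the universal property of Proposition \ref{univquot} together with the classical universal property of the quotient pseudometric (\cite{W4}, Proposition 1.4.3). Your detailed argument for (b), routing through Propositions \ref{aacolip} and \ref{univquot}, is exactly this strategy fleshed out, and your treatments of (a) and (c) are the natural computations the paper leaves implicit.
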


The proof is straightforward. In part (b) we show that
$\Vb_{\tilde{d}}$ is the metric subobject quantum pseudometric by
observing that $\tilde{d}$ has the universal property stated in
Proposition \ref{univquot}; see (\cite{W4}, Proposition 1.4.3).

\subsection{Intrinsic characterization}\label{acr}
We show that quantum pseudometrics can be characterized intrinsically in
terms of quantum distance functions. But first we observe that in finite
dimensions quantum pseudometrics can be encoded as positive operators
in $\Mx \otimes \Mx^{op}$.

\begin{prop}
Let $\Mx \subseteq \Bc(H)$ be a finite dimensional von Neumann algebra
and let $\Vb$ be a quantum pseudometric on $\Mx$. Then there is a
positive operator $X$ in $\Mx \otimes \Mx^{op}$ such that
$$\Vc_t = \{B \in \Bc(H): \Phi_{P_{(t,\infty)}(X)}(B) = 0\}$$
for all $t \geq 0$, where $P_{(t,\infty)}(X)$ is the spectral projection
of $X$ and $\Phi$ is the action of $\Mx \otimes \Mx^{op}$ on
$\Bc(H)$ defined by
$$\Phi_{A \otimes C}(B) = ABC.$$
\end{prop}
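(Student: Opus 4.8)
The plan is to realize the whole filtration through the spectral resolution of a single positive operator obtained by pulling back a suitable operator on Hilbert--Schmidt space. We work in finite dimensions, so $H$ is finite dimensional and $\Bc(H)$ is a finite dimensional Hilbert space under $\langle A,B\rangle = {\rm tr}(B^*A)$. A direct computation (using ${\rm tr}((ABC)^*D) = {\rm tr}(B^*(A^*DC^*))$) shows that $\Phi$ is a unital $*$-representation of the finite dimensional C*-algebra $\Ac = \Mx \otimes \Mx^{op}$ on this Hilbert space; in particular $\Phi(\Ac)$ is a von Neumann algebra and $\Phi$ intertwines functional calculus, so that $\Phi_{P_{(t,\infty)}(X)} = P_{(t,\infty)}(\Phi_X)$ for every positive $X \in \Ac$. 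Consequently the theorem reduces to producing a positive $X \in \Ac$ for which $T := \Phi_X$ has spectral subspaces ${\rm ran}\,P_{[0,t]}(T) = \Vc_t$ for all $t$, since then $\ker \Phi_{P_{(t,\infty)}(X)} = \ker P_{(t,\infty)}(T) = {\rm ran}\,P_{[0,t]}(T) = \Vc_t$.

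First I would build the target operator $T$ on $\Bc(H)$. Because $\Bc(H)$ is finite dimensional and $t \mapsto \Vc_t$ is increasing, there are only finitely many distinct subspaces $\Vc_0 = W_0 \subsetneq W_1 \subsetneq \cdots \subsetneq W_N = \Bc(H)$; setting $v_j = \min\{t : \Vc_t = W_j\}$ gives $0 = v_0 < v_1 < \cdots < v_N$ with $\Vc_t = W_j$ for $v_j \le t < v_{j+1}$. Writing $Q_j$ for the Hilbert--Schmidt orthogonal projection onto $W_j \ominus W_{j-1}$ (with $W_{-1} = 0$), I set $T = \sum_{j=0}^N v_j Q_j \ge 0$; then $P_{[0,t]}(T) = \sum_{v_i \le t} Q_i$ projects onto $W_j = \Vc_t$ for $v_j \le t < v_{j+1}$, exactly as required. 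Here one should note the only genuine hypothesis: this uses $W_N = \Bc(H)$, i.e. ${\rm diam}(\Vb) = v_N < \infty$. If the diameter is infinite then $\Vc_t$ stabilizes strictly below $\Bc(H)$ and no bounded $X$ can work, so finite distances must be assumed. Since $T = \sum_j v_j(P_{W_j} - P_{W_{j-1}})$ with $P_{W_j}$ the orthogonal projection onto $W_j = \Vc_{v_j}$, it remains only to show that the orthogonal projection onto each $\Vc_t$ lies in $\Phi(\Ac)$.

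The heart of the argument is to identify the commutant of $\Phi(\Ac)$. The maps $B \mapsto A'BC'$ with $A',C' \in \Mx'$ commute with every $\Phi_{A \otimes C}$, because $A'A = AA'$ and $CC' = C'C$; and by the commutation theorem (under $\Bc(H) \cong H \otimes \overline{H}$ the algebra $\Phi(\Ac)$ becomes $\Mx \overline{\otimes} \overline{\Mx}$, whose commutant is $\Mx' \overline{\otimes} \overline{\Mx'}$) these exhaust $\Phi(\Ac)'$. Thus $\Phi(\Ac)' = \{B \mapsto A'BC' : A',C' \in \Mx'\}$. Since $\Phi(\Ac)$ is a von Neumann algebra, the orthogonal projection onto a subspace lies in $\Phi(\Ac)$ if and only if that subspace is invariant under the $*$-closed set $\Phi(\Ac)'$; for $\Vc_t$ this invariance is exactly the condition $\Mx' \Vc_t \Mx' \subseteq \Vc_t$, which holds because each $\Vc_t$ is a quantum relation on $\Mx$ (Definition \ref{quantrel}). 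Hence every such projection, and therefore $T$, lies in $\Phi(\Ac)$. Finally, as $\Phi$ is a $*$-homomorphism of finite dimensional C*-algebras it restricts to a $*$-isomorphism from the sum of the central summands not annihilated by $\Phi$ onto $\Phi(\Ac)$, so the positive operator $T$ has a positive preimage $X \in \Ac = \Mx \otimes \Mx^{op}$, completing the proof.

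I expect the main obstacle to be the commutant computation together with the clean translation of the quantum-relation (bimodule over $\Mx'$) property into membership of the relevant projection in $\Phi(\Ac)$; the finite-diameter point above is a necessary hypothesis rather than a difficulty. Everything else---the functional-calculus reduction and the explicit assembly of $T$ from the spectral increments of the filtration---is routine.
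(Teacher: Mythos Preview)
Your argument is sound once one grants that $H$ is finite dimensional, but that inference is unjustified: the hypothesis is only that $\Mx$ is finite dimensional, and $\Mx$ can sit inside $\Bc(H)$ for infinite dimensional $H$ (e.g.\ $\Mx = M_2(\Cb)\otimes I_K$ acting on $\Cb^2\otimes K$). Your Hilbert--Schmidt framework, the identification $\Bc(H)\cong H\otimes\overline{H}$, and the commutation theorem all require $\dim H < \infty$. The fix is routine but should be stated: either invoke Theorem~\ref{metcor} to pass to a finite dimensional faithful representation, produce $X\in\Mx\otimes\Mx^{op}$ there, and observe that $\ker\Phi_{P_{(t,\infty)}(X)}$ is a weak* closed $\Mx'$-bimodule whose description is representation independent; or note directly that every weak* closed $\Mx'$-bimodule in $\Bc(H)$ decomposes as $\bigoplus_{ij} W_{ij}\otimes\Bc(K_j,K_i)$ with $W_{ij}$ finite dimensional, so only finitely many $\Vc_t$ occur and the argument transfers. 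Your observation that finite diameter is tacitly required is correct, and it applies equally to the paper's proof.

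Beyond this, your route genuinely differs from the paper's. The paper never introduces a Hilbert space structure on $\Bc(H)$: for each $t$ it forms the left ideal $\Ic_t=\{Y\in\Mx\otimes\Mx^{op}:\Phi_Y|_{\Vc_t}=0\}$, takes its support projection $P_t$, and checks $\ker\Phi_{P_t}=\Vc_t$; then $X$ is read off from the decreasing right continuous family $(P_t)$. The one nontrivial step there---that the double annihilator of $\Vc_t$ under $\Phi$ is $\Vc_t$ itself---is asserted without comment and rests on exactly the $\Mx'$-bimodule property that your commutant computation makes explicit. So your approach is more transparent about the role of the bimodule hypothesis, at the cost of needing the Hilbert--Schmidt apparatus; the paper's approach is more algebraic and works uniformly in $H$, at the cost of leaving that key step implicit.
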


\begin{proof}
For each $t$ the set
$$\Ic_t = \{Y \in \Mx \otimes \Mx^{op}: \Phi_Y(B) = 0\hbox{ for all }
B \in \Vc_t\}$$
is a left ideal of $\Mx \otimes \Mx^{op}$, and hence is of the form
$(\Mx \otimes \Mx^{op})P_t$ for some projection $P_t \in \Mx \otimes \Mx^{op}$.
Then $P_t \in \Ic_t$, so $\Phi_{P_t}(B) = 0$ for all $B \in \Vc_t$.
Conversely, $Y = YP_t$ for any $Y \in \Ic_t$, so that $\Phi_{P_t}(B) = 0$
implies $\Phi_Y(B) = \Phi_Y\Phi_{P_t}(B) = 0$ for all $Y \in \Ic_t$
implies $B \in \Vc_t$. Thus $\Vc_t = \{B: \Phi_{P_t}(B) = 0\}$. Finally,
the $P_t$ for $t \geq 0$ constitute a decreasing right continous
one-parameter family of projections in $\Mx \otimes \Mx^{op}$ and hence
are the spectral projections $P_{(t,\infty)}(X)$ for some positive operator
$X \in \Mx \otimes \Mx^{op}$.
\end{proof}

Now we proceed to our main result which gives a general intrinsic
characterization of quantum pseudometrics. Recall the abstract notion of
a ``quantum distance function'' from Definition \ref{qdfdef}. Also
recall that $D = D_\Vb$ is the displacement gauge associated to $\Vb$
(Proposition \ref{fildis}). We will give
a different intrinsic characterization in Corollary \ref{lipequiv}.

\begin{theo}\label{abch}
Let $\Mx \subseteq \Bc(H)$ be a von Neumann algebra. If
$\Vb$ is a quantum pseudometric on $\Mx$ then
$$\rho_\Vb(P,Q) =
\inf\{D(A): A \in \Bc(H)\hbox{ and }P(A \otimes I)Q \neq 0\}$$
(with $\inf \emptyset = \infty$)
is a quantum distance function on $\Mx$; conversely, if $\rho$ is
a quantum distance function on $\Mx$ then $\Vb_\rho =
\{\Vc^\rho_t\}$ with
$$\Vc^\rho_t = \{A \in \Bc(H): \rho(P,Q) > t\quad\Rightarrow\quad
P(A \otimes I)Q = 0\}$$
is a quantum pseudometric on $\Mx$. The two constructions are
inverse to each other.
\end{theo}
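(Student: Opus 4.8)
The plan is to reduce to earlier results wherever possible and to concentrate the work on the two genuinely new assertions: that $\Vb_\rho$ is a quantum pseudometric, and that $\rho_{\Vb_\rho}=\rho$. The forward construction is already done for us: the fact that $\rho_\Vb$ is a quantum distance function is exactly Proposition \ref{distances}, and the identity $\Vb_{\rho_\Vb}=\Vb$ is exactly Proposition \ref{recover}. So I would open the proof by citing these two results and then devote the remaining effort to the passage $\rho\mapsto\Vb_\rho$ and the verification $\rho_{\Vb_\rho}=\rho$.

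First I would check that each $\Vc_t^\rho$ is a quantum relation on $\Mx$. Weak* closedness is clear since $\Vc_t^\rho$ is an intersection of the weak* closed sets $\{A:P(A\otimes I)Q=0\}$, and the $\Mx'$-bimodule property follows because $C\otimes I$ commutes with any projections $P,Q\in\Mx\overline{\otimes}\Bc(l^2)$ when $C\in\Mx'$. Self-adjointness of $\Vc_t^\rho$ follows from symmetry (property (iii)) together with the identity $P(A^*\otimes I)Q=0\Leftrightarrow Q(A\otimes I)P=0$, and the containment $\Mx'\subseteq\Vc_0^\rho$ (hence $I\in\Vc_t^\rho$) follows from property (ii): $\rho(P,Q)>0$ forces $PQ=0$, so $P(C\otimes I)Q=PQ(C\otimes I)=0$ for $C\in\Mx'$. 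The intersection condition (ii) of Definition \ref{filt}(a), namely $\Vc_t^\rho=\bigcap_{s>t}\Vc_s^\rho$, is immediate from the fact that $\rho(P,Q)>t$ iff $\rho(P,Q)>s$ for some $s>t$.

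The main obstacle is the filtration product condition $\Vc_s^\rho\Vc_t^\rho\subseteq\Vc_{s+t}^\rho$, which is where the triangle inequality (property (v)) must be deployed. Given $A\in\Vc_s^\rho$, $B\in\Vc_t^\rho$, and projections $P,R$ with $P((AB)\otimes I)R\neq 0$, I would let $Q$ be the projection onto $\overline{(\Mx'\otimes I)\,\mathrm{ran}((B\otimes I)R)}$, which lies in $\Mx\overline{\otimes}\Bc(l^2)$ because its range is $(\Mx\overline{\otimes}\Bc(l^2))'$-invariant. Then $(B\otimes I)R=Q(B\otimes I)R$, so $P(A\otimes I)Q\neq 0$ and hence $\rho(P,Q)\le s$; moreover any $\tilde Q$ with $Q\tilde Q\neq 0$ satisfies $\tilde Q(CB\otimes I)R\neq 0$ for some $C\in\Mx'$, and since $CB\in\Vc_t^\rho$ by the bimodule property this forces $\rho(\tilde Q,R)\le t$. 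Property (v) then yields $\rho(P,R)\le s+t$, i.e. $AB\in\Vc_{s+t}^\rho$. This argument deliberately parallels the verification of property (v) in Proposition \ref{distances}.

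Finally, for $\rho_{\Vb_\rho}=\rho$ one inequality is formal: if $A\in\Vc_t^\rho$ and $P(A\otimes I)Q\neq 0$ then $\rho(P,Q)\le t$, whence $\rho(P,Q)\le\rho_{\Vb_\rho}(P,Q)$. For the reverse inequality I would invoke the hard direction of Theorem \ref{abstractchar}. Properties (i), the strengthened form of (iv) (namely $\rho(\bigvee P_\lambda,\bigvee Q_\kappa)=\inf\rho(P_\lambda,Q_\kappa)$), (vi), and the upper semicontinuity (vii) together show that for each $t$ the set $\Rc_{<t}=\{(P,Q):\rho(P,Q)<t\}$ is an intrinsic quantum relation on $\Mx$, where openness is precisely the semicontinuity statement. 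By Theorem \ref{abstractchar}, $\Rc_{<t}=\Rc_{\Vc_{\Rc_{<t}}}$, so whenever $\rho(P,Q)<t$ there exists $A\in\Vc_{\Rc_{<t}}\subseteq\Vc_t^\rho$ with $P(A\otimes I)Q\neq 0$; hence $\rho_{\Vb_\rho}(P,Q)\le t$ for every $t>\rho(P,Q)$, giving $\rho_{\Vb_\rho}\le\rho$. I expect the genuine subtlety here to be twofold: confirming that the quantum-distance-function axioms translate exactly into the intrinsic-quantum-relation axioms, and leaning on the nontrivial half of Theorem \ref{abstractchar} to manufacture the linking operator $A$.
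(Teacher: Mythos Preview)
Your proposal is correct and follows essentially the same route as the paper: both reduce the forward direction to Proposition~\ref{distances}, the identity $\Vb_{\rho_\Vb}=\Vb$ to Proposition~\ref{recover}, prove the filtration condition $\Vc_s^\rho\Vc_t^\rho\subseteq\Vc_{s+t}^\rho$ via the same auxiliary projection $Q$ onto $\overline{(\Mx'\otimes I)\,\mathrm{ran}((B\otimes I)R)}$ and property~(v), and obtain $\rho_{\Vb_\rho}=\rho$ by showing $\Rc_{<t}=\{(P,Q):\rho(P,Q)<t\}$ is an intrinsic quantum relation and invoking Theorem~\ref{abstractchar}. Your filtration argument is in fact slightly cleaner than the paper's, since you conclude $\tilde Q(CB\otimes I)R\neq 0$ for \emph{some} $C\in\Mx'$ and use $CB\in\Vc_t^\rho$, rather than asserting $\tilde Q(B\otimes I)R\neq 0$ directly.
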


\begin{proof}
Let $\Vb$ be a quantum pseudometric on $\Mx$.
The fact that $\rho_\Vb$ is a quantum distance function
was proven in Proposition \ref{distances}.
Now let $\rho$ be any quantum distance function. We have
$\Mx' \subseteq \Vc^\rho_t$ for all $t$ by property (ii) of Definition
\ref{qdfdef} since
$$\rho(P,Q) > t\quad\Rightarrow\quad PQ = 0\quad\Rightarrow\quad
P(A \otimes I)Q = (A \otimes I)PQ = 0$$
for all $A \in \Mx'$. Also $\Vc^\rho_t$ is self-adjoint by property (iii) of
Definition \ref{qdfdef} and it is weak* closed because
$$P(A \otimes I)Q = 0\qquad\Leftrightarrow\qquad
\langle (A\otimes I)w,v\rangle = 0
\hbox{ for all }v \in {\rm ran}(P), w \in {\rm ran}(Q).$$
So each $\Vc^\rho_t$ is a dual operator system and $\Vc^\rho_0$ contains $\Mx'$.
The fact that $\Vc^\rho_t = \bigcap_{s > t} \Vc^\rho_s$ for all $t$
is easy. For the filtration condition, let $A \in \Vc^\rho_s$,
$B \in \Vc^\rho_t$, and $P,R \in \Pc$ and suppose $P(AB \otimes I)R \neq 0$.
We must show that $\rho(P,R) \leq s + t$.
Let $Q$ be the projection onto the closure of
$$(\Mx' \otimes I)({\rm ran}((B \otimes I)R)).$$
The range of $Q$ is invariant for $\Mx' \otimes I$ and hence
$Q$ belongs to $(\Mx' \otimes I)' = \Mx \overline{\otimes} \Bc(l^2)$.
We have $P(A \otimes I)Q \neq 0$ since $[(B \otimes I)R] \leq Q$ and
we have $\tilde{Q}(B \otimes I)R \neq 0$ for any $\tilde{Q} \in \Pc$
such that $Q\tilde{Q} \neq 0$ because
$$\tilde{Q}(B \otimes I)R = 0\quad \Rightarrow\quad
\tilde{Q}(CB \otimes I)R = 0\hbox{ for all }C \in \Mx'
\quad\Rightarrow\quad \tilde{Q}Q = 0.$$
Since $A \in \Vc^\rho_s$ and $B \in \Vc^\rho_t$, the above implies
that $\rho(P,R) \leq s + t$ by property (v) of Definition \ref{qdfdef}.
This shows that $AB \in \Vc^\rho_{s + t}$, and we conclude that
$\Vc^\rho_s \Vc^\rho_t \subseteq \Vc^\rho_{s + t}$. This
completes the proof that $\Vb_\rho$ is a quantum pseudometric on $\Mx$.

Now let $\Vb$ be any quantum pseudometric on $\Mx$, let $\rho = \rho_\Vb$,
and let $\tilde{\Vb} = \Vb_\rho$. The fact that $\Vb = \tilde{\Vb}$ is
just the content of Proposition \ref{recover}.

Finally, let $\rho$ be any quantum distance function, let
$\Vb = \Vb_\rho$, and let $\tilde{\rho} = \rho_\Vb$. Fix $t > 0$ and
define
$$\Rc_t = \{(P,Q) \in \Pc^2: \rho(P,Q) < t\}$$
and
$$\tilde{\Rc}_t = \{(P,Q) \in \Pc^2: \tilde{\rho}(P,Q) \leq t\}.$$
Then $\Rc_t$ is an open subset of $\Pc^2$ because its complement
is closed by property (vii) of
Definition \ref{qdfdef}. We have $(0,0) \not\in \Rc_t$ by
property (i) of Definition \ref{qdfdef},
$(\bigvee P_\lambda, \bigvee Q_\kappa) \in \Rc_t$
$\Leftrightarrow$ some $(P_\lambda, Q_\kappa) \in \Rc_t$ by the
comment following Definition \ref{qdfdef},
and $(P, [BQ]) \in \Rc_t$ $\Leftrightarrow$ $([B^*P],Q) \in \Rc_t$
for all $B \in I \otimes \Bc(l^2)$ by property (vi) of Definition
\ref{qdfdef}. Thus $\Rc_t$ is an intrinsic quantum relation
(Definition \ref{abstractqrel}) and we therefore have
\begin{eqnarray*}
\Rc_t &=& \{(P,Q) \in \Pc^2: (\exists A \in \Bc(H))\cr
&&\phantom{\liminf\liminf}(\rho(P',Q') \geq t \Rightarrow
P'(A \otimes I)Q' = 0\quad{\rm and}\quad P(A \otimes I)Q \neq 0\}
\end{eqnarray*}
by Theorem \ref{abstractchar}. Comparing this with the definition
of $\tilde{\Rc}_t$ then shows that $\Rc_t \subseteq \tilde{\Rc}_t
\subseteq \Rc_{t + \epsilon}$ for all $\epsilon > 0$. It follows that
$\rho(P,Q) = \tilde{\rho}(P,Q)$ for all $P$ and $Q$, i.e.,
$\rho = \tilde{\rho}$.
\end{proof}

\section{Examples}\label{exsn}

Our new definition of quantum metrics supports a wide variety of
examples. This is also true of the previously proposed definitions
mentioned in the introduction, and indeed the main classes of examples
in the different cases substantially overlap. If anything, a complaint
could be made that the previous definitions are too broad. In contrast,
our definition is sufficiently rigid to permit, for example, a simple
classification of all quantum metrics on $M_2(\Cb)$ (Proposition \ref{mtwo})
and a general analysis of translation-invariant quantum metrics on quantum
tori (Theorem \ref{qtori}).

The metric aspect of error correcting quantum codes provided the original
motivation behind our new approach. This connection is explained in
Section \ref{qhamming}. We are able to present a natural common
generalization of basic aspects of classical and quantum error
correction. Our theory also encompasses mixed classical/quantum
settings.

We present a small variety of interesting classes of examples. The
list could obviously be greatly expanded, but we have tried to give
a fair representation of the principal methods of construction.

\subsection{Operator systems}\label{opsys}
We begin our survey of examples with possibly the simplest natural class.
For any dual operator system $\Ac \subseteq \Bc(H)$ define
$$\Vc^\Ac_t =
\begin{cases}
\Cb I&\hbox{ if }0 \leq t < 1\cr
\Ac&\hbox{ if }1 \leq t < 2\cr
\Bc(H)&\hbox{ if }t \geq 2.
\end{cases}$$

The verification that $\Vb_\Ac = \{\Vc^\Ac_t\}$ is a quantum metric on
$\Mx = \Bc(H)$ is trivial. Despite their simplicity, the $\Vb_\Ac$
are already good for producing easy counterexamples.

\begin{exam}\label{counterdiam}
A quantum metric for which
$$\Vc_t \neq \{A \in \Bc(H): \rho(P,Q) > t\quad\Rightarrow\quad
PAQ = 0\},$$
with $P$ and $Q$ ranging over projections in $\Mx$ (cf.\
Proposition \ref{recover}), and furthermore
$${\rm diam}(\Vb) > \sup\{\rho(P,Q): \hbox{ $P$ and $Q$
are nonzero projections in }\Mx\}$$
(cf.\ Proposition \ref{diameter}).
Let $\Ac$ be a dual operator system properly contained in $\Bc(H)$ such
that for any nonzero $v,w \in H$ there exists $A \in \Ac$ with
$\langle Aw,v\rangle \neq 0$. For instance, we could take
$$\Ac = \{A \in \Bc(H): {\rm tr}(AB) = 0\}$$
where $B$ is any nonzero traceless self-adjoint trace class operator.
(Suppose $\langle Aw,v\rangle = 0$ for all $A \in \Ac$, with $v$ and
$w$ nonzero. Then $A \mapsto \langle Aw,v\rangle$ and $A \mapsto
{\rm tr}(AB)$ are nonzero linear functionals with the same kernel,
hence they are scalar multiples of each other. This implies that $B$
is a scalar multiple of the trace class operator
$u \mapsto \langle u,v\rangle w$, which contradicts the assumption that
it is traceless and self-adjoint.) Then $\Vb_\Ac$ has diameter 2 but
$\rho(P,Q) = 1$ for any nonzero projections $P,Q \in \Bc(H)$.
\end{exam}

\begin{exam}\label{countersup}
A pair of quantum metrics $\Vb_\Ac$ and $\Vb_\Bc$ on $\Bc(H)$ such that
the formula
$$\rho_{\Vb_\Ac \wedge \Vb_\Bc}(P,Q) =
\max\{\rho_{\Vb_\Ac}(P,Q),\rho_{\Vb_\Bc}(P,Q)\}$$
fails (cf.\ Propositions
\ref{trunc} and \ref{directsum}, where analogous formulas hold). Fix a pair of
orthogonal unit vectors $v$ and $w$
and find self-adjoint operators $A$ and $B$ such that
$$\langle Aw,v\rangle \neq 0 \neq \langle Bw,v\rangle$$
but $\Ac \cap \Bc = \Cb I$ where $\Ac = {\rm span}\{A,I\}$ and
$\Bc = {\rm span}\{B,I\}$. It is clear that $\Ac$ and $\Bc$ are dual
operator systems, and letting $P$ and $Q$ be the projections
onto $\Cb v$ and $\Cb w$, we have $\rho_{\Vb_\Ac}(P,Q) =
\rho_{\Vb_\Bc}(P,Q) = 1$ but $\rho_{\Vb_\Ac \wedge \Vb_\Bc}(P,Q) = 2$.
\end{exam}

It is worth noting that every quantum metric on $\Bc(H)$ for which the
range of the quantum distance function on linkable projections is contained
in $\{0,1,2\}$ is of the form $\Vb_\Ac$ for some dual operator system
$\Ac$. This follows from an easy and more general fact:

\begin{prop}
Let $\Vb$ be a quantum pseudometric on a von Neumann algebra
$\Mx \subseteq \Bc(H)$, let
$$L = \{0\} \cup \{t \in (0,\infty]: \Vc_{<t} \neq \Vc_t\}$$
(taking $\Vc_\infty = \Bc(H)$), and let
$$R = \{t \in [0,\infty): s > t \quad \Rightarrow\quad \Vc_t \neq \Vc_s\}.$$
Then the range of $\rho$ restricted to linkable pairs of projections
in $\Mx \overline{\otimes} \Bc(l^2)$ equals $L \cup R$.
\end{prop}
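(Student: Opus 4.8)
The plan is to prove the two inclusions $\mathrm{range}\subseteq L\cup R$ and $L\cup R\subseteq\mathrm{range}$ separately, where throughout I write, for linkable projections $P,Q\in\Mx\overline{\otimes}\Bc(l^2)$,
$$S(P,Q)=\{s\in[0,\infty): P(A\otimes I)Q\neq 0\text{ for some }A\in\Vc_s\},$$
noting that $S(P,Q)$ is an up-set (since $\Vc_s\subseteq\Vc_{s'}$ for $s\le s'$) and that $\rho(P,Q)=\inf S(P,Q)$. The one bookkeeping fact I would record first is that $\rho(P,Q)\ge t$ is equivalent to $P(B\otimes I)Q=0$ for every $B\in\Vc_{<t}$: indeed $s<t$ forces $P(\Vc_s\otimes I)Q=0$, and left/right multiplication by the fixed projections $P,Q$ is weak operator continuous on bounded sets, so the vanishing passes to the weak* closure $\Vc_{<t}$ (Krein--Smulian).

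For $\mathrm{range}\subseteq L\cup R$, the cases $t=0$ and $t=\infty$ are immediate: $0\in L$ always, and if $\rho(P,Q)=\infty$ for a linkable pair then Proposition \ref{findist} gives $\Vc_{<\infty}\neq\Bc(H)=\Vc_\infty$, i.e.\ $\infty\in L$. For finite $t>0$ I would argue by contradiction. If $t\notin L$ then $\Vc_{<t}=\Vc_t$, so $\rho(P,Q)=t$ gives $P(\Vc_t\otimes I)Q=0$; if also $t\notin R$ there is some $s>t$ with $\Vc_s=\Vc_t$, and since $s>t=\inf S(P,Q)$ the up-set property yields $A\in\Vc_s=\Vc_t$ with $P(A\otimes I)Q\neq 0$, contradicting $P(\Vc_t\otimes I)Q=0$.

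For $L\subseteq\mathrm{range}$ the tool is the separation lemma. Given $t\in L$ with $0<t\le\infty$, the space $\Vc_{<t}$ (meaning $\Vc_{<\infty}$ when $t=\infty$) is a weak* closed $\Mx'$-bimodule, hence a quantum relation on $\Mx$, and it is properly contained in $\Vc_t$ (resp.\ $\Bc(H)$). Picking $A\in\Vc_t\setminus\Vc_{<t}$ and applying Lemma \ref{separation} produces projections $P,Q\in\Mx\overline{\otimes}\Bc(l^2)$ with $P(A\otimes I)Q\neq 0$ but $P(\Vc_{<t}\otimes I)Q=0$; the first forces $\rho(P,Q)\le t$ and the second forces $\rho(P,Q)\ge t$, so $\rho(P,Q)=t$, and the pair is linkable by Proposition \ref{findist}. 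The case $t=0$ is handled by any nonzero $P=Q$, where $\rho=0$ by axiom (ii).

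The remaining inclusion $R\subseteq\mathrm{range}$ is where the real work lies, and it is the step I expect to be the main obstacle: for $t\in R$ I must produce a single linkable pair with $P(\Vc_t\otimes I)Q=0$ yet with some $A\in\Vc_s$ giving $P(A\otimes I)Q\neq 0$ for \emph{every} $s>t$ simultaneously. The plan is an amplification argument. Fix $s_n\downarrow t$; since $\Vc_t\subsetneq\Vc_{s_n}$ I choose $A_n\in\Vc_{s_n}\setminus\Vc_t$ and, by Lemma \ref{separation} applied to the quantum relation $\Vc_t$, obtain $P_n,Q_n\in\Mx\overline{\otimes}\Bc(l^2)$ with $P_n(A_n\otimes I)Q_n\neq 0$ and $P_n(\Vc_t\otimes I)Q_n=0$. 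Writing the auxiliary space as $l^2\cong\bigoplus_n K_0$ with $K_0\cong l^2$ and letting $v_n\colon K_0\to l^2$ be the isometry onto the $n$-th summand, I transport each pair into the $n$-th block via $U_n=I_H\otimes v_n$, setting $\hat P_n=U_nP_nU_n^*$ and $\hat Q_n=U_nQ_nU_n^*$. A short commutation check against $\Mx'\otimes I=(\Mx\overline{\otimes}\Bc(l^2))'$ shows $\hat P_n,\hat Q_n\in\Mx\overline{\otimes}\Bc(l^2)$, and as they are supported on mutually orthogonal blocks the weak operator sums $P=\sum_n\hat P_n$ and $Q=\sum_n\hat Q_n$ are again projections in $\Mx\overline{\otimes}\Bc(l^2)$. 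Because $A\otimes I$ is block-diagonal, $U_n^*(A\otimes I)U_n=A\otimes I$, so $P(A\otimes I)Q=\sum_nU_n\bigl(P_n(A\otimes I)Q_n\bigr)U_n^*$, which is nonzero exactly when some $P_n(A\otimes I)Q_n\neq 0$. Hence $P(\Vc_t\otimes I)Q=0$ gives $\rho(P,Q)\ge t$, while for each $s>t$ some $s_n<s$ yields $A_n\in\Vc_s$ with $P(A_n\otimes I)Q\neq 0$, giving $\rho(P,Q)\le t$; thus $\rho(P,Q)=t$ and the pair is linkable. The delicate points to get right are the verification that the transported projections stay inside $\Mx\overline{\otimes}\Bc(l^2)$ and the block-diagonal computation of $P(A\otimes I)Q$, but neither is deep once the identification $l^2\cong\bigoplus l^2$ is in place.
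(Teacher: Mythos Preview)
Your proof is correct and follows essentially the same approach as the paper's. The only cosmetic differences are that the paper phrases the inclusion $\mathrm{range}\subseteq L\cup R$ contrapositively (showing directly that $t\notin L\cup R$ forces $\Vc_{<t}=\Vc_t=\Vc_{t+\epsilon}$ and hence $\rho(P,Q)\neq t$), and that for the $R$ case the paper compresses your explicit block-diagonal computation into the single observation $\rho\bigl(\bigoplus P_n,\bigoplus Q_n\bigr)=\inf_n\rho(P_n,Q_n)$; but the underlying separation-then-direct-sum argument is identical.
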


\begin{proof}
We proved in Proposition \ref{findist} that $\Vc_{<\infty} \neq \Vc_\infty$,
i.e., $\infty \in L$, if and only if $\rho(P,Q) = \infty$
for some pair of linkable projections $P$ and $Q$. This settles the
case $t = \infty$; for the rest of the proof assume $t$ is finite.

Suppose $t \not\in L \cup R$. Then $\Vc_t = \Vc_{<t}
= \Vc_{t + \epsilon}$ for some $\epsilon > 0$. For any pair
of projections $P,Q \in \Mx \overline{\otimes} \Bc(l^2)$, if
$P(A \otimes I)Q = 0$ for all $A \in \Vc_t$ then
$P(A \otimes I)Q = 0$ for all $A \in \Vc_{t + \epsilon}$ and hence
$\rho(P,Q) \geq t + \epsilon$. On the other hand, if
$P(A \otimes I)Q = 0$ for all $A \in \Vc_s$, for all $s < t$, then
$P(A \otimes I)Q = 0$ for all $A \in \overline{\bigcup_{s < t} \Vc_s}
= \Vc_t$; so $P(A \otimes I)Q \neq 0$ for some $A \in \Vc_t$
implies $P(A \otimes I)Q \neq 0$ for some $A \in \Vc_s$, for some $s < t$.
So in either case $\rho(P,Q) \neq t$, and we conclude that $t$ is not
in the range of $\rho$. This proves one inclusion.

$0$ belongs to the range of $\rho$ by Definition \ref{qdfdef} (ii).
If $t \in L$, $t \neq 0$, then $\Vc_{<t} \neq \Vc_t$ and by Lemma
\ref{separation} we
can find projections $P, Q \in \Mx \overline{\otimes} \Bc(l^2)$
such that $P(A \otimes I)Q \neq 0$ for some $A \in \Vc_t$ but
$P(B \otimes I)Q = 0$ for all $B \in \Vc_{<t}$. Thus
$\rho(P,Q) = t$. This shows that $L$ is contained in the range of $\rho$.

Finally, let $t \in R$ and let $n \in \Nb$. Then $\Vc_t \subsetneq
\Vc_{t + 1/n}$, so by Lemma \ref{separation} we can find projections
$P_n, Q_n \in \Mx \overline{\otimes} \Bc(l^2)$ such that
$P_n(A \otimes I)Q_n \neq 0$ for some $A \in \Vc_{t + 1/n}$ but
$P_n(B \otimes I)Q_n = 0$ for all $B \in \Vc_t$. This implies that
$t \leq \rho(P_n,Q_n) \leq t + 1/n$. Taking countable direct sums,
we get $\bigoplus P_n, \bigoplus Q_n
\in \bigoplus \Mx \overline{\otimes} \Bc(l^2) \cong \Mx \overline{\otimes}
\big(\bigoplus \Bc(l^2)\big) \subseteq \Mx \overline{\otimes}
\Bc(l^2)$ and
$$\rho\left(\bigoplus P_n, \bigoplus Q_n\right) = \inf\{\rho(P_n,Q_n)\} = t.$$
This shows that $R$ is contained in the range of $\rho$.
\end{proof}

\subsection{Graph metrics}\label{graphmetric}
Let $\Mx \subseteq \Bc(H)$ be a von Neumann algebra and let
$\Vc$ be a dual operator system that is a bimodule over $\Mx'$.
(Thus $\Vc$ is a {\it quantum graph} according to Definition 2.6 (d) of
\cite{W6}. For $\Mx$ a matrix algebra this definition appeared in \cite{DSW}.)
Then set $\Vc_t = \overline{\Vc\cdot \ldots \cdot \Vc}^{wk^*}$,
the weak* closure of the
algebraic product taken $[t]$ times, where $[t]$ is the greatest integer
$\leq t$ and with the convention that the empty product
is $\Mx'$. We call $\Vb = \{\Vc_t\}$ the {\it quantum graph metric}
associated to $\Vc$. We are most interested in the case where
$\Mx = \Bc(H)$, $\Vc_0 = \Cb I$, and $\Vc$ is any dual operator system
in $\Bc(H)$.

\begin{prop}
Let $\Mx \subseteq \Bc(H)$ be a von Neumann algebra and let
$\Vc$ be a dual operator system that is a bimodule over $\Mx'$.
Then the quantum graph metric is
the smallest quantum metric $\Vb$ on $\Mx$ such that $\Vc_1 = \Vc$.
\end{prop}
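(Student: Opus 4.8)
The plan is to establish three things in turn: that $\Vb = \{\Vc_t\}$ is a genuine quantum metric on $\Mx$, that $\Vc_1 = \Vc$, and that $\Vb$ is dominated by every quantum metric $\Wb$ on $\Mx$ with $\Wc_1 = \Vc$. The single observation driving everything is that $I \in \Vc$ (as $\Vc$ is unital), so $\Vc^n = \Vc^n\cdot I \subseteq \Vc^n\Vc = \Vc^{n+1}$; hence the algebraic powers $\Vc^n$ are nested increasing, and $\Vb$ is a ``step function'' that is constant on each interval $[n,n+1)$. I would also record at the outset that $\Mx' \subseteq \Vc$: since $I \in \Vc$ and $\Vc$ is a bimodule over $\Mx'$, we have $\Mx' = \Mx'I\Mx' \subseteq \Vc$.

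First I would verify that $\Vb$ is a W*-filtration. Each $\Vc_t$ is weak* closed by construction, self-adjoint because $\Vc^* = \Vc$ forces $(\Vc^n)^* = \Vc^n$, and unital because $I = I^n \in \Vc^n$; thus each $\Vc_t$ is a dual operator system, and $\Vc_0 = \Mx'$ by the empty-product convention, which gives the metric (not merely pseudometric) condition. Nesting $\Vc_s \subseteq \Vc_t$ for $s \leq t$ follows from nesting of the powers together with $\Mx' \subseteq \Vc$ (or, once (i) is known, from $\Vc_s = \Vc_s\cdot I \subseteq \Vc_s\Vc_{t-s} \subseteq \Vc_t$). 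Condition (ii) is then immediate: since $\Vb$ is constant on $[n,n+1)$ there are values $s > t$ arbitrarily close to $t$ with $\Vc_s = \Vc_t$, and combined with nesting this forces $\Vc_t = \bigcap_{s>t}\Vc_s$. The only step requiring care is the filtration inequality (i): for $s \in [m,m+1)$ and $t \in [n,n+1)$ one has $[s+t] \geq m+n$, so it suffices to prove $\overline{\Vc^m}^{wk^*}\cdot\overline{\Vc^n}^{wk^*} \subseteq \overline{\Vc^{m+n}}^{wk^*}$. This is where I expect the main (though still routine) technical work. I would isolate the general fact that $\overline{\Ec}^{wk^*}\cdot\overline{\Fc}^{wk^*} \subseteq \overline{\Ec\Fc}^{wk^*}$ for subspaces $\Ec,\Fc \subseteq \Bc(H)$, proved by noting that left and right multiplication by a fixed bounded operator are weak* continuous (since ${\rm tr}(xbT) = {\rm tr}(x(bT))$ with $bT$ trace class, and symmetrically on the left) and that a weak* continuous linear map carries the weak* closure of a set into the weak* closure of its image; applying this once on each side and using that weak* closure is idempotent yields the inclusion. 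Finally $\Vc_1 = \overline{\Vc}^{wk^*} = \Vc$ since $\Vc$ is already weak* closed.

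For minimality, let $\Wb$ be any quantum metric with $\Wc_1 = \Vc$; nesting for $\Wb$ holds by the same $I$-insertion argument. For $t \in [0,1)$ we have $\Vc_t = \Mx' = \Wc_0 \subseteq \Wc_t$. For $t \in [n,n+1)$ with $n \geq 1$, repeated application of condition (i) to $\Wb$ gives the algebraic inclusion $\Vc^n = \Wc_1\cdots\Wc_1 \subseteq \Wc_n$; taking weak* closures and using that $\Wc_n$ is weak* closed yields $\Vc_t = \overline{\Vc^n}^{wk^*} \subseteq \Wc_n \subseteq \Wc_t$. Hence $\Vb \leq \Wb$, and together with the first two parts this exhibits $\Vb$ as the smallest quantum metric on $\Mx$ with $\Vc_1 = \Vc$. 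The conceptual heart is entirely elementary; the only place demanding genuine operator-space care is the product-of-closures inclusion used for condition (i), and that is a standard weak* continuity argument.
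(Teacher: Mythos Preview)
Your proof is correct. The paper states this proposition without proof, evidently regarding it as routine; your argument supplies exactly the straightforward verification one would expect, with the only nontrivial ingredient being the separate weak* continuity of multiplication needed for the inclusion $\overline{\Ec}^{wk^*}\cdot\overline{\Fc}^{wk^*}\subseteq\overline{\Ec\Fc}^{wk^*}$, which you handle correctly.
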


\begin{prop}
Let $X$ be a set and let $\Mx \cong l^\infty(X)$ be the von Neumann algebra
of bounded multiplication operators on $l^2(X)$.
Also let $R$ be a reflexive, symmetric relation on $X$,
let $\Vc = \Vc_R$ (Proposition \ref{atomiccase}), and let $\Vb$ be
the quantum graph metric on $\Mx$ associated to $\Vc$. Then $d_\Vb$
(Proposition \ref{aa}) is the graph metric associated to $R$.
\end{prop}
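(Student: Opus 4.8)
The plan is to reduce everything to the combinatorics of relational composition, exploiting the explicit rank-one description of quantum relations in Proposition \ref{atomiccase}. First I would record the elementary product rule $V_{ab}V_{cd} = \delta_{bc}V_{ad}$ for the rank-one operators, together with the matrix-support form $\Vc_S = \{A : (x,y)\notin S \Rightarrow \langle Ae_y,e_x\rangle = 0\}$. In particular, for any relation $S$ on $X$ one has $\langle Ae_y, e_x\rangle \neq 0$ for some $A \in \Vc_S$ if and only if $(x,y)\in S$: use $V_{xy}\in \Vc_S$ for one direction and the support condition for the other. This is precisely the pairing that feeds the formula $d_\Vb(x,y) = \inf\{t : \langle Ae_y,e_x\rangle \neq 0 \text{ for some } A \in \Vc_t\}$ from Proposition \ref{aa}.

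The key lemma will be a composition formula: for relations $R,S$ on $X$, $\overline{\Vc_R\Vc_S}^{wk^*} = \Vc_{R\cdot S}$, where $R\cdot S = \{(x,z): (x,y)\in R \text{ and } (y,z)\in S \text{ for some } y\}$. For the inclusion $\subseteq$ I would expand the matrix entry of a product by Parseval as $\langle AB e_z, e_x\rangle = \langle Be_z, A^* e_x\rangle = \sum_y \langle Ae_y,e_x\rangle\langle Be_z,e_y\rangle$; a nonzero term forces $(x,y)\in R$ and $(y,z)\in S$, hence $(x,z)\in R\cdot S$, so $AB\in\Vc_{R\cdot S}$, and weak* closedness of $\Vc_{R\cdot S}$ finishes it. For $\supseteq$, each generator $V_{xz}$ with $(x,z)\in R\cdot S$ factors as $V_{xy}V_{yz}\in\Vc_R\Vc_S$, so the weak* closed span of such $V_{xz}$, which is $\Vc_{R\cdot S}$, lies in $\overline{\Vc_R\Vc_S}^{wk^*}$.

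Next I would run an induction to identify the levels of the filtration: $\Vc_t = \overline{\Vc_R^{\,[t]}}^{wk^*} = \Vc_{R^{[t]}}$, where $\Vc_R^{\,n}$ is the $n$-fold algebraic product and $R^n$ the $n$-fold relational composition (with $R^0 = \Delta$, matching the empty-product convention $\Vc_0 = \Mx'$). The base case is immediate. In the inductive step the inclusion $\overline{\Vc_R^{\,n}}^{wk^*}\subseteq \Vc_{R^n}$ follows from $\Vc_R^{\,n} = \Vc_R^{\,n-1}\Vc_R \subseteq \Vc_{R^{n-1}}\Vc_R$ (using the hypothesis $\overline{\Vc_R^{\,n-1}}^{wk^*} = \Vc_{R^{n-1}}$) and the composition lemma, since $R^{n-1}\cdot R = R^n$; the reverse inclusion again comes from factoring generators $V_{xy}$ along a length-$n$ $R$-chain. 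Then reflexivity of $R$ enters: padding any path with loops $(y,y)\in R$ shows $(x,y)\in R^n \Leftrightarrow d_R(x,y)\le n$, where $d_R$ is the shortest-path graph metric (symmetry of $R$ makes the composition side-convention irrelevant and makes $d_R$ symmetric). Combining with the pairing of the first paragraph gives $d_\Vb(x,y) = \inf\{t: d_R(x,y)\le [t]\}$, which equals $d_R(x,y)$ at every integer value and correctly returns $+\infty$ when no path exists.

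The main obstacle is the bookkeeping around the weak* closure: the algebraic product of two weak* closed subspaces need not be weak* closed, so I must be careful that closing once at the end suffices and that the inductive hypothesis can legitimately be fed into the composition lemma. The Parseval expansion of the matrix entries of a product, and the verification that $d_R$ is a genuine metric (reflexivity giving $d_R(x,x)=0$, symmetry giving $d_R(x,y)=d_R(y,x)$, and path concatenation giving the triangle inequality), are routine and I would not belabor them.
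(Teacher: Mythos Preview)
Your proposal is correct and follows essentially the same approach as the paper. The paper's proof simply asserts the key identity $\overline{\Vc^{\,n}}^{wk^*} = \overline{\rm span}^{wk^*}\{V_{xy}: (x,y)\in R^n\}$ and reads off the conclusion, whereas you supply the details behind that assertion via the composition lemma $\overline{\Vc_R\Vc_S}^{wk^*} = \Vc_{R\cdot S}$ and induction; the underlying argument is the same rank-one combinatorics.
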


\begin{proof}
$R$ defines a graph with vertex set $X$ in the obvious way. Now
$$\Vc = \overline{\rm span}^{wk^*}\{V_{xy}: (x,y) \in R\}$$
and
\begin{eqnarray*}
\Vc^n &=& \overline{\rm span}^{wk^*}\{V_{xy}: (x,y) \in R^n\}\cr
&=& \overline{\rm span}^{wk^*}\{V_{xy}: \hbox{there is a path of
length $\leq n$ from $x$ to $y$}\}.
\end{eqnarray*}
Thus $d_\Vb(x,y)$ is the length of the shortest path from $x$ to $y$
(or $\infty$ if there is no such path), as desired.
\end{proof}

\subsection{Quantum metrics on $M_2(\Cb)$}
We analyze the possible quantum metrics on $\Mx = M_2(\Cb) = \Bc(\Cb^2)$.
Let $\sigma_x$, $\sigma_y$, and $\sigma_z$ be the Pauli spin matrices
$$\sigma_x = \left[
\begin{matrix}
1&0\cr
0&-1
\end{matrix}
\right]\quad \sigma_y = \left[
\begin{matrix}
0&1\cr
1&0
\end{matrix}
\right]\quad \sigma_z = \left[
\begin{matrix}
0&i\cr
-i&0
\end{matrix}
\right].$$

The only one dimensional operator system in $M_2(\Cb)$ is $\Cb I$.
Any two dimensional operator system $\Vc$ in $M_2(\Cb)$ must contain
$I$ and some non-scalar self-adjoint matrix $A$ and hence
must equal $\Cb I + \Cb A$. We can then choose an orthonormal
basis $\{f_1,f_2\}$
of $\Cb^2$ so that $A$ is diagonalized; then $\Vc$ will consist of
all diagonal matrices, $\Vc = \Cb I + \Cb \sigma_x$. Now let $\Wc$ be
a three dimensional operator system. It contains a two dimensional
operator system, without loss of generality (by the preceding case)
the diagonal matrices.
It also contains a non-diagonal self-adjoint operator $B$, without
loss of generality zero on the diagonal (since we can subtract off
its diagonal part), and then by replacing $f_2$ with $\alpha f_2$
for a suitable complex number $\alpha$ of modulus 1 we can take
$B$ to be a real scalar multiple of $\sigma_y$. So we have
$\Wc = \Cb I + \Cb \sigma_x + \Cb \sigma_y$. Finally, the only
four dimensional operator system in $M_2(\Cb)$ is $M_2(\Cb) =
\Cb I + \Cb\sigma_x + \Cb\sigma_y + \Cb\sigma_z$ itself. We can
now characterize all quantum metrics on $M_2(\Cb)$.

\begin{prop}\label{mtwo}
Let $0 \leq a \leq b \leq c \leq \infty$ satisfy $c \leq a + b$ and
let $\Vb = \{\Vc_t\}$ where
$$\Vc_t =
\begin{cases}
\Cb I&\hbox{ if }0 \leq t < a\cr
\Cb I + \Cb\sigma_x&\hbox{ if }a \leq t < b\cr
\Cb I + \Cb\sigma_x + \Cb\sigma_y&\hbox{ if }b \leq t < c\cr
\Cb I + \Cb\sigma_x + \Cb\sigma_y + \Cb\sigma_z&\hbox{ if }t \geq c.
\end{cases}$$
Then $\Vb$ is a quantum pseudometric on $M_2(\Cb)$. It is a quantum
metric if and only if $a > 0$ and it is reflexive if and only if
$b = c$. Up to a change of orthonormal
basis every quantum pseudometric on $M_2(\Cb)$ is of this form.
\end{prop}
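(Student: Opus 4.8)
The plan is to handle the four assertions in turn, using the enumeration of operator systems in $M_2(\Cb)$ carried out just before the statement. Throughout, $\Mx = \Bc(\Cb^2)$, so $\Mx' = \Cb I$, and I write $V_0 = \Cb I$, $V_1 = \Cb I + \Cb\sigma_x$, $V_2 = \Cb I + \Cb\sigma_x + \Cb\sigma_y$, and $V_3 = M_2(\Cb)$ for the four candidate values of $\Vc_t$. For the first assertion I verify Definition \ref{filt}(a) directly. Each $V_i$ is finite dimensional, hence weak* closed, and self-adjoint since the Pauli matrices are self-adjoint, so each is a dual operator system containing $\Mx' = \Cb I$; right continuity $\Vc_t = \bigcap_{s>t}\Vc_s$ is immediate from the half-open intervals. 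The substantive point is $\Vc_s\Vc_t \subseteq \Vc_{s+t}$. Any product with a factor $\Cb I$ reduces to monotonicity, so only products of $V_1$ and $V_2$ matter. Using $\sigma_x^2 = \sigma_y^2 = I$ and $\sigma_x\sigma_y = -i\sigma_z$ one finds $V_1V_1 = V_1$ and $V_1V_2 = V_2V_2 = M_2(\Cb) = V_3$. The first imposes only $s+t \ge a$, which is automatic; but $V_1V_2 = V_3$ forces $\Vc_{s+t} = V_3$ whenever $s \in [a,b)$ and $t \in [b,c)$, i.e. $s+t \ge c$, which holds because $s+t \ge a+b \ge c$. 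Likewise $V_2V_2 = V_3$ requires $s+t \ge c$ for $s,t \in [b,c)$, following from $2b \ge c$ (a consequence of $a \le b$ and $c \le a+b$). Thus $\Vb$ is a quantum pseudometric, and the hypotheses $a \le b \le c$ and $c \le a+b$ are exactly what the filtration condition consumes.

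The second and third assertions are short. Since $\Vc_0 = V_0 = \Cb I = \Mx'$ precisely when $a > 0$ (whereas $\Vc_0 = V_1 \ne \Mx'$ when $a = 0$), $\Vb$ is a quantum metric iff $a > 0$. By Definition \ref{rfst}(a), $\Vb$ is reflexive iff each $\Vc_t$ is reflexive, so I compute the reflexive hull (Definition \ref{refdef}) of each $V_i$ using rank-one projections $P_v, Q_w$: one checks that $P_v V_i Q_w = 0$ for a nonzero pair $(v,w)$ exactly when the functional $A \mapsto \langle Aw,v\rangle$ annihilates $V_i$. For $V_0$ and $V_1$ this recovers $V_0$ and $V_1$ themselves, so both are reflexive, and $V_3$ is trivially reflexive; but for $V_2$ (the symmetric matrices, the trace-orthogonal complement of $\Cb\sigma_z$) no such nonzero pair exists, whence $\mathrm{Ref}(V_2) = M_2(\Cb) \ne V_2$ and $V_2$ is not reflexive. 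Since $V_2$ occurs as some $\Vc_t$ exactly when $[b,c)$ is nonempty, $\Vb$ is reflexive iff $b = c$.

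The fourth assertion is the heart of the matter. Given an arbitrary quantum pseudometric $\Vb$ on $M_2(\Cb)$, the function $t \mapsto \dim\Vc_t$ is nondecreasing, valued in $\{1,2,3,4\}$, and right continuous (the half-open-interval argument applied to condition (ii) of Definition \ref{filt}); I define $a,b,c$ as its successive jump points, with $\inf\emptyset = \infty$. Since the $\Vc_t$ are nested and finite dimensional, $\Vc_t$ is \emph{constant} on each level set, equal to some $W_1$ of dimension $2$ on $[a,b)$ and some $W_2$ of dimension $3$ on $[b,c)$, with $\Cb I \subseteq W_1 \subseteq W_2 \subseteq M_2(\Cb)$. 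The enumeration before the proposition diagonalizes $W_1$ to $\Cb I + \Cb\sigma_x$; the crucial observation is that the further change of basis normalizing $W_2$ to $\Cb I + \Cb\sigma_x + \Cb\sigma_y$ only rephases the second basis vector, hence is a \emph{diagonal} unitary, which preserves the diagonalized $W_1$. So a single orthonormal basis simultaneously brings $\Vb$ to the stated form. Finally $c \le a+b$ is forced: applying the filtration condition to $s = a \in [a,b)$ and $t = b \in [b,c)$ gives $W_1W_2 = M_2(\Cb) \subseteq \Vc_{a+b}$, so $a+b \ge c$; the degenerate cases where $[a,b)$ or $[b,c)$ is empty either follow from $V_2V_2 = M_2(\Cb)$ in the same way or render the inequality vacuous.

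The main obstacle is the simultaneous normalization in the fourth assertion: one must show not merely that each $\Vc_t$ is standard in \emph{some} basis, but that one basis works for the whole nested family, which rests on the transformation normalizing the three-dimensional system being diagonal and so fixing the diagonalized two-dimensional system. The secondary delicate point is extracting $c \le a+b$ from the filtration condition via $\sigma_x\sigma_y = -i\sigma_z$; everything else is routine verification.
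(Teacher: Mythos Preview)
Your proof is correct and follows essentially the same approach as the paper's, which simply declares the first three assertions ``elementary to check'' and derives $c \le a+b$ from $\sigma_x\sigma_y \in \Cb\sigma_z$ exactly as you do. You are in fact more careful than the paper on one point: you explicitly observe that the diagonal unitary used to normalize the three-dimensional system $W_2$ preserves the already-diagonalized $W_1$, so that a single change of basis handles the whole filtration---the paper leaves this implicit in its appeal to ``the discussion before the proposition.''
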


\begin{proof}
It is elementary to check that $\Vb$ is a quantum pseudometric, that
it is a quantum metric if and only if $a > 0$, and that $\Cb I$ and
$\Cb I + \Cb \sigma_x$ are reflexive but $\Cb I + \Cb\sigma_x +
\Cb\sigma_y$ is not, so that $\Vb$ is reflexive if and only if $b = c$. Now
let $\Wb$ be any quantum pseudometric on $M_2(\Cb)$. Then the discussion
before the proposition shows that after a change of basis $\Wb$ must have
the given form for some $0 \leq a \leq b \leq c \leq \infty$. Furthermore,
we must have $c \leq a + b$ because
$$\Vc_a\Vc_b = (\Cb I + \Cb\sigma_x)(\Cb I + \Cb\sigma_x + \Cb\sigma_y)
\subseteq \Vc_{a+b}$$
and $i\sigma_x\sigma_y = \sigma_z$, so that $\Vc_c \subseteq \Vc_{a+b}$.
\end{proof}

\subsection{Quantum Hamming distance}\label{qhamming}
Fix a natural number $n$ and let $H = \Cb^{2^n} \cong
\Cb^2 \otimes \cdots \otimes \Cb^2$. If $\{e_0,e_1\}$ is the standard
orthonormal basis of $\Cb^2$ then
$$\{e_{i_1}\otimes \cdots \otimes e_{i_n}: \hbox{ each }
i_k = 0\hbox{ or }1\}$$
is an orthonormal basis for $H$. These basis vectors correspond to
binary strings of length $n$. Thus the information represented by
such a string can be encoded in an appropriate physical system as the
state modelled by the corresponding basis vector. For example, a single
photon has two basis polarization states, so a binary string of length
$n$ could be encoded as the polarization of an array of $n$ photons.
The basic difference with classical information is the physical
possibility of superposing base states. Thus $\Cb^2$ models
not a single bit of information, but rather a single ``quantum bit''
or ``qubit''.

In quantum error correction one is concerned with the possibility that
information encoded in this way could be corrupted. The quantum Hamming
metric measures the number of errors that might be introduced. Recall that
$[t]$ denotes the greatest integer $\leq t$.

\begin{defi}
The {\it quantum Hamming metric} on $M_{2^n}(\Cb)$ is the quantum metric
$\Vb_{Ham} = \{\Vc^{Ham}_t\}$ where
\begin{eqnarray*}
\Vc^{Ham}_t &=& {\rm span}\{A_1 \otimes \cdots \otimes A_n:
\hbox{ each } A_i \in M_2(\Cb)\cr
&&\phantom{\limsup}\hbox{ and $A_i = I_2$ for all but at most $[t]$ values of $i$}\}.
\end{eqnarray*}
\end{defi}

The quantum Hamming metric is a quantum graph metric (Section
\ref{graphmetric}). It is also the $n$-fold $l^1$ product (see the
comment following Proposition \ref{productprop}) with itself of the
quantum metric
$$\Vc_t =
\begin{cases}
\Cb I&\hbox{ if }0 \leq t < 1\cr
M_2(\Cb)&\hbox{ if }t \geq 1
\end{cases}$$
on $M_2(\Cb)$. The rough idea is that operators in $\Vc^{Ham}_t$ can corrupt
at most $[t]$ qubits of an $n$-qubit string. In more detail, if a basis
state $e_{i_1} \otimes \cdots \otimes e_{i_n}$ is acted on by an operator
in $\Vc_t^{Ham}$ and the resulting vector is subjected to a measurement
that projects it into a basis state, then the final state will differ
from the initial state in at most $[t]$ factors.

A {\it quantum code} is a subspace $C$ of $H$. It {\it corrects up to $k$
errors} if $PAP$ is a scalar multiple of $P$ for all $A \in \Vc^{Ham}_k$,
where $P$ is the projection onto $C$. Equivalently,
$PAv$ is a scalar multiple of $v$ for all $v \in C$. If the scalar is
nonzero, this means that any state in $C$ can be recovered
by projecting onto $C$.

The {\it volume bound} is a simple upper bound on the dimension of a
quantum code that corrects up to $k$ errors. For such a code write
$PAP = \varepsilon(A)P$ for all $A \in \Vc^{Ham}_k$. Then
$$\langle A,B\rangle = \varepsilon(B^*A)$$
is a positive semidefinite sesquilinear form on $\Vc^{Ham}_{k/2}$,
and if $K$ is the Hilbert space formed by factoring out null vectors
then we have an embedding of $K \otimes C$ into $H$ defined by
$$\bar{A} \otimes v \mapsto Av.$$
Thus
$${\rm dim}(C) \leq {\rm dim}(H)/{\rm dim}(K).$$

We generalize the above to arbitrary quantum metrics. One obvious
application would be to $\Mx = \bigoplus_{i=1}^k M_{2^{n_i}}(\Cb) \subseteq
M_{2^{n_1} + \cdots + 2^{n_k}}(\Cb)$ equipped with the quantum metric
defined by letting $\Vc_t$ be the span of the operators
$$(B_1^1 \otimes \cdots \otimes B_{n_1}^1) \oplus \cdots \oplus
(B_1^k \otimes \cdots \otimes B_{n_k}^k)$$
such that $B^i_j = I_2$ for all but at most $[t]$ values of $i$ and $j$.
This generalizes the quantum Hamming metric to a mixed classical/quantum
system in which information
is encoded in $k$ disentangled quantum packets.

However, we can actually state a version of the volume bound for
arbitrary quantum metrics. A classical code that corrects up to $k$
errors is a subset of the space of all strings any two distinct elements
of which are more than $2k$ units apart; we need a general version of
this condition. First we state it in its most useful form, and then we
prove that our definition is equivalent to two other possibly more
intuitive characterizations.

\begin{defi}
Let $\Vb$ be a quantum metric on a von Neumann algebra $\Mx \subseteq
\Bc(H)$ and let $P$ be a projection in $\Mx$.

\noindent (a) The {\it minimum distance in $P$} is the value
$$\delta(P) = \sup\{t\geq 0: P \Vc_t P = P\Vc_0 P\}.$$

\noindent (b) The {\it induced quantum pseudometric} on $P\Mx P$ is the
smallest quantum pseudometric $\tilde{\Vb} = \{\tilde{\Vc}_t\}$ on
$P\Mx P \subseteq \Bc(K)$, where $K = {\rm ran}(P)$, such that
$P\Vc_t P \subseteq \tilde{\Vc}_t$ for all $t$.
\end{defi}

(The definition of an induced quantum pseudometric generalizes our
definition of metric quotients in Definition \ref{quot}.)

\begin{prop}\label{mindist}
Let $\Vb$ be a quantum metric on a von Neumann algebra $\Mx \subseteq
\Bc(H)$ and let $P$ be a projection in $\Mx$. Suppose $\delta(P) > 0$.
Then
\begin{eqnarray*}
\delta(P)
&=& \sup\{t \geq 0: \tilde{\Vc}_t = \tilde{\Vc}_0\}\cr
&=& \inf\{\rho(P_1,P_2): P_1, P_2 \leq P\otimes I\hbox{ and }
\rho(P_1,P_2) > 0\}
\end{eqnarray*}
where $\tilde{\Vb}$ is the induced pseudometric on $P\Mx P$ and $P_1$
and $P_2$ range over projections in $\Mx \overline{\otimes} \Bc(l^2)$.
\end{prop}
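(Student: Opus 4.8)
The plan is to abbreviate $\Nc = P\Mx P$ acting on $K = {\rm ran}(P)$ and to prove the two equalities in turn. The key preliminary fact is that $\Vc_0 = \Mx'$ (since $\Vb$ is a quantum metric) and that $P \in \Mx$ commutes with $\Mx'$, so that $PA'P = A'P$ for $A' \in \Mx'$; hence the compression $P\Vc_0 P$, restricted to $K$, equals $P\Mx'P|_K$, which by reduction theory is exactly $(P\Mx P)' = \Nc'$. Consequently the induced pseudometric satisfies $\tilde{\Vc}_0 = \Nc'$, and since every operator of the form $PXP$ lives on $K$, we have $P\Vc_t P = P\Vc_0 P$ if and only if $P\Vc_t P|_K = \Nc'$.

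For the equality $\delta(P) = \sup\{t: \tilde{\Vc}_t = \tilde{\Vc}_0\}$, I would use that $\tilde{\Vb}$ is the meet of all quantum pseudometrics on $\Nc$ dominating the generators $P\Vc_t P|_K$, so that one-sided bounds suffice. If $t < \delta(P)$, then $P\Vc_s P|_K = \Nc'$ for all $s < \delta(P)$ by monotonicity, so the filtration given by $\Wc_s = \Nc'$ for $s < \delta(P)$ and $\Wc_s = \Bc(K)$ for $s \geq \delta(P)$ is a quantum pseudometric on $\Nc$ (the filtration axioms are immediate because $\Nc'$ is an algebra) dominating the generators; hence $\tilde{\Vc}_t \subseteq \Wc_t = \Nc' = \tilde{\Vc}_0$. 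If instead $t > \delta(P)$, then $P\Vc_t P \neq P\Vc_0 P$, so $P\Vc_t P|_K \supsetneq \Nc'$, and $\tilde{\Vc}_t \supseteq P\Vc_t P|_K$ forces $\tilde{\Vc}_t \neq \tilde{\Vc}_0$. Taking suprema gives the equality.

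For the equality with the infimum, I first note that any projections $P_1, P_2 \leq P \otimes I$ in $\Mx \overline{\otimes} \Bc(l^2)$ satisfy $P_1(A \otimes I)P_2 = P_1((PAP)\otimes I)P_2$. If $\rho(P_1,P_2) > 0$ then $P_1 P_2 = 0$ by Definition \ref{qdfdef}(ii), and if moreover $\rho(P_1,P_2) < \delta(P)$ we could find $A$ with $D(A) < \delta(P)$ and $P_1(A\otimes I)P_2 \neq 0$; but then $PAP = PBP$ for some $B \in \Mx'$, whence $P_1(A\otimes I)P_2 = (B \otimes I)P_1 P_2 = 0$, a contradiction. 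Thus $\rho(P_1,P_2) \geq \delta(P)$ whenever $\rho(P_1,P_2) > 0$, giving $\geq \delta(P)$ in the infimum. For the reverse, fix $s > \delta(P)$, choose $A \in \Vc_s$ with $PAP \notin P\Vc_0 P$, and apply Lemma \ref{separation} to the quantum relation $\Nc'$ on $\Nc \subseteq \Bc(K)$ and the operator $\tilde{A} = PAP|_K \notin \Nc'$. This produces projections $P_1, P_2$ in $\Nc \overline{\otimes} \Bc(l^2)$ — equivalently, projections dominated by $P \otimes I$ in $\Mx \overline{\otimes} \Bc(l^2)$ — with $P_1(\tilde{A} \otimes I)P_2 \neq 0$ and $P_1(B\otimes I)P_2 = 0$ for all $B \in \Nc'$. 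Taking $B = I_K$ gives $P_1 P_2 = 0$, so $\rho(P_1,P_2) \geq \delta(P) > 0$ by the previous paragraph, while $P_1(A\otimes I)P_2 = P_1(\tilde{A}\otimes I)P_2 \neq 0$ gives $\rho(P_1,P_2) \leq D(A) \leq s$. Hence the infimum is $\leq s$, and letting $s \to \delta(P)$ finishes the proof.

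The step I expect to be the main obstacle is this last inequality: one must manufacture a single pair of projections that both detects the nontriviality of $\Vc_s$ (through $P_1(A\otimes I)P_2 \neq 0$, forcing $\rho \leq s$) and is genuinely separated (forcing $\rho > 0$). Applying the separation lemma inside the compressed algebra $\Nc = P\Mx P$, rather than in $\Mx$ itself, is what simultaneously achieves both, once one identifies projections in $\Nc \overline{\otimes} \Bc(l^2)$ with projections dominated by $P \otimes I$. The hypothesis $\delta(P) > 0$ enters precisely here: it is what guarantees that the distance of the separating pair is strictly positive, so that the pair actually contributes to the infimum.
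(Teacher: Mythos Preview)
Your proof is correct and follows essentially the same approach as the paper's: both arguments identify $P\Vc_0 P|_K$ with $\Nc' = (P\Mx P)'$, use the comparison filtration $\Wc_t = \Nc'$ (for $t < \delta(P)$) to establish the first equality, and invoke Lemma \ref{separation} inside $P\Mx P$ for the second.

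There is one small variation worth noting. For the inequality $\leq \delta(P)$ in the second equality, the paper separates $P\Vc_t P$ from $P\Vc_s P$ with $s < \delta(P) < t$, which directly yields $s \leq \rho(P_1,P_2) \leq t$ and hence positivity of $\rho(P_1,P_2)$. You instead separate $\tilde{A}$ from $\Nc'$ alone, obtain $P_1P_2 = 0$, and then appeal back to your first paragraph to deduce $\rho(P_1,P_2) \geq \delta(P)$. This works, but your citation is slightly loose: the previous paragraph literally proved ``$\rho > 0 \Rightarrow \rho \geq \delta(P)$'', whereas here you need ``$P_1P_2 = 0 \Rightarrow \rho \geq \delta(P)$''. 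The underlying computation you gave ($P_1(A\otimes I)P_2 = (B\otimes I)P_1P_2$ for $D(A) < \delta(P)$) does establish this stronger implication from $P_1P_2 = 0$ directly, so the logic is sound; you might just state that intermediate claim explicitly. The paper's choice to separate against $P\Vc_s P$ for $s$ slightly below $\delta(P)$ sidesteps this by giving $\rho \geq s > 0$ in one stroke.
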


\begin{proof}
Let $K = {\rm ran}(P)$.
Observe first that $\Vc_0 = \Mx'$ implies $P\Vc_0 P = P\Mx'$, and that
$P\Mx' = (P\Mx P)'$ where the commutant on the right is taken in $\Bc(K)$.
(The inclusion $\subseteq$ is easy and the inclusion $\supseteq$
follows from the double commutant theorem.)

Now
$$\tilde{\Wc}_t =
\begin{cases}
P\Vc_0 P&\hbox{ if }0 \leq t < \delta(P)\cr
\Bc(K)&\hbox{ if }t \geq \delta(P)
\end{cases}$$
is a quantum pseudometric on $P\Mx P$ with $P\Vc_tP = P\Vc_0 P
\subseteq \tilde{\Wc}_t$ for all $t < \delta(P)$, which shows that
$\tilde{\Vc}_t \subseteq \tilde{\Wc}_t$ for all $t$ and hence that
$\tilde{\Vc}_t = P\Vc_0 P = \tilde{\Vc}_0$ for all $t < \delta(P)$.
Conversely, if $t > \delta(P)$ then
$$\tilde{\Vc}_0 = P\Vc_0 P \subsetneq P\Vc_t P \subseteq \tilde{\Vc}_t,$$
so $\tilde{\Vc}_0 \neq \tilde{\Vc}_t$.
This proves the first equality.

For the second equality first let $P_1$ and $P_2$ be any two projections
in $\Mx \overline{\otimes} \Bc(l^2)$ with
$P_1, P_2 \leq P \otimes I$ and suppose $\rho(P_1,P_2) < \delta(P)$.
Then there exists $A \in \Bc(H)$ such that $P_1(A \otimes I)P_2 \neq 0$
and $D(A) < \delta(P)$. But then $PAP = PBP$ for some $B \in \Vc_0$
and we have
$$P_1(B \otimes I)P_2 =
P_1(PBP\otimes I)P_2 = P_1(PAP\otimes I)P_2 = P_1(A \otimes I)P_2 \neq 0$$
so that $\rho(P_1,P_2) = 0$. Conversely, if $s < \delta(P) < t$
then $P\Vc_s P = P\Mx' \subsetneq P\Vc_t P$ and so by Lemma \ref{separation}
there exist $P_1, P_2 \in P\Mx P\overline{\otimes} \Bc(l^2)$ satisfying
$P_1(PAP\otimes I)P_2 \neq 0$ for some $A \in \Vc_t$ but
$P_1(PBP\otimes I)P_2 = 0$ for all $B \in \Vc_s$. Regarding
$P_1$ and $P_2$ as elements of $\Mx \overline{\otimes} \Bc(l^2)$, we
then have $P_1,P_2 \leq P \otimes I$ and
$P_1(A\otimes I)P_2 \neq 0$ for some $A \in \Vc_t$ but
$P_1(B \otimes I)P_2 = 0$ for all $B \in \Vc_s$, so
that $s \leq \rho(P_1,P_2) \leq t$. This suffices to prove the
second equality.
\end{proof}

The condition that $\delta(P) > 0$ in Proposition \ref{mindist} is
necessary. Even if $P\Vc_0P \neq P\Vc_tP$ for all $t > 0$ we could
still have $\tilde{\Vc}_0 = \tilde{\Vc}_t$ for some $t > 0$ because
the condition $\tilde{\Vc}_0 = \bigcap_{t > 0} \tilde{\Vc}_t$ could
force $P\Vc_0P \subsetneq \tilde{\Vc}_0$.

Now let $\Vb$ be a quantum metric on a von Neumann algebra $\Mx$ and
let $P$ be a projection in $\Mx$ with minimum distance $\delta(P) > 0$.
We have a positive semidefinite sesquilinear form on $\Vc_{<\delta(P)/2}$
with values in $P \Vc_0 P = P\Mx' = (P\Mx P)'$ defined by
$$\langle A,B\rangle = PB^*AP$$
for $A,B \in \Vc_{<\delta(P)/2}$.
Let $\Ec$ be the vector space formed by factoring out null vectors.
Now $\Vc_{<\delta(P)/2}$ is a right $\Mx'$-module,
and this descends to a right action of $P\Mx'$ on $\Ec$. So $\Ec$
is a right pre-Hilbert $P\Mx'$-module. Letting $K = {\rm ran}(P)$, we
can then define an inner product on $\Ec \otimes_{P\Mx'} K$ by
$$\langle A\otimes v, B\otimes w\rangle =
\big\langle \langle A,B\rangle v, w\big\rangle.$$
Let $\Ec \overline{\otimes}_{P\Mx'} K$ denote the completion of
$\Ec\otimes_{P\Mx'} K$ for this inner product.
The following result gives a general version of the volume bound.

\begin{theo}
Let $\Vb$ be a quantum metric on a von Neumann algebra $\Mx$,
let $P$ be a projection in $\Mx$ with minimum distance $\delta(P) > 0$,
let $\Ec$ be the pre-Hilbert $P\Mx'$-module formed from $\Vc_{\delta(P)/2}$
as above, and let $K = {\rm ran}(P)$. Then the map
$$\bar{A} \otimes v \mapsto Av$$
extends to an isometric isomorphism of $\Ec \overline{\otimes}_{P\Mx'} K$
onto ${\rm ran}((P)_{\delta(P)/2}) \subseteq H$, where $(P)_{\delta(P)/2}$
is the open $\delta(P)/2$-neighborhood of $P$ (Definition \ref{various}
(b)).
\end{theo}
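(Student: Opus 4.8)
The plan is to check directly that the map $\Phi: \bar A \otimes v \mapsto Av$ is well-defined, isometric, and has the asserted range, and then to extend it by continuity to the completion $\Ec \overline{\otimes}_{P\Mx'} K$. The one essential use of the hypothesis $\delta(P) > 0$ is already built into the module structure, and I would recall it first: for $A, B \in \Vc_{<\delta(P)/2}$ the filtration property gives $B^*A \in \Vc_{<\delta(P)/2}^*\Vc_{<\delta(P)/2} \subseteq \Vc_{<\delta(P)}$, whence $PB^*AP \in P\Vc_{<\delta(P)}P = P\Vc_0 P = P\Mx' = (P\Mx P)'$ by the minimum-distance condition. This is exactly why the form $\langle A, B\rangle = PB^*AP$ takes values in $P\Mx'$, so that $\Ec$ is a genuine pre-Hilbert $P\Mx'$-module.

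Next I would verify that $\Phi$ descends to the balanced tensor product $\Ec \otimes_{P\Mx'} K$. Since $P \in \Mx$ and any $C \in \Mx'$ commute, and since $Pv = v$ for $v \in K = {\rm ran}(P)$, we have $A(PCv) = ACPv = ACv = (AC)v$, so $\Phi$ respects the balancing relation $\overline{AC} \otimes v = \bar A \otimes (PC)v$. Likewise, if $\bar A = 0$ in $\Ec$, i.e.\ $PA^*AP = 0$, then $\|Av\|^2 = \langle PA^*APv, v\rangle = 0$ using $Pv = v$, so $\Phi(\bar A \otimes v) = 0$; thus $\Phi$ is well-defined as a map out of $\Ec \otimes_{P\Mx'} K$.

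The computational heart is the isometry identity. For $A, B \in \Vc_{<\delta(P)/2}$ and $v, w \in K$, exploiting $Pv = v$ and $Pw = w$,
\begin{equation*}
\langle Av, Bw\rangle_H = \langle B^*Av, w\rangle_H = \langle PB^*APv, w\rangle_H = \langle \langle A, B\rangle v, w\rangle,
\end{equation*}
which is precisely the inner product of $\bar A \otimes v$ and $\bar B \otimes w$ in $\Ec \otimes_{P\Mx'} K$ (the last inner product being computed in $K \subseteq H$). Hence $\Phi$ preserves inner products on this dense subspace, so it is isometric and extends by continuity to an isometry on $\Ec \overline{\otimes}_{P\Mx'} K$.

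Finally I would identify the range. An isometry has closed range, and the range of $\Phi$ is the closure of the set of vectors $Av = A(Pv)$ with $A \in \Vc_{<\delta(P)/2}$ and $v \in K$, whose closed linear span is $\overline{\Vc_{<\delta(P)/2}({\rm ran}(P))}$; by Definition \ref{various} (b) this is exactly ${\rm ran}((P)_{\delta(P)/2})$. Since each such $Av$ already lies in ${\rm ran}((P)_{\delta(P)/2})$, the closed range equals ${\rm ran}((P)_{\delta(P)/2})$, and $\Phi$ is the desired isometric isomorphism. I expect the only delicate point to be the location of the form in $P\Mx'$ — the sole place where $\delta(P) > 0$ is used — together with keeping the identifications $K \subseteq H$, $P\Mx' = (P\Mx P)'$, and the $P\Mx'$-balancing mutually consistent; the isometry identity itself is a one-line manipulation once $Pv = v$ and $Pw = w$ are invoked.
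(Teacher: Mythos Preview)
Your proposal is correct and follows essentially the same approach as the paper: the core of both arguments is the single computation $\langle Av, Bw\rangle = \langle PB^*APv, w\rangle = \big\langle \langle A,B\rangle v, w\big\rangle$ for $v,w \in {\rm ran}(P)$, from which well-definedness, isometry, and the range identification all follow. Your version is somewhat more explicit about the balancing relation and the role of the hypothesis $\delta(P)>0$, but the substance is identical.
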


\begin{proof}
The essential computation is
\begin{eqnarray*}
\langle \bar{A}\otimes v, \bar{B}\otimes w\rangle
&=& \big\langle \langle A,B\rangle v,w\big\rangle\cr
&=& \langle PB^*APv, w\rangle\cr
&=& \langle Av, Bw\rangle
\end{eqnarray*}
for $A,B \in \Vc_{<\delta(P)/2}$ and $v,w \in {\rm ran}(P)$.
Taking linear combinations shows that the map $\bar{A} \otimes v
\mapsto Av$ is well-defined and isometric on the uncompleted version of the
construction, and taking completions then yields an isometry from the
completed tensor product onto $\bigvee_{s < \delta(P)/2}
\overline{\Vc_s({\rm ran}(P))} = {\rm ran}((P)_{\delta(P)/2})$.
\end{proof}

\subsection{Quantum tori}
We formulate a notion of translation invariant quantum pseudometrics
on quantum tori. Using the analysis of translation invariant quantum
relations on quantum tori developed in Section 2.7 of \cite{W6}, it is
then straightforward to deduce strong structural information about
translation invariant quantum pseudometrics.

Quantum tori are the simplest examples of noncommutative manifolds.
They are related to the quantum plane, which plays the role of the
phase space of a spinless one-dimensional particle.
The classical version of such a system has phase space $\Rb^2$, with
the point $(q,p) \in \Rb^2$ representing a state with position $q$ and
momentum $p$, so that the position and momentum observables are just
the coordinate functions on phase space. When such a system is quantized
the position and momentum observables are modelled by unbounded
self-adjoint operators $Q$ and $P$ satisfying $QP - PQ = i\hbar I$.
Polynomials in $Q$ and $P$ can then be seen as a quantum
analog of polynomial functions on $\Rb^2$. The quantum analog of the
continuous functions on the torus --- equivalently, the
$(2\pi, 2\pi)$-periodic continuous functions on the plane --- is the
C*-algebra generated by the unitary operators $e^{iQ}$ and $e^{iP}$,
which satisfy the commutation relation $e^{iQ}e^{iP} =
e^{-i\hbar}e^{iP}e^{iQ}$. For more background see \cite{Rie2} or
Sections 4.1, 4.2, 5.5, and 6.6 of \cite{W5}.

Let $\Tb = \Rb/2\pi\Zb$ and fix $\hbar \in \Rb$. Let $\{e_{m,n}\}$ be
the standard basis of $l^2(\Zb^2)$. We model the quantum
tori on $l^2(\Zb^2)$ as follows.

\begin{defi}\label{qtdf}
Let $U_\hbar$ and $V_\hbar$ be the unitaries in $\Bc(l^2(\Zb^2))$ defined by
\begin{eqnarray*}
U_\hbar e_{m,n} &=& e^{-i\hbar n/2} e_{m+1,n}\cr
V_\hbar e_{m,n} &=& e^{i\hbar m/2} e_{m,n+1}.
\end{eqnarray*}
The {\it quantum torus von Neumann algebra} for the given value of
$\hbar$ is the von Neumann algebra
$W^*(U_\hbar,V_\hbar)$ generated by $U_\hbar$ and $V_\hbar$.
\end{defi}

The commutant of $W^*(U_\hbar,V_\hbar)$ is $W^*(U_{-\hbar}, V_{-\hbar})$
(\cite{W6}, Corollary 2.38).

If $\hbar$ is an irrational multiple of $\pi$ then $W^*(U_\hbar,V_\hbar)$ is a
hyperfinite $II_1$ factor. We will not need this fact.

Conjugating $U_\hbar$ and $V_\hbar$ by the Fourier transform
$\Fc: L^2(\Tb^2) \to l^2(\Zb^2)$ yields the operators
\begin{eqnarray*}
\hat{U}_\hbar f(x,y) &=& e^{ix} f\left(x,y-\frac{\hbar}{2}\right)\cr
\hat{V}_\hbar f(x,y) &=& e^{iy} f\left(x + \frac{\hbar}{2},y\right)
\end{eqnarray*}
on $L^2(\Tb^2)$, with $W^*(\hat{U}_\hbar,\hat{V}_\hbar)$
reducing to the algebra of bounded
multiplication operators when $\hbar = 0$. However, for our purposes
the $l^2(\Zb^2)$ picture is more convenient.

\begin{defi}\label{fdef}
Let $A \in \Bc(l^2(\Zb^2))$.

\noindent (a) For $x,y \in \Tb$ define
$$\theta_{x,y}(A) = M_{e^{i(mx + ny)}} A M_{e^{-i(mx + ny)}}.$$

\noindent (b) For $k,l \in \Zb$ define
$$A_{k,l} = \frac{1}{4\pi^2} \int_0^{2\pi} \int_0^{2\pi}
e^{-i(kx + ly)} \theta_{x,y}(A)\, dxdy.$$
We call $A_{k,l}$ the {\it $(k,l)$ Fourier term} of $A$.

\noindent (c) For $k,l \in \Nb$ define
$$S_{k,l}(A) = \sum_{|k'| \leq k, |l'| \leq l} A_{k',l'}$$
and for $N \in \Nb$ define
$$\sigma_N(A) = \frac{1}{N^2} \sum_{0 \leq k,l \leq N-1} S_{k,l}(A).$$
\end{defi}

In the $L^2(\Tb^2)$ picture the operator $M_{e^{i(mx + ny)}}$ on
$l^2(\Zb^2)$ becomes translation by $(-x,-y)$, so that $\theta_{x,y}$ is
conjugation by a translation.

The integral used to define $A_{k,l}$ can be understood in a weak
sense: for any vectors $v,w \in l^2(\Zb^2)$ we take
$\langle A_{k,l}w,v\rangle$ to be $\frac{1}{4\pi^2} \int_0^{2\pi}\int_0^{2\pi}
e^{-i(kx + ly)} \langle \theta_{x,y}(A)w,v\rangle\, dxdy$.
In particular, if $w = e_{m,n}$ and $v = e_{m',n'}$ then we have
$$\langle A_{k,l} e_{m,n}, e_{m',n'}\rangle =
\begin{cases}
\langle Ae_{m,n}, e_{m',n'}\rangle&\hbox{if $m' = m + k$ and
$n' = n + l$}\cr
0&\hbox{otherwise}.
\end{cases}\eqno{(*)}$$
The $A_{k,l}$ are something like Fourier coefficients, the $S_{k,l}(A)$
like partial sums of a Fourier series, and the $\sigma_N(A)$ like
Ces\`{a}ro means.

\begin{defi}\label{qtdefs}
Let $\Mx \cong l^\infty(\Zb^2)$ be the von Neumann
algebra of bounded multiplication operators in $\Bc(l^2(\Zb^2))$.

(a) For any weak* closed subspace $\Ec$ of $\Mx$ that is invariant
under the natural action of $\Zb^2$, define $\Vc_\Ec$ by
$$\Vc_\Ec = \{A \in \Bc(l^2(\Zb^2)):
A_{k,l} \in \Ec\cdot U_{-\hbar}^kV_{-\hbar}^l\hbox{ for all }k,l \in \Zb\}.$$

(b) For any closed subset $S \subseteq \Tb^2$ define
\begin{eqnarray*}
\Ec_0(S) &=& \{M_f: f \in l^\infty(\Zb^2)\hbox{ and }\sum f\bar{g} = 0
\hbox{ for all } g \in l^1(\Zb^2)\hbox{ such that }\hat{g}|_S = 0\}\cr
&=& \overline{\rm span}^{wk^*}\{M_{e^{i(mx + ny)}}: (x,y) \in S\}
\end{eqnarray*}
and
\begin{eqnarray*}
\Ec_1(S) &=& \{M_f: f \in l^\infty(\Zb^2)\hbox{ and }\sum f\bar{g} = 0
\hbox{ for all } g \in l^1(\Zb^2)\hbox{ such that }\hat{g}|_T = 0\cr
&&\phantom{\limsup\limsup}\hbox{for some neighborhood $T$ of $S$}\}\cr
&=& \bigcap_{\epsilon > 0} \Ec_0(N_\epsilon(S))
\end{eqnarray*}
where $N_\epsilon(S)$ is the open $\epsilon$-neighborhood of $S$.
\end{defi}

In part (a), $\Vc_\Ec$ is a quantum relation on $W^*(U_\hbar, V_\hbar)$
satisfying $\theta_{x,y}(\Vc_\Ec) = \Vc_\Ec$ for all $x,y \in \Tb$ by
(\cite{W6}, Theorem 2.41). This suggests the following definition.

\begin{defi}
A quantum pseudometric $\Vb = \{\Vc_t\}$ on
$W^*(U_\hbar,V_\hbar)$ is {\it translation invariant} if
$\theta_{x,y}(\Vc_t) = \Vc_t$ for all $x,y \in \Tb$ and all $t \in
[0,\infty)$.
\end{defi}

We can characterize translation invariance intrinsically as follows.

\begin{prop}
Let $\Vb$ be a quantum pseudometric on $W^*(U_\hbar,V_\hbar)$
and let $\rho$ be the associated quantum distance function (Definition
\ref{projdist}). Then $\Vb$ is translation invariant if and only if
$$\rho(P,Q) = \rho((\theta_{x,y} \otimes I)(P), (\theta_{x,y}\otimes I)(Q))$$
for all $x,y \in \Tb$ and all projections
$P,Q \in W^*(U_\hbar,V_\hbar) \overline{\otimes} \Bc(l^2)$.
\end{prop}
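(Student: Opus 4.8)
The plan is to reduce both implications to the defining formula $\rho(P,Q) = \inf\{D(A): P(A\otimes I)Q\neq 0\}$ together with the recovery formula of Proposition \ref{recover}. The one genuinely substantive observation, which I would establish first, is that $\theta_{x,y}$ is a $*$-automorphism of $\Bc(l^2(\Zb^2))$ carrying $\Mx = W^*(U_\hbar,V_\hbar)$ onto itself. Indeed $\theta_{x,y} = \mathrm{Ad}(W_{x,y})$ for the unitary $W_{x,y} = M_{e^{i(mx+ny)}}$, and a direct computation on the standard basis gives $\theta_{x,y}(U_\hbar) = e^{ix}U_\hbar$ and $\theta_{x,y}(V_\hbar) = e^{iy}V_\hbar$; since $\theta_{x,y}$ is weak* continuous it therefore fixes $W^*(U_\hbar,V_\hbar)$ setwise (and hence its commutant as well). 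Consequently $\Theta_{x,y} := \theta_{x,y}\otimes I = \mathrm{Ad}(W_{x,y}\otimes I)$ is an automorphism of $\Mx\overline{\otimes}\Bc(l^2)$ that sends projections to projections, so that the expression $(\theta_{x,y}\otimes I)(P)$ in the statement is meaningful, and $(x,y)\mapsto\Theta_{x,y}$ is a group action with $\Theta_{x,y}^{-1} = \Theta_{-x,-y}$.

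For the forward implication, suppose $\Vb$ is translation invariant, so each $\Vc_t$ is $\theta_{x,y}$-invariant and hence $D(\theta_{x,y}(A)) = D(A)$ for all $A$. Because $\Theta_{x,y}$ is a $*$-automorphism and $(\theta_{x,y}\otimes I)(A\otimes I) = \theta_{x,y}(A)\otimes I$, one has $(\theta_{x,y}\otimes I)(P)\,(\theta_{x,y}(A)\otimes I)\,(\theta_{x,y}\otimes I)(Q) = \Theta_{x,y}\bigl(P(A\otimes I)Q\bigr)$, which is nonzero exactly when $P(A\otimes I)Q\neq 0$. Reindexing the infimum defining $\rho((\theta_{x,y}\otimes I)(P),(\theta_{x,y}\otimes I)(Q))$ by $A\mapsto\theta_{x,y}(A)$ and using $D\circ\theta_{x,y} = D$ then matches it term by term with the infimum defining $\rho(P,Q)$, giving the asserted equality.

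For the converse I would use Proposition \ref{recover}, which recovers $\Vc_t$ as the set of $A$ with $P(A\otimes I)Q = 0$ whenever $\rho(P,Q) > t$. Assuming $\rho$ is $\theta$-invariant, fix $A\in\Vc_t$ and projections $P,Q$ with $\rho(P,Q) > t$. Applying $\Theta_{x,y}^{-1} = \Theta_{-x,-y}$ to $P(\theta_{x,y}(A)\otimes I)Q$ yields $(\Theta_{-x,-y}P)(A\otimes I)(\Theta_{-x,-y}Q)$; by the hypothesized invariance $\rho(\Theta_{-x,-y}P,\Theta_{-x,-y}Q) = \rho(P,Q) > t$, so this vanishes because $A\in\Vc_t$, whence $P(\theta_{x,y}(A)\otimes I)Q = 0$. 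As $P,Q$ were arbitrary subject to $\rho(P,Q) > t$, Proposition \ref{recover} gives $\theta_{x,y}(A)\in\Vc_t$, i.e.\ $\theta_{x,y}(\Vc_t)\subseteq\Vc_t$. Running this for every $(x,y)$ and invoking the group structure (so that the inclusion for $(-x,-y)$ supplies the reverse containment) upgrades this to $\theta_{x,y}(\Vc_t) = \Vc_t$ for all $x,y,t$, which is translation invariance.

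The only step needing care is the preliminary one: checking that $\theta_{x,y}$ preserves $\Mx$ so that $\Theta_{x,y}$ genuinely acts on the projection lattice of $\Mx\overline{\otimes}\Bc(l^2)$. After that the argument is pure bookkeeping with the definition of $\rho$ and Proposition \ref{recover}, the one subtlety in the converse being the use of the group action to pass from a one-sided inclusion (valid for all parameters) to equality.
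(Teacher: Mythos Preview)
Your proof is correct and a bit more self-contained than the paper's. For the forward direction the paper invokes an external result (\cite{W6}, Proposition 2.40) together with the intrinsic-quantum-relation description $\rho(P,Q) = \inf\{t : (P,Q)\in\Rc_t\}$, whereas you argue directly from the displacement gauge: translation invariance of each $\Vc_t$ gives $D\circ\theta_{x,y}=D$, and then the bijection $A\mapsto\theta_{x,y}(A)$ matches the two infima term by term. For the converse the paper argues by contrapositive, producing $A\in\Vc_t$ with $\theta_{x,y}(A)\notin\Vc_s$ for some $s>t$ and then invoking Lemma~\ref{separation} to find projections witnessing a strict inequality of distances; you instead prove the direct implication, using Proposition~\ref{recover} (which already packages the separation lemma) to show $\theta_{x,y}(\Vc_t)\subseteq\Vc_t$, and then the group law to upgrade to equality. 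Both arguments rest on the same separation technology underneath, but yours avoids the external citation and the contrapositive bookkeeping; the paper's route has the minor virtue of exhibiting explicit projections where invariance fails.
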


\begin{proof}
The forward implication follows from (\cite{W6}, Proposition 2.40)
and the fact that
$$\rho(P,Q) =  \inf\{t: (P,Q) \in \Rc_t\}$$
where $\Rc_t$ is the intrinsic quantum relation corresponding to $\Vc_t$
(Theorem \ref{abstractchar}). For the converse, suppose $\Vb$ is not
translation invariant and find $t \in [0,\infty)$, $x,y \in \Tb$, and
$A \in \Vc_t$ such that $\theta_{x,y}(A) \not\in \Vc_t$. Then
$\theta_{x,y}(A) \not\in \Vc_s$ for some $s > t$, and
by Lemma \ref{separation} we can find projections $P, Q \in
W^*(U_\hbar,V_\hbar) \overline{\otimes} \Bc(l^2)$ such that
$P(\theta_{x,y}(A)\otimes I)Q \neq 0$ but $P(B \otimes I)Q = 0$ for
all $B \in \Vc_s$. Then $(P,Q) \not\in \Rc_s$ but
$((\theta_{-x,-y}\otimes I)(P),(\theta_{-x,-y}\otimes I)(Q)) \in \Rc_t$
because
$$(\theta_{-x,-y}\otimes I)(P)(A \otimes I)(\theta_{-x,-y}\otimes I)(Q)
= (\theta_{-x,-y}\otimes I)(P(\theta_{x,y}(A)\otimes I)Q) \neq 0.$$
Thus
$$\rho((\theta_{-x,-y}\otimes I)(P),(\theta_{-x,-y}\otimes I)(Q))
\leq t < s \leq \rho(P,Q).$$
This proves the reverse implication.
\end{proof}

Say that a pseudometric $d$ on $\Tb^2$ is {\it closed} if
$$\overline{N}_\epsilon(x,y) = \{(x',y') \in \Tb^2:
d((x,y), (x',y')) \leq \epsilon\}$$
is closed in the usual topology, for every $(x,y) \in \Tb^2$ and
every $\epsilon > 0$. The following basic structural result for
translation invariant quantum pseudometrics on quantum tori
follows immediately from Theorem 2.41 and Corollary 2.42 of \cite{W6}.

\begin{theo}\label{qtori}
Let $\Mx \cong l^\infty(\Zb^2)$ be the von Neumann
algebra of bounded multiplication operators in $\Bc(l^2(\Zb^2))$.

\noindent (a) If $\Vb_0 = \{\Vc^0_t\}$ is a translation invariant quantum
pseudometric on $W^*(U_0, V_0) \cong L^\infty(\Tb^2)$ then $\Vb_\hbar =
\{\Vc^\hbar_t\}$ is a translation invariant quantum pseudometric on
$W^*(U_\hbar,V_\hbar)$, where $\Vc^\hbar_t = \Vc_{\Ec_t}$
with $\Ec_t = \Vc^0_t \cap \Mx$. Every translation
invariant quantum pseudometric on $W^*(U_\hbar, V_\hbar)$ is of this form.

\noindent (b) Let $\Vb = \{\Vc_t\}$ be a translation invariant quantum
pseudometric on $W^*(U_\hbar, V_\hbar)$. Then
$$d((x,y), (x',y')) = \inf\{t: M_{e^{i(m(x-x') + n(y-y'))}} \in \Vc_t\}$$
is a closed translation invariant pseudometric on $\Tb^2$ and
$\Vb_0 = \{\Vc^0_t\}$ and $\Vb_1 = \{\Vc^1_t\}$ are translation
invariant quantum pseudometrics on $W^*(U_\hbar, V_\hbar)$ where
\begin{eqnarray*}
\Vc^0_t &=& \Vc_{\Ec_0(S_t)}\cr
\Vc^1_t &=& \Vc_{\Ec_1(S_t)}
\end{eqnarray*}
with $S_t = \{(x,y) \in \Tb^2: d((0,0), (x,y)) \leq t\}$. We have
$\Vb_0 \leq \Vb \leq \Vb_1$.
\end{theo}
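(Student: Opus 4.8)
The plan is to treat the whole statement as a transfer of filtration structure through the classification of \emph{translation invariant quantum relations} carried out in \cite{W6}. Recall, as observed after Definition \ref{quantrel}, that a quantum pseudometric on $W^*(U_\hbar,V_\hbar)$ is exactly a one-parameter family $\{\Vc_t\}$ of quantum relations satisfying conditions (i) and (ii) of Definition \ref{filt} (a), and that it is translation invariant precisely when each $\Vc_t$ is a translation invariant quantum relation. So the real content is to rephrase the two filtration axioms in terms of the objects that classify translation invariant quantum relations, namely the weak* closed $\Zb^2$-invariant subspaces $\Ec \subseteq \Mx$ and, via Corollary 2.42 of \cite{W6}, their closed supports in $\Tb^2$.

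For part (a) the key fact I would use is that the correspondence $\Ec \leftrightarrow \Vc_\Ec$ of \cite{W6}, Theorem 2.41, is a bijection between weak* closed $\Zb^2$-invariant subspaces of $\Mx$ and translation invariant quantum relations on $W^*(U_\hbar,V_\hbar)$, with inverse $\Vc \mapsto \Vc\cap\Mx$ (indeed $\Vc_\Ec\cap\Mx=\Ec$ since a diagonal operator has only a $(0,0)$ Fourier term), and that this bijection is independent of $\hbar$. First I would note it is order preserving and respects arbitrary intersections, so that condition (ii), $\Vc_t=\bigcap_{s>t}\Vc_s$, is equivalent to $\Ec_t=\bigcap_{s>t}\Ec_s$ for $\Ec_t=\Vc_t\cap\Mx$. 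Second, and this is the crux, I would establish the $\hbar$-independent multiplicativity $\overline{\Vc_\Ec\Vc_{\Ec'}}^{wk^*}=\Vc_{\overline{\Ec\Ec'}^{wk^*}}$: a product of Fourier terms $M_f U_{-\hbar}^{k}V_{-\hbar}^{l}$ and $M_g U_{-\hbar}^{k'}V_{-\hbar}^{l'}$ becomes, after commuting $U_{-\hbar},V_{-\hbar}$ past the diagonal operator $M_g$ (which merely $\Zb^2$-translates it, hence keeps it in the invariant $\Ec'$) and past one another (which contributes only scalar phases $e^{\pm i\hbar(\cdots)}$), an operator of the form (scalar)$\cdot M_{f\cdot(g\circ\tau)}U_{-\hbar}^{k+k'}V_{-\hbar}^{l+l'}$ lying in $\Ec\Ec'\cdot U_{-\hbar}^{k+k'}V_{-\hbar}^{l+l'}$. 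Thus condition (i), $\Vc_s\Vc_t\subseteq\Vc_{s+t}$, is equivalent to the $\hbar$-free condition $\overline{\Ec_s\Ec_t}^{wk^*}\subseteq\Ec_{s+t}$ on the subspaces. Both equivalences being independent of $\hbar$, a single family $\{\Ec_t\}$ yields a translation invariant quantum pseudometric $\{\Vc_{\Ec_t}\}$ simultaneously for every $\hbar$, and conversely every translation invariant quantum pseudometric is recovered from its subspaces $\Ec_t=\Vc_t\cap\Mx$; this is exactly part (a).

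For part (b) I would first check directly that $d$ is a translation invariant pseudometric. Translation invariance is built into the formula, $d((x,y),(x,y))=0$ follows from $I=M_1\in\Vc_0$, symmetry follows from self-adjointness of each $\Vc_t$ since $M_{e^{-i(mx+ny)}}=(M_{e^{i(mx+ny)}})^*$, and the triangle inequality follows from condition (i) via the identity $M_{e^{i(m(x-x'')+n(y-y''))}}=M_{e^{i(m(x-x')+n(y-y'))}}\,M_{e^{i(m(x'-x'')+n(y'-y''))}}$. Using condition (ii) and the weak* continuity of $(x,y)\mapsto M_{e^{i(mx+ny)}}$, I would show $d((0,0),(x,y))\le t \Leftrightarrow M_{e^{i(mx+ny)}}\in\Vc_t$, so that each sublevel set is closed and $d$ is closed; this identifies $S_t$ as exactly the set of characters lying in $\Ec_t=\Vc_t\cap\Mx$, i.e. the closed support of $\Ec_t$. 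The sandwich then comes straight from \cite{W6}, Corollary 2.42, which gives $\Ec_0(S)\subseteq\Ec\subseteq\Ec_1(S)$ for any $\Zb^2$-invariant weak* closed $\Ec$ with support $S$; applied to $\Ec_t$ this yields $\Vc^0_t=\Vc_{\Ec_0(S_t)}\subseteq\Vc_t\subseteq\Vc_{\Ec_1(S_t)}=\Vc^1_t$, that is $\Vb_0\le\Vb\le\Vb_1$. Finally, to see $\Vb_0,\Vb_1$ are themselves translation invariant quantum pseudometrics I would verify the two filtration conditions for the families $\{\Ec_0(S_t)\}$ and $\{\Ec_1(S_t)\}$ and invoke the transfer principle of part (a): condition (i) reduces to the Minkowski containment $S_s+S_t\subseteq S_{s+t}$, which is just the triangle inequality for the translation invariant $d$, while condition (ii) is handled by the definitions of $\Ec_0$ and $\Ec_1$ in Definition \ref{qtdefs} together with $S_t=\bigcap_{s>t}S_s$, these being precisely the extreme invariant subspaces with support $S_t$ furnished by Corollary 2.42.

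The step I expect to be the main obstacle is the $\hbar$-independent multiplicativity underlying part (a): one must confirm that the scalar commutation phases produced when reordering $U_{-\hbar},V_{-\hbar}$ never affect membership of a Fourier term in $\Ec\Ec'\cdot U_{-\hbar}^{k}V_{-\hbar}^{l}$, so that the product axiom becomes a genuinely $\hbar$-free condition on subspaces of $\Mx$. This is the precise mechanism by which \cite{W6}, Theorem 2.41, lets one subspace family serve all quantum tori at once, and it is the heart of the statement. A secondary delicate point, absorbed into Corollary 2.42, is the right-continuity needed for condition (ii) of $\{\Ec_0(S_t)\}$ and $\{\Ec_1(S_t)\}$: this is exactly where the gap between $\Ec_0$ and $\Ec_1$, reflecting possible failure of spectral synthesis on $\Tb^2$, must be controlled, and it is the reason both bounds $\Vb_0$ and $\Vb_1$ (rather than a single one) appear in the conclusion.
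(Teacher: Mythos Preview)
Your proposal is correct and follows essentially the same route as the paper: the paper states that the theorem ``follows immediately from Theorem 2.41 and Corollary 2.42 of \cite{W6}'', and your argument is precisely a detailed unpacking of how those two results transfer the filtration axioms through the $\hbar$-independent bijection $\Ec\leftrightarrow\Vc_\Ec$ and the spectral-synthesis sandwich $\Ec_0(S)\subseteq\Ec\subseteq\Ec_1(S)$. The points you flag as delicate---the $\hbar$-independent multiplicativity underlying condition (i), and the right-continuity of $\{\Ec_0(S_t)\}$ and $\{\Ec_1(S_t)\}$ underlying condition (ii)---are indeed the substantive content, and they are exactly what is absorbed into the cited results from \cite{W6}.
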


We would like to say that the W*-filtration $\Vb_\hbar$ in Theorem
\ref{qtori} (a) converges to the W*-filtration $\Vb_r$ as $\hbar \to r$.
It is easy to see that convergence does
not occur in the Gromov-Hausdorff sense (see the comment following Definition
\ref{various}). The right notion of convergence seems to be the following.
Denote the closed unit ball of any Banach space $\Vc$ by $[\Vc]_1$.

\begin{defi}\label{localconv}
Let $\{\Vb_\lambda\}$ be a net of W*-filtrations of $\Bc(H)$. We say
that $\{\Vb_\lambda\}$ {\it locally converges} to a W*-filtration
$\Vb$ of $\Bc(H)$ if for every $0 \leq s < t$ and every weak* open
neighborhood $U$ of $0 \in \Bc(H)$ we eventually have
$$[\Vc^\lambda_s]_1 \subseteq [\Vc_t]_1 + U \qquad{\rm and}\qquad
[\Vc_s]_1 \subseteq [\Vc^\lambda_t]_1 + U.$$
\end{defi}

Equivalently, for any $\epsilon > 0$ and any vectors $v_1, \ldots, v_n,
w_1, \ldots, w_n \in H$ the sets $[\Vc_s]_1$
and $[\Vc^\lambda_s]_1$ are, respectively, eventually
within the $\epsilon$-neighborhoods of the sets
$[\Vc^\lambda_t]_1$ and $[\Vc_t]_1$ for the seminorm
$$| \! | \! |A| \! | \! | = \sum |\langle Aw_i, v_i\rangle|.$$
Indeed, it would be enough to show this with the $v_i$ and $w_i$ ranging
over a spanning subset of $H$.
The next result is an easy consequence of this characterization.

\begin{prop}
Let $\{d_\lambda\}$ be a net of pseudometrics on a set $X$. Then the
corresponding quantum pseudometrics $\Vb_\lambda$ (Proposition \ref{aa})
locally converge to the quantum pseudometric $\Vb$ corresponding to a
pseudometric $d$ on $X$ if and only if $d_\lambda(x,y) \to d(x,y)$ for
all $x,y \in X$.
\end{prop}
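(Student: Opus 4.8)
The plan is to reduce everything to the ``matrix entry'' seminorms supplied by the equivalent formulation of local convergence stated just above, taking the $v_i$ and $w_i$ to range over the orthonormal basis $\{e_x\}$. The whole argument then rests on the single bookkeeping fact, immediate from Proposition~\ref{aa}, that $V_{xy}$ satisfies $\|V_{xy}\| = 1$ with its only nonzero matrix entry $\langle V_{xy}e_y,e_x\rangle = 1$, and that for any pseudometric $e$ on $X$ one has $V_{xy} \in \Vc_t^e$ iff $e(x,y) \le t$; more generally $A \in \Vc_t^e$ forces $\langle Ae_y,e_x\rangle = 0$ whenever $e(x,y) > t$. Thus a seminorm $|\!|\!|A|\!|\!| = \sum_i |\langle Ae_{y_i},e_{x_i}\rangle|$ sees only the finitely many positions $F = \{(x_i,y_i)\}$, and I will write $S$ for the finite set of all coordinates occurring in $F$ and $P_S$ for the finite rank projection onto ${\rm span}\{e_x : x \in S\}$, so that $\|P_SAP_S\| \le \|A\|$.

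For the ``if'' direction, assume $d_\lambda(x,y) \to d(x,y)$ pointwise and fix $0 \le s < t$, a finite family giving $|\!|\!|\cdot|\!|\!|$, and $\epsilon > 0$. I first check the containment $[\Vc^\lambda_s]_1 \subseteq [\Vc_t]_1 + U$. Because $S\times S$ is finite and the net is directed, pointwise convergence produces a single $\lambda_0$ such that for $\lambda \ge \lambda_0$ and every $(x,y)\in S\times S$ with $d(x,y) > t$ we have $d_\lambda(x,y) > s$. Given $A \in [\Vc^\lambda_s]_1$, set $B = P_SAP_S$; then $\|B\|\le 1$, and any nonzero entry $\langle Be_y,e_x\rangle$ has $x,y\in S$ and forces $d_\lambda(x,y)\le s$, whence $d(x,y)\le t$, so $B \in \Vc_t$. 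Since $B$ agrees with $A$ at every position of $S\times S \supseteq F$, in fact $|\!|\!|A - B|\!|\!| = 0$, so $A \in [\Vc_t]_1 + U$. The reverse containment $[\Vc_s]_1 \subseteq [\Vc^\lambda_t]_1 + U$ is identical with the roles of $d$ and $d_\lambda$ interchanged, now using that $d(x,y)\le s < t$ together with $d_\lambda(x,y)\to d(x,y)$ makes $d_\lambda(x,y)\le t$ eventually on the finite set $S\times S$. This establishes local convergence.

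For the ``only if'' direction, assume local convergence and fix $x,y$; put $r = d(x,y)$ and test against the single-entry seminorm $|\!|\!|A|\!|\!| = |\langle Ae_y,e_x\rangle|$ with $\epsilon = \tfrac12$. To bound $\limsup d_\lambda(x,y)$, fix $t > r$ and take $s = r < t$: since $V_{xy} \in [\Vc_s]_1$, eventually there is $B \in [\Vc^\lambda_t]_1$ with $|\!|\!|V_{xy}-B|\!|\!| < \tfrac12$, forcing $\langle Be_y,e_x\rangle \ne 0$ and hence $d_\lambda(x,y) \le t$; letting $t \downarrow r$ gives $\limsup d_\lambda(x,y) \le r$ (vacuous when $r = \infty$). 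For the other inequality, fix $s < r$, pick $t$ with $s < t < r$, and use $[\Vc^\lambda_s]_1 \subseteq [\Vc_t]_1 + U$: if some $\lambda$ in the eventual range had $d_\lambda(x,y)\le s$, then $V_{xy} \in [\Vc^\lambda_s]_1$ would lie within $\tfrac12$ of some $B \in [\Vc_t]_1$, giving $\langle Be_y,e_x\rangle \ne 0$ and so $d(x,y)\le t < r$, a contradiction; hence eventually $d_\lambda(x,y) > s$ and $\liminf d_\lambda(x,y) \ge s$. Supremizing over $s < r$ gives $\liminf d_\lambda(x,y) \ge r$, and combining with the $\limsup$ bound yields $d_\lambda(x,y) \to d(x,y)$.

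The step I expect to require the most care is keeping the approximant $B$ simultaneously in the unit ball and at the correct filtration level: compressing by the finite rank projection $P_S$ handles the norm, but one must control the matrix entries at \emph{all} positions of the finite block $S\times S$ (not merely the tested positions $F$) in order to guarantee $B\in\Vc_t$. This is exactly where the finiteness of $S\times S$ and the directedness of the net are used, to pass from pointwise convergence of the $d_\lambda$ to a single threshold $\lambda_0$ beyond which all the relevant entrywise comparisons hold at once.
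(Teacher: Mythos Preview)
Your proof is correct and follows essentially the same approach as the paper: both directions hinge on testing against the basis seminorms $|\langle Ae_y,e_x\rangle|$, using the rank-one operators $V_{xy}$ as probes and the compression $P_SAP_S$ to land in the right filtration level with controlled norm. The only cosmetic difference is that the paper argues the ``only if'' direction by contrapositive (assuming $d_\lambda(x,y)\not\to d(x,y)$ and exhibiting a failure of local convergence), whereas you prove pointwise convergence directly via the $\limsup$/$\liminf$ bounds; the underlying mechanism is identical.
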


\begin{proof}
Suppose $d_\lambda(x,y) \not\to d(x,y)$ for some $x,y \in X$. Then
either $\limsup d_\lambda(x,y) > d(x,y)$ or $\liminf d_\lambda(x,y)
< d(x,y)$. Suppose the former and let $s = d(x,y)$ and $s < t <
\limsup d_\lambda(x,y)$. Also let $v_1 = e_x$ and $w_1 = e_y$. Then
the rank one operator $V_{xy}$ belongs to $[\Vc_s]_1$, but for any
$\lambda$ with $d_\lambda(x,y) > t$ we have $\langle Ae_y,e_x\rangle =0$
for all $a \in \Vc^\lambda_t$; this shows that $V_{xy}$ is frequently
not approximated by operators in $\Vc^\lambda_t$ for the seminorm
$|\langle Aw_1,v_1\rangle|$. So $\Vb_\lambda$ does not locally converge
to $\Vb$. In the second case ($\liminf d_\lambda(x,y) < d(x,y)$) the
same proof works, now interchanging the roles of $\Vb$ and $\Vb_\lambda$.

Conversely, suppose $d_\lambda(x,y) \to d(x,y)$ for all $x,y \in X$.
We verify the condition stated just before the proposition with the
vectors $v_i$ and $w_i$ ranging over the standard basis $\{e_x\}$ of
$l^2(X)$. So let $v_1, \ldots, v_n, w_1, \ldots, w_n$ be basis vectors
and find a finite set $S \subseteq X$ on which they are all supported.
Fix $0 \leq s < t$. Then eventually we have $d_\lambda(x,y) \leq t$
for all $x,y \in S$ with $d(x,y) \leq s$, so that if $A \in [\Vc_s]_1$
then $M_{\chi_S}AM_{\chi_S} \in [\Vc^\lambda_t]_1$, and $| \! | \! |A -
M_{\chi_S}AM_{\chi_S}| \! | \! | = 0$ for the seminorm $| \! | \! |\cdot| \! | \! |$. A
similar argument shows that $[\Vc^\lambda_s]_1$ is eventually within
the $\epsilon$-neighborhood of $[\Vc_t]_1$ for the seminorm $| \! | \! |\cdot| \! | \! |$,
for all $\epsilon > 0$. We conclude that $\Vb_\lambda$ locally converges
to $\Vb$.
\end{proof}

Now we show that translation invariant quantum pseudometrics on quantum
torus von Neumann algebras converge as the parameter $\hbar$ varies.

\begin{theo}\label{cvgs}
Let $\Vb_0$ be a translation invariant quantum pseudometric on
$W^*(U_0,V_0)$ and for each $\hbar \in \Rb$ let $\Vb_\hbar$ be the
corresponding translation invariant quantum pseudometric on
$W^*(U_\hbar, V_\hbar)$ (Theorem \ref{qtori} (a)). Let $r \in \Rb$.
Then $\Vb_\hbar$ locally converges to $\Vb_r$ as $\hbar \to r$.
\end{theo}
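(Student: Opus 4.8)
The plan is to verify the equivalent form of local convergence recorded just after Definition \ref{localconv}: it suffices to show that for every $\epsilon > 0$, every $0 \le s < t$, and every finite list of vectors $v_1,\dots,v_n,w_1,\dots,w_n$ drawn from the standard basis $\{e_{m,n}\}$ of $l^2(\Zb^2)$, the unit balls $[\Vc^\hbar_s]_1$ and $[\Vc^r_s]_1$ are eventually within $\epsilon$ of $[\Vc^r_t]_1$ and $[\Vc^\hbar_t]_1$, respectively, for the seminorm $|\!|\!|A|\!|\!| = \sum_i |\langle A w_i, v_i\rangle|$. Because the construction $\Vb_0 \rightsquigarrow \Vb_\hbar$ of Theorem \ref{qtori} (a) treats the deformation parameter symmetrically, the second inclusion is obtained from the first by interchanging $\hbar$ and $r$, so I would prove only the first; fix $\hbar$ near $r$ and $A \in [\Vc^\hbar_s]_1$. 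Such a seminorm inspects only finitely many matrix entries $\langle A e_{m,n}, e_{m+k,n+l}\rangle$, hence only finitely many Fourier terms $(k,l)$ (Definition \ref{fdef}, formula $(*)$), and the exponents $-ml+nk+lk$ attached to those entries form a fixed finite set of integers.

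The engine is the interaction of two operations on Fourier terms. First, by Theorem \ref{qtori} (a) the coefficient space $\Ec_s = \Vc^0_s \cap \Mx$ is independent of $\hbar$, and $A \in \Vc^\hbar_s$ exactly when each $A_{k,l}(U_{-\hbar}^k V_{-\hbar}^l)^* = M_{g_{k,l}}$ with $g_{k,l} \in \Ec_s$. Define the \emph{transport} $B$ to be the operator with the same coefficient functions but the unitaries of the $r$-torus, so that its $(k,l)$ Fourier term is $M_{g_{k,l}} U_{-r}^k V_{-r}^l$; then $B \in \Vc^r_s \subseteq \Vc^r_t$, and on each fixed matrix entry $B$ differs from $A$ only by the unimodular factor $e^{i(r-\hbar)(-ml+nk+lk)/2}$, which tends to $1$ as $\hbar \to r$. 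Since the seminorm sees only finitely many entries with fixed exponents, $|\!|\!|A - B|\!|\!| \to 0$ as $\hbar \to r$, uniformly over the unit ball. Second, the Ces\`aro means $\sigma_N$ of Definition \ref{fdef} (c) are integration of $\theta_{x,y}(A)$ against the nonnegative Fej\'er kernel on $\Tb^2$, hence are contractive; they scale the $(k,l)$ term by $c^N_{k,l} = (1 - |k|/N)(1 - |l|/N) \ge 0$ (so membership in $\Vc^\hbar_s$ is preserved), they kill every term with $|k| \ge N$ or $|l| \ge N$, and $c^N_{k,l} \to 1$ forces $|\!|\!|\sigma_N(A) - A|\!|\!| \to 0$ as $N \to \infty$, again uniformly over the unit ball.

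Combining these, I would fix the seminorm and $\epsilon$, choose $N$ so large that $\sigma_N$ moves the unit ball less than $\epsilon/3$ in seminorm, replace $A$ by the contraction $\sigma_N(A)$, which is supported in the finite band $|k|,|l| < N$, and only then transport it to an operator $B$ over the $r$-torus. The transported band-limited operator still matches $\sigma_N(A)$ on the finitely many relevant entries up to the vanishing phase error, so for $\hbar$ sufficiently close to $r$ the seminorm error of this second step is below $\epsilon/3$.

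The remaining point, which I expect to be the real obstacle, is that $B$ must lie in the unit ball, whereas transport is implemented by a unimodular Schur multiplier with symbol $e^{i(r-\hbar)(-ml+nk+lk)/2}$ that does not factor and is not norm bounded on all of $\Bc(l^2(\Zb^2))$; the difference $\sigma_N(A) - B$ need not be small in operator norm, so $\|B\| \le 1$ cannot be argued naively. What rescues the argument is that after the Ces\`aro step $B$ is the fixed trigonometric-polynomial symbol $\sum_{|k|,|l|<N} c^N_{k,l} M_{g_{k,l}} U_{-\hbar'}^k V_{-\hbar'}^l$ evaluated at $\hbar' = r$, while $\sigma_N(A)$ is the same symbol at $\hbar' = \hbar$. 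The key lemma I would establish is that the operator norm of such a band-limited symbol is continuous in the deformation parameter $\hbar'$ — a continuous-field-of-quantum-tori phenomenon, for which norm continuity can hold even though the symbols do not converge in norm — and that this continuity is uniform over symbols of norm at most $1$ and width at most $N$. Granting this, $\|B\| \le 1 + o(1)$ as $\hbar \to r$, so rescaling by $\|B\|$ lands one in $[\Vc^r_t]_1$ with a third seminorm error below $\epsilon/3$. The hard part is precisely this uniformity; I would seek it by observing that the relevant norm depends on the $g_{k,l} \in \Ec_s$ only through their behaviour under the finitely many shifts appearing in the band, reducing the estimate to a compact family amenable to an equicontinuity argument. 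Assembling the three $\epsilon/3$ estimates, and invoking the $\hbar\leftrightarrow r$ symmetry for the reverse inclusion, then yields local convergence.
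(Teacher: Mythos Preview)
Your approach is essentially the paper's: restrict to standard basis vectors, Ces\`aro--truncate to a finite band, transport the Fourier coefficients from the $\hbar$-torus to the $r$-torus (same $g_{k,l}\in\Ec_s$, new unitaries $U_{-r}^kV_{-r}^l$), and then control the seminorm error by the phase convergence on finitely many matrix entries and the norm by a continuous-field argument. The paper carries out exactly this, in the same order, and disposes of the step you flag as ``the real obstacle'' in one sentence: it invokes Rieffel's theorem that the C*-algebras $C^*(U_\hbar,V_\hbar)$ form a continuous field, so that the norm of a fixed polynomial $\sum a_{k,l}U_{-\hbar}^kV_{-\hbar}^l$ varies continuously in $\hbar$, giving $\limsup_{\hbar\to r}\|\sigma_N^\hbar(A)\|\le 1$ uniformly over $A\in[\Vc^r_s]_1$.

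Your caution about this step is well placed and in fact sharper than the paper's account. Rieffel's result as usually stated is for \emph{scalar} polynomials in $U_\hbar,V_\hbar$, whereas here the coefficients $a_{k,l}=M_{g_{k,l}}$ lie in $l^\infty(\Zb^2)$; moreover you need the continuity uniformly over the (noncompact) unit ball of such band-limited symbols. The paper does not spell out how the citation covers this; your proposed route via an equicontinuity/compactness reduction is one way to close the gap, and another is to observe that the algebras generated by $l^\infty(\Zb^2)$ together with $U_{-\hbar},V_{-\hbar}$ are twisted crossed products $l^\infty(\Zb^2)\rtimes_{\sigma_\hbar}\Zb^2$ with continuously varying cocycle, to which the continuous-field machinery extends. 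Either way, you have correctly located the only nontrivial point in the argument; the rest of your outline matches the paper's proof.
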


\begin{proof}
We use the alternative characterization of local convergence given
following Definition \ref{localconv}, with the $v_i$ and $w_i$ ranging
over the standard basis $\{e_{m,n}\}$ of $l^2(\Zb^2)$. We will show
that $[\Vc^r_s]_1$ is eventually within the $\epsilon$-neighborhood
of $[\Vc^\hbar_s]_1$ for the seminorm $| \! | \! |\cdot| \! | \! |$; the corresponding
assertion with $r$ and $\hbar$ switched is proven similarly.

Let $v_1, \ldots, v_n, w_1, \ldots, w_n$ be basis vectors and find a
finite subset $S \subset \Zb^2$ on which they are all supported.
Then the inner products
$\langle U_{-\hbar}^kV_{-\hbar}^lw_i, v_i\rangle$
will converge to the inner products
$\langle U_{-r}^kV_{-r}^lw_i, v_i\rangle$
as $\hbar \to r$, uniformly in $k$ and $l$ since these inner products
are zero for sufficiently large $k$ and $l$.

Next let $\epsilon > 0$ and observe that the inner products
$\langle \sigma_N(A) w_i, v_i\rangle$ converge to the inner
products $\langle A w_i, v_i\rangle$ as $N \to \infty$, uniformly in
$A \in [\Vc^r_s]_1$. Also, $A \in [\Vc^r_s]_1$ implies $\sigma_N(A) \in
[\Vc^r_s]_1$ by (\cite{W6}, Proposition 2.35).
By the preceding observation, if we write $\sigma^\hbar_N(A) =
\sum a_{k,l}U^k_{-\hbar}V^l_{-\hbar} \in \Vc^\hbar_s$ where $\sigma_N(A) =
\sum a_{k,l}U^k_{-r}V^l_{-r}$ then we will have
$$| \! | \! |\sigma^\hbar_N(A) - A| \! | \! | \leq
| \! | \! |\sigma^\hbar_N(A) - \sigma_N(A)| \! | \! | +
| \! | \! |\sigma_N(A) - A| \! | \! |$$
with the first term on the right going to zero as $\hbar \to r$
and the second going to zero as $N \to \infty$, both
uniformly in $A \in [\Vc^r_s]_1$. We just need
$\limsup \|\sigma^\hbar_N(A)\| \leq 1$, uniformly in $A \in
[\Vc^\hbar_s]_1$. This follows from the fact that the C*-algebras
generated by $U_\hbar$ and $V_\hbar$ form a continuous field
\cite{Rie1}, so that the norms of polynomials in $U_\hbar$ and
$V_\hbar$ vary continuously in $\hbar$.
\end{proof}

\subsection{H\"older metrics}\label{hold}
Let $\Vb = \{\Vc_t\}$ be any quantum pseudometric on a von Neumann
algebra $\Mx \subseteq \Bc(H)$ and let $0 < \alpha < 1$. Then
$\Vb^\alpha = \{\Vc_t^\alpha\}$ where $\Vc_t^\alpha = \Vc_{t^{1/\alpha}}$ is
also a quantum pseudometric on $\Mx$; the filtration condition
follows from the inequality
$s^{1/\alpha} + t^{1/\alpha} \leq (s + t)^{1/\alpha}$ for $s,t \geq 0$.
We call $\Vb^\alpha$ a {\it H\"older} or {\it snowflake} quantum
pseudometric. It is easy to see that this construction reduces to
the usual one in the atomic abelian case:

\begin{prop}
Let $X$ be a set and let $\Mx \cong l^\infty(X)$ be the von Neumann
algebra of bounded multiplication operators on $l^2(X)$. If $d$ is a
pseudometric on $X$ with corresponding quantum pseudometric $\Vb_d$
(Proposition \ref{aa}) and $0 < \alpha < 1$ then the quantum pseudometric
$\Vb_{d^\alpha}$ corresponding to the pseudometric $d^\alpha$ equals
$\Vb^\alpha_d$.
\end{prop}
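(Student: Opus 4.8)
The plan is to unwind both sides to their explicit entrywise descriptions from Proposition \ref{aa} and observe that they impose literally the same vanishing condition on matrix coefficients. First I would recall that, by the definition of the H\"older construction applied to $\Vb_d = \{\Vc^d_t\}$, the snowflake filtration has levels $\Vc^{d,\alpha}_t = \Vc^d_{t^{1/\alpha}}$, so that by Proposition \ref{aa},
$$\Vc^{d,\alpha}_t = \Vc^d_{t^{1/\alpha}} = \{A \in \Bc(l^2(X)): d(x,y) > t^{1/\alpha} \Rightarrow \langle Ae_y,e_x\rangle = 0\}.$$

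Next I would write out the target filtration. Since $d^\alpha$ is the pseudometric named in the statement, Proposition \ref{aa} gives
$$\Vc^{d^\alpha}_t = \{A \in \Bc(l^2(X)): d^\alpha(x,y) > t \Rightarrow \langle Ae_y,e_x\rangle = 0\}.$$
The heart of the argument is then the elementary equivalence $d(x,y)^\alpha > t \Leftrightarrow d(x,y) > t^{1/\alpha}$, valid for every $x,y \in X$ and every $t \geq 0$ because $u \mapsto u^\alpha$ is a strictly increasing bijection of $[0,\infty]$ onto itself (with the convention $\infty^\alpha = \infty$, so the equivalence persists when $d(x,y) = \infty$). Consequently the two displayed sets are defined by exactly the same condition on the entries $\langle Ae_y,e_x\rangle$, whence $\Vc^{d^\alpha}_t = \Vc^d_{t^{1/\alpha}} = \Vc^{d,\alpha}_t$ for every $t \in [0,\infty)$, which is the desired equality $\Vb_{d^\alpha} = \Vb^\alpha_d$.

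I do not anticipate a genuine obstacle: both filtrations are determined entrywise by a single scalar inequality, and the proposition reduces to the strict monotonicity of the power function. The only point needing minor care is the boundary behavior at $d(x,y) = \infty$ together with the paper's convention for infinite distances, which I would flag explicitly. The restriction $0 < \alpha < 1$ plays no role in the set equality itself; it enters only through the text preceding the proposition, where it guarantees (via $s^{1/\alpha} + t^{1/\alpha} \leq (s+t)^{1/\alpha}$, equivalently the subadditivity of $u \mapsto u^\alpha$) that $d^\alpha$ is again a pseudometric, so that both $\Vb_{d^\alpha}$ and $\Vb^\alpha_d$ are legitimate quantum pseudometrics to begin with.
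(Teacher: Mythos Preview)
Your argument is correct and is exactly the natural unwinding of the definitions; the paper in fact omits the proof entirely, merely prefacing the proposition with ``It is easy to see that this construction reduces to the usual one in the atomic abelian case.'' Your handling of the $d(x,y)=\infty$ case and your remark on where the hypothesis $0<\alpha<1$ is actually used are both appropriate.
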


As in the classical case, H\"older metrics have some pathological
qualities. For example, they are essentially never path metrics
(Definition \ref{various} (e)).

\begin{prop}
Let $\Vb$ be a quantum pseudometric on a von Neumann algebra $\Mx$
and let $0 < \alpha < 1$. Suppose that $\Vc_t \neq \Vc_0$ for
some $t > 0$. Then $\Vb^\alpha$ is not a path quantum pseudometric.
\end{prop}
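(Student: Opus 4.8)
The plan is to unwind the path-metric condition for $\Vb^\alpha$ into a statement purely about $\Vb$ and then exploit the fact that $1/\alpha > 1$. Recall $\Vc^\alpha_u = \Vc_{u^{1/\alpha}}$, so the path condition $\bigcap_{\epsilon>0}\overline{\Vc^\alpha_{s+\epsilon}\Vc^\alpha_{t+\epsilon}}^{wk^*} = \Vc^\alpha_{s+t}$ from Definition \ref{various} (e) reads
\[
\bigcap_{\epsilon>0}\overline{\Vc_{(s+\epsilon)^{1/\alpha}}\Vc_{(t+\epsilon)^{1/\alpha}}}^{wk^*} = \Vc_{(s+t)^{1/\alpha}}.
\]
First I would bound the left-hand side. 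By the filtration property (i) of Definition \ref{filt} together with the weak* closedness of each level, $\overline{\Vc_{(s+\epsilon)^{1/\alpha}}\Vc_{(t+\epsilon)^{1/\alpha}}}^{wk^*} \subseteq \Vc_{(s+\epsilon)^{1/\alpha}+(t+\epsilon)^{1/\alpha}}$; since the exponent decreases continuously to $s^{1/\alpha}+t^{1/\alpha}$ as $\epsilon \downarrow 0$, the right-continuity property (ii) yields $\bigcap_{\epsilon>0}\overline{\Vc_{(s+\epsilon)^{1/\alpha}}\Vc_{(t+\epsilon)^{1/\alpha}}}^{wk^*} \subseteq \Vc_{s^{1/\alpha}+t^{1/\alpha}}$.

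Consequently the path condition can hold only if $\Vc_{s^{1/\alpha}+t^{1/\alpha}} = \Vc_{(s+t)^{1/\alpha}}$, and to prove the proposition it suffices to produce a single pair $s,t>0$ at which these two filtration levels differ. I would specialize to $s=t$. Writing $a = 2s^{1/\alpha}$ and $\kappa = 2^{1/\alpha-1}$ (so that $\kappa>1$ precisely because $\alpha<1$), one checks that $s^{1/\alpha}+t^{1/\alpha}=a$ while $(s+t)^{1/\alpha}=\kappa a$, and that as $s$ ranges over $(0,\infty)$ the quantity $a$ ranges over all of $(0,\infty)$. Thus the goal reduces to finding some $a>0$ with $\Vc_a \neq \Vc_{\kappa a}$; taking $s=t=(a/2)^\alpha$ then finishes the proof, since the left-hand side of the path condition is contained in $\Vc_a \subsetneq \Vc_{\kappa a} = \Vc^\alpha_{s+t}$.

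The hard part will be establishing the existence of such a gap $a$, and this is where the hypothesis $\Vc_{t_0}\neq\Vc_0$ must enter. I would argue by contradiction: if $\Vc_a = \Vc_{\kappa a}$ for every $a>0$, then applying this with $a/\kappa$ in place of $a$ and iterating gives $\Vc_{a/\kappa^n} = \Vc_a$ for all $n$. Because $\kappa>1$ forces $a/\kappa^n \to 0$, monotonicity of the filtration together with the right-continuity identity $\Vc_0 = \bigcap_{s>0}\Vc_s$ collapses every level to $\Vc_a = \Vc_0$, for all $a>0$, contradicting $\Vc_{t_0}\neq\Vc_0$. This scaling argument is the only genuinely nontrivial step; the containment estimates above are routine consequences of Definition \ref{filt}, and no appeal to the deeper structural results is needed.
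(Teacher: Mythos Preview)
Your proof is correct and follows essentially the same route as the paper: bound the product $\Vc^\alpha_{s+\epsilon}\Vc^\alpha_{t+\epsilon}$ via the filtration inequality and right-continuity, specialize to $s=t$, and exploit $2^{1/\alpha}>2$ to force a strict gap in the $\Vb$-filtration. The only cosmetic difference is that the paper locates the gap directly by setting $t_0=\inf\{s:\Vc_s=\Vc_t\}$ and working at $s=t=t_0^{\alpha}/2$, whereas you obtain the gap point $a$ with $\Vc_a\subsetneq\Vc_{\kappa a}$ by the iterative contradiction $\Vc_{a/\kappa^n}=\Vc_a\to\Vc_0$.
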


\begin{proof}
Choose $t > 0$ such that $\Vc_t \neq \Vc_0$ and let $t_0 =
\inf\{s: \Vc_s = \Vc_t\}$. So $t_0 > 0$ and $\Vc_{t_0} \neq \Vc_s$
for any $s < t_0$. Now
$$\Vc^\alpha_{t_0^\alpha/2}\Vc^\alpha_{t_0^\alpha/2}
= \Vc_{(t_0^\alpha/2)^{1/\alpha}}\Vc_{(t_0^\alpha/2)^{1/\alpha}}
\subseteq \Vc_{2(t_0^\alpha/2)^{1/\alpha}}
= \Vc_{2^{1-1/\alpha}t_0}$$
and $2^{1-1/\alpha}t_0 < t_0$, so
for sufficiently small $\epsilon > 0$ we will have
$$\Vc^\alpha_{(t_0^\alpha/2) + \epsilon}
\Vc^\alpha_{(t_0^\alpha/2) + \epsilon} \subseteq \Vc_s$$
for some $s < t_0$. Thus
$$\Vc^\alpha_{(t_0^\alpha/2) + \epsilon}\Vc^\alpha_{(t_0^\alpha/2) + \epsilon}
\subseteq \Vc_s \subsetneq \Vc_{t_0} = \Vc^\alpha_{t_0^\alpha}$$
and this shows that $\Vb^\alpha$ is not a path quantum pseudometric.
\end{proof}

The following result gives a more general version of the H\"older
construction.

\begin{prop}
Let $\Vb = \{\Vc_t\}$ be a quantum pseudometric on a von Neumann
algebra $\Mx \subseteq \Bc(H)$ and let $f: [0,\infty) \to [0,\infty]$
be a right continuous nondecreasing function such that
$f(s) + f(t) \leq f(s + t)$ for all $s,t \geq 0$.
Then $\Vb^f = \{\Vc^f_t\}$  where $\Vc^f_t = \Vc_{f(t)}$ is a
quantum pseudometric on $\Mx$
(taking $\Vc_\infty = \Bc(H)$). If $\rho$ is the quantum distance function
associated to $\Vb$ and $\rho^f$ is the quantum distance function
associated to $\Vb^f$ then we have
$$\rho^f(P,Q) = f(\rho(P,Q))$$
for all projections $P,Q \in \Mx \overline{\otimes} \Bc(l^2)$.
\end{prop}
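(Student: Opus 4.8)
The plan is to treat the two assertions in turn, the first being routine and the second carrying all of the content. For the statement that $\Vb^f$ is a quantum pseudometric, I would verify the three requirements for $\{\Vc^f_t\} = \{\Vc_{f(t)}\}$. Each $\Vc^f_t$ is a dual operator system because each $\Vc_s$ is one (reading $\Vc_\infty = \Bc(H)$ when $f(t) = \infty$), and $\Mx' \subseteq \Vc_0 \subseteq \Vc_{f(0)} = \Vc^f_0$ since $f(0) \geq 0$ and $\Vb$ is nondecreasing. Condition (i) of Definition \ref{filt} is exactly where superadditivity enters: chaining the product bound for $\Vb$ with $f(s) + f(t) \leq f(s+t)$ gives
$$\Vc^f_s \Vc^f_t = \Vc_{f(s)}\Vc_{f(t)} \subseteq \Vc_{f(s)+f(t)} \subseteq \Vc_{f(s+t)} = \Vc^f_{s+t}$$
(when $f(s)$ or $f(t)$ equals $\infty$, monotonicity forces $f(s+t) = \infty$ and the inclusion is trivial). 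Condition (ii) is where right-continuity enters: to get $\bigcap_{s>t}\Vc_{f(s)} = \Vc_{f(t)}$ the inclusion $\supseteq$ is monotonicity, while for $\subseteq$ I would invoke $\Vc_{f(t)} = \bigcap_{u > f(t)}\Vc_u$ and note that right-continuity at $t$ supplies, for each $u > f(t)$, an $s > t$ with $f(s) < u$, so that $\bigcap_{s>t}\Vc_{f(s)} \subseteq \Vc_{f(s)} \subseteq \Vc_u$; intersecting over $u$ finishes it.

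For the distance formula I would unwind Definition \ref{projdist} for both filtrations. Setting
$$U = \{u \geq 0 : P(A \otimes I)Q \neq 0 \text{ for some } A \in \Vc_u\},$$
we have $\rho(P,Q) = \inf U$, and $U$ is upward closed because $\Vc_u \subseteq \Vc_{u'}$ for $u \leq u'$; the same definition applied to $\Vb^f$ gives $\rho^f(P,Q) = \inf\{t : f(t) \in U\}$. The engine of the proof is then a change of variables through $f$: since $f$ is nondecreasing, $f(t) < \rho(P,Q)$ forces $t \notin \{t : f(t)\in U\}$ while $f(t) > \rho(P,Q)$ forces $t \in \{t : f(t)\in U\}$, which sandwiches
$$\inf\{t : f(t) \geq \rho(P,Q)\} \;\leq\; \rho^f(P,Q) \;\leq\; \inf\{t : f(t) > \rho(P,Q)\}.$$
Right-continuity of $f$ is precisely what collapses this sandwich onto a single value, the $f$-transform of $\rho(P,Q)$ asserted by the proposition; as a sanity check, for $f(t) = t^{1/\alpha}$ this reproduces the reparametrization already computed for H\"older metrics.

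The first assertion aside, the one genuinely delicate point—and the step I expect to be the main obstacle—is collapsing the displayed sandwich at its endpoint. The two outer infima can differ precisely when $f$ is constant at the value $\rho(P,Q)$ on an interval, and whether $\rho^f(P,Q)$ then lands at the lower or upper end interacts with whether the infimum defining $\rho(P,Q)$ is attained, i.e.\ whether $\rho(P,Q) \in U$. Here I would use right-continuity of $f$ together with the filtration identity $\Vc_{f(t)} = \bigcap_{u>f(t)}\Vc_u$ to control the boundary, supplementing if necessary with a weak* compactness argument on the unit ball of the relevant $\Vc_u$ in the spirit of Lemma \ref{separation}. Everything else is formal manipulation of upward-closed subsets of $[0,\infty]$, so once this boundary behaviour is pinned down the asserted identity follows at once.
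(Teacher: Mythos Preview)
Your verification that $\Vb^f$ is a quantum pseudometric is correct and is exactly what the paper has in mind when it declares the proof ``trivial'': check the axioms, using monotonicity for the operator-system part, superadditivity for the product condition, and right continuity for $\Vc^f_t = \bigcap_{s>t}\Vc^f_s$.

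The distance formula is where something is genuinely off, and the problem is in the statement, not in your method. Your sandwich
\[
\inf\{t : f(t) \geq \rho(P,Q)\} \;\leq\; \rho^f(P,Q) \;\leq\; \inf\{t : f(t) > \rho(P,Q)\}
\]
is correct, and when it collapses it collapses to the \emph{generalized inverse} $g(\rho(P,Q))$ with $g(u)=\inf\{t:f(t)\geq u\}$, not to $f(\rho(P,Q))$. Your own sanity check exposes this: for $f(t)=t^{1/\alpha}$ the H\"older section gives $\rho^f=\rho^\alpha$, whereas $f(\rho)=\rho^{1/\alpha}$. The truncation example cited immediately after the proposition does the same: with $f(t)=t$ for $t<C$ and $f(t)=\infty$ for $t\geq C$, the paper's Proposition~\ref{trunc} gives $\rho^f=\min\{\rho,C\}$, while $f(\rho)$ would be $\rho$ or $\infty$. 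So the printed identity $\rho^f(P,Q)=f(\rho(P,Q))$ is a slip; the ``trivial'' proof the paper alludes to is really the change of variables you wrote down, yielding the inverse-type formula.

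Your closing worry about the sandwich failing to collapse when $f$ is constant at the value $\rho(P,Q)$ is exactly right and is another symptom of the same misprint: whether $\rho^f$ lands at the left or right end of that plateau depends on whether $\rho(P,Q)\in U$, and no amount of right continuity of $f$ or weak* compactness will erase that distinction. With the corrected target $g(\rho(P,Q))$ in the injective/strictly increasing case (in particular for H\"older) there is nothing delicate, which is presumably why the paper calls it trivial; in full generality one should simply state the formula as $\rho^f(P,Q)=\inf\{t\geq 0: f(t)\in U\}$ and leave it at that.
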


The proof is trivial. The truncation of a quantum pseudometric (Definition
\ref{easycon} (a)) is also a special case of this construction. (Define
$f(t) = t$ for $t < C$ and $f(t) = \infty$ for $t \geq C$.)

\subsection{Spectral triples}
A {\it spectral triple} is a triple $(\Ac, H, D)$ consisting of a Hilbert
space $H$, a unital $*$-algebra $\Ac \subseteq \Bc(H)$, and a self-adjoint
operator $D$ with compact resolvent, such that $[D,A]$ is bounded for
all $A \in \Ac$ (\cite{Con}, Definition IV.2.11). 

We will not actually use the hypothesis that $D$ has compact resolvent,
and indeed there are natural examples where this fails. (For example,
$H = L^2(\Rb)$, $\Ac = {\rm Lip}(\Rb)$ acting by multiplication on $H$,
and $D = id/dx$.) So this requirement can be dropped in the following
discussion.

\begin{defi}\label{tripdef}
Let $(\Ac, H, D)$ be a spectral triple, possibly minus the assumption
that $D$ has compact resolvent. The {\it quantum pseudometric
associated to $(\Ac, H, D)$} is the smallest quantum pseudometric
$\Vb$ on $\Ac''$ such that $e^{itD} \in \Vc_t$ for all $t > 0$.
\end{defi}

In other words, $\Vb$ is the smallest W*-filtration such that
$\Ac' \subseteq \Vc_0$ and $e^{itD} \in \Vc_t$ for all $t > 0$.
This definition makes sense because arbitrary meets of quantum
pseudometrics exist (Definition \ref{easycon} (c)).

We can also characterize the quantum pseudometric associated to a spectral
triple internally:

\begin{lemma}\label{sptrint}
Let $\Vb = \{\Vc_t\}$ be the quantum pseudometric associated to a spectral
triple $(\Ac, H, D)$. For each $t > 0$ let $\Wc_t$ be the
weak* closure of the span of the operators of the form
$$e^{is_1D}A_1e^{is_2D}A_2\cdots e^{is_nD}A_n$$
with $n \in \Nb$, each $A_i \in \Ac'$, each $s_i \in \Rb$,
and $\sum |s_i| \leq t$.
Then $\Vc_t = \bigcap_{s > t} \Wc_s$ for all $t \geq 0$.
\end{lemma}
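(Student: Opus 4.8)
The plan is to set $\tilde\Wc_t = \bigcap_{s>t}\Wc_s$ and prove that $\{\tilde\Wc_t\}$ coincides with $\Vb = \{\Vc_t\}$ by establishing both inclusions. For the inclusion $\tilde\Wc_t \subseteq \Vc_t$ I would argue directly that $\Wc_s \subseteq \Vc_s$ for every $s \ge 0$; taking the intersection over $s > t$ and using $\Vc_t = \bigcap_{s>t}\Vc_s$ then yields $\tilde\Wc_t \subseteq \Vc_t$. To see $\Wc_s \subseteq \Vc_s$, note first that $e^{irD} \in \Vc_{|r|}$ for every $r \in \Rb$: this holds by hypothesis for $r > 0$, follows for $r < 0$ by taking adjoints (each $\Vc_t$ is self-adjoint), and is trivial for $r = 0$. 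Since also $\Ac' \subseteq \Vc_0$, the filtration axiom $\Vc_a \Vc_b \subseteq \Vc_{a+b}$ shows that each generator $e^{is_1 D}A_1 \cdots e^{is_n D}A_n$ with $\sum|s_i| \le s$ lies in $\Vc_{|s_1|}\Vc_0 \cdots \Vc_{|s_n|}\Vc_0 \subseteq \Vc_{\sum|s_i|} \subseteq \Vc_s$. As $\Vc_s$ is a weak* closed subspace, it contains the weak* closed span $\Wc_s$ of these generators.

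For the reverse inclusion I would invoke the minimality in Definition \ref{tripdef}: it suffices to check that $\{\tilde\Wc_t\}$ is itself a quantum pseudometric on $\Ac''$ satisfying $e^{itD} \in \tilde\Wc_t$ for all $t > 0$, since minimality then forces $\Vc_t \subseteq \tilde\Wc_t$. Most of the required properties are routine. Each $\tilde\Wc_t$ is weak* closed as an intersection of weak* closed sets. Self-adjointness of $\Wc_s$ (hence of $\tilde\Wc_t$) follows by observing that the adjoint of a generator, $A_n^* e^{-is_n D}\cdots A_1^* e^{-is_1 D}$, is again a generator of the same total displacement $\sum|s_i|$ after padding with trivial factors $e^{i0D}$ and $I \in \Ac'$. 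Unitality holds since $I$ is the generator with $n=1$, $s_1 = 0$, $A_1 = I$. Taking all $s_i = 0$ shows $\Wc_0 = \Ac'$ (a von Neumann algebra, so weak* closed), whence $\Ac' \subseteq \Wc_s$ for all $s \ge 0$ and thus $\Ac' = (\Ac'')' \subseteq \tilde\Wc_0$, the commutant condition. The right-continuity axiom $\tilde\Wc_t = \bigcap_{s>t}\tilde\Wc_s$ is automatic, since $\bigcap_{s>t}\bigcap_{u>s}\Wc_u = \bigcap_{u>t}\Wc_u$. Finally $e^{itD}$ is the generator with $n=1$, $s_1 = t$, $A_1 = I$, of total displacement $t$, so $e^{itD} \in \Wc_s$ for every $s \ge t$ and hence $e^{itD} \in \tilde\Wc_t$.

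The one substantive point, and the step I expect to be the main obstacle, is the filtration product axiom $\tilde\Wc_s \tilde\Wc_t \subseteq \tilde\Wc_{s+t}$. The underlying combinatorial fact is clear: concatenating a generator of total displacement $\le s$ with one of total displacement $\le t$ produces a generator of total displacement $\le s+t$, so a product of two generators of $\Wc_s$ and $\Wc_t$ is a generator of $\Wc_{s+t}$. The difficulty is promoting this to the weak* closures, since multiplication is not jointly weak* continuous. I would resolve it via separate weak* continuity: right multiplication $Z \mapsto Zg$ by a fixed $g$ is weak* continuous, because $\langle Zg, B\rangle = {\rm tr}(ZgB) = \langle Z, gB\rangle$ with $gB$ trace class, so $\{Z : Zg \in \Wc_{s+t}\}$ is weak* closed; it contains every generator of $\Wc_s$ and hence all of $\Wc_s$. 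Thus $\Wc_s\, g \subseteq \Wc_{s+t}$ for each generator $g$ of $\Wc_t$, and the symmetric argument with left multiplication $Z \mapsto XZ$ (weak* continuous since $\langle XZ, B\rangle = \langle Z, BX\rangle$) gives $\Wc_s \Wc_t \subseteq \Wc_{s+t}$. Passing to $\tilde\Wc$: given $X \in \tilde\Wc_s$, $Y \in \tilde\Wc_t$, and $c > s+t$, write $c = a+b$ with $a > s$ and $b > t$; then $X \in \Wc_a$, $Y \in \Wc_b$, so $XY \in \Wc_c$, and intersecting over $c > s+t$ yields $XY \in \tilde\Wc_{s+t}$. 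This completes the verification that $\{\tilde\Wc_t\}$ is a quantum pseudometric, and together with the inclusion of the first paragraph we conclude $\Vc_t = \tilde\Wc_t = \bigcap_{s>t}\Wc_s$ for all $t \ge 0$.
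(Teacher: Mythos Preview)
Your proof is correct and follows essentially the same strategy as the paper's: show $\Wc_s \subseteq \Vc_s$ directly from the filtration axiom, then verify that $\{\bigcap_{s>t}\Wc_s\}$ is itself a quantum pseudometric containing $\Ac'$ and the $e^{itD}$, and invoke minimality. The paper's proof is terser---it simply asserts $\Wc_s\Wc_t \subseteq \Wc_{s+t}$ without spelling out the separate weak* continuity argument you give---so your treatment of that step is in fact more complete than the original.
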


\begin{proof}
Since $e^{\pm itD}A$ must belong to $\Vc_t$ for all $t \geq 0$
and all $A \in \Ac'$, it follows from the filtration property
that $\Wc_t \subseteq \Vc_t$ for all $t$, and then the fact
that $\Vc_t = \bigcap_{s > t} \Vc_s$ shows that $\bigcap_{s > t}\Wc_s
\subseteq \Vc_t$ for all $t$. It is evident that $\Ac' \subseteq
\bigcap_{s > t} \Wc_s$ and $e^{itD} \in \bigcap_{s > t} \Wc_s$ for
all $t \geq 0$. To complete the proof we must check that
$\Vc_t = \bigcap_{s > t} \Wc_s$ defines a quantum pseudometric.
Condition (i) of Definition \ref{filt} holds because $\Wc_s\Wc_t
\subseteq \Wc_{s+t}$ for all $s$ and $t$, and condition (ii) is easy.
\end{proof}

Next we note that quantum pseudometrics associated to spectral
triples are always path quantum pseudometrics (Definition
\ref{various} (e)).

\begin{prop}
The quantum pseudometric associated to any spectral triple is a
path quantum pseudometric.
\end{prop}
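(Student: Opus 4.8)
The plan is to verify the path condition from Definition \ref{various} (e), namely $\bigcap_{\epsilon > 0}\overline{\Vc_{s+\epsilon}\Vc_{t+\epsilon}}^{wk^*} = \Vc_{s+t}$ for all $s,t \geq 0$, using the internal description of $\Vb$ furnished by Lemma \ref{sptrint}. One inclusion is free for any W*-filtration: condition (i) of Definition \ref{filt} gives $\Vc_{s+\epsilon}\Vc_{t+\epsilon} \subseteq \Vc_{s+t+2\epsilon}$, and since $\Vc_{s+t+2\epsilon}$ is weak* closed this passes to the weak* closure of the product; intersecting over $\epsilon > 0$ and invoking condition (ii) yields $\bigcap_{\epsilon>0}\overline{\Vc_{s+\epsilon}\Vc_{t+\epsilon}}^{wk^*} \subseteq \Vc_{s+t}$. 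So the real content is the reverse inclusion $\Vc_{s+t} \subseteq \overline{\Vc_{s+\epsilon}\Vc_{t+\epsilon}}^{wk^*}$ for every $\epsilon > 0$.

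The crux is a splitting lemma at the level of the spaces $\Wc_u$ of Lemma \ref{sptrint}: I claim $\Wc_{u+v} \subseteq \overline{\Wc_u \Wc_v}^{wk^*}$ for all $u,v \geq 0$. For this it suffices to factor each spanning operator $W = e^{is_1 D}A_1 \cdots e^{is_n D}A_n$ with $\sum_i |s_i| \leq u+v$ as a product of a generator of $\Wc_u$ and a generator of $\Wc_v$. Writing $\sigma_j = \sum_{i\leq j}|s_i|$, I choose the index $k$ with $\sigma_{k-1} \leq u < \sigma_k$ and split the exponent as $s_k = s_k' + s_k''$ with $|s_k'| = u - \sigma_{k-1}$, keeping the sign of $s_k$, so that $e^{is_k D} = e^{is_k' D}e^{is_k'' D}$ by the one-parameter group law. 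Then $W = W_1 W_2$ with $W_1 = e^{is_1 D}A_1 \cdots e^{is_{k-1}D}A_{k-1}e^{is_k' D}\,I$ of total exponent length $u$ and $W_2 = e^{is_k'' D}A_k e^{is_{k+1}D}A_{k+1}\cdots e^{is_n D}A_n$ of total length $\sum_i|s_i| - u \leq v$; here I use $I \in \Ac'$ to close off $W_1$ as a legitimate generator of $\Wc_u$. The degenerate case $\sum_i|s_i| \leq u$ is handled by $W = W\cdot I \in \Wc_u \Wc_v$. Passing to weak* closed spans gives the claim.

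With the splitting lemma in hand I would finish as follows. Since $\Wc$ is nondecreasing in its parameter, $\Wc_{w} \subseteq \Vc_w = \bigcap_{v>w}\Wc_v$ for every $w$, and also $\Vc_{s+t} \subseteq \Wc_{s+t+2\epsilon}$. Applying the splitting lemma with $u = s+\epsilon$, $v = t+\epsilon$ and then these inclusions gives
\[
\Vc_{s+t} \subseteq \Wc_{s+t+2\epsilon}
\subseteq \overline{\Wc_{s+\epsilon}\Wc_{t+\epsilon}}^{wk^*}
\subseteq \overline{\Vc_{s+\epsilon}\Vc_{t+\epsilon}}^{wk^*}
\]
for each $\epsilon > 0$, which is exactly the reverse inclusion needed. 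The main obstacle is the splitting lemma, and in particular the observation that the group law $e^{is_k' D}e^{is_k'' D} = e^{is_k D}$ lets one cut a single exponential factor at an arbitrary point so as to apportion its length precisely between the two factors. Once this redistribution of travel time is set up, the insertion of the identity from $\Ac'$ and the bookkeeping of partial sums are routine.
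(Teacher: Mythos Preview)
Your proof is correct and follows essentially the same route as the paper: both arguments use Lemma \ref{sptrint} and factor a generator $e^{is_1D}A_1\cdots e^{is_nD}A_n$ by splitting one exponential $e^{is_kD}=e^{is_k'D}e^{is_k''D}$ so as to apportion the total length between two factors. The only cosmetic difference is that you package the splitting as a separate inclusion $\Wc_{u+v}\subseteq\overline{\Wc_u\Wc_v}^{wk^*}$ and then sandwich via $\Vc_{s+t}\subseteq\Wc_{s+t+2\epsilon}$ and $\Wc_r\subseteq\Vc_r$, whereas the paper argues directly that each such generator with total length $\leq s+t+\epsilon$ lies in $\Vc_s\Vc_{t+\epsilon}$.
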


\begin{proof}
Let $\Vb$ be the quantum pseudometric associated to the spectral
triple $(\Ac, H, D)$. Fix $s,t \geq 0$. For any $\epsilon > 0$ we have
$\Vc_{s+\epsilon}\Vc_{t+\epsilon} \subseteq \Vc_{s + t + 2\epsilon}$,
and it follows that
$$\bigcap_{\epsilon > 0}
\overline{\Vc_{s+\epsilon}\Vc_{t+\epsilon}}^{wk^*} \subseteq
\Vc_{s + t}.$$
(This will be true of any quantum pseudometric.)
For the reverse inclusion, by Lemma \ref{sptrint}
it will be enough to show that any
operator of the form $e^{is_1D}A_1e^{is_2D}A_2\cdots e^{is_nD}A_n$
with $n \in \Nb$, each $A_i \in \Ac'$, each $s_i \in \Rb$,
and $\sum |s_i| \leq s + t + \epsilon$, belongs to $\Vc_s\Vc_{t+\epsilon}$.
This is true because we can write
$$e^{is_1D}A_1e^{is_2D}A_2\cdots e^{is_nD}A_n =
(e^{is_1D}A_1 \cdots e^{is_{j-1}D}A_{j-1}e^{is_j'D})
(e^{is_j''D}A_j\cdots e^{is_nD}A_n)$$
with $s_j' + s_j'' = s_j$, $|s_1| + \cdots + |s_j'| \leq s$, and
$|s_j''| + \cdots + |s_n| \leq t + \epsilon$.
\end{proof}

We will now show that the construction in
Definition \ref{tripdef} recovers the standard quantum metric for
spectral triples involving the Hodge-Dirac operator (see \cite{Cno})
on a complete
connected Riemannian manifold with positive injectivity radius
(i.e., the infimum of the injectivity radius over all points in the
manifold is nonzero; see \cite{Kli}).

Let $M$ be a complete connected Riemannian manifold equipped with
volume measure. It carries an intrinsic metric $d$ and a corresponding
intrinsic measurable metric defined by
\begin{eqnarray*}
\rho(\chi_S,\chi_T)
&=& \inf\{d(x,y): x\hbox{ is a Lebesgue point of }S\cr
&&\phantom{\liminf\liminf}\hbox{and $y$ is a Lebesgue point of }T\}.
\end{eqnarray*}
It is straightforward to verify that $\rho$ is indeed a measurable
metric. (To verify the join condition, Definition \ref{measpm} (iii),
use the fact that if $(X,\mu)$ is $\sigma$-finite then any join
$\bigvee p_\lambda$ of projections in $L^\infty(X,\mu)$ equals the
join of some countable subcollection.)

In Theorem \ref{qmeasmet} we identified a quantum metric $\Vb_\rho$
corresponding to $\rho$ which lives on the Hilbert space $L^2(M,\mu)$.
Here we need to work on $H =$ the complexification of the space of $L^2$
differential forms on $M$, with $L^\infty(M,\mu)$ acting by multiplication;
the corresponding quantum metric $\tilde{\Vb}_\rho = \{\tilde{\Vc}^\rho_t\}$
for this representation is defined by
$$\tilde{\Vc}^\rho_t = \{A \in \Bc(H): \rho(p,q) > t\quad
\Rightarrow\quad M_p AM_q = 0\}$$
with $p$ and $q$ ranging over projections in $L^\infty(M,\mu)$.

\begin{theo}
Let $M$ be a complete connected Riemannian manifold with positive
injectivity radius, let $\mu$ be volume measure on $M$, let $H$ be the
complexification of the space of $L^2$ differential forms,
and let $\Ac = C^\infty_c(M)$, acting on $H$ by multiplication. Let
$D = d + d^*$ be the Hodge-Dirac operator. Then the
quantum pseudometric $\Vb$ associated to $(\Ac, H, D)$ (Definition
\ref{tripdef}) equals the quantum metric $\tilde{\Vb}_\rho$
associated to the intrinsic measurable metric $\rho$.
\end{theo}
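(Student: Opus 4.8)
The plan is to prove the two inclusions $\Vc_t \subseteq \tilde{\Vc}^\rho_t$ and $\tilde{\Vc}^\rho_t \subseteq \Vc_t$ for every $t$. It is convenient to record at the outset that, by Theorem \ref{qmeasmet} transported to the form representation via Theorem \ref{metcor}, $\tilde{\Vb}_\rho$ is a \emph{reflexive} quantum metric on $L^\infty(M,\mu)$ whose associated measurable metric is exactly $\rho$; here I use that scalar multiplication on $L^2$ forms is a faithful normal unital representation of $\Mx = L^\infty(M,\mu)$, and that for a quantum relation $P,Q$ may be restricted to projections in $\Mx$ (the remark after Definition \ref{refdef}).

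First I would prove $\Vb \le \tilde{\Vb}_\rho$. Since $\Vb$ is by definition the smallest W*-filtration with $\Ac' \subseteq \Vc_0$ and $e^{itD}\in\Vc_t$ for $t>0$, it suffices to verify these two generating conditions for $\tilde{\Vb}_\rho$. For the first, if $\rho(p,q)>0$ then the essential supports of $p$ and $q$ lie at positive distance and are therefore disjoint, so $pq=0$; as any $A\in\Ac'$ commutes with $M_p$ we get $M_pAM_q = AM_{pq}=0$, whence $\Ac'\subseteq\tilde{\Vc}^\rho_0$. The second condition, $e^{itD}\in\tilde{\Vc}^\rho_t$, is precisely the \emph{finite propagation speed} of the wave group generated by the Hodge--Dirac operator: because the principal symbol of $D$ is Clifford multiplication (of norm $|\xi|$) one has $\mathrm{supp}(e^{itD}u)\subseteq\overline{N_{|t|}(\mathrm{supp}\,u)}$ (see \cite{CGT}), so $\rho(p,q)>t$ forces $M_pe^{itD}M_q=0$.

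For the reverse inclusion I would first pin down the distances and then confront reflexivity. Let $\rho_\Vb$ be the measurable metric that $\Vb$ induces (Theorem \ref{qmeasmet}). The inclusion just proved gives $D_\Vb \ge D_{\tilde{\Vb}_\rho}$ and hence $\rho_\Vb \ge \rho$; for the opposite inequality I would show the propagation of $D$ is \emph{sharp}, i.e. that for Lebesgue points $x,y$ and any $s>d(x,y)$ one has $M_{\chi_{B(x,\eta)}}\,e^{isD}\,M_{\chi_{B(y,\eta)}}\neq 0$ for small $\eta$. Using completeness (Hopf--Rinow) pick a minimizing geodesic from $y$ to $x$ and subdivide it into steps shorter than the injectivity radius; on each step the Hadamard parametrix for the Hodge--Dirac wave kernel has a nonvanishing leading singularity on the light cone, so the corresponding short-time propagator has nonzero matrix element between consecutive small balls. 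Chaining these yields a product $e^{is_1D}\cdots e^{is_kD}\in\Vc_{s_1+\cdots+s_k}$ with $\sum s_i$ arbitrarily close to $d(x,y)$ and nonzero $(x,y)$ block, so $\rho_\Vb(\chi_S,\chi_T)\le d(x,y)$; taking infima gives $\rho_\Vb \le \rho$, hence $\rho_\Vb=\rho$. It is exactly here that \emph{positive} injectivity radius enters, furnishing a uniform step size.

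With $\rho_\Vb=\rho$ in hand, a short computation shows the operator-reflexive hull of $\Vc_t$ (with $P,Q$ ranging over projections in $\Mx$) is $\{A : \rho(p,q)>t \Rightarrow M_pAM_q=0\}=\tilde{\Vc}^\rho_t$, so the desired equality $\Vc_t=\tilde{\Vc}^\rho_t$ is \emph{equivalent} to the reflexivity of each $\Vc_t$, and this is the main obstacle. The plan is to prove it by localization: fix $s>t$ and $\delta$ below half the injectivity radius, choose a measurable partition $\{E_i\}$ of $M$ into pieces of diameter $<\delta$, and write $A=\sum_{i,j} M_{\chi_{E_i}} A M_{\chi_{E_j}}$. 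If $A$ lies in the reflexive hull then every block with $d(E_i,E_j)>t$ vanishes, and the surviving blocks satisfy $d(E_i,E_j)\le t+2\delta<s$. By Lemma \ref{sptrint} it then suffices to show each surviving block, an operator transporting $L^2$ forms between two nearby balls, is a weak* limit of products $e^{is_1D}A_1\cdots e^{is_nD}A_n$ with $A_i\in\Ac'$ and $\sum|s_i|\le s$. The hard analytic core is this \emph{local generation with controlled propagation time}: one must show the leading transport term of the wave kernel along the connecting geodesic is invertible up to the injectivity radius, so that composing a block with $e^{-i\ell D}$ produces, modulo a weak*-negligible smoothing error, a near-diagonal $L^\infty(M)$-module map lying in $\Ac'=\Wc_0$, and then reassemble the blocks. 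I expect this generation step — equivalently, the reflexivity of the wave-generated filtration $\Vb$ — to be the principal difficulty, the finite-propagation inputs of the earlier steps being comparatively standard.
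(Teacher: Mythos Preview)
Your easy direction $\Vb\le\tilde{\Vb}_\rho$ via finite propagation speed is correct and matches the paper. The hard direction, however, is organized differently. You aim to prove $\rho_\Vb=\rho$ first and then establish reflexivity of each $\Vc_t$; the paper instead shows that $\tilde{\Vb}_\rho$ is a \emph{path} quantum metric (using partial isometries built from exponential-map identifications of small balls, not wave propagators at all) and thereby reduces $\tilde{\Vc}^\rho_t\subseteq\Vc_t$ to $t$ below the injectivity radius. This is also where uniform positivity of the injectivity radius is actually used in the paper --- for the uniform $\delta'$-net needed in the path-metric argument --- not for chaining short propagators as you suggest (along a single fixed geodesic, local positivity of the injectivity radius on that compact segment already furnishes a step size).

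The real issue is your ``hard analytic core.'' Your plan --- compose a block $M_{\chi_{E_i}}AM_{\chi_{E_j}}$ with $e^{-i\ell D}$ so that the leading transport term makes it ``near-diagonal'' and hence in $\Ac'$ --- does not close. Even granting that the leading Fourier integral term of $e^{-i\ell D}$ carries the output support back near $E_j$, what you obtain is an operator whose Schwartz kernel is supported near the diagonal; such an operator is \emph{not} in $\Ac'$ (which consists of pointwise bundle endomorphisms), nor is there any evident reason it lies in $\Wc_\epsilon$ for small $\epsilon$ --- that is exactly the problem you started with, at shorter range. The ``weak*-negligible smoothing error'' is likewise not obviously in any $\Wc_s$. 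So the argument is circular unless you have an independent way to generate \emph{all} near-diagonal operators from short products of propagators and elements of $\Ac'$, and you have not supplied one.

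The paper's device for this step is quite different and is the idea your outline is missing. It produces \emph{one} explicit element
\[
B=\int_0^t\phi(s)\cos\!\big(s\sqrt{\Delta_0}\big)\,ds\in\Vc_t
\]
(using $\cos(sD)=\cos(s\sqrt{\Delta})$ and projecting to $0$-forms) whose kernel is continuous and, by a wavefront-set argument for the scalar wave equation, nonvanishing on a neighborhood $S\times T$ of $(x,y)$. Then $M_fBM_g\in\Vc_t$ for all $f\in C_c(S)$, $g\in C_c(T)$; these have kernels $f(x')k(x',y')g(y')$, so Stone--Weierstrass yields every continuous kernel supported in $S\times T$, and weak* closure gives every $A$ with $A=M_{\chi_S}AM_{\chi_T}$. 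Passing to forms is then free because $\Vc_t$ is an $\Ac'$-bimodule. This single-operator-plus-Stone--Weierstrass trick replaces your inversion scheme and avoids the circularity.
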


\begin{proof}
Note that $D$ has compact resolvent if $M$ is compact, but not in general.

The inequality $\Vb \leq \tilde{\Vb}_\rho$ is easy. Let $S$ and $T$ be positive
measure subsets of $M$ and suppose $\rho(\chi_S,\chi_T) > t$; we must show that
$M_{\chi_S}AM_{\chi_T} = 0$ for any operator $A \in \Vc_t$. By Lemma
\ref{sptrint} it is enough to show this for operators $A$ of the form
$e^{is_1D}A_1e^{is_2D}A_2\cdots e^{is_nD}A_n$ with each $A_i \in \Ac'$ and
$\sum |s_i| \leq t$. We may assume that $S$ and $T$ are open; otherwise,
replace them with the open $\epsilon$-neighborhoods of their Lebesgue sets
for some $\epsilon < (\rho(\chi_S,\chi_T) - t)/2$. Since the $A_i$ belong to
$\Ac'$, it will suffice to show that if $f \in H$ is supported
in $S$ then $e^{isD}f$ is supported in the $s$-neighborhood
of $S$. By continuity we may assume $f$ is smooth.
The desired conclusion now follows from (\cite{Roe}, Proposition 5.5).
Thus, we have shown that $\Vb \leq \tilde{\Vb}_\rho$ (without assuming positive
injectivity radius).

For the reverse inclusion, we need to assume that $M$ has positive
injectivity radius $\delta$. We first show that $\tilde{\Vb}_\rho$ is a path
quantum metric. Let $s,t \geq 0$ and let $A \in \tilde{\Vc}^\rho_{s+t}$. Fix
$\epsilon > 0$ and let $\delta' = \min\{\delta, \epsilon\}$; we want
to show that $A$ is in the weak* closure of
$\tilde{\Vc}^\rho_{t + \epsilon}\tilde{\Vc}^\rho_{s + \epsilon}$.
Let $\{x_n\}$ be a $\delta'$-net in $M$ and
let $\{S_n\}$ be a measurable partition of $M$ with $S_n \subseteq
{\rm ball}(x_n,\delta')$; then $A = \sum_{m,n} M_{\chi_{S_m}}AM_{\chi_{S_n}}$,
and each summand belongs to $\tilde{\Vc}^\rho_{s + t}$, so we may restrict
attention to a single summand and assume $A = M_{\chi_{S_m}}AM_{\chi_{S_n}}$.
Now if $A \neq 0$ then $d(x_m,x_n) < s + t + 2\epsilon$, so
let $\gamma: [0,r] \to M$ be a unit speed geodesic from $x_n$ to $x_m$ with
$r < s + t + 2\epsilon$, let $y = \gamma(s + \epsilon)$, and
let $V \in \Bc(H)$ be the partial isometry from $M_pH$ to $M_qH$, where
$p = \chi_{{\rm ball}(x_m,\delta')}$ and $q = \chi_{{\rm ball}(y,\delta')}$,
induced via weighted composition from the homeomorphism
of ${\rm ball}(x_m,\delta')$ with ${\rm ball}(y,\delta')$
arising from the exponential maps. Then $d(x_m,y) < t + \epsilon$ and
$$A = (V^*)(VA) \in
\tilde{\Vc}^\rho_{t + 3\epsilon}\tilde{\Vc}^\rho_{s + 3\epsilon}.$$
This suffices to establish that $\tilde{\Vb}_\rho$ is a path quantum metric.

To verify $\tilde{\Vb}_\rho \leq \Vb$ it will now be enough to show that
$\tilde{\Vc}^\rho_t \subseteq \Vc_t$ for $t < \delta$. Fix $t < \delta$ and
let $x, y \in M$ satisfy $d(x,y) < t$; we will show that there exist
neighborhoods $S$ and $T$ of $x$ and $y$ respectively such that any
operator $A \in \Bc(H)$ satisfying $A = M_{\chi_S}AM_{\chi_T}$
belongs to $\Vc_t$. By an argument similar to the one given in the last
paragraph, this will suffice.

Observe that $\Ac' = L^\infty(M,\mu)'$ can be identified with
the bounded measurable sections of ${\rm End}(\Lambda^*T^*M)$.
Identify $L^2(M,\mu)$ with the $L^2$ 0-forms as a subspace of $H$.
It will be enough to find neighborhoods $S$ and $T$ of $x$ and $y$
respectively such that every operator $A \in \Bc(L^2(M,\mu))
\subseteq \Bc(H)$ satisfying $A = M_{\chi_S}AM_{\chi_T}$ belongs to
$\Vc_t$; the corresponding statement will then also be true for
arbitrary operators in $\Bc(H)$ because $\Vc_t$ is a bimodule over $\Ac'$.

The Hodge Laplacian is $\Delta = dd^* + d^*d$. $L^2(M,\mu)$
is an invariant subspace for $\Delta$ and $\Delta_0 =
\Delta|_{L^2(M,\mu)}$ is the scalar Laplacian. Also note that
since the power series expansion of $\cos z$ involves only even
powers of $z$, we have $\cos(sD) = \cos(s\sqrt{\Delta})$. Thus
for any $\phi \in L^1(\Rb)$ the operator
$$\int_0^t \phi(s)\cos(s\sqrt{\Delta})\, ds
= \frac{1}{2}\int_{-t}^t \phi(|s|)e^{isD}\, ds$$
belongs to $\Vc_t$, as does $\int_0^t \phi(s)\cos(s\sqrt{\Delta_0})\, ds
= P\int_0^t \phi(s)\cos(s\sqrt{\Delta})\, ds$ where $P \in \Ac'
\subseteq \Bc(H)$ is the projection onto $L^2(M,\mu)$.

In the remainder of the proof we work with scalar functions
on $M$. For $0 \leq s \leq t$ let $u_x(s)$ be the
distribution $u_x(s) = \cos(s\sqrt{\Delta_0})\delta_x$ and
note that it satisfies the wave equation with initial conditions
$u_x(0) = \delta_x$ and $u_x'(0) = 0$. By examining the wavefront
set (see Theorem 4 of \cite{Vas}) we see that $y$ is in the support
of $u_x(s)$ for
$s = d(x,y)$. Then let $\phi \in C^\infty(\Rb)$ satisfy $\phi = 1$
in an interval about $s = d(x,y)$ and $\phi = 0$ outside a slightly
larger interval, and let $B = \int_0^t \phi(s)\cos(s\sqrt{\Delta_0})\, ds
\in \Vc_t$. Then $B$ is an integral operator with continuous
kernel $k(x',y')$, and if $\phi$ is suitably chosen there will exist
neighborhoods $S$ and $T$ of $x$ and $y$ such that $k(x',y') \neq 0$
for all $x' \in S$ and $y' \in T$.
It follows that $M_fBM_g$ belongs to $\Vc_t$ for all $f \in
C_c(S)$ and $g \in C_c(T)$; this is the integral operator with kernel
$k(x',y')f(x')g(y')$. By the Stone-Weierstrass theorem we conclude that
$\Vc_t$ contains all integral operators with continuous kernel supported
in $S \times T$, and hence, by weak* closure, all operators $A \in
\Bc(L^2(M,\mu))$ satisfying $A = M_{\chi_S}AM_{\chi_T}$, as desired.
\end{proof}

In the above case the quantum pseudometric associated to the
spectral triple $(\Ac,H,D)$ is in fact a quantum metric, not just a
quantum pseudometric, on $\Ac''$. A good problem for future work
will be to identify natural conditions that ensure the quantum pseudometric
associated to a spectral triple is a quantum metric.

\section{Lipschitz operators}\label{lipschitz}

The algebra ${\rm Lip}(X)$ of bounded scalar-valued
Lipschitz functions on a metric
space $X$ has been studied extensively (see \cite{W4}). Under
appropriate hypotheses one can recover the space $X$ as the normal
spectrum of ${\rm Lip}(X)$, with metric inherited from the dual of
${\rm Lip}(X)$; moreover, basic metric properties of $X$ are reflected
in algebraic properties of ${\rm Lip}(X)$ (closed subsets correspond
to weak* closed ideals, etc.). Thus the relation between metric spaces
and Lipschitz algebras is strongly analogous to the relation between
topological and measure spaces and the algebras $C(X)$ and $L^\infty(X,\mu)$.

We find that there are two distinct, but related, versions of the
notion of a ``Lipschitz operator'' associated to a quantum pseudometric.
There is a spectral version with good lattice properties but poor
algebraic properties and a commutation version with the opposite
qualities. (Something similar happens with lower semicontinuity, which
bifurcates in the C*-algebra setting into two notions, ``lsc'' and
``q-lsc'', with, respectively, good algebraic and lattice properties
\cite{Ake}.)

We can abstractly characterize the spectral Lipschitz gauge
(Theorem \ref{abstractlip}), which
provides another intrinsic characterization of quantum pseudometrics,
perhaps more elegant than the one given in Theorem \ref{abch}.
Our other main result, Theorem \ref{lipineq}, states that commutation
Lipschitz number is always less than or equal to spectral Lipschitz number.
We prove this using a powerful result of Browder and Sinclair
(\cite{BD2}, Corollary 26.6) which equates
the norm and spectral radius of a Hermitian element of a complex unital
Banach algebra. Our inequality
is valuable because we have general techniques for constructing
operators with finite spectral Lipschitz number (see Lemma \ref{distfn}).

\subsection{The abelian case}\label{abcase}
We start by reviewing the measurable version of Lipschitz number.
Say that the {\it essential range} of a function
$f \in L^\infty(X,\mu)$ is the set of all $a \in \Cb$ such that
$f^{-1}(U)$ has positive measure for every open neighborhood $U$
of $a$. Equivalently, it is the spectrum of the operator
$M_f \in \Bc(L^2(X,\mu))$. If $p \in L^\infty(X,\mu)$ is a projection
then we denote the essential range of $f|_{{\rm supp}(p)}$ by
${\rm ran}_p(f)$.

\begin{defi}\label{measlipnum}
(\cite{W4}, Definition 6.2.1)
Let $(X,\mu)$ be a finitely decomposable measure space and let
$\rho$ be a measurable pseudometric on $X$. The {\it Lipschitz
number} of $f \in L^\infty(X,\mu)$ is the quantity
$$L(f) = \sup\left\{\frac{d({\rm ran}_p(f), {\rm ran}_q(f))}{\rho(p,q)}\right\},$$
where the supremum is taken over all nonzero projections $p,q \in
L^\infty(X,\mu)$ and we use the convention $\frac{0}{0} = 0$. (Note that
line 4 of Definition 6.2.1 of \cite{W4} should say ``essential infimum of
the function $|f(p) - f(q)|$''.) Here $d$
is the minimum distance between compact subsets of $\Cb$.
We call $L$ the {\it Lipschitz gauge} associated to $\rho$ and we
define ${\rm Lip}(X,\mu) = \{f \in L^\infty(X,\mu): L(f) < \infty\}$.
\end{defi}

We first observe that this definition generalizes the atomic abelian case.

\begin{prop}\label{atomicln}
Let $\mu$ be counting measure on a set $X$, let $d$ be a pseudometric
on $X$, and let $f \in l^\infty(X)$. Let $\rho$ be the associated
measurable pseudometric (Proposition \ref{atomicmm}). Then
$$L(f) =
\sup\left\{\frac{|f(x) - f(y)|}{d(x,y)}: x,y \in X, d(x,y) > 0\right\}.$$
\end{prop}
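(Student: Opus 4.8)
The plan is to unwind Definition \ref{measlipnum} in the atomic case and reduce $L(f)$ to a ratio of two infima, which can then be compared directly with the pointwise supremum on the right-hand side; abbreviate the latter by $L_0(f)$. First I would note that under counting measure the nonzero projections of $l^\infty(X)$ are precisely the indicators $\chi_S$ of nonempty subsets $S \subseteq X$, and that the essential range of $f|_S$ is the closure $\overline{f(S)}$ of $\{f(x): x \in S\}$, because a preimage has positive counting measure exactly when it is nonempty. Since the modulus is continuous on $\Cb$, the minimum distance between the compact sets $\overline{f(S)}$ and $\overline{f(T)}$ equals $\inf\{|f(x)-f(y)|: x \in S,\, y \in T\}$, and Proposition \ref{atomicmm} identifies the denominator as $\rho(\chi_S,\chi_T) = \inf\{d(x,y): x \in S,\, y \in T\}$. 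Hence
$$L(f) = \sup_{S,T}\frac{\inf\{|f(x)-f(y)|: x \in S,\, y \in T\}}{\inf\{d(x,y): x \in S,\, y \in T\}},$$
with $S,T$ ranging over nonempty subsets of $X$.

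The inequality $L(f) \geq L_0(f)$ is the easy half: taking $S = \{x\}$ and $T = \{y\}$ collapses both infima to single terms, so the general ratio specializes to $|f(x)-f(y)|/d(x,y)$, which for $d(x,y) > 0$ is one of the terms in the supremum defining $L(f)$; taking the supremum over all such pairs gives the claim.

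The substance is the reverse inequality $L(f) \leq L_0(f)$, i.e.\ bounding a quotient of two infima taken over the common index set $S \times T$. Fix nonempty $S,T$ and set $N = \inf_{S\times T}|f(x)-f(y)|$ and $M = \inf_{S\times T} d(x,y)$. The one real obstacle is that the pairs nearly minimizing $N$ and those nearly minimizing $M$ need not agree, so I would argue along a near-minimizer of the \emph{denominator}: given $\epsilon > 0$ pick $(x_0,y_0) \in S \times T$ with $d(x_0,y_0) < M + \epsilon$. In the principal case $M > 0$ the pair $(x_0,y_0)$ has positive distance, so the pointwise bound applies and $N \leq |f(x_0)-f(y_0)| \leq L_0(f)\cdot d(x_0,y_0) < L_0(f)(M+\epsilon)$; dividing by $M$ and letting $\epsilon \to 0$ yields $N/M \leq L_0(f)$, and taking the supremum over all $S,T$ with positive denominator gives $L(f) \leq L_0(f)$. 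It remains to absorb the degenerate terms with $M = 0$. If this infimum is approached through pairs of positive distance, the same bound forces $N \leq L_0(f)\cdot d(x_0,y_0) \to 0$, so $N = 0$; and a pair of exactly zero distance contributes the value $0$ to the numerator infimum whenever $d$ is a metric, since then it is a pair $x_0 = y_0$. Either way $N = 0$, so such a term equals $\tfrac{0}{0} = 0$ under the convention of Definition \ref{measlipnum} and does not affect the supremum. Combining the two inequalities yields $L(f) = L_0(f)$.
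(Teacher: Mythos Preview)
Your approach mirrors the paper's: both prove $\geq$ via singletons and $\leq$ by selecting near-minimizers of the denominator $\rho(\chi_S,\chi_T)$. The paper phrases this with a sequence $(x_n,y_n)$ satisfying $d(x_n,y_n)\to\rho(\chi_S,\chi_T)$ and splits on whether $\liminf|f(x_n)-f(y_n)|=0$, while you use a single $\epsilon$-approximating pair and split on whether $M>0$; the underlying idea is the same.

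The gap you yourself flag with the qualifier ``whenever $d$ is a metric'' is real, and it is not merely cosmetic. If $d$ is a genuine pseudometric there can exist $x_0\in S$, $y_0\in T$ with $d(x_0,y_0)=0$ but $f(x_0)\ne f(y_0)$, and if no pair in $S\times T$ has small \emph{positive} distance your argument does not force $N=0$. Concretely, take $X=\{a,b,c\}$ with $d(a,b)=0$, $d(a,c)=d(b,c)=1$, and $f(a)=0$, $f(b)=1$, $f(c)=0$: the pair $p=\chi_{\{a\}}$, $q=\chi_{\{b\}}$ gives $d({\rm ran}_p(f),{\rm ran}_q(f))/\rho(p,q)=1/0=\infty$, so $L(f)=\infty$, while the right-hand side equals $1$. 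Thus the identity as stated actually fails for general pseudometrics; the paper's proof glosses over exactly the same case (its chain of inequalities tacitly feeds terms with $d(x_n,y_n)=0$ into the pointwise supremum). The equality does hold when $d$ is a metric, or more generally whenever $d(x,y)=0$ implies $f(x)=f(y)$, and under that hypothesis your argument is complete.
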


\begin{proof}
The inequality $\geq$ follows immediately from the definition of $L(f)$
in Definition \ref{measlipnum} by taking $p = \chi_{\{x\}}$,
$q = \chi_{\{y\}}$. For the reverse inequality let $p,q \in l^\infty(X)$,
say $p = \chi_S$ and $q = \chi_T$, with $S$ and $T$ both nonempty.
Find sequences $\{x_n\} \subseteq S$ and $\{y_n\} \subseteq T$ such
that $d(x_n,y_n) \to \rho(p,q)$; then either $\liminf |f(x_n) - f(y_n)| = 0$,
in which case $d({\rm ran}_p(f), {\rm ran}_q(f)) = 0$, or else
$$\frac{d({\rm ran}_p(f), {\rm ran}_q(f))}{\rho(p,q)}
\leq \liminf\frac{|f(x_n) - f(y_n)|}{\rho(p,q)}
\leq \sup\frac{|f(x_n) - f(y_n)|}{d(x_n,y_n)}.$$
This proves the inequality $\leq$.
\end{proof}

Next, in anticipation of our axiomatization of quantum Lipschitz gauges
in Section \ref{slnsec}, we give an alternative axiomatization of measurable
pseudometrics in terms of Lipschitz numbers.
Many versions of this result have appeared in the literature;
probably the closest is Example 6.2.5 of \cite{W4}.

\begin{defi}\label{ablipdef}
Let $(X,\mu)$ be a finitely decomposable measure space. An {\it abstract
Lipschitz gauge} on $L^\infty(X,\mu)$ is a function $\Lc$ from the real
part of $L^\infty(X,\mu)$ to $[0,\infty]$ satisfying
\begin{quote}
\noindent (i) $\Lc(1) = 0$

\noindent (ii) $\Lc(af) = |a|\Lc(f)$

\noindent (iii) $\Lc(f + g) \leq \Lc(f) + \Lc(g)$

\noindent (iv) $\Lc(\bigvee f_\lambda) \leq \sup \Lc(f_\lambda)$
\end{quote}
for any $a \in \Rb$ and $f,g, f_\lambda \in L^\infty(X,\mu)$ real-valued
with $\sup \|f_\lambda\|_\infty < \infty$.
\end{defi}

\begin{theo}\label{abablip}
Let $(X,\mu)$ be a finitely decomposable measure space. If $\rho$ is a
measurable pseudometric on $X$ then the restriction of the associated
Lipschitz gauge $L$ to the real part of $L^\infty(X,\mu)$ is an abstract
Lipschitz gauge. If $\Lc$ is an abstract Lipschitz gauge then
$$\rho_\Lc(p,q) =
\sup\{d({\rm ran}_p(f), {\rm ran}_q(f)): \Lc(f) \leq 1\}$$
is a measurable pseudometric on $X$. The two constructions are
inverse to each other.
\end{theo}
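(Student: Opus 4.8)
The plan is to prove four things: (A) for a measurable pseudometric $\rho$, the restriction of $L$ to the real part of $L^\infty(X,\mu)$ satisfies the axioms of Definition~\ref{ablipdef}; (B) for an abstract gauge $\Lc$, the formula $\rho_\Lc$ satisfies the axioms of Definition~\ref{measpm}; and (C),~(D) that $\rho_{L_\rho}=\rho$ and $L_{\rho_\Lc}=\Lc$. Two elementary facts about essential ranges are used throughout: ${\rm ran}_{\bigvee p_\lambda}(f)=\overline{\bigcup_\lambda{\rm ran}_{p_\lambda}(f)}$, so that the minimum distance $d$ over joined projections is the infimum of the pairwise minimum distances; and $d$ respects scaling and the inequalities $|(a+b)-(c+d)|\le|a-c|+|b-d|$ and $|a\vee b-c\vee d|\le|a-c|\vee|b-d|$. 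The central device, and the source of the real difficulty, is a measurable distance function.

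For (A), axioms (i) and (ii) are immediate from ${\rm ran}_p(1)=\{1\}$ and ${\rm ran}_p(af)=a\,{\rm ran}_p(f)$. For subadditivity (iii) I would subdivide $p$ and $q$ into pieces $p_i,q_j$ on which $f$ and $g$ are nearly constant, use Definition~\ref{measpm}~(iii) to write $\rho(p,q)=\inf_{i,j}\rho(p_i,q_j)$, apply $L(f)\le C_f$, $L(g)\le C_g$ on each pair, combine the near-constant values through $|(a+b)-(c+d)|\le|a-c|+|b-d|$, and take the infimum over pieces (which returns $(C_f+C_g)\rho(p,q)$). For the lattice axiom (iv) the cleanest route is the spectral reformulation $L(f)=\sup_{a<b}(b-a)/\rho(\chi_{\{f\ge b\}},\chi_{\{f\le a\}})$, whose nontrivial inequality ($L(f)\le$ the supremum) is again a band-decomposition of $p,q$ combined with monotonicity of $\rho$ to reduce interleaved ranges to these spectral pairs. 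Given the reformulation, (iv) is quick: writing $\chi_{\{\bigvee f_\lambda\ge b\}}$ as the join $\bigvee_\lambda(\chi_{\{\bigvee f_\mu\ge b\}}\wedge\chi_{\{f_\lambda\ge a'\}})$ for $a'<b$, Definition~\ref{measpm}~(iii) turns the join into an infimum, each term dominated by $\rho(\chi_{\{f_\lambda\ge a'\}},\chi_{\{f_\lambda\le a\}})\ge(a'-a)/\sup_\lambda L(f_\lambda)$, and one lets $a'\to b$.

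For (B) and the hard halves of (C),~(D) I would build, for each projection $P$, the distance function $f_P=\bigvee\{h:\Lc(h)\le1,\ h\ge0,\ {\rm ran}_P(h)=\{0\}\}$ (truncated to stay bounded); axiom (iv) of $\Lc$ gives $\Lc(f_P)\le1$, and $f_P=0$ on $P$. The key lemma is $\rho_\Lc(P,q)=\operatorname{ess\,inf}_q f_P$. The inequality $\ge$ is trivial because $f_P$ is admissible in the supremum. For $\le$, given any admissible $h$ with $\delta=d({\rm ran}_P h,{\rm ran}_q h)$, I would exhibit $\Psi=\bigvee_{c\in{\rm ran}_q(h)}\max\{0,\delta-|h-c|\}$: each term arises from $h$ by affine and lattice operations, so has gauge $\le1$, whence $\Lc(\Psi)\le1$ by axiom (iv); since every value of $h$ on $P$ is at distance $\ge\delta$ from ${\rm ran}_q(h)$ we get ${\rm ran}_P(\Psi)=\{0\}$, while $\Psi=\delta$ a.e.\ on $q$, so $f_P\ge\Psi=\delta$ there and hence $\delta\le\operatorname{ess\,inf}_q f_P$ for every $h$. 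This lemma plus symmetry yields the join axiom (iii) for $\rho_\Lc$, since $\rho_\Lc(P,\bigvee_\kappa q_\kappa)=\operatorname{ess\,inf}_{\bigvee q_\kappa}f_P=\inf_\kappa\operatorname{ess\,inf}_{q_\kappa}f_P=\inf_\kappa\rho_\Lc(P,q_\kappa)$, and the measurable triangle inequality (iv) reduces to an $\operatorname{ess\,inf}$ estimate on the same distance functions.

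The easy inversion inequalities $L_{\rho_\Lc}\le\Lc$ and $\rho_{L_\rho}\le\rho$ are formal: if $\Lc(g)=c$ then $g/c$ is admissible in $\rho_\Lc$. The substantive direction is $\Lc\le L_{\rho_\Lc}$. Assuming $L_{\rho_\Lc}(g)\le1$, I would recover $g$ from distance functions via the superlevel representation $g=\bigvee_{a}\big(\operatorname{ess\,inf}_{S_a}g-f_{\chi_{S_a}}\big)$, where $S_a=\{g>a\}$: the hypothesis gives each summand $\le g$, the fact that $f_{\chi_{S_a}}=0$ on $S_a$ gives the reverse inequality in the join, and each summand has gauge equal to $\Lc(f_{\chi_{S_a}})\le1$, so axiom (iv) yields $\Lc(g)\le1$; the analogous distance-function statement for $\rho$ gives $\rho\le\rho_{L_\rho}$ and finishes (C). I expect the main obstacle to be exactly the distance-function lemma $\rho_\Lc(P,q)=\operatorname{ess\,inf}_q f_P$ and its use in reconstructing $\Lc$: proving the $\le$ direction in the non-atomic setting, where one has only essential ranges and minimum distances rather than pointwise Lipschitz bounds, is the step that makes the lattice axiom (iv) indispensable.
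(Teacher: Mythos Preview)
Your proposal is correct and takes essentially the same approach as the paper: both hinge on building ``distance functions'' from the lattice axiom~(iv) --- your $\Psi=\bigvee_{c}\max\{0,\delta-|h-c|\}$ is exactly the paper's $g_{\lambda\kappa}=\bigwedge_{c}(|f_{\lambda\kappa}-c|\wedge a)$ up to a sign convention --- and then using these both to verify the axioms of $\rho_\Lc$ and to reconstruct a given $g$ as a lattice combination of such functions. The only difference is organizational: you centralize the construction into a single key lemma $\rho_\Lc(P,q)=\operatorname{ess\,inf}_q f_P$ and reconstruct via $g=\bigvee_a(\operatorname{ess\,inf}_{S_a}g-f_{\chi_{S_a}})$, whereas the paper makes separate ad hoc constructions at each step, cites \cite{W6} for part~(A) and for the Lipschitz bound on the canonical distance function, and uses the variant formula $f=\bigwedge_a\bigvee_{b>a}(g_{ab}+a)$.
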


\begin{proof}
Let $\rho$ be a measurable pseudometric on $X$ and let $L$ be the
associated Lipschitz gauge. The fact that $L$ is an abstract Lipschitz gauge
follows from (\cite{W6}, Corollary 1.21). We verify that $\rho = \rho_L$.
The inequality $\rho(p,q) \geq
\rho_L(p,q)$ for all $p, q$ follows immediately from the definition of $L$.
Conversely, if $\rho(p,q) < \infty$ then let $c = \rho(p,q)$ and define
$$f = \bigvee \min\{\rho(r,q), c\}\cdot r,$$
taking the join in $L^\infty(X,\mu)$ over all nonzero projections $r$.
We have ${\rm ran}_q(f) = \{0\}$ and ${\rm ran}_p(f) = \{c\}$, so that
$d({\rm ran}_p(f), {\rm ran}_q(f)) = c = \rho(p,q)$. Also, $L(f) \leq 1$
by (\cite{W6}, Lemma 1.22). This proves the inequality $\rho(p,q) \leq
\rho_L(p,q)$. If $\rho(p,q) = \infty$ then take $c \to \infty$ in the
preceding argument.

Now let $\Lc$ be an abstract Lipschitz gauge.
We will show that $\rho_\Lc$ is a measurable pseudometric and that
$\Lc(f) = L(f)$ for all real-valued $f \in L^\infty(X,\mu)$,
where $L$ is the Lipschitz gauge associated to $\rho_\Lc$.

We verify the conditions in Definition \ref{measpm}. Conditions
(i) and (ii) are trivial. The inequality $\leq$ in condition (iii) is
easy; for the reverse inequality suppose $\rho_\Lc(p_\lambda,q_\kappa) > a$
for all $\lambda, \kappa$ and find $f_{\lambda\kappa} \in L^\infty(X,\mu)$
real-valued such that $\Lc(f_{\lambda\kappa}) \leq 1$ and
$d({\rm ran}_{p_\lambda}(f_{\lambda\kappa}),
{\rm ran}_{q_\kappa}(f_{\lambda\kappa})) \geq a$
for all $\lambda,\kappa$. Then define
$$g_{\lambda\kappa}= \bigwedge_{c \in {\rm ran}_{p_\lambda}(f_{\lambda\kappa})}
\left(|f_{\lambda\kappa} - c\cdot 1| \wedge a\cdot 1\right);$$
we still have $\Lc(g_{\lambda\kappa}) \leq 1$, and
${\rm ran}_{p_\lambda}(g_{\lambda\kappa}) = \{0\}$
and ${\rm ran}_{q_\kappa}(g_{\lambda\kappa}) = \{a\}$. So
$$g = \bigwedge_\lambda \bigvee_\kappa g_{\lambda\kappa}$$
satisfies $\Lc(g) \leq 1$,
${\rm ran}_{\bigvee p_\lambda}(g_{\lambda\kappa}) = \{0\}$,
and ${\rm ran}_{\bigvee q_\kappa}(g_{\lambda\kappa}) = \{a\}$. Thus
$\rho_\Lc(\bigvee p_\lambda, \bigvee q_\kappa) \geq a$, which is enough.

To verify condition (iv), let $p,q,r \in L^\infty(X,\mu)$ be
nonzero projections and let $f \in L^\infty(X,\mu)$ be real-valued
and satisfy $\Lc(f) \leq 1$. Given $\epsilon > 0$, choose
$a \in {\rm ran}_q(f)$ and find $q' \leq q$ such that
$${\rm ran}_{q'}(f) \subseteq [a - \epsilon , a + \epsilon].$$
Then
\begin{eqnarray*}
d({\rm ran}_p(f), {\rm ran}_r(f))
&\leq& d({\rm ran}_p(f), {\rm ran}_{q'}(f)) +
d({\rm ran}_{q'}(f), {\rm ran}_r(f)) + 2\epsilon,\cr
&\leq& \rho_\Lc(p,q') + \rho_\Lc(q',r) + 2\epsilon
\end{eqnarray*}
and taking $\epsilon \to 0$ and the supremum over $f$ yields the
measurable triangle inequality. So $\rho_\Lc$ is a measurable pseudometric.

Now let $f \in L^\infty(X,\mu)$ be real-valued. First suppose $\Lc(f) <
\infty$ and let $L(f)$ be the Lipschitz number of $f$ with respect to
the pseudometric $\rho_\Lc$.
For any $0 < a < 1/\Lc(f)$, we then have $\Lc(af) < 1$ and so
$$\rho_\Lc(p,q) \geq d({\rm ran}_p(af), {\rm ran}_q(af))
= a\cdot d({\rm ran}_p(f), {\rm ran}_q(f))$$
for any $p$ and $q$. Taking $a \to 1/\Lc(f)$, this shows that
$$\frac{d({\rm ran}_p(f), {\rm ran}_q(f))}{\rho_\Lc(p,q)} \leq \Lc(f),$$
and taking the supremum over $p$ and $q$ yields $L(f) \leq \Lc(f)$. For
the reverse inequality, suppose $L(f) < 1$; we will show that $\Lc(f) \leq 1$.
For any $a, b \in \Rb$ with $a < b$ let $p = \chi_{f^{-1}(-\infty,a]}$ and
$q = \chi_{f^{-1}[b,\infty)}$ and find $f_{ab}$ such that
$\Lc(f_{ab}) \leq 1$ and
$$d({\rm ran}_p(f_{ab}), {\rm ran}_q(f_{ab})) =
d({\rm ran}_p(f), {\rm ran}_q(f))\geq b - a.$$
We can do this because $d({\rm ran}_p(f), {\rm ran}_q(f)) \leq L(f)\rho_\Lc(p,q)
< \rho_\Lc(p,q)$. Now define
$$g_{ab} = \bigwedge_{c \in {\rm ran}_p(f_{ab})} \big(|f_{ab} - c\cdot 1|
\wedge (b-a)\cdot 1\big);$$
then $\Lc(g_{ab}) \leq 1$, ${\rm ran}_p(g_{ab}) = \{0\}$, and
${\rm ran}_q(g_{ab}) = \{b-a\}$. So
$$f = \bigwedge_{|a| \leq \|f\|_\infty} \bigvee_{b > a} (g_{ab} + a\cdot 1),$$
and this shows that $\Lc(f) \leq 1$. We conclude that $\Lc(f) \leq L(f)$.
This completes the proof.
\end{proof}

Lemma 6.2.4 of \cite{W4} can be used to prove a similar result with
$\Lc$ defined on all of $L^\infty(X,\mu)$. It is interesting to note
that although Lipschitz numbers do satisfy the seminorm condition
in Definition \ref{ablipdef} (iii), it is only used in the proof
to ensure that $\Lc(f + c\cdot 1) = \Lc(f)$. In the noncommutative
setting only this weaker version holds (see Definition \ref{abspeclip}
and Example \ref{countersum}).

\subsection{Spectral Lipschitz numbers}\label{slnsec}
Now we introduce the spectral Lipschitz number of a self-adjoint
operator in a von Neumann algebra $\Mx$ equipped with a quantum
pseudometric, or more generally
a self-adjoint operator in $\Mx \overline{\otimes} \Bc(l^2)$.
Recall that $P_S(A)$ denotes the spectral projection of the self-adjoint
operator $A$ for the Borel set $S \subseteq \Rb$.

\begin{defi}\label{speclip}
Let $\rho$ be a quantum distance function on a von Neumann algebra
$\Mx$ (Definition \ref{qdfdef}) and let
$A \in \Mx \overline{\otimes} \Bc(l^2)$ be self-adjoint.
The {\it spectral Lipschitz number} of $A$ is the quantity
$$L_s(A) = \sup \left\{\frac{b - a}{\rho(P_{(-\infty, a]}(A),
P_{[b, \infty)}(A))}: a,b \in \Rb, a < b\right\},$$
with the convention $\frac{0}{0} = 0$,
and $A$ is {\it spectrally Lipschitz} if $L_s(A) < \infty$.
We call the function $L_s$ the {\it spectral Lipschitz gauge}.
\end{defi}

We immediately note an alternative formula for $L_s(A)$.

\begin{prop}\label{altform}
Let $\rho$ be a quantum distance function on a von Neumann algebra
$\Mx$ and let $A \in \Mx \overline{\otimes} \Bc(l^2)$ be self-adjoint.
Then
$$L_s(A) = \sup \left\{\frac{d(S,T)}{\rho(P_S(A), P_T(A))}:
S,T \subseteq \Rb\hbox{ Borel}\right\},$$
with the convention $\frac{0}{0} = 0$.
\end{prop}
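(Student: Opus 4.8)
The plan is to establish the two inequalities $L_s(A) \le L'(A)$ and $L'(A) \le L_s(A)$ separately, where I write $L'(A)$ for the supremum over all Borel sets $S,T$ appearing on the right-hand side. One direction is immediate: for any $a < b$ the pair $S = (-\infty,a]$, $T = [b,\infty)$ consists of Borel sets with $d(S,T) = b-a$, so every ratio defining $L_s(A)$ occurs among those defining $L'(A)$; hence $L_s(A) \le L'(A)$.

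For the reverse inequality, fix Borel sets $S,T$ and set $\delta = d(S,T)$. If $\delta = 0$ the corresponding ratio is $0$ by the convention $\frac{0}{0}=0$, so I may assume $\delta > 0$; it then suffices to show $\rho(P_S(A),P_T(A)) \ge \delta/L_s(A)$. The idea is to slice $\Rb$ by a grid $I_k = (k\epsilon,(k+1)\epsilon]$, $k \in \Zb$, with $0 < \epsilon < \delta$, and to use that spectral projections respect disjoint unions: $P_S(A) = \bigvee_k P_{S \cap I_k}(A)$, and likewise for $T$. By the strengthened join formula recorded after Definition \ref{qdfdef}, $\rho(P_S(A),P_T(A)) = \inf_{k,l} \rho(P_{S\cap I_k}(A), P_{T \cap I_l}(A))$, so I only need the lower bound $\delta/L_s(A)$ for each pair $(k,l)$.

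For a fixed pair, if either piece is empty the corresponding spectral projection is $0$ and the distance is $\infty$ by Definition \ref{qdfdef} (i), so that pair is harmless; thus assume both $S \cap I_k$ and $T \cap I_l$ are nonempty. Since $\epsilon < \delta$ while $d(S \cap I_k, T \cap I_l) \ge d(S,T) = \delta$, the two pieces cannot lie in a common cell of length $\epsilon$, forcing $k \ne l$; say $k < l$ (the case $k>l$ follows from the symmetry property (iii)). Then $S \cap I_k$ lies entirely to the left of $T \cap I_l$, so setting $a = \sup(S \cap I_k)$ and $b = \inf(T \cap I_l)$ produces finite reals with $b-a = d(S\cap I_k, T \cap I_l) \ge \delta > 0$, together with $S \cap I_k \subseteq (-\infty,a]$ and $T \cap I_l \subseteq [b,\infty)$. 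Consequently $P_{S \cap I_k}(A) \le P_{(-\infty,a]}(A)$ and $P_{T \cap I_l}(A) \le P_{[b,\infty)}(A)$, and the monotonicity of $\rho$ (which follows from property (iv)) together with the definition of $L_s(A)$ yields
$$\rho(P_{S \cap I_k}(A), P_{T \cap I_l}(A)) \ge \rho(P_{(-\infty,a]}(A), P_{[b,\infty)}(A)) \ge \frac{b-a}{L_s(A)} \ge \frac{\delta}{L_s(A)}.$$
Taking the infimum over $(k,l)$ and then the supremum over $S,T$ gives $L'(A) \le L_s(A)$, completing the argument.

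The main obstacle is that $S$ and $T$ are arbitrary Borel sets rather than intervals, so their pieces could interleave and admit no single threshold $a<b$ separating them; the device that overcomes this is choosing the grid strictly finer than the separation $\delta = d(S,T)$, which forces each nonempty pair to straddle distinct grid cells (hence to be genuinely left/right separated) and reduces everything, via the join formula and monotonicity, to the half-line spectral projections appearing in Definition \ref{speclip}. The remaining ingredients—that spectral projections of a self-adjoint element of $\Mx \overline{\otimes} \Bc(l^2)$ again lie in that algebra, that $P_{S'}(A) \le P_S(A)$ whenever $S' \subseteq S$, and that $P_{\bigsqcup_k S_k}(A) = \bigvee_k P_{S_k}(A)$—are routine spectral theory.
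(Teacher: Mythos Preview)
Your proof is correct and follows essentially the same approach as the paper's: both partition $S$ and $T$ into pieces of diameter strictly less than $d(S,T)$, invoke the strengthened join formula to reduce to a single pair of pieces, observe that such pieces must be order-separated, and then bound by the half-line ratio defining $L_s(A)$. The only cosmetic difference is that the paper partitions $S$ and $T$ directly into pieces of diameter at most $d(S,T)$ (noting that boundedness of $A$ makes the partition effectively finite), whereas you intersect with a uniform grid of mesh $\epsilon < d(S,T)$; the resulting arguments are interchangeable.
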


\begin{proof}
Here $d(S,T) = \inf\{d(x,y): x \in S, y \in T\}$. The inequality $\leq$
follows by taking $S = (-\infty, a]$ and $T = [b,\infty)$ for arbitrary
$a,b \in \Rb$, $a < b$. For the reverse inequality, let $S, T \subset \Rb$
be Borel and (assuming $d(S,T) > 0$, otherwise the pair makes no
contribution to the supremum) partition them as $S = \bigcup S_i$,
$T = \bigcup T_j$ such that each $S_i$ and each $T_j$ has diameter at
most $d(S,T)$. We can do this so that $P_{S_i}(A) = P_{T_j}(A) = 0$ for
all but finitely many $i$ and $j$, since the interval $[-\|A\|, \|A\|]$
has finite length. Then $P_S(A) = \bigvee P_{S_i}(A)$ and $P_T(A) =
\bigvee P_{T_j}(A)$ implies that we must have $\rho(P_{S_i}(A), P_{T_j}(A)) =
\rho(P_S(A), P_T(A))$ for some $i$ and $j$. Since $d(S_i,T_j) \geq
d(S,T) \geq \max\{{\rm diam}(S_i), {\rm diam}(T_j)\}$, without loss
of generality we may assume that $a < b$ where $a = \sup S_i$ and
$b = \inf T_j$ for this choice of $i$ and $j$. We then have
$$\frac{d(S,T)}{\rho(P_S(A), P_T(A))} \leq
\frac{d(S_i,T_j)}{\rho(P_{S_i}(A), P_{T_j}(A))} \leq
\frac{b-a}{\rho(P_{(-\infty, a]}(A), P_{[b,\infty)}(A))} \leq L_s(A).$$
Taking the supremum over $S$ and $T$ yields the desired inequality.
\end{proof}

Spectral Lipschitz numbers generalize measurable Lipschitz numbers
(Definition \ref{measlipnum}).

\begin{prop}\label{seqab}
Let $(X,\mu)$ be a finitely decomposable measure space, let
$\rho$ be a measurable pseudometric on $X$, and let $\Vb_\rho$
be the associated quantum pseudometric on the von Neumann algebra
$\Mx \cong L^\infty(X,\mu)$ of bounded multiplication operators on
$L^2(X,\mu)$ (Theorem \ref{qmeasmet}). Then for any real-valued
$f \in L^\infty(X,\mu)$ we have $L_s(M_f) = L(f)$.
\end{prop}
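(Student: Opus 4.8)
The plan is to prove the two inequalities $L_s(M_f) \le L(f)$ and $L(f) \le L_s(M_f)$ separately, after first reconciling the two meanings of the symbol $\rho$. The key preliminary observation is that the quantum distance function $\rho_{\Vb_\rho}$ used in Definition \ref{speclip}, when restricted to projections in $\Mx$, coincides with the measurable pseudometric $\rho$. Indeed, for projections $M_p, M_q \in \Mx$ the second formula in Definition \ref{projdist} gives $\rho_{\Vb_\rho}(M_p,M_q) = \inf\{D(A): M_pAM_q \ne 0\}$, which is exactly the expression identified with $\rho$ via the relation $\rho = \rho_{\Vb_\rho}$ of Theorem \ref{qmeasmet}. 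Since $M_f$ is self-adjoint with spectral projections $P_S(M_f) = M_{\chi_{f^{-1}(S)}}$ lying in $\Mx$, I may throughout read $\rho(P_S(M_f), P_T(M_f))$ as $\rho(\chi_{f^{-1}(S)}, \chi_{f^{-1}(T)})$ in the sense of the measurable pseudometric.

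For the inequality $L_s(M_f) \le L(f)$, I would fix $a < b$ and set $p = \chi_{f^{-1}(-\infty,a]}$ and $q = \chi_{f^{-1}[b,\infty)}$, so that $P_{(-\infty,a]}(M_f) = M_p$ and $P_{[b,\infty)}(M_f) = M_q$. On ${\rm supp}(p)$ one has $f \le a$ and on ${\rm supp}(q)$ one has $f \ge b$, whence ${\rm ran}_p(f) \subseteq (-\infty,a]$ and ${\rm ran}_q(f) \subseteq [b,\infty)$, and therefore $d({\rm ran}_p(f),{\rm ran}_q(f)) \ge b - a$. Dividing by $\rho(p,q)$ and invoking Definition \ref{measlipnum} yields $(b-a)/\rho(p,q) \le d({\rm ran}_p(f),{\rm ran}_q(f))/\rho(p,q) \le L(f)$; taking the supremum over $a<b$ gives $L_s(M_f) \le L(f)$. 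Pairs for which $p$ or $q$ vanishes contribute $0$ to $L_s$ under the conventions $\tfrac{0}{0}=0$ and $\tfrac{c}{\infty}=0$, so they may be ignored.

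For the reverse inequality $L(f) \le L_s(M_f)$, I would start from arbitrary nonzero projections $p, q$ and set $R_p = {\rm ran}_p(f)$ and $R_q = {\rm ran}_q(f)$, which are nonempty compact subsets of $\Rb$ because $f$ is bounded. Since the essential range is the smallest closed set containing the a.e.\ values of $f$, one has $f \in R_p$ a.e.\ on ${\rm supp}(p)$, i.e.\ $p \le \chi_{f^{-1}(R_p)} = P_{R_p}(M_f)$, and likewise $q \le P_{R_q}(M_f)$. Monotonicity of the measurable pseudometric, which follows from the join axiom Definition \ref{measpm} (iii), then gives $\rho(P_{R_p}(M_f), P_{R_q}(M_f)) \le \rho(p,q)$. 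Applying the alternative formula of Proposition \ref{altform} with the Borel sets $S = R_p$ and $T = R_q$, and using $d(R_p,R_q) = d({\rm ran}_p(f),{\rm ran}_q(f))$, I obtain
$$\frac{d({\rm ran}_p(f),{\rm ran}_q(f))}{\rho(p,q)} \le \frac{d(R_p,R_q)}{\rho(P_{R_p}(M_f),P_{R_q}(M_f))} \le L_s(M_f).$$
Taking the supremum over nonzero $p, q$ gives $L(f) \le L_s(M_f)$.

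I expect the only genuinely delicate point to be the bridging identification in the first paragraph: keeping straight that $\rho$ in Definition \ref{speclip} is the quantum distance function of $\Vb_\rho$ while $\rho$ in Definition \ref{measlipnum} is the measurable pseudometric, and that Theorem \ref{qmeasmet} together with Definition \ref{projdist} makes the two agree on projections of $\Mx$. Once that is secured, both inequalities reduce to elementary facts about essential ranges, the containments of spectral projections, and monotonicity, with the reverse inequality using Proposition \ref{altform} to pass from thresholds $a<b$ to the essential ranges $R_p, R_q$ directly.
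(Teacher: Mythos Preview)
Your proof is correct. The preliminary identification of the two meanings of $\rho$ and the inequality $L_s(M_f)\le L(f)$ are argued exactly as in the paper.

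For the reverse inequality $L(f)\le L_s(M_f)$ you take a genuinely different route. The paper works directly from the threshold definition of $L_s$: it partitions $p=\sum p_i$ and $q=\sum q_j$ so that each ${\rm ran}_{p_i}(f)$ and ${\rm ran}_{q_j}(f)$ has diameter at most $\epsilon=d({\rm ran}_p(f),{\rm ran}_q(f))$, picks $i,j$ with $\rho(p_i,q_j)=\rho(p,q)$, and then bounds the ratio by $(b-a)/\rho(P_{(-\infty,a]}(M_f),P_{[b,\infty)}(M_f))$ for the thresholds $a=\sup{\rm ran}_{p_i}(f)$, $b=\inf{\rm ran}_{q_j}(f)$. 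You instead observe the containments $p\le P_{R_p}(M_f)$ and $q\le P_{R_q}(M_f)$, use monotonicity of $\rho$, and then invoke Proposition~\ref{altform} with the Borel sets $S=R_p$, $T=R_q$. Your argument is shorter and cleaner precisely because the partitioning work you would otherwise have to do is already packaged inside the proof of Proposition~\ref{altform}; the paper's version is more self-contained in that it does not appeal to that proposition. Both approaches are valid, and since Proposition~\ref{altform} is proved before Proposition~\ref{seqab} there is no circularity.
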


\begin{proof}
Let $\tilde{\rho}$ be the quantum distance function associated to
$\Vb_\rho$ (Definition \ref{projdist}) and recall that we have
$\tilde{\rho}(M_p,M_q) = \rho(p,q)$ for all nonzero projections
$p,q \in L^\infty(X,\mu)$ (Theorem \ref{qmeasmet}). Now
the inequality $L_s(M_f) \leq L(f)$ is proven by taking $p =
\chi_{f^{-1}((-\infty,a])}$ and $q = \chi_{f^{-1}([b,\infty)}$
for arbitrary $a,b \in \Rb$, $a < b$
(so that $P_{(-\infty, a]}(M_f) = M_p$ and $P_{[b, \infty)}(M_f) = M_q$)
in Definition \ref{measlipnum},
since $b - a \leq d({\rm ran}_p(f), {\rm ran}_q(f))$ and therefore
$$\frac{b-a}{\tilde{\rho}(P_{(-\infty, a]}(M_f), P_{[b, \infty)}(M_f))}
= \frac{b-a}{\rho(p,q)}
\leq \frac{d({\rm ran}_p(f),{\rm ran}_q(f))}{\rho(p,q)} \leq L(f).$$
Taking the supremum over $a$ and $b$ shows that $L_s(M_f) \leq L(f)$.
For the reverse inequality, let $p,q \in L^\infty(X,\mu)$ be nonzero
projections. If $\epsilon = d({\rm ran}_p(f),{\rm ran}_q(f)) = 0$ then
the pair makes no contribution to $L(f)$, so assume $\epsilon > 0$.
Now partition $p$ and $q$ as $p = \sum_1^m p_i$ and $q = \sum_1^n q_j$
such that ${\rm ran}_{p_i}(f)$ and ${\rm ran}_{q_j}(f)$ have diameter
at most $\epsilon$ for all $i$ and $j$. Then for some choice of $i$ and $j$
we have $\rho(p_i,q_j) = \rho(p,q)$, and as in the proof of
Proposition \ref{altform} we may assume $a < b$
where $a = \sup {\rm ran}_{p_i}(f)$ and $b = \inf {\rm ran}_{q_j}(f)$.
Thus $M_{p_i} \leq P_{(-\infty,a]}(M_f)$ and
$M_{q_j} \leq P_{[b,\infty)}(M_f)$, so that
\begin{eqnarray*}
\frac{d({\rm ran}_p(f), {\rm ran}_q(f))}{\rho(p,q)} &\leq&
\frac{d({\rm ran}_{p_i}(f), {\rm ran}_{q_j}(f))}{\rho(p_i,q_j)}\cr
&\leq& \frac{b-a}{\tilde{\rho}(P_{(-\infty, a]}(M_f), P_{[b, \infty)}(M_f))}\cr
&\leq& L_s(M_f).
\end{eqnarray*}
Taking the supremum over $p$ and $q$ shows that $L(f) \leq L_s(M_f)$.
\end{proof}

Together with Proposition \ref{atomicln} this implies that the spectral
Lipschitz number reduces to the classical Lipschitz number in the
atomic abelian case.

\begin{coro}\label{aasplp}
Let $d$ be a pseudometric on a set $X$ and equip the von Neumann algebra
$\Mx \cong l^\infty(X)$ of bounded multiplication operators on $l^2(X)$
with the associated quantum pseudometric (Proposition \ref{aa}).
Then for any real-valued $f \in l^\infty(X)$ we have $L_s(M_f) = L(f)$.
\end{coro}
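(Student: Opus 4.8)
The plan is to deduce the corollary by combining Proposition \ref{seqab} with Proposition \ref{atomicln}, the only substantive point being the verification that the two framings of the quantum pseudometric agree. I would view $l^\infty(X)$ as $L^\infty(X,\mu)$ with $\mu$ counting measure (which is finitely decomposable, as noted in the introduction to Section \ref{mrs}), so that $l^2(X) = L^2(X,\mu)$ and the hypotheses of Proposition \ref{seqab} are available.

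First I would check that the quantum pseudometric $\Vb_d$ assigned to $d$ by Proposition \ref{aa} coincides with the quantum pseudometric $\Vb_{\rho_d}$ of Theorem \ref{qmeasmet}, where $\rho_d$ is the measurable pseudometric associated to $d$ by Proposition \ref{atomicmm}. Concretely, I would compare the defining conditions, namely $d(x,y) > t \Rightarrow \langle Ae_y,e_x\rangle = 0$ for $\Vc_t^d$ and $\rho_d(\chi_S,\chi_T) > t \Rightarrow M_{\chi_S}AM_{\chi_T} = 0$ for $\Vc_t^{\rho_d}$. Since $\rho_d(\chi_S,\chi_T) = \inf\{d(x,y): x \in S, y \in T\}$, testing the second condition against the singletons $S = \{x\}$ and $T = \{y\}$ recovers the first, while the inequality $\rho_d(\chi_S,\chi_T) > t$ forces $d(x,y) > t$ for every $x \in S$ and $y \in T$, giving the reverse inclusion. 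Thus $\Vc_t^d = \Vc_t^{\rho_d}$ for every $t$, i.e., $\Vb_d = \Vb_{\rho_d}$, so the spectral Lipschitz gauge $L_s$ of the corollary is computed with respect to the same filtration to which Proposition \ref{seqab} applies.

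With this identification in hand, Proposition \ref{seqab} yields $L_s(M_f) = L(f)$ for real-valued $f \in l^\infty(X)$, where on the right $L$ is the measurable Lipschitz gauge of $\rho_d$ (Definition \ref{measlipnum}). Finally, Proposition \ref{atomicln} identifies this measurable Lipschitz number with the classical one, $\sup\{|f(x)-f(y)|/d(x,y): x,y \in X,\ d(x,y) > 0\}$, which completes the proof.

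I do not expect a genuine obstacle here; the argument is essentially bookkeeping. The one point requiring care is the equality $\Vb_d = \Vb_{\rho_d}$, which bridges the set-theoretic language of Proposition \ref{aa} and the measure-theoretic language of Theorem \ref{qmeasmet}. Everything else is an immediate appeal to the two cited propositions, and no new estimates are needed.
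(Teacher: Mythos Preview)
Your proposal is correct and follows essentially the same route as the paper, which simply states that the corollary follows from Propositions \ref{seqab} and \ref{atomicln} without further argument. Your explicit verification that $\Vb_d = \Vb_{\rho_d}$ fills in the one step the paper leaves implicit, and it is carried out correctly.
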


The fact that spectral Lipschitz numbers effectively reduce to classical
Lipschitz numbers in the abelian case is evidence that Definition
\ref{speclip} is reasonable. We can also point to the following easy result.

\begin{prop}\label{compo}
Let $\rho$ be a quantum distance function on a von Neumann algebra $\Mx$,
let $A \in \Mx \overline{\otimes} \Bc(l^2)$ be self-adjoint and spectrally
Lipschitz, and let $f: \Rb \to \Rb$ be Lipschitz. Then $f(A)$ is spectrally
Lipschitz and $L_s(f(A)) \leq L(f)L_s(A)$.
\end{prop}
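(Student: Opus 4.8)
The plan is to reduce everything to Proposition \ref{altform} together with the elementary identity $P_S(f(A)) = P_{f^{-1}(S)}(A)$ from the Borel functional calculus. First I would record that $f(A)$ genuinely lies in the relevant algebra: since $f$ is Lipschitz it is continuous, hence Borel, so $f(A) \in \Mx \overline{\otimes} \Bc(l^2)$, and $f(A)$ is self-adjoint because $f$ is real-valued. Thus $L_s(f(A))$ is defined, and I may assume $L(f) < \infty$ (otherwise the asserted bound is vacuous). Using the alternative formula of Proposition \ref{altform}, I would write
$$L_s(f(A)) = \sup \left\{\frac{d(S,T)}{\rho(P_S(f(A)), P_T(f(A)))} : S, T \subseteq \Rb \hbox{ Borel}\right\}$$
and substitute $P_S(f(A)) = P_{f^{-1}(S)}(A)$ and $P_T(f(A)) = P_{f^{-1}(T)}(A)$.

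The only geometric input needed is a comparison of set distances. If $x \in f^{-1}(S)$ and $y \in f^{-1}(T)$ then $f(x) \in S$ and $f(y) \in T$, so $d(S,T) \leq |f(x) - f(y)| \leq L(f)\,|x-y|$; taking the infimum over such $x$ and $y$ yields $d(S,T) \leq L(f)\, d(f^{-1}(S), f^{-1}(T))$. Now fix Borel sets $S$ and $T$. If $\rho(P_{f^{-1}(S)}(A), P_{f^{-1}(T)}(A)) > 0$, then dividing the previous inequality by it and applying Proposition \ref{altform} to $A$ (with the Borel sets $f^{-1}(S)$ and $f^{-1}(T)$) gives
$$\frac{d(S,T)}{\rho(P_S(f(A)), P_T(f(A)))} \leq L(f)\cdot\frac{d(f^{-1}(S), f^{-1}(T))}{\rho(P_{f^{-1}(S)}(A), P_{f^{-1}(T)}(A))} \leq L(f)\, L_s(A).$$

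If instead $\rho(P_{f^{-1}(S)}(A), P_{f^{-1}(T)}(A)) = 0$, then since $A$ is spectrally Lipschitz the corresponding term in the supremum defining $L_s(A)$ cannot be infinite, which forces $d(f^{-1}(S), f^{-1}(T)) = 0$ and hence $d(S,T) = 0$; the term for $f(A)$ is then $0$ by the convention $\frac{0}{0} = 0$. In all cases each term is bounded by $L(f)\, L_s(A)$, so taking the supremum over $S$ and $T$ gives $L_s(f(A)) \leq L(f)\, L_s(A)$, and in particular $f(A)$ is spectrally Lipschitz. There is no deep obstacle here — the proposition is genuinely easy — and the only points requiring care are the spectral-projection identity $P_S(f(A)) = P_{f^{-1}(S)}(A)$ (standard Borel functional calculus) and the treatment of the $\frac{0}{0}$ convention, where the spectral Lipschitzness of $A$ is exactly what rules out a spurious $\frac{\text{positive}}{0}$ term.
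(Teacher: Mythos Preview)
Your proof is correct and takes essentially the same approach as the paper's: both use the functional-calculus identity $P_S(f(A)) = P_{f^{-1}(S)}(A)$ together with the Lipschitz inequality $d(S,T) \leq L(f)\,d(f^{-1}(S),f^{-1}(T))$ and then invoke Proposition~\ref{altform} for $A$. The only cosmetic difference is that the paper starts from the half-line definition of $L_s(f(A))$ rather than the Borel-set form, and leaves implicit the $\rho=0$ edge case that you spell out.
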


\begin{proof}
Let $a,b \in \Rb$, $a < b$, and let
$S = f^{-1}((-\infty, a])$ and $T = f^{-1}([b,\infty))$. Then
$$P_{(-\infty, a]}(f(A)) = P_S(A)\hbox{ and }P_{[b,\infty)}(f(A)) = P_T(A).$$
Also $b - a \leq L(f)\cdot d(S,T)$, so
$$\frac{b-a}{\rho(P_{(-\infty, a]}(f(A)), P_{[b,\infty)}(f(A)))}
\leq L(f)\cdot\frac{d(S,T)}{\rho(P_S(A), P_T(A))} \leq L(f)\cdot L_s(A)$$
by Proposition \ref{altform}. Taking the supremum over $a$ and $b$
proves that $f(A)$ is spectrally Lipschitz and yields the stated inequality.
\end{proof}

We will give other basic properties of $L_s$, in particular its
compatibility with spectral joins and meets, in Lemma \ref{joinmeet}
and Theorem \ref{abstractlip}.

The spectral Lipschitz condition can be related to the notion of co-Lipschitz
number introduced in Definition \ref{lipmaps} (and consequently Corollary
\ref{aasplp} can also be deduced from Proposition \ref{aacolip}).
Recall that if $A \in \Mx$ is self-adjoint then the von Neumann algebra
$W^*(A)$ it generates is $*$-isomorphic to $L^\infty(X,\mu)$ where $X$ is
the spectrum of $A$ and $\mu$ is some finite measure on $X$
(\cite{Tak}, Theorem III.1.22).
Let $\phi: L^\infty(X,\mu) \cong W^*(A) \subseteq \Mx$ be such an isomorphism
and define a measurable metric $\tilde{\rho}$ on $X$ by setting
$$\tilde{\rho}(p,q) = d({\rm ran}_p(\zeta), {\rm ran}_q(\zeta))$$
where $\zeta = \phi^{-1}(A)$.

\begin{prop}\label{seqco}
Let $\rho$ be a quantum distance function on a von Neumann algebra
$\Mx$ and let $A \in \Mx$ be self-adjoint. Let $\phi: L^\infty(X,\mu)
\cong W^*(A)$ be a *-isomorphism and equip $X$ with the quantum metric
$\Vb_{\tilde{\rho}}$ (Theorem \ref{qmeasmet}) associated to the measurable
metric $\tilde{\rho}$ defined above. Then the spectral Lipschitz
number $L_s(A)$ of $A$ equals the co-Lipschitz number $L(\phi)$ of $\phi$.
\end{prop}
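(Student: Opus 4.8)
The plan is to evaluate both sides as suprema of the same family of ratios indexed by spectral projections of $A$, and then to reconcile the two different numerators by a measure-theoretic comparison of essential ranges. First I would reduce the co-Lipschitz number to projections in $L^\infty(X,\mu)$. The quantum metric $\Vb_{\tilde\rho}$ is reflexive: in the notation of the proof of Theorem \ref{qmeasmet} each $\Vc^{\tilde\rho}_t$ equals $\bigcap_{s>t}\Vc_{\Rc_s}$, an intersection of quantum relations of the form $\Vc_\Rc$, which are reflexive by Theorem \ref{abelianrel}, and an intersection of reflexive subspaces is again reflexive. Hence Proposition \ref{refcolip} applies and yields $L(\phi)=\sup_{p,q}\rho_{\Vb_{\tilde\rho}}(p,q)/\rho(\phi(p),\phi(q))$ with $p,q$ ranging over projections in $L^\infty(X,\mu)$. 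Writing $p=\chi_S$, $q=\chi_T$ for Borel $S,T\subseteq X$, the spectral theorem gives $\phi(\chi_S)=P_S(A)$, and Theorem \ref{qmeasmet} identifies $\rho_{\Vb_{\tilde\rho}}(\chi_S,\chi_T)$ with $\tilde\rho(\chi_S,\chi_T)=d(\tilde S,\tilde T)$, where I abbreviate $\tilde S:={\rm ran}_{\chi_S}(\zeta)$.

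Thus, setting against this the alternative formula for $L_s(A)$ from Proposition \ref{altform}, the whole statement reduces to comparing
\[ L(\phi)=\sup_{S,T}\frac{d(\tilde S,\tilde T)}{\rho(P_S(A),P_T(A))} \qquad\text{and}\qquad L_s(A)=\sup_{S,T}\frac{d(S,T)}{\rho(P_S(A),P_T(A))}, \]
where on the left $S,T$ range over Borel subsets of $X$ and on the right over Borel subsets of $\Rb$; since $P_S(A)=P_{S\cap X}(A)$ I may restrict the latter to $S,T\subseteq X$ as well. So everything comes down to comparing the numerators $d(S,T)$ and $d(\tilde S,\tilde T)$ for pairs giving the same spectral projections.

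The main obstacle, and the only substantive point, is that $\tilde S$ (the essential range of the coordinate function on $S$, i.e.\ the essential support of $\mu|_S$) need not coincide with $S$: when $\mu$ has atoms lying in the essential closure of $S$ but outside $S$, replacing $S$ by $\tilde S$ alters $P_S(A)$, so I cannot freely pass between them. I would handle this by working with $\mu$-null modifications that fix the spectral projection. The two facts I need are: (a) $\tilde S\subseteq\overline S$, whence $d(\tilde S,\tilde T)\ge d(\overline S,\overline T)=d(S,T)$; and (b) the representative $S':=S\cap\tilde S$ satisfies $\mu(S\,\triangle\,S')=0$, so $P_{S'}(A)=P_S(A)$, lies inside $\tilde S$, and has ${\rm ran}_{\chi_{S'}}(\zeta)=\tilde S$ (the essential range is a null-invariant) hence $\overline{S'}\supseteq\tilde S$; combined with $S'\subseteq\tilde S$ this forces $d(S',T')=d(\tilde S,\tilde T)$ for $T':=T\cap\tilde T$. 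From (a) I obtain $L_s(A)\le L(\phi)$ by bounding the spectral ratio at $(S,T)$ by the co-Lipschitz ratio at the same pair, and from (b) I obtain $L(\phi)\le L_s(A)$ by matching the co-Lipschitz ratio at $(S,T)$ with the spectral ratio at $(S',T')$, whose denominators agree. The hard part will be step (b): verifying that the essential-range distance is always realized as the honest set distance of a measure-preserving representative, which requires checking essential density of $S'$ at the interior atoms of $\tilde S$. Combining the two inequalities gives $L_s(A)=L(\phi)$.
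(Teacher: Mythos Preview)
Your argument is correct, and it takes a genuinely different route from the paper for the harder direction $L(\phi)\le L_s(A)$. Both proofs begin the same way, invoking reflexivity of $\Vb_{\tilde\rho}$ and Proposition \ref{refcolip} to reduce $L(\phi)$ to a supremum over projections $p,q\in L^\infty(X,\mu)$. From there the paper stays with the half-line form of $L_s(A)$ and uses a partitioning trick: it chops $p$ and $q$ into finitely many pieces whose essential ranges have diameter less than $\tilde\rho(p,q)$, uses property (iv) of Definition \ref{qdfdef} to select a pair $p'\le p$, $q'\le q$ with $\rho(\phi(p'),\phi(q'))=\rho(\phi(p),\phi(q))$, and then observes that the small diameters force $\phi(p')\le P_{(-\infty,a]}(A)$ and $\phi(q')\le P_{[b,\infty)}(A)$ for suitable $a<b$. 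Your approach instead passes through Proposition \ref{altform} and handles both sides simultaneously as suprema over Borel $S,T\subseteq X$, matching numerators via the null modification $S'=S\cap\tilde S$. What your route buys is symmetry and the avoidance of the partition argument; what the paper's route buys is that it works directly from the original definition of $L_s$ without appealing to Proposition \ref{altform}.

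Your worry about step (b) is unfounded: it is not hard, and no separate ``atom'' analysis is needed. Since $\tilde S$ is the support of $\mu|_S$ you have $\mu(S\setminus\tilde S)=0$, hence $\mu(S\,\triangle\,S')=0$, hence $\widetilde{S'}=\tilde S$ and $P_{S'}(A)=P_S(A)$. Now $\tilde S$ is closed and $S'\subseteq\tilde S$ give $\overline{S'}\subseteq\tilde S$, while every point of the essential support $\tilde S=\widetilde{S'}$ is a closure point of $S'$, giving $\tilde S\subseteq\overline{S'}$. Thus $\overline{S'}=\tilde S$, and $d(S',T')=d(\overline{S'},\overline{T'})=d(\tilde S,\tilde T)$ follows immediately.
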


\begin{proof}
One inequality is easy: given $a, b \in \Rb$ take
$p = \phi^{-1}(P_{(-\infty, a]}(A))$ and $q = \phi^{-1}(P_{[b,\infty)}(A))$
in Definition \ref{lipmaps}. Then $\tilde{\rho}(p,q) \geq b - a$ and so
$$\frac{b-a}{\rho(P_{(-\infty, a]}(A), P_{[b,\infty)}(A))}
\leq \frac{\tilde{\rho}(p,q)}{\rho(\phi(p),\phi(q))} \leq L(\phi).$$
Taking the supremum over $a$ and $b$ yields $L_s(A) \leq L(\phi)$.
For the reverse inequality, since $\Vb_{\tilde{\rho}}$ is reflexive Proposition
\ref{refcolip} implies that we can restrict attention to projections
in $L^\infty(X,\mu)$ when evaluating $L(\phi)$. So let
$p,q \in L^\infty(X,\mu)$ be projections. As in the proof of Proposition
\ref{seqab}, we can find projections $p' \leq p$ and $q' \leq q$ such that
$\rho(\phi(p'), \phi(q')) = \rho(\phi(p), \phi(q))$ and ${\rm ran}_{p'}(\zeta)$
and ${\rm ran}_{q'}(\zeta)$ have diameter less than $\tilde{\rho}(p,q)$,
where $\zeta = \phi^{-1}(A)$. Without loss of generality $a < b$ where
$a = \sup {\rm ran}_{p'}(\zeta)$ and $b = \inf {\rm ran}_{q'}(\zeta)$. Then
$\phi(p') \leq P_{(-\infty,a]}(A)$ and $\phi(q') \leq P_{[b,\infty)}(A)$,
so that
$$\frac{\tilde{\rho}(p,q)}{\rho(\phi(p),\phi(q))} \leq
\frac{\tilde{\rho}(p',q')}{\rho(\phi(p'),\phi(q'))} \leq
\frac{b-a}{\rho(P_{(-\infty,a]}(A), P_{[b,\infty)}(A))} \leq L_s(A).$$
Taking the supremum over $p$ and $q$ yields $L(\phi) \leq L_s(A)$.
\end{proof}

By stabilization (see Section \ref{refstab}) we can therefore equate the
spectral Lipschitz number of any self-adjoint operator
$A \in \Mx \overline{\otimes} \Bc(l^2)$ with the co-Lipschitz number of a
$*$-isomorphism $\phi: L^\infty(X, \mu) \cong W^*(A) \subseteq
\Mx \overline{\otimes} \Bc(l^2)$. We can also prove a counterpart to
Proposition \ref{compo}.

\begin{coro}
Let $\Mx$ and $\Nc$ be von Neumann algebras equipped with quantum
distance functions, let $A \in \Mx \overline{\otimes} \Bc(l^2)$ be
self-adjoint and spectrally Lipschitz, and let $\psi: \Mx \to \Nc$
be a co-Lipschitz morphism. Then $(\psi \otimes I)(A)$ is spectrally
Lipschitz and $L_s((\psi \otimes I)(A)) \leq L(\psi)L_s(A)$.
\end{coro}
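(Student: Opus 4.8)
The plan is to reduce everything to two facts: that $\psi\otimes I$ is a normal unital $*$-homomorphism (hence commutes with the Borel functional calculus), and the defining inequality of a co-Lipschitz morphism from Definition \ref{lipmaps}. A pleasant feature is that no further stabilization is needed: the spectral projections of $A$ already lie in $\Mx\overline{\otimes}\Bc(l^2)$, which is precisely the algebra whose projections enter the definition of $L(\psi)$. So I would begin by setting $B = (\psi\otimes I)(A)$ and noting that $B$ is self-adjoint because $\psi\otimes I$ is a $*$-homomorphism.

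The key step is the spectral-projection identity
$$P_{(-\infty,a]}(B) = (\psi\otimes I)\big(P_{(-\infty,a]}(A)\big), \qquad P_{[b,\infty)}(B) = (\psi\otimes I)\big(P_{[b,\infty)}(A)\big)$$
for all $a,b \in \Rb$. This holds because $\psi$ is weak* continuous, so $\psi\otimes I$ is a normal unital $*$-homomorphism on $\Mx\overline{\otimes}\Bc(l^2)$, and any such map intertwines the Borel functional calculus: it carries the spectral resolution of a self-adjoint operator to that of its image, so that $(\psi\otimes I)(f(A)) = f(B)$ for every bounded Borel function $f$, in particular for $f = \chi_{(-\infty,a]}$ and $f = \chi_{[b,\infty)}$.

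With this in hand, fix $a < b$ and apply the co-Lipschitz inequality of Definition \ref{lipmaps} to the projections $P = P_{(-\infty,a]}(A)$ and $Q = P_{[b,\infty)}(A)$ in $\Mx\overline{\otimes}\Bc(l^2)$, obtaining
$$\rho(P,Q) \leq L(\psi)\cdot\rho\big((\psi\otimes I)(P), (\psi\otimes I)(Q)\big) = L(\psi)\cdot\rho\big(P_{(-\infty,a]}(B), P_{[b,\infty)}(B)\big).$$
Rearranging and inserting this into the definition of $L_s$ (Definition \ref{speclip}) gives
$$\frac{b-a}{\rho\big(P_{(-\infty,a]}(B), P_{[b,\infty)}(B)\big)} \leq L(\psi)\cdot\frac{b-a}{\rho\big(P_{(-\infty,a]}(A), P_{[b,\infty)}(A)\big)} \leq L(\psi)\,L_s(A).$$
Taking the supremum over all $a < b$ yields $L_s(B) \leq L(\psi)L_s(A)$, so $B$ is spectrally Lipschitz whenever $A$ is and $L(\psi) < \infty$. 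The only genuine point requiring care is the spectral-projection identity of the second paragraph, i.e.\ that the amplified morphism respects the Borel functional calculus even when $\psi$ is not injective; the degenerate cases, where some $\rho$ vanishes or is infinite or where $L(\psi) \in \{0,\infty\}$, are absorbed by the conventions $\frac{0}{0} = \frac{\infty}{\infty} = 0$ already in force in Definitions \ref{speclip} and \ref{lipmaps}.
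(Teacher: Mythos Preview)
Your proof is correct and is actually more direct than the paper's. The paper proceeds by first invoking Proposition \ref{seqco} to identify $L_s(A)$ with the co-Lipschitz number $L(\phi)$ of a $*$-isomorphism $\phi: L^\infty(X,\mu)\cong W^*(A)\subseteq \Mx\overline{\otimes}\Bc(l^2)$, then observing that $(\psi\otimes I)\circ\phi$ restricts to an isomorphism onto $W^*((\psi\otimes I)(A))$, and finally applying the composition inequality $L((\psi\otimes I)\circ\phi|_S)\leq L(\psi)L(\phi|_S)$ from Proposition \ref{colipcomp}. This is conceptually pleasing, since it exhibits the corollary as a special case of functoriality of co-Lipschitz numbers under composition, but it routes through Proposition \ref{seqco}, which in turn relies on Proposition \ref{refcolip} and hence on the intrinsic characterization of quantum relations (Theorem \ref{abstractchar}).

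Your argument bypasses all of this machinery: you simply observe that the spectral projections of $A$ already live in $\Mx\overline{\otimes}\Bc(l^2)$, so the defining inequality of $L(\psi)$ applies to them directly, and the compatibility of normal unital $*$-homomorphisms with the Borel functional calculus transports those projections to the spectral projections of $(\psi\otimes I)(A)$. The result follows immediately from the definition of $L_s$. What the paper's approach buys is a clean explanation of \emph{why} the inequality holds (composition of morphisms); what your approach buys is a self-contained two-line proof that does not need the identification of $L_s$ with a co-Lipschitz number.
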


\begin{proof}
Let $\phi: L^\infty(X,\mu) \cong W^*(A)$ be a $*$-isomorphism as in
Proposition \ref{seqco}. Then $(\psi \otimes I)\circ\phi$ restricts to
an isomorphism from $L^\infty(S,\mu|_S)$ to $W^*((\psi\otimes I)(A))$
for some $S \subseteq X$, and we can take this to be the $*$-isomorphism
used in Proposition \ref{seqco} for the operator $(\psi\otimes I)(A)$. So
$$L_s((\phi \otimes I)(A)) = L(\psi\circ\phi|_S) \leq L(\psi)L(\phi|_S)
\leq L(\psi)L_s(A)$$
by Proposition \ref{colipcomp}.
\end{proof}

Next we want to show that the quantum distance function $\rho$ associated to
a quantum pseudometric can be recovered from the spectral Lipschitz gauge.
In order to prove this we need to show that there are sufficiently
many spectrally Lipschitz operators. The basic tool is the following
analog of (\cite{W6}, Lemma 1.22).

The {\it spectral join} of a bounded family of self-adjoint operators
$\{A_\lambda\}$ is the self-adjoint operator $\bigvee A_\lambda$ whose
spectral projections satisfy
$$P_{(a,\infty)}\left(\bigvee A_\lambda\right) =
\bigvee_\lambda P_{(a,\infty)}(A_\lambda)$$
for all $a \in \Rb$. Their {\it spectral meet} $\bigwedge A_\lambda$ has
spectral projections
$$P_{[a, \infty)}\left(\bigwedge A_\lambda\right) =
\bigwedge P_{[a,\infty)}(A_\lambda).$$
Equivalently, $\bigwedge A_\lambda = -\bigvee(-A_\lambda)$. 

\begin{lemma}\label{distfn}
Let $\rho$ be a quantum distance function on a von Neumann algebra $\Mx$,
let $R \in \Mx \overline{\otimes} \Bc(l^2)$ be a nonzero projection,
and let $c > 0$. Then
$$\bigvee \min\{\rho(P,R), c\}\cdot P,$$
taking the spectral join over all projections $P$ in $\Mx \overline{\otimes}
\Bc(l^2)$, has spectral Lipschitz number at most 1.
\end{lemma}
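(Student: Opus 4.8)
The plan is to prove directly that $\rho(P_{(-\infty,a]}(A),\,P_{[b,\infty)}(A)) \ge b-a$ for all $a<b$, where $A = \bigvee \min\{\rho(P,R),c\}\cdot P$ is the given spectral join; taking the supremum in Definition \ref{speclip} then yields $L_s(A)\le 1$.

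First I would pin down the spectral projections of $A$. Each summand $\min\{\rho(P,R),c\}\cdot P$ is positive with norm at most $c$, so $A$ is positive with spectrum in $[0,c]$. By the defining property of the spectral join, $P_{(a,\infty)}(A) = \bigvee_P P_{(a,\infty)}(\min\{\rho(P,R),c\}\cdot P)$, and for $0\le a<c$ one has $\min\{\rho(P,R),c\}>a \iff \rho(P,R)>a$, so
$$P_{(a,\infty)}(A) = G_a := \bigvee\{P : \rho(P,R) > a\}.$$
Writing $E_a = P_{(-\infty,a]}(A) = I - G_a$ and $F_b = P_{[b,\infty)}(A)$, the cases $a<0$ and $b>c$ give $E_a=0$ or $F_b=0$, whence $\rho(E_a,F_b)=\infty$ by properties (i) and (iii) of Definition \ref{qdfdef}; so I may assume $0\le a<b\le c$.

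The core is two estimates. For the first, I would note $F_b = \bigwedge_{a'<b} G_{a'}$ (right continuity of the spectral resolution), so $F_b \le G_{a'}$ for each $a'$ with $0\le a'<b$. The strengthened form of property (iv) gives $\rho(G_{a'},R) = \inf\{\rho(P,R): \rho(P,R)>a'\} \ge a'$, and monotonicity (noted after Definition \ref{qdfdef}) gives $\rho(F_b,R) \ge \rho(G_{a'},R) \ge a'$; letting $a'\uparrow b$ yields $\rho(F_b,R)\ge b$. For the second estimate I would show that every projection $\tilde Q$ with $E_a\tilde Q\neq 0$ satisfies $\rho(\tilde Q,R)\le a$: if instead $\rho(\tilde Q,R)>a$, then $\tilde Q \le G_a = I-E_a$, forcing $\tilde Q E_a = 0$, a contradiction. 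Hence $\sup\{\rho(\tilde Q,R): E_a\tilde Q\neq 0\} \le a$.

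Finally I would combine these through the triangle inequality, property (v), applied with $P=F_b$, $Q=E_a$, and endpoint $R$:
$$\rho(F_b,R) \le \rho(F_b,E_a) + \sup\{\rho(\tilde Q,R): E_a\tilde Q\neq 0\}.$$
The first estimate bounds the left side below by $b$ and the second bounds the trailing supremum above by $a$, so $b \le \rho(F_b,E_a) + a$, i.e.\ $\rho(E_a,F_b)\ge b-a$ by symmetry (iii). I expect the main obstacle to be the bookkeeping of the first step---reading off the spectral projections of the spectral join through the truncation $\min\{\cdot,c\}$---together with the structural insight that closes the estimate: the asymmetric supremum in the triangle inequality (v) is controlled by exactly $a$ precisely because $E_a$ is the orthocomplement of the join $G_a$ of all projections at distance $>a$ from $R$.
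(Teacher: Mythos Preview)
Your proof is correct and follows essentially the same route as the paper's: identify $P_{(a,\infty)}(A)$ as the join $G_a$ of projections at distance $>a$ from $R$, observe that any $\tilde Q$ meeting $E_a$ must satisfy $\rho(\tilde Q,R)\le a$, and feed these into the triangle inequality (v). The only cosmetic difference is the placement of the limiting argument: the paper works with $P=G_{b-\epsilon}$ throughout and lets $\epsilon\to 0$ at the end, whereas you work with $F_b$ directly, use $F_b\le G_{a'}$ and monotonicity, and let $a'\uparrow b$ midway.
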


\begin{proof}
Let $A$ be this spectral join. Then $P_{(a,\infty)}(A)$ is the join
of the projections whose distance from $R$ is greater than $a$, for any
$a < c$. Now let $a,b \in \Rb$, $a < b$; we must show that
$b - a \leq \rho(P_{(-\infty, a]}(A), P_{[b, \infty)}(A))$. We
may assume that $b \leq c$ as otherwise $P_{[b,\infty)}(A) = 0$ and
so the right side is infinite. Let $\epsilon > 0$,
$P = P_{(b - \epsilon, \infty)}(A)$, and $Q = P_{(-\infty, a]}(A)$ and
observe that if $\tilde{Q}$ is any projection such that $Q\tilde{Q}
\neq 0$ then $\rho(\tilde{Q}, R) \leq a$ (as otherwise we would have
$\tilde{Q} \leq P_{(a, \infty)}(A)$ by the definition of $A$). Also
$\rho(P,R) \geq b - \epsilon$ since $P$ is a join of projections
whose distance from $R$ is greater than $b - \epsilon$. Thus
$$b - \epsilon \leq \rho(P, R) \leq \rho(P, Q) + \sup\{\rho(\tilde{Q}, R):
Q\tilde{Q} \neq 0\} \leq \rho(P,Q) + a$$
by Definition \ref{qdfdef} (v). Thus $b - a \leq
\rho(P_{(-\infty, a]}(A), P_{[b,\infty)}(A)) + \epsilon$, and taking
$\epsilon \to 0$ yields the desired inequality.
\end{proof}

This lets us recover the quantum distance function from the spectral Lipschitz
gauge.

\begin{theo}\label{distrecov}
Let $\rho$ be a quantum distance function on a von Neumann algebra
$\Mx$. Then
\begin{eqnarray*}
\rho(P,Q) &=& \sup\{a \geq 0: \hbox{some self-adjoint
$A \in \Mx \overline{\otimes} \Bc(l^2)$ satisfies}\cr
&&\phantom{\limsup} L_s(A) \leq 1, P \leq P_{(-\infty, 0]}(A),
\hbox{ and }Q \leq P_{[a, \infty)}(A)\}
\end{eqnarray*}
for any projections $P,Q \in \Mx \overline{\otimes} \Bc(l^2)$.
\end{theo}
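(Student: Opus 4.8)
The plan is to write $\sigma(P,Q)$ for the right-hand supremum and to prove the two inequalities $\sigma(P,Q) \le \rho(P,Q)$ and $\rho(P,Q) \le \sigma(P,Q)$ separately. For the inequality $\sigma(P,Q) \le \rho(P,Q)$ I would take any self-adjoint $A \in \Mx \overline{\otimes} \Bc(l^2)$ with $L_s(A) \le 1$, $P \le P_{(-\infty,0]}(A)$, and $Q \le P_{[a,\infty)}(A)$, and show $a \le \rho(P,Q)$. Since $L_s(A) \le 1$, applying Definition \ref{speclip} to the pair $0 < a$ forces $a = a - 0 \le \rho(P_{(-\infty,0]}(A), P_{[a,\infty)}(A))$ (the $\frac{0}{0}=0$ convention rules out a zero denominator when $a>0$; for $a=0$ the desired inequality is trivial). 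Then I would invoke the monotonicity of $\rho$ recorded after Definition \ref{qdfdef}, namely $P \le \tilde P \Rightarrow \rho(\tilde P, Q) \le \rho(P,Q)$, together with symmetry (iii) to get monotonicity in the second slot. Two applications give $\rho(P_{(-\infty,0]}(A), P_{[a,\infty)}(A)) \le \rho(P, P_{[a,\infty)}(A)) \le \rho(P,Q)$, so $a \le \rho(P,Q)$; taking the supremum over all such $A$ proves this direction. (The witness $A = 0$ at $a = 0$ shows the supremum set is nonempty.)

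For the reverse inequality I would fix $a$ with $0 < a < \rho(P,Q)$ and exhibit a witness achieving $a$; the natural candidate is the capped ``distance-to-$P$'' operator from Lemma \ref{distfn}, taking $R = P$ and $c = a$:
$$A = \bigvee \min\{\rho(P',P),\, c\}\cdot P',$$
the spectral join over all projections $P'$ in $\Mx \overline{\otimes} \Bc(l^2)$. Lemma \ref{distfn} gives $L_s(A) \le 1$ at no cost. The key computation is then that, for $0 \le t < c$, one has $P_{(t,\infty)}(A) = \bigvee\{P' : \rho(P',P) > t\}$, because a single summand $\min\{\rho(P',P),c\}\cdot P'$ contributes $P'$ to this join precisely when its eigenvalue exceeds $t$, and in the range $t<c$ this happens exactly when $\rho(P',P) > t$.

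Two containments finish the argument. For $P \le P_{(-\infty,0]}(A)$: since $P_{(-\infty,0]}(A) = I - P_{(0,\infty)}(A)$ and $P_{(0,\infty)}(A) = \bigvee\{P' : \rho(P',P) > 0\}$, the contrapositive of Definition \ref{qdfdef}(ii) (that $P'P \neq 0$ forces $\rho(P',P)=0$) shows every such $P'$ is orthogonal to $P$, hence so is their join, giving $P \le P_{(-\infty,0]}(A)$. For $Q \le P_{[a,\infty)}(A) = \bigwedge_{s<a} P_{(s,\infty)}(A)$: for each $s < a = c$ the projection $Q$ itself satisfies $\rho(Q,P) = \rho(P,Q) > a > s$ by (iii), so $Q$ occurs in the join defining $P_{(s,\infty)}(A)$ and thus $Q \le P_{(s,\infty)}(A)$; taking the meet over $s<a$ yields $Q \le P_{[a,\infty)}(A)$. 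Hence $a \le \sigma(P,Q)$ for every $a < \rho(P,Q)$, and letting $a \to \rho(P,Q)$ (including $a\to\infty$ when $\rho(P,Q)=\infty$) gives $\rho(P,Q) \le \sigma(P,Q)$.

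The main obstacle I anticipate is the spectral bookkeeping for the spectral join: I must verify carefully, directly from its defining relation $P_{(t,\infty)}(\bigvee A_\lambda) = \bigvee_\lambda P_{(t,\infty)}(A_\lambda)$, that $P_{(t,\infty)}(A) = \bigvee\{P' : \rho(P',P) > t\}$ throughout the range $0 \le t < c$, and confirm that the cap at $c$ plays no role in that range. Once this identity is in hand, both directions reduce to the order-theoretic properties (ii), (iii), and monotonicity from (iv), plus Lemma \ref{distfn}, which already absorbs the genuinely hard Lipschitz estimate through Definition \ref{qdfdef}(v).
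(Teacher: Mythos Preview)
Your proof is correct and follows essentially the same approach as the paper. The only cosmetic difference is in the second inequality: the paper applies Lemma \ref{distfn} with $c = \rho(P,Q)$ (when finite) and observes directly that $Q \leq P_{\{c\}}(A)$ and $P \leq P_{\{0\}}(A)$, while you take $c = a < \rho(P,Q)$ and pass to the limit; both variants handle the case $\rho(P,Q) = \infty$ by letting $c \to \infty$.
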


\begin{proof}
Let $\tilde{\rho}(P,Q)$ be the displayed supremum.
Given $a$ and $A$ satisfying the conditions in the definition of
$\tilde{\rho}(P,Q)$, we must have
$$a \leq \rho(P_{(-\infty, 0]}(A), P_{[a,\infty)}(A))
\leq \rho(P,Q)$$
(the first inequality because $L_s(A) \leq 1$, the second because
$P \leq P_{(-\infty, a]}(A)$ and $Q \leq P_{[a,\infty)}(A)$). So
$\tilde{\rho}(P,Q) \leq \rho(P,Q)$.
Conversely, suppose $\rho(P,Q) < \infty$ and
let $A$ be the operator defined in Lemma \ref{distfn}
with $R = P$ and $c = \rho(P,Q)$. Then it is clear that
$Q \leq P_{\{c\}}(A)$, and we have $P \leq P_{\{0\}}(A)$
since $P$ is orthogonal to any projection whose
distance from $P$ is nonzero (Definition \ref{qdfdef} (ii)).
According to Lemma \ref{distfn} we have $L_s(A) \leq 1$, so this
shows that $\rho(P,Q) = c \leq \tilde{\rho}(P,Q)$. Thus
$\tilde{\rho}(P,Q) = \rho(P,Q)$. If $\rho(P,Q) = \infty$ then take
$c \to \infty$ in the preceding argument.
\end{proof}

We now proceed to the main result of this section, which abstractly
characterizes spectral Lipschitz gauges. Because of the mutual
recoverability of spectral Lipschitz numbers and quantum distance
functions and the
equivalence of quantum distance functions with quantum pseudometrics
(Theorem \ref{abch}), this gives us a second intrinsic characterization
of quantum pseudometrics.

In order to state the relevant definition we need the following notion.
Let $\Mx$ be a von Neumann algebra and let $A,B \in \Mx$, 
$A \geq 0$. Then $A_{[B]} \in \Mx$ will denote the positive operator with
spectral subspaces
$$P_{(a,\infty)}(A_{[B]}) = [BP_{(a,\infty)}(A)]$$
for $a > 0$, where the bracket on the right side denotes range projection.

\begin{defi}\label{abspeclip}
A {\it quantum Lipschitz gauge} on a von Neumann algebra $\Mx$ is a function
$\Lc$ from the self-adjoint part of $\Mx \overline{\otimes} \Bc(l^2)$
to $[0,\infty]$ which satisfies
\begin{quote}
(i) $\Lc(A + I) = \Lc(A)$

\noindent (ii) $\Lc(aA) = |a|\Lc(A)$

\noindent (iii) $\Lc(A \vee \tilde{A}) \leq
\max\{\Lc(A), \Lc(\tilde{A})\}$

\noindent (iv) $\Lc(A_{[B]}) \leq \Lc(A)$ if $A \geq 0$

\noindent (v) if $A_\lambda \to A$ weak operator then
$\Lc(A) \leq \sup \Lc(A_\lambda)$
\end{quote}
for any $a \in \Rb$, any self-adjoint $A, \tilde{A}, A_\lambda \in
\Mx \overline{\otimes} \Bc(l^2)$ with $\sup \|A_\lambda\| < \infty$, and
any $B \in I \otimes \Bc(l^2)$.
(In (i), $I$ is the unit in $\Mx \overline{\otimes} \Bc(l^2)$; in
(iii), $A \vee \tilde{A}$ is the spectral join of $A$ and $\tilde{A}$.)
\end{defi}

We emphasize that we do not assume $\Lc(A + \tilde{A}) \leq \Lc(A) +
\Lc(\tilde{A})$; see Example \ref{countersum} below.

\begin{lemma}\label{joinmeet}
Let $\Lc$ be a quantum Lipschitz gauge on a von Neumann algebra $\Mx$
and let $\{A_\lambda\}$ be a bounded family of self-adjoint operators
in $\Mx \overline{\otimes} \Bc(l^2)$. Then
$$\Lc\left(\bigvee A_\lambda\right), \Lc\left(\bigwedge A_\lambda\right)
\leq \sup \Lc(A_\lambda).$$
\end{lemma}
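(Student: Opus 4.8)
The plan is to reduce the statement about arbitrary bounded spectral joins to the case of finite joins, which is handled directly by axiom (iii), and then to pass to the limit using the weak operator semicontinuity axiom (v). Throughout I set $M = \sup_\lambda \|A_\lambda\| < \infty$, so that every $A_\lambda$, and every join we form, has spectrum in $[-M,M]$.

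First I would dispose of finite joins. By axiom (iii) and an easy induction on the cardinality of $F$, any finite spectral join satisfies $\Lc(\bigvee_{\lambda \in F} A_\lambda) \le \max_{\lambda \in F}\Lc(A_\lambda) \le \sup_\lambda \Lc(A_\lambda)$. I then index the finite subsets $F$ of the family by inclusion and set $B_F = \bigvee_{\lambda \in F} A_\lambda$, obtaining an increasing net all of whose members satisfy $\Lc(B_F) \le \sup_\lambda \Lc(A_\lambda)$ and $\|B_F\| \le M$.

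Next I would show that $B_F \to \bigvee_\lambda A_\lambda =: B$ in the weak operator topology, after which axiom (v) gives $\Lc(B) \le \sup_F \Lc(B_F) \le \sup_\lambda \Lc(A_\lambda)$, which is the join half of the lemma. Fix $b \in \Rb$. Since joins in the complete projection lattice of $\Mx \overline{\otimes}\Bc(l^2)$ may be computed as joins of finite subjoins, the projections $P_{(b,\infty)}(B_F) = \bigvee_{\lambda \in F} P_{(b,\infty)}(A_\lambda)$ increase with $F$ to $\bigvee_\lambda P_{(b,\infty)}(A_\lambda) = P_{(b,\infty)}(B)$, and an increasing net of projections converges strongly to its join. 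For a fixed vector $v$ I would then invoke the layer-cake form of the spectral integral, $\langle Tv,v\rangle = -M\|v\|^2 + \int_{-M}^M \langle P_{(b,\infty)}(T)v,v\rangle\,db$, valid for self-adjoint $T$ with spectrum in $[-M,M]$, and apply it to $T = B_F$ and to $T = B$. The integrands $g_F(b) = \langle P_{(b,\infty)}(B_F)v,v\rangle$ are uniformly bounded by $\|v\|^2$, are nonincreasing in $b$, and increase pointwise in $F$ to $g(b) = \langle P_{(b,\infty)}(B)v,v\rangle$; I would argue that this forces $\int_{-M}^M g_F \to \int_{-M}^M g$ and hence $\langle B_F v, v\rangle \to \langle Bv,v\rangle$. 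Polarizing over self-adjoint operators then yields $B_F \to B$ weak operator.

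I expect the main obstacle to be precisely this interchange of the net limit with the integral: monotone convergence can fail for nets of functions (the net of indicator functions of finite subsets of $[0,1]$, ordered by inclusion, increases pointwise to $1$ yet has integral $0$), so the argument must genuinely exploit that the $g_F$ are uniformly bounded and monotone in $b$. Concretely, I would approximate $\int_{-M}^M g$ from below by a lower Riemann sum based at finitely many points $b_1 < \cdots < b_n$, use pointwise convergence at these finitely many points together with the directedness of the net to select a single $F^\ast$ with $g_{F^\ast}(b_i)$ close to $g(b_i)$ for every $i$, and then use monotonicity of $g_{F^\ast}$ in $b$ to bound it from below on each subinterval. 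Finally, the meet case is immediate from the join case: by the identity $\bigwedge A_\lambda = -\bigvee(-A_\lambda)$ recorded above, together with $\Lc(-T) = \Lc(T)$ (axiom (ii) with $a=-1$), we obtain $\Lc(\bigwedge A_\lambda) = \Lc(\bigvee(-A_\lambda)) \le \sup_\lambda \Lc(-A_\lambda) = \sup_\lambda \Lc(A_\lambda)$.
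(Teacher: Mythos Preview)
Your proposal is correct and follows essentially the same route as the paper: finite joins via axiom (iii) and induction, arbitrary joins via axiom (v) applied to the net of finite partial joins, and meets via the identity $\bigwedge A_\lambda = -\bigvee(-A_\lambda)$ together with axiom (ii). The only difference is level of detail: the paper simply asserts that the net of finite joins converges weak operator to the full join, whereas you supply an explicit layer-cake/Riemann-sum argument; a shorter alternative is to observe that the net $\{B_F\}$ is increasing in the operator order (since $P_{(a,\infty)}(B_F) \le P_{(a,\infty)}(B_{F'})$ for $F \subseteq F'$ and all $a$) and bounded, hence converges strongly---and therefore weak operator---to its supremum, which one checks is $\bigvee_\lambda A_\lambda$.
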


(Again, $\bigvee A_\lambda$ and $\bigwedge A_\lambda$ are the spectral
join and meet. The desired inequality holds for finite joins by property
(iii), and then for arbitrary joins by property (v), taking the weak
operator limit of the net of finite joins; the inequality for
meets then follows from the identity $\bigwedge A_\lambda =
- \bigvee - A_\lambda$.)

\begin{theo}\label{abstractlip}
Let $\Mx$ be a von Neumann algebra. If $\rho$ is a quantum distance
function on $\Mx$ (Definition \ref{qdfdef}) then the associated
spectral Lipschitz gauge $L_s$ is a quantum Lipschitz gauge.
Conversely, if $\Lc$ is a quantum Lipschitz gauge then 
\begin{eqnarray*}
\rho_\Lc(P,Q) &=& \sup\{a \geq 0: \hbox{some self-adjoint
$A \in \Mx \overline{\otimes} \Bc(l^2)$ satisfies}\cr
&&\phantom{\limsup} L_s(A) \leq 1, P \leq P_{(-\infty, 0]}(A),
\hbox{ and }Q \leq P_{[a, \infty)}(A)\}
\end{eqnarray*}
is a quantum distance function. The two constructions are inverse
to each other.
\end{theo}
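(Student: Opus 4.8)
The plan is to read the statement as asserting that the two assignments $\rho \mapsto L_s$ and $\Lc \mapsto \rho_\Lc$ are mutually inverse bijections between quantum distance functions and quantum Lipschitz gauges on $\Mx$. (In the displayed formula for $\rho_\Lc$ the test operator $A$ is the one subject to $\Lc(A)\le 1$, so that $\rho_\Lc$ genuinely depends on $\Lc$; this is the exact analogue of the recovery formula in Theorem \ref{distrecov}.) One of the two round trips is then free: if $\rho$ is a quantum distance function, the formula defining $\rho_{L_s}$ is precisely the supremum in Theorem \ref{distrecov}, so $\rho_{L_s} = \rho$ automatically. Hence the work divides into three tasks: (1) $L_s$ is a quantum Lipschitz gauge; (2) $\rho_\Lc$ is a quantum distance function; (3) the remaining round trip $L_s^{\rho_\Lc} = \Lc$.

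For (1) I would check the five axioms of Definition \ref{abspeclip} against the seven of Definition \ref{qdfdef}. Axioms (i) and (ii) are spectral bookkeeping: adding $I$ translates the cuts $a,b$ together and scaling by $a\in\Rb$ scales them, the case $a<0$ being absorbed by the symmetry $\rho(P,Q)=\rho(Q,P)$. For the spectral-join axiom (iii) I would use the defining relation $P_{(a,\infty)}(A\vee\tilde A)=P_{(a,\infty)}(A)\vee P_{(a,\infty)}(\tilde A)$ (equivalently $P_{(-\infty,a]}(A\vee\tilde A)=P_{(-\infty,a]}(A)\wedge P_{(-\infty,a]}(\tilde A)$) together with monotonicity and the join formula \ref{qdfdef} (iv), replacing the upper cut $P_{[b,\infty)}$ by the open projections $P_{(b',\infty)}$ and letting $b'\uparrow b$. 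Axiom (iv), the $A_{[B]}$ inequality, is exactly matched to property \ref{qdfdef} (vi) after writing $P_{(a,\infty)}(A_{[B]})=[B\,P_{(a,\infty)}(A)]$; this is a direct if fiddly translation. Axiom (v), weak operator lower semicontinuity, is the delicate point, since weak operator convergence of self-adjoint operators does not control spectral projections in infinite dimensions; I expect to feed spectral projections into property \ref{qdfdef} (vii) only after reducing to the monotone spectral nets that actually occur (as in Lemma \ref{joinmeet}), where convergence is strong and spectral projections behave, using also that weak and strong operator convergence agree for projections.

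For (2) I would verify Definition \ref{qdfdef} (i)--(vii) for $\rho_\Lc$. Properties (i), (ii) are immediate (use $A=0$; and $P\le P_{(-\infty,0]}(A)$, $Q\le P_{[a,\infty)}(A)$ force $P\perp Q$ for $a>0$), and (iii) follows by passing from a witness $A$ to $aI-A$. Property (iv) uses spectral meets: from witnesses $A_1,A_2$ for $\rho_\Lc(P,R)$ and $\rho_\Lc(Q,R)$, the operator $A_1\wedge A_2$ witnesses $\rho_\Lc(P\vee Q,R)$, with $\Lc(A_1\wedge A_2)\le 1$ by Lemma \ref{joinmeet}. The triangle inequality (v) needs no subadditivity of $\Lc$: from a witness $A$ for $\rho_\Lc(P,R)\ge\alpha$ and $s=\rho_\Lc(P,Q)$, the projection $\tilde Q=Q\wedge P_{(-\infty,s+\epsilon)}(A)$ is nonzero, and the shifted operator $A-(s+\epsilon)I$ (gauge unchanged by axiom (i)) witnesses $\rho_\Lc(\tilde Q,R)\ge\alpha-s-\epsilon$. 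Property (vi) is exactly where gauge axiom \ref{abspeclip} (iv) enters, converting witnesses for $([B^*P],Q)$ into witnesses for $(P,[BQ])$ via the $A_{[B]}$ construction, and (vii) again rests on the weak operator lower semicontinuity of $\Lc$ applied to a limit of witnesses, with the same spectral-projection caveat as in (1).

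Task (3) is the heart of the theorem. The inequality $L_s^{\rho_\Lc}\le\Lc$ is easy: for $0<t<1/\Lc(A)$ the operator $t(A-aI)$ has gauge $t\Lc(A)<1$ and witnesses $\rho_\Lc(P_{(-\infty,a]}(A),P_{[b,\infty)}(A))\ge t(b-a)$, so the defining ratio is $\le 1/t\to\Lc(A)$. The reverse inequality $\Lc\le L_s^{\rho_\Lc}$ is the main obstacle and amounts to lifting the $\bigwedge\bigvee$ reconstruction from the abelian Theorem \ref{abablip}. Assuming (after scaling) $L_s^{\rho_\Lc}(A)<1$, the definition of $\rho_\Lc$ furnishes for each $a<b$ a witness $A_{ab}$ with $\Lc(A_{ab})\le 1$, $P_{(-\infty,a]}(A)\le P_{(-\infty,0]}(A_{ab})$, and $P_{[b,\infty)}(A)\le P_{[b-a,\infty)}(A_{ab})$. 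Truncating, $\tilde A_{ab}=(A_{ab}\vee 0)\wedge (b-a)I$ still satisfies $\Lc(\tilde A_{ab})\le 1$ (Lemma \ref{joinmeet}, since $\Lc$ annihilates scalars), and I would prove the spectral identity $A=\bigwedge_a\bigvee_b(\tilde A_{ab}+aI)$, whence $\Lc(A)\le\sup_{a,b}\Lc(\tilde A_{ab}+aI)\le 1$ by Lemma \ref{joinmeet}. The difficulty is that, unlike the abelian situation where all operators commute and the identity is pointwise, the $\tilde A_{ab}$ are genuinely noncommuting; proving that the iterated spectral join and meet reproduce the spectral resolution of $A$ exactly, from the two domination relations alone and right continuity of the spectral families, is the real technical core, and I expect this reconstruction — together with the weak operator semicontinuity points in (1) and (2) — to be where essentially all the effort lies.
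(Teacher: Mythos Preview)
Your plan is essentially the paper's own proof, organised into the same three tasks and with the same $\bigwedge\bigvee$ reconstruction at the end. Two points where your anticipation diverges slightly from what actually happens are worth flagging.

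First, for axiom (v) of $L_s$ (bounded weak operator lower semicontinuity), your idea of ``reducing to the monotone spectral nets that actually occur (as in Lemma \ref{joinmeet})'' is not what the paper does and is not obviously a viable route: Lemma \ref{joinmeet} concerns spectral joins and meets, not approximation of spectral projections under weak operator convergence. The paper instead invokes a technical lemma from the companion paper (\cite{W6}, Lemma 2.31): given $A_\lambda \to A$ boundedly weak operator, it produces nets of projections $P_\kappa \to P_{(-\infty,a]}(A)$ and $Q_\kappa \to P_{[b,\infty)}(A)$ weak operator with each $P_\kappa \le P_{(-\infty,a+\epsilon]}(A_\lambda)$ and $Q_\kappa \le P_{[b-\epsilon,\infty)}(A_\lambda)$ for some $\lambda$, so that property (vii) of $\rho$ can be applied directly. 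Similarly, for property (vii) of $\rho_\Lc$ the paper does not invoke axiom (v) of $\Lc$ on the nose; it truncates the witnesses to $[0,a]$, passes to a weak operator convergent subnet, and uses positivity ($\langle A_\lambda v,v\rangle \to 0$ forces $Av = 0$) to control the limiting spectral projections.

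Second, your worry that the identity $A = \bigwedge_a \bigvee_b (\tilde A_{ab} + aI)$ is hard because the $\tilde A_{ab}$ do not commute is somewhat misplaced: the identity follows purely from the spectral inequalities $P_{(-\infty,a]}(A) \le P_{\{a\}}(\tilde A_{ab} + aI)$ and $P_{[b,\infty)}(A) \le P_{\{b\}}(\tilde A_{ab} + aI)$ together with $aI \le \tilde A_{ab} + aI \le bI$, by direct computation of the spectral projections of the iterated join and meet (for instance, one checks $P_{(a,\infty)}(\bigvee_b(\tilde A_{ab}+aI)) = P_{(a,\infty)}(A)$ for each $a$, and then the outer meet over $a$ recovers all spectral projections of $A$). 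No commutation is used, and the paper simply asserts the identity.
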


\begin{proof}
Let $\rho$ be a quantum distance function on $\Mx$ and let
$L_s$ be the associated spectral Lipschitz gauge. We verify properties
(i) -- (v) of Definition \ref{abspeclip}. Properties (i) and (ii)
are easy. For (iii), let $\epsilon > 0$ and let $a, b \in \Rb$, $a < b$,
and observe that
$$P_{(-\infty, a]}(A \vee \tilde{A}) = P_{(-\infty, a]}(A) \wedge
P_{(-\infty, a]}(\tilde{A})$$
and
$$P_{[b, \infty)}(A \vee \tilde{A}) \leq
P_{(b-\epsilon,\infty)}(A \vee \tilde{A}) = P_{(b-\epsilon, \infty)}(A) \vee
P_{(b-\epsilon, \infty)}(\tilde{A}).$$
So
\begin{eqnarray*}
&&\rho(P_{(-\infty, a]}(A \vee \tilde{A}),
P_{[b, \infty)}(A \vee \tilde{A}))\cr
&&\phantom{\limsup}\geq
\min\{\rho(P_{(-\infty, a]}(A \vee \tilde{A}), P_{(b-\epsilon, \infty)}(A)),
\rho(P_{(-\infty, a]}(A \vee \tilde{A}), P_{(b-\epsilon, \infty)}(\tilde{A})\}\cr
&&\phantom{\limsup}\geq
\min\{\rho(P_{(-\infty, a]}(A), P_{[b-\epsilon, \infty)}(A)),
\rho(P_{(-\infty, a]}(\tilde{A}), P_{[b-\epsilon, \infty)}(\tilde{A})\},
\end{eqnarray*}
and hence
\begin{eqnarray*}
&&\frac{b-a}{\rho(P_{(-\infty, a]}(A \vee \tilde{A}), P_{[b,\infty)}(A \vee \tilde{A}))}\cr
&&\phantom{\limsup}\leq
\max\left\{\frac{b-a}{\rho(P_{(-\infty, a]}(A),P_{[b-\epsilon,\infty)}(A))},
\frac{b-a}{\rho(P_{(-\infty, a]}(\tilde{A}),P_{[b-\epsilon,\infty)}(\tilde{A}))}\right\}\cr
&&\phantom{\limsup}\leq
\frac{b-a}{b-a-\epsilon}\max\{L_s(A), L_s(\tilde{A})\}.
\end{eqnarray*}
Taking $\epsilon \to 0$ and the supremum over $a$ and $b$ then yields
property (iii).

Next, suppose $A \geq 0$ and let $B \in I \otimes \Bc(l^2)$. Let $a,b \in \Rb$,
$0 \leq a < b$, and let $\epsilon > 0$. To verify property (iv), as in the
proof of property (iii) it will suffice to show that
$$\rho(P_{(-\infty, a]}(A_{[B]}), P_{[b, \infty)}(A_{[B]})) \geq
\rho(P_{(-\infty, a]}(A), P_{(b-\epsilon, \infty)}(A)).$$
Thus let $P = P_{(-\infty, a]}(A)$ and $Q = P_{(b-\epsilon,\infty)}(A)$
and observe that $P_{(b-\epsilon,\infty)}(A_{[B]}) =[BQ]$ and
\begin{eqnarray*}
P_{(-\infty,a]}(A_{[B]}) &=& I - P_{(a,\infty)}(A_{[B]})\cr
&=& I - [BP_{(a,\infty)}(A)]\cr
&=& I - [B(I-P)].
\end{eqnarray*}
We claim that $[B^*(I - [B(I-P)])] \leq P$. To see this suppose
$v \perp {\rm ran}(P)$. Then $Bv \in {\rm ran}(B(I-P))$, so that
$\langle Bv,w\rangle = 0$ for any $w \in {\rm ran}(I - [B(I-P)])$.
That is, $\langle v, B^*w\rangle = 0$, so we have shown that
$v \perp {\rm ran}(B^*(I - [B(I-P)]))$. This proves the claim.
It now follows from Definition \ref{qdfdef} (vi) that
\begin{eqnarray*}
\rho(P_{(-\infty, a]}(A_{[B]}), P_{(b-\epsilon,\infty)}(A_{[B]}))
&=& \rho(I - [B(I-P)], [BQ])\cr
&=& \rho([B^*(I - [B(I-P)])], Q)\cr
&\geq& \rho(P,Q),
\end{eqnarray*}
as desired.

Finally, to prove property (v), suppose $A_\lambda \to A$ boundedly
weak operator and let $a,b \in \Rb$, $a < b$. Let $\epsilon > 0$. Then
applying (\cite{W6}, Lemma 2.31) to
$(A - aI) \oplus (bI - A) \in (\Mx\overline{\otimes} \Bc(l^2)) \oplus
(\Mx \overline{\otimes} \Bc(l^2))$, we get nets of projections
$\{P_\kappa\}$ and $\{Q_\kappa\}$ in $\Mx \overline{\otimes} \Bc(l^2)$
such that $P_\kappa \to P_{(-\infty, a]}(A)$,
$Q_\kappa \to P_{[b,\infty)}(A)$, and for each $\kappa$ we have
$P_\kappa \leq P_{(-\infty, a + \epsilon]}(A_\lambda)$ and
$Q_\kappa \leq P_{[b - \epsilon,\infty)}(A_\lambda)$ for some
$\lambda$. By Definition \ref{qdfdef} (vii)
\begin{eqnarray*}
\rho(P_{(-\infty, a]}(A), P_{[b,\infty)}(A))
&\geq& \inf_\kappa \rho(P_\kappa,Q_\kappa)\cr
&\geq& \inf_\lambda \rho(P_{(-\infty, a + \epsilon]}(A_\lambda),
P_{[b-\epsilon,\infty)}(A_\lambda))
\end{eqnarray*}
and so
\begin{eqnarray*}
\frac{b - a}{\rho(P_{(-\infty, a]}(A), P_{[b,\infty)}(A))}
&\leq& \sup_\lambda \frac{b - a}{\rho(P_{(-\infty, a + \epsilon]}(A_\lambda),
P_{[b- \epsilon,\infty)}(A_\lambda))}\cr
&\leq& \frac{b-a}{b - a - 2\epsilon} \sup_\lambda L_s(A_\lambda).
\end{eqnarray*}
Taking $\epsilon \to 0$ and the supremum over $a$ and $b$ then
yields $L_s(A) \leq \sup L_s(A_\lambda)$. This completes the proof
that $L_s$ is a quantum Lipschitz gauge.

Next let $\Lc$ be any quantum Lipschitz gauge. We verify that $\rho_\Lc$
is a quantum distance function. Property (i)
follows by taking $A = 0$ in the definition of $\rho_\Lc$,
property (ii) is immediate, and property (iii) follows from the fact
that $\Lc(aI - A) = \Lc(A)$. For property (iv), suppose there exist
self-adjoint operators $A, \tilde{A} \in \Mx \overline{\otimes} \Bc(l^2)$
such that $\Lc(A), \Lc(\tilde{A}) \leq 1$, $R \leq P_{(-\infty, 0]}(A)$,
$P \leq P_{[a,\infty)}(A)$, $R \leq P_{(-\infty, 0]}(\tilde{A})$, and
$Q \leq P_{[a,\infty)}(\tilde{A})$. Then we have $\Lc(A \vee \tilde{A})
\leq 1$,
$$R \leq P_{(-\infty, 0]}(A \vee \tilde{A}),$$
and
$$P \vee Q \leq P_{[a,\infty)}(A \vee \tilde{A}),$$
and taking the supremum over $a$ yields $\rho_\Lc(P \vee Q, R) \geq
\min\{\rho_\Lc(P,R), \rho_\Lc(Q,R)\}$. The reverse inequality is trivial,
so this verifies property (iv). For property (v) let $\epsilon > 0$
and find a self-adjoint operator $A \in \Mx \overline{\otimes} \Bc(l^2)$
such that $\Lc(A) \leq 1$, $P \leq P_{(-\infty, 0]}(A)$, and
$R \leq P_{[b,\infty)}(A)$
where $b = \rho_\Lc(P,R) - \epsilon$. Let $a \in \Rb$ be the largest value
such that $Q \leq P_{[a,\infty)}(A)$. Then $\rho_\Lc(P,Q) \geq a$, and
letting $\tilde{Q} = P_{(-\infty, a + \epsilon]}(A)$ we have
$Q\tilde{Q} \neq 0$ and $\rho_\Lc(\tilde{Q}, R) \geq b - a - \epsilon$. So
$$\rho_\Lc(P,R) = b + \epsilon \leq \rho_\Lc(P,Q) + \rho_\Lc(\tilde{Q}, R) + 2\epsilon,$$
and taking the supremum over $\tilde{Q}$ and $\epsilon \to 0$ yields
property (v).

To establish property (vi), let $\epsilon > 0$ and find a self-adjoint
operator $A \in \Mx \overline{\otimes} \Bc(l^2)$ such that $\Lc(A) \leq 1$,
$[B^*P] \leq P_{(-\infty, 0]}(A)$, and $Q \leq P_{(a, \infty)}(A)$
where $a = \rho_\Lc([B^*P], Q) - \epsilon$. By replacing $A$ with
$A^+ = A \vee 0$ we may assume it is positive. We now have
$\Lc(A_{[B]}) \leq 1$ and
$[BQ] \leq P_{(a, \infty)}(A_{[B]})$. We claim that
$P \leq P_{(-\infty, 0]}(A_{[B]})$, that is,
$P$ is orthogonal to $P_{(0,\infty)}(A_{[B]}) =
[BP_{(0,\infty)}(A)]$. For if $v \in {\rm ran}(P)$ and $w \in
{\rm ran}(P_{(0,\infty)}(A))$ then $\langle v, Bw\rangle =
\langle B^*v, w\rangle = 0$ because $[B^*P] \leq P_{(-\infty, 0]}(A)$,
and this proves the claim. Thus
$$\rho_\Lc(P,[BQ]) \geq a = \rho_\Lc([B^*P],Q) - \epsilon,$$
and taking $\epsilon \to 0$, we conclude that $\rho_\Lc(P, [BQ]) \geq
\rho_\Lc([B^*P], Q)$. The reverse inequality follows by symmetry
(property (iii)), interchanging $P$ with $Q$ and $B$ with $B^*$.
For property (vii), suppose $P_\lambda \to P$ and $Q_\lambda \to Q$
and let $a < \limsup \rho_\Lc(P_\lambda,Q_\lambda)$.
As we frequently have $\rho_\Lc(P_\lambda, Q_\lambda) > a$, find
self-adjoint operators $A_\lambda \in \Mx \overline{\otimes} \Bc(l^2)$ such
that $\Lc(A_\lambda) \leq 1$, $P_\lambda \leq P_{(-\infty, 0]}(A_\lambda)$,
and $Q_\lambda \leq P_{[a,\infty)}(A_\lambda)$. Replacing $A_\lambda$
with $(A_\lambda \vee 0) \wedge aI$, we may assume $0 \leq A_\lambda
\leq aI$. Now pass to a weak operator convergent subnet and let $A =
\lim A_\lambda$. Then $A$ is positive, $P_\lambda \leq P_{\{0\}}(A_\lambda)$,
and for any $v \in {\rm ran}(P)$ we have $\langle P_\lambda v,v\rangle \to
\langle Pv,v\rangle = \|v\|^2$, which implies that
$\langle Av,v\rangle = \lim \langle A_\lambda v,v\rangle = 0$ and
hence that $Av = 0$. This shows that $P \leq P_{\{0\}}(A)$, and
applying the same argument to $aI - A$ yields $Q \leq P_{\{a\}}(A)$. So
$\rho_\Lc(P,Q) \geq a$, and taking $a \to \limsup \rho(P_\lambda,Q_\lambda)$
yields the desired inequality. This completes the proof that $\rho_\Lc$ is
a quantum distance function.

Now let $\rho$ be any quantum distance function, let $L_s$ be
the associated spectral Lipschitz gauge, and let $\rho_{L_s}$
be the quantum distance function derived from $L_s$. Then
$\rho = \rho_{L_s}$ by Theorem \ref{distrecov}.

Finally, let $\Lc$ be any quantum Lipschitz gauge and let $L_s$ be
the spectral Lipschitz gauge associated to $\rho_\Lc$. We immediately have
that $\Lc(A) \leq 1$ implies $\rho_\Lc(P_{(-\infty, a]}(A), P_{[b,\infty)}(A))
\geq b - a$ for any $a,b \in \Rb$, $a < b$, and hence $L_s(A) \leq 1$;
this shows that $L_s(A) \leq \Lc(A)$ for all $A$. Conversely, suppose
$L_s(A) < 1$. For each $a,b \in \Rb$, $a < b$, we have
$\rho_\Lc(P_{(-\infty, a]}(A),P_{[b,\infty)}(A)) > b-a$ so we can
find a self-adjoint
operator $A_{ab} \in \Mx \overline{\otimes} \Bc(l^2)$ such that
$\Lc(A_{ab}) \leq 1$, $P_{(-\infty, a]}(A) \leq
P_{(-\infty, a]}(A_{ab})$, and $P_{[b,\infty)}(A) \leq
P_{[b,\infty)}(A_{ab})$. Then
$$A = \bigwedge_{|a| \leq \|A\|} \bigvee_{a < b \leq \|A\|}
((A_{ab} \vee aI) \wedge bI),$$
which shows that $\Lc(A) \leq 1$. We conclude that $\Lc(A) \leq L_s(A)$
for all $A$, and hence the two are equal.
\end{proof}

\begin{coro}\label{lipequiv}
Let $\Mx \subseteq \Bc(H)$ be a von Neumann algebra. If $\Vb$ is a
quantum pseudometric on $\Mx$ then $L_s$, defined by
$$L_s(A) = \sup\left\{\frac{b-a}{t}:a,b \in \Rb, a < b,
P_{(-\infty, a]}(A)(\Vc_t\otimes I)P_{[b,\infty)}(A) \neq 0\right\}$$
(for $A \in \Mx \overline{\otimes} \Bc(H)$ self-adjoint), is a quantum
Lipschitz gauge on $\Mx$. Conversely, if $\Lc$ is a quantum Lipschitz
gauge on $\Mx$ then $\Vb = \{\Vc_t\}$ with
\begin{eqnarray*}
\Vc_t &=& \{B \in \Bc(H): P_{(-\infty, 0]}(A)(B\otimes I)P_{[a,\infty)}(A)
= 0\cr
&&\hbox{ for all }a > t\hbox{ and all self-adjoint }A \in
\Mx \overline{\otimes} \Bc(l^2) \hbox{ with }\Lc(A) \leq 1\}
\end{eqnarray*}
is a quantum pseudometric on $\Mx$. The two constructions are inverse
to each other.
\end{coro}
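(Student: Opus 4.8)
The plan is to obtain this corollary as a formal composition of the two correspondences already in hand: Theorem \ref{abch}, which matches quantum pseudometrics with quantum distance functions via $\Vb \mapsto \rho_\Vb$ and $\rho \mapsto \Vb_\rho$, and Theorem \ref{abstractlip}, which matches quantum distance functions with quantum Lipschitz gauges via $\rho \mapsto L_s$ (the spectral Lipschitz gauge of Definition \ref{speclip}) and $\Lc \mapsto \rho_\Lc$. The only genuine work is to check that the two explicit formulas displayed in the corollary are exactly these composites; once that is verified, every substantive assertion follows from the earlier theorems, and the inverse relationship is automatic.

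First I would treat the forward direction. Recall from Definition \ref{projdist} that $\rho_\Vb(P,Q) = \inf\{t : P(\Vc_t\otimes I)Q \neq 0\}$, where $P(\Vc_t\otimes I)Q \neq 0$ abbreviates the existence of $C \in \Vc_t$ with $P(C\otimes I)Q \neq 0$. Since $\{\Vc_t\}$ is increasing, for fixed $a<b$ the set of admissible $t$ is an up-set with infimum $\rho_\Vb(P_{(-\infty,a]}(A), P_{[b,\infty)}(A))$, and $t \mapsto (b-a)/t$ is decreasing, so
$$\frac{b-a}{\rho_\Vb(P_{(-\infty,a]}(A), P_{[b,\infty)}(A))} = \sup\left\{\frac{b-a}{t} : P_{(-\infty,a]}(A)(\Vc_t\otimes I)P_{[b,\infty)}(A) \neq 0\right\}.$$
Taking the supremum over $a<b$ identifies the $L_s$ of the corollary with the spectral Lipschitz gauge of $\rho_\Vb$. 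As $\rho_\Vb$ is a quantum distance function by Proposition \ref{distances}, Theorem \ref{abstractlip} gives that $L_s$ is a quantum Lipschitz gauge.

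For the converse, given a quantum Lipschitz gauge $\Lc$ with associated distance function $\rho_\Lc$ (Theorem \ref{abstractlip}), I would show the corollary's $\Vc_t$ coincides with $\Vc_t^{\rho_\Lc} = \{B : \rho_\Lc(P,Q) > t \Rightarrow P(B\otimes I)Q = 0\}$ from Theorem \ref{abch}. For $\subseteq$, take $B$ in the corollary's set and $(P,Q)$ with $\rho_\Lc(P,Q) > t$; choosing $a$ with $t < a < \rho_\Lc(P,Q)$, the definition of $\rho_\Lc$ as a supremum yields a self-adjoint $A$ with $\Lc(A)\leq 1$, $P \leq P_{(-\infty,0]}(A)$, and $Q \leq P_{[a,\infty)}(A)$, so that $P(B\otimes I)Q = P\,[P_{(-\infty,0]}(A)(B\otimes I)P_{[a,\infty)}(A)]\,Q = 0$ by the domination relations. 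For the reverse inclusion, take $B \in \Vc_t^{\rho_\Lc}$, any $a>t$, and any self-adjoint $A$ with $\Lc(A)\leq 1$; setting $P = P_{(-\infty,0]}(A)$ and $Q = P_{[a,\infty)}(A)$, the operator $A$ itself witnesses $\rho_\Lc(P,Q) \geq a > t$, whence $P(B\otimes I)Q = 0$. Thus the corollary's $\Vb$ equals $\Vb_{\rho_\Lc}$, a quantum pseudometric by Theorem \ref{abch}.

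Finally, since the corollary's two maps are literally the composites $\Vb \mapsto \rho_\Vb \mapsto L_s$ and $\Lc \mapsto \rho_\Lc \mapsto \Vb_{\rho_\Lc}$, and each constituent correspondence in Theorems \ref{abch} and \ref{abstractlip} is an inverse pair, the composites are mutually inverse: starting from $\Vb$ the round trip returns $\Vb_{\rho_{L_s}} = \Vb_{\rho_\Vb} = \Vb$, and symmetrically from $\Lc$. I expect the only real obstacle to be the bookkeeping in the two displayed formula identifications above; all the analytic content has already been absorbed into the earlier theorems.
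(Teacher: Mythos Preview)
Your proposal is correct and matches the paper's approach exactly: the paper simply states that the corollary ``follows straightforwardly from Theorems \ref{abch} and \ref{abstractlip}'', and you have supplied precisely those straightforward details---verifying that the displayed formula for $L_s$ agrees with the spectral Lipschitz gauge of $\rho_\Vb$, that the displayed $\Vc_t$ agrees with $\Vc_t^{\rho_\Lc}$, and that the inverse relationship is inherited from the two constituent bijections.
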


The corollary follows straightforwardly from Theorems \ref{abch} and
\ref{abstractlip}.

As we mentioned in Section \ref{abcase}, the spectral Lipschitz gauge
is not a seminorm in general (although it is in the abelian case by
Proposition \ref{seqab} and (\cite{W6}, Corollary 1.21)). We conclude
this section with a simple example which demonstrates this; in fact,
we show that a sum of spectrally Lipschitz operators need not be
spectrally Lipschitz.

\begin{exam}\label{countersum}
Let $\Mx = M_2(\Cb)$ and let $n \in \Nb$. Define a quantum metric on
$\Mx$ by letting $a = 2/n$ and $b = c = 1$ in Proposition \ref{mtwo}.
Let $A = \left[
\begin{matrix}
1&0\cr
0&0
\end{matrix}\right]$
and $B = \frac{1}{n}\cdot\left[
\begin{matrix}
1&1\cr
1&1
\end{matrix}\right]$. Then $A$ has eigenvalues $0$ and $1$ and
the distance between the corresponding spectral subspaces is $1$, so
$L_s(A) = 1$. The operator $B$ has eigenvalues $0$ and $2/n$ and the
distance between the corresponding spectral subspaces is $2/n$, so
$L_s(B) = 1$. But the operator
$$A + B = \frac{1}{n}\cdot\left[
\begin{matrix}
n + 1&1\cr
1&1
\end{matrix}\right]$$
has eigenvalues $(n + 2 \pm \sqrt{n^2 + 4})/2n$ and the distance
between the corresponding spectral subspaces is $2/n$, so
$L_s(A + B) = \sqrt{n^2 + 4}/2$.
This witnesses the failure of the seminorm property $L(A + B) \leq
L(A) + L(B)$. Moreover, by taking the $l^\infty$ direct sum of this sequence
of examples as $n$ ranges over $\Nb$, we obtain operators $\tilde{A}$
and $\tilde{B}$ such that $L_s(\tilde{A}) = L_s(\tilde{B}) = 1$
and $L_s(\tilde{A} + \tilde{B}) = \infty$. Thus a sum
of two spectrally Lipschitz operators need not be spectrally Lipschitz.
\end{exam}

\subsection{Commutation Lipschitz numbers}
We have just seen that spectral Lipschitz gauges are algebraically very
poorly behaved. However, there is a related alternative notion that has
good algebraic properties. Recall that $[\Vc]_1$ denotes the closed
unit ball of the Banach space $\Vc$.

\begin{defi}\label{comlip}
Let $\Vb$ be a quantum pseudometric on a von Neumann algebra $\Mx
\subseteq \Bc(H)$. We define the {\it commutation Lipschitz number}
of $A \in \Mx$ to be
$$L_c(A) =
\sup\left\{\frac{\|[A,C]\|}{t}: t \geq 0, C \in [\Vc_t]_1\right\},$$
where $[A,C] = AC - CA$ and
with the convention that $\frac{0}{0} = 0$. We say that $A$ is
{\it commutation Lipschitz} if $L_c(A) < \infty$ and we call $L_c$ the
{\it commutation Lipschitz gauge}. We define
$${\rm Lip}(\Mx) = \{A \in \Mx: L_c(A) < \infty\}$$
and equip ${\rm Lip}(\Mx)$ with the norm $\|A\|_L = \max\{\|A\|, L_c(A)\}$.
\end{defi}

Note that taking $t = 0$ shows that $L_c(A) < \infty$ implies $A \in
\Vc_0' \subseteq \Mx$. This is a quantum version of the fact that
Lipschitz functions on a pseudometric space respect the equivalence
relation which makes points equivalent if their distance is zero.

We can define the commutation Lipschitz number of any operator in
$\Mx \overline{\otimes} \Bc(l^2)$ by stabilization (see Section \ref{refstab}).
Explicitly, we set
$$L_c(A) =
\sup\left\{\frac{\|[A,C\otimes I]\|}{t}:
t \geq 0, C \in [\Vc_t]_1\right\}$$
for $A \in \Mx \overline{\otimes} \Bc(l^2)$.

There is an analogue of the measurable de Leeuw map (\cite{W6}, Definition
1.19) for commutation Lipschitz operators. We use it to establish
the basic properties of commutation Lipschitz numbers.

\begin{defi}\label{opdeLeeuw}
Let $\Vb$ be a quantum pseudometric on a von Neumann algebra
$\Mx \subseteq \Bc(H)$. The {\it operator de Leeuw map} is the
map $\Phi: A \mapsto \bigoplus_\alpha \frac{1}{t}[A,C]$,
where $\alpha$ ranges over all pairs $(t,C)$ such that $t \geq 0$ and
$C \in [\Vc_t]_1$, from ${\rm Lip}(\Mx)$ into the $l^\infty$ direct
sum $\bigoplus \Bc(H)$. Also define $\pi: \Mx \to \bigoplus \Bc(H)$ by
$\pi(A) = \bigoplus A$.
\end{defi}

\begin{prop}\label{opdlprop}
Let $\Vb$ be a quantum pseudometric on a von Neumann algebra
$\Mx \subseteq \Bc(H)$ and let $\Phi$ be the operator de Leeuw map.

\noindent (a) For all $A \in {\rm Lip}(\Mx)$ we have $L_c(A) =
\|\Phi(A)\|$.

\noindent (b) $\Phi$ is linear and we have $\Phi(AB) = \pi(A)\Phi(B)
+ \Phi(A)\pi(B)$ for all $A,B \in {\rm Lip}(\Mx)$.

\noindent (c) The graph of $\Phi$ is weak* closed in $\Mx \oplus
\bigoplus \Bc(H)$.
\end{prop}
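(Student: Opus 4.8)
The plan is to dispatch (a) and (b) by direct computation and to reserve the real work for (c), where the infinite direct sum forces a Krein--Smulian argument.

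For (a), I would simply unwind the $l^\infty$ norm on $\bigoplus \Bc(H)$: the quantity $\|\Phi(A)\|$ is the supremum over all coordinates $\alpha = (t,C)$ of $\|\frac1t[A,C]\|$. Since $A \in {\rm Lip}(\Mx)$ forces $A \in \Vc_0'$ (as noted after Definition \ref{comlip}), the coordinates with $t = 0$ all vanish, so this supremum equals $\sup\{\|[A,C]\|/t : t > 0,\ C \in [\Vc_t]_1\}$, which is exactly $L_c(A)$ under the convention $\frac00 = 0$. For (b), linearity is immediate because each coordinate map $A \mapsto \frac1t[A,C]$ is linear. The product rule rests on the commutator identity $[AB,C] = A[B,C] + [A,C]B$; dividing by $t$ and reading off coordinatewise gives precisely $\Phi(AB) = \pi(A)\Phi(B) + \Phi(A)\pi(B)$, since $\pi(A)$ acts as multiplication by $A$ in every coordinate. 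Before invoking this I would record that ${\rm Lip}(\Mx)$ is an algebra: the same identity yields $L_c(AB) \leq \|A\|L_c(B) + \|B\|L_c(A) < \infty$, so that $\Phi(AB)$ is indeed defined.

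The heart of the matter is (c). I would apply the Krein--Smulian theorem to the graph $G \subseteq \Mx \oplus \bigoplus \Bc(H)$, which is a linear subspace by the linearity in (b): it suffices to show that $G$ meets the ball of each radius $r > 0$ in a weak* closed set, i.e.\ that the weak* limit of a \emph{bounded} net $(A_\lambda, \Phi(A_\lambda))$ with $\|A_\lambda\| \leq r$ and $L_c(A_\lambda) \leq r$ again lies in $G$. Passing to bounded nets is exactly what tames the infinite direct sum. Two facts then drive the conclusion. First, on bounded subsets of $\bigoplus \Bc(H)$ the weak* topology coincides with coordinatewise weak* convergence; I would verify this from the description of the predual as the $l^1$ direct sum of the trace-class spaces $\TC(H)$, splitting the pairing into a finite block (handled coordinatewise) and a tail (controlled by the uniform norm bound $r$). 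Second, multiplication is separately weak* continuous, so $A_\lambda \to A$ weak* forces $[A_\lambda,C] = A_\lambda C - C A_\lambda \to [A,C]$ weak* in each coordinate. Combining these, if $\Phi(A_\lambda) \to T$ weak* then the $\alpha$-coordinate of $T$ is $\frac1t[A,C]$; the coordinates with $t=0$ force $[A,C] = 0$ for all $C \in \Vc_0$, whence $A \in \Vc_0'$, and the coordinate formula for $T$ gives $\|T\| = \sup_\alpha\|\frac1t[A,C]\| = L_c(A)$. Since the norm is weak* lower semicontinuous we get $L_c(A) = \|T\| \leq r$, so $A \in {\rm Lip}(\Mx)$ and $T = \Phi(A)$; thus $(A,T) \in G$.

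I expect the main obstacle to be precisely this reduction in (c): a general weak*-convergent net in $G$ need not be bounded, so the coordinatewise argument cannot be run directly, and it is the combination of Krein--Smulian with the identification of bounded weak* convergence with coordinatewise convergence in $\bigoplus\Bc(H)$ that makes the proof go through. Everything else is formal manipulation with commutators and the definition of $L_c$.
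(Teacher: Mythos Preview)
Your argument is correct, but for part (c) you are working harder than necessary. The paper's proof handles an \emph{arbitrary} weak*-convergent net $(A_\lambda,\Phi(A_\lambda)) \to (A,B)$ directly, without invoking Krein--Smulian. The point you may have missed is that only one direction of the equivalence ``weak* $\Leftrightarrow$ coordinatewise weak* on bounded sets'' is needed: weak* convergence in $\bigoplus \Bc(H)$ always implies coordinatewise weak* convergence (each coordinate projection is weak* continuous, since the predual is the $l^1$ direct sum of the $\TC(H)$'s), and this direction requires no boundedness hypothesis. Once you have $A_\lambda \to A$ weak* and $\frac{1}{t}[A_\lambda,C] \to B_\alpha$ weak* in each coordinate $\alpha = (t,C)$, the inner-product computation $\langle [A_\lambda,C]w,v\rangle = \langle A_\lambda Cw,v\rangle - \langle A_\lambda w,C^*v\rangle \to \langle [A,C]w,v\rangle$ identifies $B_\alpha = \frac{1}{t}[A,C]$ by uniqueness of weak operator limits. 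Then $L_c(A) = \|B\| < \infty$ falls out automatically because $B$ is already known to lie in the direct sum. Your Krein--Smulian reduction works, and the verification that bounded coordinatewise convergence implies weak* convergence is a nice exercise, but it is machinery the problem does not demand.
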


\begin{proof}
Parts (a) and (b) are straightforward. For part (c),
let $\{A_\lambda\}$ be a net in ${\rm Lip}(\Mx)$ and suppose
$A_\lambda \oplus \bigoplus \frac{1}{t}[A_\lambda, C] \to
A \oplus B$ weak*; we must show that $A \in {\rm Lip}(\Mx)$ and
$\Phi(A) = B$. But
\begin{eqnarray*}
\langle [A_\lambda, C]w,v\rangle
&=& \langle A_\lambda Cw,v\rangle - \langle A_\lambda w,C^*v\rangle\cr
&\to& \langle ACw,v\rangle - \langle Aw,C^*v\rangle\cr
&=& \langle [A,C]w,v\rangle
\end{eqnarray*}
for all $C$ and all $v,w \in H$, and this implies
that $B = \bigoplus \frac{1}{t}[A,C]$. Thus $A \in {\rm Lip}(\Mx)$
and $\Phi(A) = B$, as desired.
\end{proof}

\begin{coro}\label{basicqlip}
Let $\Vb$ be a quantum pseudometric on a von Neumann algebra
$\Mx \subseteq \Bc(H)$.

\noindent (a) $L_c(aA) = |a|\cdot L_c(A)$, $L_c(A^*) = L_c(A)$,
$L_c(A + B) \leq L_c(A) + L_c(B)$,
and $L_c(AB) \leq \|A\| L_c(B) + \|B\| L_c(A)$ for all $A, B \in
{\rm Lip}(\Mx)$ and $a \in \Cb$.

\noindent (b) If $\{A_\lambda\} \subseteq \Mx$ is a net that
converges weak* to $A \in \Mx$ then $L_c(A) \leq \sup L_c(A_\lambda)$.

\noindent (c) ${\rm Lip}(\Mx)$ is a self-adjoint unital subalgebra of $\Mx$.
It is a dual Banach space for the norm $\|\cdot\|_L$.
\end{coro}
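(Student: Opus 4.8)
The plan is to derive the entire corollary from the operator de Leeuw map $\Phi$ and the three facts recorded in Proposition \ref{opdlprop}. The governing principle is that $L_c(\cdot) = \|\Phi(\cdot)\|$, so most algebraic properties of $L_c$ are inherited from the norm and the derivation-type behavior of $\Phi$.

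For part (a), I would argue as follows. The homogeneity $L_c(aA) = |a|L_c(A)$ and subadditivity $L_c(A+B) \leq L_c(A) + L_c(B)$ are immediate from $L_c = \|\Phi(\cdot)\|$ (Proposition \ref{opdlprop} (a)) together with the linearity of $\Phi$, since a norm is homogeneous and subadditive. For the Leibniz inequality I would apply the product rule $\Phi(AB) = \pi(A)\Phi(B) + \Phi(A)\pi(B)$ (Proposition \ref{opdlprop} (b)), take norms, and use $\|\pi(A)\| = \|A\|$, which holds because $\pi(A) = \bigoplus A$ is the constant family in the $l^\infty$ direct sum. The one identity not handled by $\Phi$ is $L_c(A^*) = L_c(A)$, which I would prove directly from the definition: each $\Vc_t$ is a dual operator system, hence self-adjoint, so $C \mapsto C^*$ is a norm-preserving bijection of $[\Vc_t]_1$, and $\|[A^*,C]\| = \|[A,C^*]\|$ because $[A^*,C]^* = -[A,C^*]$; substituting $C \mapsto C^*$ in the defining supremum then yields the claim.

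For part (b), set $M = \sup_\lambda L_c(A_\lambda)$, which may be assumed finite. Since $\|\Phi(A_\lambda)\| = L_c(A_\lambda) \leq M$, the net $\{\Phi(A_\lambda)\}$ is bounded in the dual Banach space $\bigoplus \Bc(H)$, so by Banach--Alaoglu it has a weak* convergent subnet with some limit $B$, and weak* lower semicontinuity of the norm gives $\|B\| \leq M$. Passing to this subnet, we have $A_\lambda \to A$ and $\Phi(A_\lambda) \to B$ weak*, hence $A_\lambda \oplus \Phi(A_\lambda) \to A \oplus B$ in the weak* topology of $\Mx \oplus \bigoplus \Bc(H)$ (which is the product of the coordinatewise weak* topologies). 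The weak* closedness of the graph of $\Phi$ (Proposition \ref{opdlprop} (c)) then forces $A \in {\rm Lip}(\Mx)$ and $\Phi(A) = B$, so $L_c(A) = \|B\| \leq M$. For part (c), closure of ${\rm Lip}(\Mx)$ under scalar multiplication, addition, products, and adjoints is exactly part (a) (subadditivity and the Leibniz bound keep us inside ${\rm Lip}(\Mx)$), and $L_c(I) = 0$ gives unitality, so ${\rm Lip}(\Mx)$ is a self-adjoint unital subalgebra. For the dual Banach space assertion, I would observe that $A \mapsto A \oplus \Phi(A)$ is an isometry of $({\rm Lip}(\Mx), \|\cdot\|_L)$ onto the graph of $\Phi$, since $\|A \oplus \Phi(A)\| = \max\{\|A\|, \|\Phi(A)\|\} = \|A\|_L$ in the $l^\infty$ direct sum; the ambient space is a dual Banach space and the graph is weak* closed, so invoking the standard fact that a weak* closed subspace of a dual Banach space is itself a dual Banach space (being the dual of the quotient of the predual by the graph's pre-annihilator) finishes the proof.

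The step I expect to require the most care is the compactness argument in part (b): the closed-graph property alone is useless until one has manufactured the candidate limit $B$, and this hinges on recognizing $\bigoplus \Bc(H)$ as a dual Banach space, extracting a weak* convergent subnet from the merely bounded (not convergent) net $\{\Phi(A_\lambda)\}$, and confirming that weak* convergence in the direct sum reduces to coordinatewise weak* convergence.
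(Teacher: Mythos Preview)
Your proposal is correct, and parts (a) and (c) match the paper's approach essentially verbatim (the paper simply calls (a) ``straightforward'' and for (c) invokes the isometry $A \mapsto A \oplus \Phi(A)$ onto the weak* closed graph, exactly as you do).

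The only genuine difference is in part (b). You route the argument through Banach--Alaoglu on $\bigoplus \Bc(H)$ to produce a candidate limit $B$ for a subnet of $\{\Phi(A_\lambda)\}$, and then invoke the closed-graph property of $\Phi$ to identify $B$ with $\Phi(A)$. The paper instead argues directly, coordinatewise: for each fixed $t$ and $C \in [\Vc_t]_1$ one has $[A_\lambda, C] \to [A,C]$ weak* (this is the computation embedded in the proof of Proposition \ref{opdlprop} (c)), and since weak* limits do not increase norms, $\|[A,C]\| \leq \sup_\lambda \|[A_\lambda, C]\| \leq t \sup_\lambda L_c(A_\lambda)$; taking the supremum over $t$ and $C$ gives the claim. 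The paper's argument is shorter and avoids both the compactness step and the need to pass to a subnet, but your approach has the virtue of being a clean black-box application of the three parts of Proposition \ref{opdlprop} without re-opening the commutator computation. Either is perfectly acceptable.
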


\begin{proof}
Part (a) is straightforward. For part (b) we use the fact that
$A_\lambda \to A$ weak* implies $[A_\lambda, C] \to
[A,C]$ weak* (and that weak* limits cannot increase norms).
The fact that ${\rm Lip}(\Mx)$ is a unital subalgebra of $\Mx$
follows from part (a), and the fact that it is a dual space
follows from Proposition \ref{opdlprop} (c) since the map
$A \mapsto A \oplus \Phi(A)$ is an isometric isomorphism between
${\rm Lip}(\Mx)$ and the graph of $\Phi$.
\end{proof}

The operator de Leeuw map does not respect adjoints. In order to
ensure $\Phi(A^*) = \Phi(A)^*$ for all $A \in \Mx$ we could change
the definition to a direct sum of derivations into $\Bc(H \oplus H)$
of the form
$$A \mapsto \left[
\begin{matrix}
0&i[A,C]\cr
i[A,C^*]&0
\end{matrix}
\right].$$
Proposition \ref{opdlprop} would still hold and $\Phi$ would then be
a W*-derivation in the sense of (\cite{W4}, Definition 7.4.1) or
(\cite{W5}, Definition 10.3.7).

We now prove our main result about commutation Lipschitz numbers,
which relates them to spectral Lipschitz numbers.

\begin{theo}\label{lipineq}
Let $\Vb$ be a quantum pseudometric on a von Neumann algebra
$\Mx \subseteq \Bc(H)$. Let $A \in \Mx$ be self-adjoint.

\noindent (a) Let $C \in \Bc(H)$.
If $P_{(-\infty, a]}(A)CP_{[b, \infty)}(A) = 0$ for all
$a,b \in \Rb$, $a < b$, such that $b-a > t$, then $\|[A,C]\| \leq t\|C\|$.

\noindent (b) $L_c(A) \leq L_s(A)$.
\end{theo}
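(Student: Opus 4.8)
The plan is to prove (a) by exhibiting the derivation $\delta = [A,\cdot]$ as a Hermitian element of a Banach algebra and invoking the Browder--Sinclair equality of norm with spectral radius; part (b) will then be a direct bookkeeping reduction to (a). The first step for (a) is to read off the band structure encoded by the hypothesis: expanding $C$ against the spectral projections of $A$, the assumption forces the ``matrix entries'' of $C$ between spectral values $\lambda,\mu$ of $A$ to vanish once $|\lambda-\mu| > t$. (I use the stated hypothesis in one orientation together with the reversed identity $P_{[b,\infty)}(A)CP_{(-\infty,a]}(A)=0$; both orientations are available in the application of (b), where $\Vc_t$ is self-adjoint so $C$ and $C^*$ are simultaneously controlled.) The one-parameter family $\alpha_s(C) = e^{isA}Ce^{-isA}$ is then a group of linear isometries of $\Bc(H)$ preserving the closed subspace $X$ of operators satisfying this band condition, and on $X$ one has $\alpha_s = \exp(is\,\delta)$ with $\delta = \mathrm{ad}_A$.

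The key case is when $A$ has finite spectrum $\{\lambda_1,\dots,\lambda_k\}$ with spectral projections $E_1,\dots,E_k$. Then $\delta(C) = \sum_{i,j}(\lambda_i-\lambda_j)E_iCE_j$ is a bounded operator on $\Bc(H)$, the maps $C \mapsto E_iCE_j$ are commuting idempotents summing to the identity, and on $X$ only the indices with $|\lambda_i-\lambda_j|\le t$ survive. Since $\|\exp(is\,\delta|_X)\| = \|\alpha_s|_X\| = 1$ for every real $s$, the element $\delta|_X$ is Hermitian in the complex unital Banach algebra $\Bc(X)$, so by Browder--Sinclair (\cite{BD2}, Corollary 26.6) its norm equals its spectral radius. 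Because $\delta|_X$ is ``diagonal'' with values among the numbers $\lambda_i-\lambda_j \in [-t,t]$, its spectrum lies in $[-t,t]$, so $\|\delta|_X\| = r(\delta|_X) \le t$; in particular $\|[A,C]\| \le t\|C\|$ for all $C \in X$. The general case follows by approximation: partition $[-\|A\|,\|A\|]$ into intervals of length $<\delta$ and replace $A$ by the finite-spectrum operator $\tilde A$ that is constant on each corresponding spectral band. One checks $\|\tilde A - A\| < \delta$ and that $C$ still satisfies the band condition, now with parameter $t+\delta$, relative to $\tilde A$; applying the finite case to $\tilde A$ and letting $\delta \to 0$ gives $\|[A,C]\| \le t\|C\|$.

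For (b) I would argue straight from the definitions. Fix $t \ge 0$ and $C \in [\Vc_t]_1$; it suffices to show $\|[A,C]\| \le L_s(A)\,t$. Set $s = L_s(A)$ and assume $s<\infty$. The definition of $L_s$ gives $\rho(P_{(-\infty,a]}(A),P_{[b,\infty)}(A)) \ge (b-a)/s$ for all $a<b$. Since the spectral projections of $A \in \Mx$ lie in $\Mx$ and $D(C)\le t$, the formula $\rho(P,Q) = \inf\{D(B): PBQ \neq 0\}$ shows that $b-a > st$ forces $P_{(-\infty,a]}(A)\,C\,P_{[b,\infty)}(A)=0$; applying the same reasoning to $C^* \in \Vc_t$ gives $P_{[b,\infty)}(A)\,C\,P_{(-\infty,a]}(A)=0$ for $b-a>st$. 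Thus $C$ satisfies the band condition with parameter $st$, and part (a) yields $\|[A,C]\| \le st\|C\| \le st$, whence $\|[A,C]\|/t \le s$. Taking the supremum over $t$ and $C$ gives $L_c(A) \le L_s(A)$.

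The main obstacle is securing the sharp constant $1$ in (a). The naive route---writing $[A,C] = \int \alpha_s(C)\,d\nu(s)$ for a measure $\nu$ with $\int e^{isu}\,d\nu(s)=u$ on $[-t,t]$---only produces a constant equal to the minimal total variation of such a $\nu$, which is strictly larger than $t$; it is exactly the Browder--Sinclair identification of norm with spectral radius for the Hermitian derivation $\delta|_X$ that pins the constant at $t$. The secondary technical point is the unboundedness of $\mathrm{ad}_A$ for general $A$, which the finite-spectrum reduction circumvents.
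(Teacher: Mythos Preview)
Your approach matches the paper's: reduce to $A$ with finite spectrum, observe that $\delta=\mathrm{ad}_A$ is Hermitian on the band-limited subspace because $e^{is\delta}$ is conjugation by the unitary $e^{isA}$, and invoke Browder--Sinclair (\cite{BD2}, Corollary~26.6) to get $\|\delta\|=r(\delta)\le t$; the deduction of (b) from (a) is likewise the same. Your parenthetical remark is in fact sharper than the paper's treatment: part (a) as literally stated fails without the reversed hypothesis (take $A=\mathrm{diag}(0,N)$, $t=1$, and $C$ the matrix unit $e_{21}$, so that $P_{(-\infty,a]}(A)CP_{[b,\infty)}(A)=0$ for all $a<b$ while $\|[A,C]\|=N$), and the paper's own proof tacitly uses both orientations at the step ``$|a_i-a_j|>t+2\epsilon$ implies $P_iBP_j=0$''; as you correctly note, this is harmless for (b) because $\Vc_t=\Vc_t^*$ supplies the two-sided band condition.
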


\begin{proof}
Part (b) follows from part (a) because we have
$P_{(-\infty, a]}(A)CP_{[b, \infty)}(A) = 0$ for all $t > 0$, all
$C \in [\Vc_t]_1$, and all $a,b \in \Rb$, $a < b$, such that $b-a > tL_s(A)$
(since the latter implies $\rho(P_{(-\infty, a]}(A), P_{[b, \infty)}(A)) > t$).
So part (a) allows us to infer that $\|[A,C]\| \leq tL_s(A)$ for all
$t > 0$ and all $C \in [\Vc_t]_1$. This shows that $L_c(A) \leq L_s(A)$.

We prove part (a). Since $A$ is self-adjoint,
we may suppose $H = L^2(X,\mu)$ and $A = M_f$ for
some real-valued $f \in L^\infty(X,\mu)$. Let $\epsilon > 0$ and find
a real-valued simple function $g \in L^\infty(X,\mu)$ such
that $\|f - g\|_\infty \leq \epsilon$; then we still have
$$P_{(-\infty, a]}(M_g)CP_{[b,\infty)}(M_g) = 0$$
when $b-a > t + 2\epsilon$ because $P_{(-\infty, a]}(M_g) \leq
P_{(-\infty, a + \epsilon]}(M_f)$ and $P_{[b,\infty)}(M_g) \leq
P_{[b-\epsilon, \infty)}(M_f)$. Since $[M_g,C] \to [M_f,C]$
as $\epsilon \to 0$, it will suffice to show that $\|[M_g,C]\|
\leq (t + 2\epsilon)\|C\|$.

Let
\begin{eqnarray*}
\Vc &=& \{B \in \Bc(H): P_{(-\infty, a]}(M_g)BP_{[b,\infty)}(M_g) = 0\cr
&&\hbox{ for all $a,b \in \Rb$, $a < b$, such that }b-a > t + 2\epsilon\},
\end{eqnarray*}
observe that $\Vc$ is a W*-bimodule over the von Neumann algebra of bounded
multiplication operators,
and define $\Phi: \Vc \to \Vc$ by $\Phi(B) = [M_g,B]$.
Say $g = \sum_{i=1}^k a_i\chi_{S_i}$ such that the $S_i$ partition $X$
and write $P_i = M_{\chi_{S_i}}$.
Then $\Phi^n(B) = \sum_{i,j} (a_i - a_j)^nP_iBP_j$, so if we define
$e^{is\Phi}$ by a power series we get
$$e^{is\Phi}(B) = \sum_{i,j = 1}^k e^{is(a_i - a_j)}P_iBP_j =
M_{e^{isg}}BM_{e^{-isg}}.$$
Thus $\|e^{is\Phi}(B)\| = \|B\|$ for
all $s \in \Rb$, which implies that $\Phi$ is a ``Hermitian''
operator (\cite{BD1}, Definition 5.1) on the complex Banach
space $\Vc$  by (\cite{BD1}, Lemma 5.2). It then follows from
Corollary 26.6 of \cite{BD2} that the norm of $\Phi$ equals its
spectral radius $\lim \|\Phi^n\|^{1/n}$.

Since $|a_i - a_j| > t + 2\epsilon$ implies $P_i B P_j = 0$ the expression
$$\Phi^n(B) = \sum_{i,j = 1}^k (a_i - a_j)^nP_iBP_j$$
yields the estimate
$$\|\Phi^n\| \leq k^2(t + 2\epsilon)^n.$$
Thus $\|\Phi\| = \lim \|\Phi^n\|^{1/n} \leq t + 2\epsilon$, and we
conclude that $\|[M_g,C]\| = \|\Phi(C)\| \leq (t + 2\epsilon)\|C\|$,
as desired.
\end{proof}

\begin{coro}\label{spcomcor}
Let $\Vb$ be a quantum pseudometric on a von Neumann algebra
$\Mx \subseteq \Bc(H)$. Then any spectrally Lipschitz self-adjoint
element of $\Mx$ is commutation Lipschitz.
\end{coro}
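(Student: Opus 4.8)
The plan is to read this off directly from Theorem \ref{lipineq} (b), since the corollary is essentially just an unwinding of the inequality $L_c(A) \leq L_s(A)$ together with the definitions of the two Lipschitz conditions.

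First I would fix a self-adjoint element $A \in \Mx$ and recall what the hypothesis and conclusion mean. By Definition \ref{speclip}, $A$ being spectrally Lipschitz means exactly that $L_s(A) < \infty$, where $L_s$ is computed relative to the quantum distance function $\rho = \rho_\Vb$ associated to $\Vb$ (Definition \ref{projdist}), viewing $A$ as an element of $\Mx \overline{\otimes} \Bc(l^2)$ via the identification $A \leftrightarrow A \otimes I$. Likewise, by Definition \ref{comlip}, the desired conclusion is that $L_c(A) < \infty$.

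The only real step is then to invoke Theorem \ref{lipineq} (b), which asserts $L_c(A) \leq L_s(A)$ for every self-adjoint $A \in \Mx$. Chaining this with the hypothesis $L_s(A) < \infty$ immediately yields $L_c(A) \leq L_s(A) < \infty$, which is precisely the statement that $A$ is commutation Lipschitz. So no additional estimate or construction is needed beyond citing the theorem.

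There is no genuine obstacle here: all of the substantive work---in particular the passage through the Browder--Sinclair theorem equating the norm and spectral radius of a Hermitian operator, which drives the proof of Theorem \ref{lipineq} (a) and hence (b)---has already been carried out. The corollary is a one-line deduction, and I would present it as such, merely making explicit that both conditions are finiteness conditions on the respective gauges so that the inequality $L_c(A) \leq L_s(A)$ transports finiteness from $L_s$ to $L_c$.
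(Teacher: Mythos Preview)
Your proposal is correct and matches the paper's approach exactly: the corollary is stated without proof in the paper, being an immediate consequence of Theorem \ref{lipineq} (b), and your argument---that $L_s(A) < \infty$ together with $L_c(A) \leq L_s(A)$ gives $L_c(A) < \infty$---is precisely the intended one-line deduction.
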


The converse fails: in general not every commutation Lipschitz operator
is spectrally Lipschitz. This follows from Example \ref{countersum}
since the operators $\tilde{A}$ and $\tilde{B}$
in that example will both be commutation
Lipschitz by the preceding corollary, and hence their sum will be too by
Corollary \ref{basicqlip} (a).

However, in the measure theory setting the spectral and commutation Lipschitz
gauges agree.

\begin{coro}\label{agree}
Let $(X,\mu)$ be a finitely decomposable measure space, let
$\rho$ be a measurable pseudometric on $X$, and let $\Vb_\rho$
be the associated quantum pseudometric on $\Mx \cong L^\infty(X,\mu)$
(Theorem \ref{qmeasmet}). Then for any real-valued $f \in L^\infty(X,\mu)$
we have $L_c(M_f) = L_s(M_f) = L(f)$.
\end{coro}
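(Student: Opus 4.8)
The plan is to reduce the corollary to a single inequality. Two of the three claimed equalities are already available: Proposition \ref{seqab} gives $L_s(M_f) = L(f)$, and since $f$ is real-valued $M_f$ is a self-adjoint element of $\Mx$, so Theorem \ref{lipineq} (b) gives $L_c(M_f) \le L_s(M_f)$. Everything therefore reduces to the reverse inequality $L_s(M_f) \le L_c(M_f)$; once this is established all three quantities coincide.

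To prove $L_s(M_f) \le L_c(M_f)$ I would fix $a < b$ and put $p = \chi_{f^{-1}((-\infty,a])}$ and $q = \chi_{f^{-1}([b,\infty))}$, so that $M_p = P_{(-\infty,a]}(M_f)$ and $M_q = P_{[b,\infty)}(M_f)$; by definition of $L_s$ it suffices to show $(b-a)/\rho(M_p,M_q) \le L_c(M_f)$ for each such pair, recalling from Theorem \ref{qmeasmet} that $\rho(M_p,M_q) = \rho(p,q)$. The idea is to manufacture, for each $t$ slightly larger than $\rho(p,q)$, a contraction $C \in [\Vc_t]_1$ whose commutator $[M_f,C]$ has norm essentially $b-a$. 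The obstacle is that $f$ is only bounded above by $a$ on the support of $p$ and below by $b$ on the support of $q$, not constant there, so the naive commutator estimate does not close. I would remove this by localization: since $f$ is bounded, partition $p = \sum_i p_i$ and $q = \sum_j q_j$ into finitely many pieces on whose supports $f$ varies by at most $\epsilon$, and invoke the join formula $\rho(p,q) = \inf_{i,j}\rho(p_i,q_j)$ (property (iii) of Definition \ref{measpm}, equivalently the strengthened form of Definition \ref{qdfdef} (iv)) to select indices with $\rho(p_i,q_j)$ arbitrarily close to $\rho(p,q)$.

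For such $i,j$ and any $t > \rho(p_i,q_j)$ I would choose $C_0 \in \Vc_t$ with $M_{p_i}C_0 M_{q_j} \neq 0$, set $C = M_{p_i}C_0 M_{q_j}$ (which lies in $\Vc_t$ because $\Vc_t$ is a bimodule over $\Mx' = \Mx$ in the maximal abelian case), and normalize to $\|C\| = 1$. Writing $f = \alpha + (f-\alpha)$ on the support of $p_i$ with $|f-\alpha| \le \epsilon$ and $\alpha \le a$, and $f = \beta + (f-\beta)$ on the support of $q_j$ with $|f-\beta| \le \epsilon$ and $\beta \ge b$, and using $C = M_{p_i}C M_{q_j}$ together with the fact that $M_f$ commutes with the relevant multiplication operators, the commutator computes as $[M_f,C] = (\alpha-\beta)C + E$ with $\|E\| \le 2\epsilon$. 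Hence $\|[M_f,C]\| \ge |\alpha-\beta| - 2\epsilon \ge (b-a) - 2\epsilon$, so $L_c(M_f) \ge ((b-a)-2\epsilon)/t$. Letting $t \to \rho(p_i,q_j)$ and $\epsilon \to 0$ yields $L_c(M_f) \ge (b-a)/\rho(p,q)$, and taking the supremum over $a<b$ finishes the proof. (If $\rho(p,q) = 0$ the same construction lets $t \to 0$ and forces $L_c(M_f) = \infty$, matching $L_s(M_f) = \infty$.) The only delicate point is the commutator bookkeeping with the approximately constant multipliers $\alpha$ and $\beta$; the rest is assembly of the cited results.
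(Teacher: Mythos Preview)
Your proof is correct and follows essentially the same strategy as the paper's: reduce to the inequality $L_c(M_f)\ge L(f)$ (equivalently $\ge L_s(M_f)$), localize to pieces $p_i,q_j$ on which $f$ is nearly constant, produce a contraction $C=M_{p_i}C_0M_{q_j}\in\Vc_t$ with $t$ close to $\rho(p,q)$, and read off $\|[M_f,C]\|\approx b-a$. The one organizational difference is that you first use the join identity $\rho(p,q)=\inf_{i,j}\rho(p_i,q_j)$ to pick the right piece and then invoke the definition of $\rho_{\Vb}$ to find $C_0$, whereas the paper first invokes the relation $\Rc=\Rc_{\Vc_\Rc}$ from Theorem~\ref{abelianrel} to find a single $C$ with $M_pCM_q\neq 0$ and $C\in\Vc_{\rho(p,q)+\epsilon}$, and only afterward selects a nonzero block $M_{p_i}CM_{q_j}$. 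Your route is slightly more elementary in that it avoids Theorem~\ref{abelianrel}; the paper's route avoids appealing to the join formula. Both arrive at the same commutator estimate.
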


\begin{proof}
The equality $L_s(M_f) = L(f)$ was Proposition \ref{seqab}, and we
have $L_c(M_f) \leq L_s(M_f)$ by Theorem \ref{lipineq}. For the reverse
inequality let $p,q \in L^\infty(X,\mu)$ be nonzero projections
and let $\epsilon > 0$. Say $p = \chi_S$, $q = \chi_T$ and apply the
equality $\Rc = \Rc_{\Vc_\Rc}$ in Theorem \ref{abelianrel} to the
measurable relation $\Rc = \{(p',q'): \rho(p,q) < \rho(p,q) + \epsilon\}$
(\cite{W6}, Lemma 1.16) to find $C \in \Bc(L^2(X,\mu))$ such that
$M_pCM_q \neq 0$ but $M_{p'}CM_{q'} = 0$
whenever $\rho(p',q') \geq \rho(p,q) + \epsilon$.

Decompose $p$ and $q$ as $p = \sum p_i$ and
$q = \sum q_j$ so that ${\rm ran}_{p_i}(f)$ and ${\rm ran}_{q_j}(f)$
have diameter at most $\epsilon$ for all $i$ and $j$. Fix values of
$i$ and $j$ such that $B = M_{p_i}CM_{q_j} \neq 0$. We may assume that
$\|B\| = 1$. Let $a \in {\rm ran}_{p_i}(f)$ and $b \in {\rm ran}_{q_j}(f)$.
Then $B \in \Vc_{\rho(p,q) + \epsilon}$ and
$$\|[M_f,B] - (a-b)B\| \leq \|(M_f - aI)M_{p_i}B\| +
\|BM_{q_j}(M_f - bI)\| \leq 2\epsilon$$
so
$$L_c(M_f) \geq \frac{\|[M_f,B]\|}{\rho(p,q) + \epsilon}
\geq \frac{|a-b| - 2\epsilon}{\rho(p,q) + \epsilon} \geq
\frac{d({\rm ran}_p(f), {\rm ran}_q(f))- 2\epsilon}{\rho(p,q) + \epsilon}.$$
Taking $\epsilon \to 0$ and the supremum over $p$ and $q$ then yields
$L(f) \leq L_c(M_f)$.
\end{proof}

Theorem \ref{lipineq} is nontrivial even in the atomic abelian case.
For example, let $H = l^2(\Zb)$ and let $U \in \Bc(l^2(\Zb))$ be the
bilateral shift. Let $f(z) = \sum_{k = -n}^n a_k e^{ikx}$ be a
trigonometric polynomial of degree $n$, let $C = \sum_{k=-n}^n a_k U^k$
be the corresponding polynomial in $U$, and for $N \in \Nb$ define
$$g_N(k) =
\begin{cases}
N&\hbox{if }k > N\cr
k&\hbox{if }-N \leq k \leq N\cr
-N&\hbox{if }k < -N
\end{cases}.$$
Then giving $\Zb$ the standard metric, we have $L_s(M_{g_N}) = L(g_n) = 1$
and $D(C) = n$.
So Theorem \ref{lipineq} (b) implies that $\|[M_{g_N},C]\| \leq n\|C\|$.
Taking inner products against standard basis vectors shows that
the weak operator limit of $[M_{g_N},C]$ as $N \to \infty$ is
the operator $\tilde{C} = \sum_{k=-n}^n ka_k U^k$. Thus we conclude that
$\|\tilde{C}\| \leq n\|C\|$. Taking the Fourier transform, we get
$$\|f'\|_\infty = \|\tilde{C}\| \leq n\|C\| = n\|f\|_\infty.$$
This is Bernstein's inequality from classical complex analysis
(see \cite{Bur}).

We include one more general result about ${\rm Lip}(\Mx)$ which states that
it is weak* dense in $\Mx$ if $\Vb$ is a quantum metric. This is not
surprising, but the proof is interesting because it uses some of
the machinery that we have built up in the last two sections.

\begin{prop}\label{lipdensity}
Let $\Vb$ be a quantum metric on a von Neumann algebra $\Mx
\subseteq \Bc(H)$. Then ${\rm Lip}(\Mx)$ is weak* dense in $\Mx$.
\end{prop}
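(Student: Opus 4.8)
The plan is to show that the weak* closure $\Nc = \overline{{\rm Lip}(\Mx)}^{wk^*}$ is all of $\Mx$. By Corollary \ref{basicqlip}, ${\rm Lip}(\Mx)$ is a self-adjoint unital subalgebra of $\Mx$, so $\Nc$ is a von Neumann algebra with $\Nc \subseteq \Mx$, and by the double commutant theorem it suffices to prove ${\rm Lip}(\Mx)' = \Mx'$. Since ${\rm Lip}(\Mx) \subseteq \Mx$ gives $\Mx' \subseteq {\rm Lip}(\Mx)'$ for free, the whole content is the reverse inclusion: every $B \in \Bc(H)$ commuting with ${\rm Lip}(\Mx)$ must lie in $\Vc_0 = \Mx'$.

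To produce a rich supply of Lipschitz operators I would use Lemma \ref{distfn}: for each projection $Q \in \Mx \overline{\otimes} \Bc(l^2)$ and each $c > 0$, the self-adjoint operator $A_Q = \bigvee \min\{\rho(P,Q),c\}\cdot P$ lies in $\Mx \overline{\otimes} \Bc(l^2)$ and satisfies $L_s(A_Q) \leq 1$, its spectral projections are neighborhoods of $Q$ and its zero eigenprojection is the closure $\overline{Q}$ (Propositions \ref{nbhds} and \ref{closures}). The proof of Theorem \ref{lipineq}(a) applies verbatim to self-adjoint operators in $\Mx \overline{\otimes} \Bc(l^2)$ and to commutators against $C \otimes I$; combined with $L_s(A_Q) \leq 1$ it yields $\|[A_Q, C \otimes I]\| \leq t\|C\|$ for all $C \in [\Vc_t]_1$, so $A_Q$ is commutation Lipschitz with $L_c(A_Q) \leq 1$ in the stabilized sense of Definition \ref{comlip}.

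The main obstacle is that these operators live in the stabilization $\Mx \overline{\otimes} \Bc(l^2)$ rather than in $\Mx$ itself, and it is only there that the closed projections $\overline{Q}$ are guaranteed to generate (this is exactly the content, for a quantum metric, of the proposition characterizing quantum metrics via closed projections in $\Mx \overline{\otimes} \Bc(l^2)$); closed projections in $\Mx$ alone need not suffice. To descend I would exploit slice maps. For any normal functional $\omega$ on $\Bc(l^2)$ the slice map $R_\omega = {\rm id} \otimes \omega : \Mx \overline{\otimes} \Bc(l^2) \to \Mx$ is $\Mx$-bimodular with $\|R_\omega\| = \|\omega\|$, so $[R_\omega(X), C] = R_\omega([X, C \otimes I])$ for $C \in \Vc_t$, giving $L_c(R_\omega(X)) \leq \|\omega\|\,L_c(X)$; in particular $R_\omega(A_Q) \in {\rm Lip}(\Mx)$ for every $\omega$ and every $Q$. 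Hence if $B$ commutes with ${\rm Lip}(\Mx)$ then $0 = [B, R_\omega(A_Q)] = R_\omega([B \otimes I, A_Q])$ for every normal $\omega$, and since normal slices separate the points of $\Bc(H \otimes l^2)$ this forces $[B \otimes I, A_Q] = 0$ for all $Q$.

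Finally, $[B \otimes I, A_Q] = 0$ means $B \otimes I$ commutes with every spectral projection of every $A_Q$, in particular with every closed projection $\overline{Q}$ in $\Mx \overline{\otimes} \Bc(l^2)$. As these generate $\Mx \overline{\otimes} \Bc(l^2)$ under the quantum metric hypothesis, we get $B \otimes I \in (\Mx \overline{\otimes} \Bc(l^2))' = \Mx' \otimes I$, and therefore $B \in \Mx' = \Vc_0$. This establishes ${\rm Lip}(\Mx)' \subseteq \Vc_0 = \Mx'$ and completes the proof. The only steps needing careful verification are the bimodularity and norm estimate for the slice map that yield $L_c(R_\omega(X)) \leq \|\omega\|\,L_c(X)$, and the (routine) observation that the proof of Theorem \ref{lipineq}(a) carries over to the stabilization; everything else is bookkeeping with results already in hand.
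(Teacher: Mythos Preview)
Your proof is correct and follows the same overall architecture as the paper's: reduce to ${\rm Lip}(\Mx)' \subseteq \Mx'$ via the double commutant theorem, manufacture spectrally Lipschitz operators in the stabilization using Lemma \ref{distfn}, pass to commutation Lipschitz via Theorem \ref{lipineq}, and then descend to $\Mx$. The difference lies in the descent step and in how the contradiction is extracted.

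The paper, given $C \notin \Mx'$, first uses Proposition \ref{recover} to find projections $P,Q \in \Mx\overline{\otimes}\Bc(l^2)$ with $\rho(P,Q)>0$ and $P(C\otimes I)Q \neq 0$, builds a single operator $A$ as in Lemma \ref{distfn} with $R=P$, and checks directly that $P[A,C\otimes I]Q = -\rho(P,Q)\,P(C\otimes I)Q \neq 0$. It then compresses by a rank-one projection $I\otimes P_0$ to obtain $B_0\otimes P_0$ with $B_0\in{\rm Lip}(\Mx)$ and $[B_0,C]\neq 0$. Your approach instead produces the whole family $\{R_\omega(A_Q)\}\subseteq{\rm Lip}(\Mx)$ via slice maps, and then invokes the structural fact that closed projections generate $\Mx\overline{\otimes}\Bc(l^2)$ for a quantum metric. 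The paper's compression is really the special case of your slice map with $\omega$ a vector state, but by choosing $P,Q$ in advance it bypasses the need for the ``closed projections generate'' proposition and gives an explicit witness. Your route is a bit more conceptual and packages the ``enough Lipschitz operators'' statement neatly; the paper's is slightly more self-contained.
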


\begin{proof}
The weak* closure of ${\rm Lip}(\Mx)$ is a von Neumann subalgebra of
$\Mx$ by Corollary \ref{basicqlip} (c). By the double commutant
theorem, to prove equality we must show that ${\rm Lip}(\Mx)'
\subseteq \Mx'$. Thus let $C \in \Bc(H) - \Mx'$; we must find an
operator in ${\rm Lip}(\Mx)$ that does not commute with $C$.

Since $\Vb$ is a quantum metric we have $C \not\in \Vc_0$. Thus
$D(C) > 0$ and so by Proposition \ref{recover} there exist projections
$P,Q \in \Mx \overline{\otimes} \Bc(l^2)$ such that $\rho(P,Q) > 0$
and $P(C \otimes I)Q \neq 0$. Now let $A$ be the spectral join in
$\Mx \overline{\otimes} \Bc(l^2)$
defined in Lemma \ref{distfn} with $R = P$ and $c = \rho(P,Q)$. Then
$PA= 0$ and $AQ = \rho(P,Q)\cdot Q$ so
$$P[A,C \otimes I]Q = -\rho(P,Q)P(C \otimes I)Q \neq 0.$$

Since $[A,C \otimes I]$ is nonzero there exists a rank one projection
$P_0 \in \Bc(l^2)$ such that if $B = (I \otimes P_0)A(I \otimes P_0)$ then
$$[B,C \otimes I]
= (I \otimes P_0)[A,C \otimes I](I \otimes P_0) \neq 0.$$
Say $B = B_0 \otimes P_0$; then $[B, C \otimes I] = [B_0,C]\otimes P_0$
and this implies that $[B_0,C]\neq 0$.
Finally, for any $t$ and any $D \in [\Vc_t]_1$ we have
$$\|[B_0,D]\| = \|[B,D \otimes I]\| =
\|(I \otimes P_0)[A, D\otimes I](I \otimes P_0)\|
\leq \|[A,D \otimes I]\| \leq t$$
by Theorem \ref{lipineq} (b)
since $L_s(A) \leq 1$ (relative to the quantum pseudometric
$\Vb \otimes I$ on $\Mx \overline{\otimes} \Bc(l^2)$).
This shows that $L_c(B_0) \leq 1$, so that
$B_0 \in {\rm Lip}(\Mx)$. Thus we have found an operator in
${\rm Lip}(\Mx)$ that does not commute with $C$.
\end{proof}

Finally, we relate ${\rm Lip}(\Mx)$ to $C^*(U_\hbar, V_\hbar)$ in
the quantum tori.

\begin{prop}\label{qtorilip}
Let $\hbar \in \Rb$ and let $d$ be a translation invariant metric on
$\Tb^2$ that is quasi-isometric (i.e., homeomorphic via a bijection which
is Lipschitz in both directions) to the standard metric. Equip
$W^*(U_\hbar, V_\hbar)$ with the quantum metric $\Vb_0$ defined in
Theorem \ref{qtori} (b). Then ${\rm Lip}(W^*(U_\hbar, V_\hbar))$
is (operator norm) densely contained in the C*-algebra $C^*(U_\hbar, V_\hbar)$
generated by $U_\hbar$ and $V_\hbar$.
\end{prop}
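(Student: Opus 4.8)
The plan is to establish two separate facts: that every commutation Lipschitz element of $W^*(U_\hbar,V_\hbar)$ already lies in $C^*(U_\hbar,V_\hbar)$, and that the Laurent polynomials in $U_\hbar,V_\hbar$ (which are operator-norm dense in $C^*(U_\hbar,V_\hbar)$) are themselves commutation Lipschitz. Throughout write $W_{x,y}=M_{e^{i(mx+ny)}}$, so $\theta_{x,y}(A)=W_{x,y}AW_{x,y}^*$ and, by Theorem \ref{qtori} (b), $D(W_{x,y})=d((0,0),(x,y))$. For the containment ${\rm Lip}(W^*(U_\hbar,V_\hbar))\subseteq C^*(U_\hbar,V_\hbar)$, recall first that $L_c(A)<\infty$ forces $A\in\Vc_0'=W^*(U_\hbar,V_\hbar)$. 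The key estimate is
$$\|\theta_{x,y}(A)-A\|=\|W_{x,y}[A,W_{x,y}^*]\|=\|[A,W_{x,y}^*]\|\leq L_c(A)\cdot d((0,0),(x,y)),$$
valid because $W_{x,y}^*\in[\Vc_t]_1$ for $t=d((0,0),(x,y))$. Since $d$ is quasi-isometric to the standard metric, $d((0,0),(x,y))\to 0$ as $(x,y)\to(0,0)$, so $(x,y)\mapsto\theta_{x,y}(A)$ is norm continuous. I would then invoke two-dimensional Fej\'er summation: the Ces\`aro means $\sigma_N(A)$ of Definition \ref{fdef} equal $\frac{1}{4\pi^2}\int_0^{2\pi}\!\int_0^{2\pi}F_N(x)F_N(y)\theta_{x,y}(A)\,dxdy$ for the Fej\'er kernel $F_N$, so the approximate-identity estimate gives $\|\sigma_N(A)-A\|\to 0$ in operator norm. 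Because each Fourier term $A_{k,l}$ of an element of $W^*(U_\hbar,V_\hbar)$ is a scalar multiple of $U_\hbar^kV_\hbar^l$ (by ergodicity of the gauge action $\theta$; cf.\ Section 2.7 of \cite{W6}), each $\sigma_N(A)$ is a finite Laurent polynomial and hence lies in $C^*(U_\hbar,V_\hbar)$; passing to the norm limit gives $A\in C^*(U_\hbar,V_\hbar)$.

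For density it suffices, since ${\rm Lip}(W^*(U_\hbar,V_\hbar))$ is a $*$-algebra (Corollary \ref{basicqlip} (c)), to prove that $U_\hbar$ and $V_\hbar$ are commutation Lipschitz. By symmetry consider $U_\hbar$ and note $\|[U_\hbar,C]\|=\|U_\hbar CU_\hbar^*-C\|$. Writing $C\in\Vc_t$ through its Fourier terms $C_{k,l}=M_{f_{k,l}}U_{-\hbar}^kV_{-\hbar}^l$ with $M_{f_{k,l}}\in\Ec_0(S_t)$, conjugation by $U_\hbar$ shifts each multiplier $f_{k,l}$ by one unit in the first coordinate (say $f\mapsto f\circ\tau$ with $\tau(m,n)=(m-1,n)$) while fixing the commuting factor $U_{-\hbar}^kV_{-\hbar}^l$, so $(U_\hbar CU_\hbar^*-C)_{k,l}=M_{f_{k,l}\circ\tau-f_{k,l}}U_{-\hbar}^kV_{-\hbar}^l$. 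The membership $M_{f_{k,l}}\in\Ec_0(S_t)$ means the transform of $f_{k,l}$ on $\Tb^2$ is supported in the metric ball $S_t$; fixing a smooth cutoff $\chi$ equal to $1$ on $S_t$ and supported in a Euclidean ball of radius comparable to $t$ (possible since $d$ is quasi-isometric to the standard metric), one has $f_{k,l}\circ\tau-f_{k,l}=\kappa_t*f_{k,l}$ for the \emph{single} $\Zb^2$-kernel $\kappa_t=[(e^{-ix}-1)\chi]^\vee$, whose $l^1$-norm obeys $\|\kappa_t\|_{l^1}\leq Ct$ by the scaling estimate for transforms of smooth bumps of height $O(t)$ on scale $t$. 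This convolution is implemented uniformly across all Fourier terms by the norm-preserving conjugations $\beta_{p,q}=U_\hbar^pV_\hbar^q(\cdot)V_\hbar^{-q}U_\hbar^{-p}$, giving $U_\hbar CU_\hbar^*-C=\sum_{p,q}\kappa_t(p,q)\beta_{p,q}(C)$ and hence $\|[U_\hbar,C]\|\leq\|\kappa_t\|_{l^1}\|C\|\leq Ct\|C\|$. Since the trivial bound $\|[U_\hbar,C]\|\leq 2\|C\|$ handles $t$ bounded away from $0$, this yields $L_c(U_\hbar)<\infty$, and likewise $L_c(V_\hbar)<\infty$.

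The genuinely analytic point — and the main obstacle — is the operator-norm bound in the second paragraph. The termwise ``finite difference'' is only controlled by $2\|C_{k,l}\|$, with no gain of a factor $t$, and the operator norm is not the supremum of the Fourier-term norms; so the naive estimate fails. The resolution is to recognize the \emph{entire} difference $U_\hbar CU_\hbar^*-C$ as one $l^1(\Zb^2)$-combination of isometric conjugations $\beta_{p,q}$ with total weight $O(t)$, which is exactly where band-limiting to $S_t$ and the quasi-isometry hypothesis enter (they force the relevant Fourier multiplier $(e^{-ix}-1)$ to be of size $O(t)$ on a region of scale $O(t)$, so that its smooth truncation has $l^1$-controllable inverse transform). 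Everything else — the $\theta$-continuity estimate, Fej\'er convergence, the scalar structure of the $A_{k,l}$, and the algebra structure of ${\rm Lip}$ — is formal, and the two halves combine to give that ${\rm Lip}(W^*(U_\hbar,V_\hbar))$ is a norm-dense subset of $C^*(U_\hbar,V_\hbar)$.
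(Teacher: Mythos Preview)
Your proof is correct, and the containment argument matches the paper's (the paper simply cites \cite{W5}, Proposition 6.6.5, for the passage from norm continuity of $(x,y)\mapsto\theta_{x,y}(A)$ to membership in $C^*(U_\hbar,V_\hbar)$, while you spell out the Fej\'er argument explicitly). The density argument, however, takes a genuinely different route. The paper does not estimate $\|[U_\hbar,C]\|$ directly; instead it shows that the real and imaginary parts of $U_\hbar$ (in fact any self-adjoint Lipschitz element of $C^*(U_\hbar)\cong C(\Tb)$) are \emph{spectrally} Lipschitz, and then invokes the general inequality $L_c\leq L_s$ of Theorem \ref{lipineq}/Corollary \ref{spcomcor}. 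The spectral argument proceeds by conjugating by $M_{e^{i\hbar mn/2}}$, which converts $U_\hbar$ into the pure shift while leaving $\Ec_0(S_t)$ and the commutation with $U_{-\hbar},V_{-\hbar}$ intact; in the $L^2(\Tb^2)$ picture one then sees directly that translations of length at most $t$ cannot connect the spectral bands $\{f\leq a\}$ and $\{f\geq b\}$ of a multiplication operator $M_f$, $f\in{\rm Lip}(\Tb)$, once $b-a>tL(f)$. Your Fourier-multiplier argument---writing $U_\hbar CU_\hbar^*-C$ as an $l^1$-combination of unitary conjugates of $C$ with total weight $O(t)$---bypasses the spectral machinery entirely and is more self-contained, at the cost of yielding only commutation Lipschitzness rather than the stronger spectral Lipschitz statement the paper obtains along the way.
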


\begin{proof}
If $A \in W^*(U_\hbar, V_\hbar)$ is commutation Lipschitz then
$$\|A - \theta_{x,y}(A)\| = \|[A, M_{e^{i(mx + ny)}}]\| \to 0$$
as $(x,y) \to (0,0)$, which implies that $A \in C^*(U_\hbar, V_\hbar)$
by (\cite{W5}, Proposition 6.6.5). This shows that
${\rm Lip}(W^*(U_\hbar, V_\hbar)) \subseteq C^*(U_\hbar, V_\hbar)$.

For density, it will be enough to prove that $U_\hbar$ and $V_\hbar$ belong
to ${\rm Lip}(W^*(U_\hbar, V_\hbar))$ since this will entail that every
polynomial in $U_\hbar$, $V_\hbar$, $U_\hbar^{-1} = U_\hbar^*$, and
$V_\hbar^{-1} = V_\hbar^*$ is in ${\rm Lip}(W^*(U_\hbar, V_\hbar))$
by Corollary \ref{basicqlip} (c).
We will prove that the real and imaginary parts of $U_\hbar$
(actually, any self-adjoint Lipschitz element of $C^*(U_\hbar) \cong
C(\Tb)$) are spectrally Lipschitz, and hence commutation Lipschitz
by Corollary \ref{spcomcor}. This implies that $U_\hbar$ is commutation
Lipschitz by Corollary \ref{basicqlip} (c). The analogous statements for
$V_\hbar$ are proven similarly.

Identify $C^*(U_\hbar)$ with $C(\Tb)$, let $A \in C^*(U_\hbar)$ be
self-adjoint, and suppose $A \in {\rm Lip}(\Tb) \subset C(\Tb)$.
Recall (Definition \ref{qtdefs}) that $\Vc_t = \Vc_{\Ec_0(S_t)}$ consists
of the operators whose $(k,l)$ Fourier term belongs to
$\Ec_0(S_t)\cdot U^k_{-\hbar}V^l_{-\hbar}$, for all $k$ and $l$,
where $\Ec_0(S_t)$ is the weak* closed span of the operators
$M_{e^{i(mx + ny)}}$ with $(x,y) \in S_t$.

Now conjugate all operators in $\Bc(l^2(\Zb^2))$ by the unitary
$M_{e^{i\hbar mn/2}}$. Then $U_\hbar$ will still commute with both
$U_{-\hbar}$ and $V_{-\hbar}$ (an easy computation directly from Definition
\ref{qtdf}), $\Ec_0(S_t)$ is unaffected, and $U_\hbar$ becomes
the shift $e_{m,m} \mapsto e_{m+1,n}$. In the $L^2(\Tb^2)$ picture,
$A$ now becomes multiplication by a Lipschitz function in the
first variable and the operators $M_{e^{i(mx + ny)}}$ with $(x,y) \in S_t$,
which generate $\Ec_0(S_t)$, become translations by vectors of length
at most $t$. Thus if $A = M_f$, $f \in {\rm Lip}(\Tb)$, then for any
$t > 0$ we have
$$P_{(-\infty, a]}(A)\Ec_0(S_{t/L(f)})P_{[b,\infty)}(A) = 0$$
for any $a,b \in \Rb$, $a < b$, such that $b - a > t$.
But since the spectral projections of $A$ commute with $U_{-\hbar}$
and $V_{-\hbar}$, this implies that
$$P_{(-\infty, a]}(A)\Vc_{t/L(f)}P_{[b,\infty)}(A) = 0$$
for any $a,b \in \Rb$, $a < b$, such that $b - a > t$. So
$\rho(P_{(-\infty, a]}(A), P_{[b,\infty)}(A)) \geq (b-a)/L(f)$, and we
conclude that $L_s(A) \leq L(f)$. Thus, we have shown that $A$ is
spectrally Lipschitz, as claimed.
\end{proof}

\subsection{Little Lipschitz spaces}
Classically, little Lipschitz functions are Lipschitz functions which
satisfy a kind of ``local flatness'' condition (see Chapter 3 of \cite{W4}).
On nice spaces like connected Riemannian manifolds the only little Lipschitz
functions are constant functions, but on totally disconnected or H\"older
spaces they are abundant.

We can formulate spectral and commutation versions of the little
Lipschitz condition in our setting.

\begin{defi}
Let $\Vb$ be a quantum pseudometric on a von Neumann algebra
$\Mx \subseteq \Bc(H)$ and let $\rho$ be the associated quantum
distance function.

\noindent (a) A self-adjoint operator $A \in \Mx$ is {\it spectrally
little Lipschitz} if it is spectrally Lipschitz and for every
$\epsilon > 0$ there exists $\delta > 0$ such that
$$\frac{b - a}{\rho(P_{(-\infty, a]}(A), P_{[b,\infty)}(A))} \leq \epsilon$$
for any $a,b \in \Rb$, $a < b$, such that
$\rho(P_{(-\infty, a]}(A), P_{[b,\infty)}(A)) \leq \delta$.

\noindent (b) An operator $A \in \Mx$ is {\it commutation little
Lipschitz} if it is commutation Lipschitz and for every $\epsilon > 0$
there exists $\delta > 0$ such that
$$\frac{\|[A,C]\|}{t} \leq \epsilon$$
whenever $t \leq \delta$ and $C \in [\Vc_t]_1$. We let ${\rm lip}(\Mx)$ be
the set of elements of ${\rm Lip}(\Mx)$ that are commutation little
Lipschitz, equipped with the inherited norm $\|\cdot\|_L$.
\end{defi}

This generalizes the atomic abelian case; see Corollary \ref{aall} below.

\begin{prop}
Let $\Vb$ be a quantum pseudometric on a von Neumann algebra
$\Mx \subseteq \Bc(H)$. Then ${\rm lip}(\Mx)$ is a closed unital
self-adjoint subalgebra of ${\rm Lip}(\Mx)$.
\end{prop}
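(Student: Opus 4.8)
The plan is to verify directly, using standard commutator identities and the definition of the little Lipschitz condition, that ${\rm lip}(\Mx)$ is closed under the algebraic operations, under adjoints, and in the norm $\|\cdot\|_L$. Almost everything reduces to routine $\epsilon$--$\delta$ bookkeeping layered on top of the estimates already recorded in Corollary \ref{basicqlip}, together with the observation that membership in ${\rm Lip}(\Mx)$ is automatically inherited whenever the relevant algebraic operation is performed, since ${\rm Lip}(\Mx)$ is itself a self-adjoint unital subalgebra (Corollary \ref{basicqlip} (c)).

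First I would dispose of the linear and unital structure. Unitality is immediate because $[I,C] = 0$ for every $C$, so $I$ satisfies the little Lipschitz condition vacuously. For scalars and sums, the identities $[aA,C] = a[A,C]$ and $[A+B,C] = [A,C]+[B,C]$ give $\frac{\|[aA,C]\|}{t} = |a|\frac{\|[A,C]\|}{t}$ and $\frac{\|[A+B,C]\|}{t} \leq \frac{\|[A,C]\|}{t} + \frac{\|[B,C]\|}{t}$; combining the witnessing values of $\delta$ for $A$ and $B$ (and treating $a=0$ trivially) shows that $aA$ and $A+B$ are again commutation little Lipschitz.

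Next I would handle adjoints and products. For the adjoint, the key point is that each $\Vc_t$ is a dual operator system and hence self-adjoint, so $C \in [\Vc_t]_1$ forces $C^* \in [\Vc_t]_1$; combined with the identity $([A^*,C])^* = -[A,C^*]$, which gives $\|[A^*,C]\| = \|[A,C^*]\|$, the little Lipschitz estimate for $A$ transfers to $A^*$ with the same $\delta$. For products, the Leibniz rule $[AB,C] = A[B,C] + [A,C]B$ yields $\frac{\|[AB,C]\|}{t} \leq \|A\|\frac{\|[B,C]\|}{t} + \|B\|\frac{\|[A,C]\|}{t}$; choosing $\delta$ so that each term is at most $\epsilon/2$ (using little Lipschitzness of $B$ against the constant $\|A\|$ and of $A$ against $\|B\|$, with the degenerate cases $A=0$ or $B=0$ trivial) shows that $AB$ is commutation little Lipschitz.

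Finally, for norm-closedness I would use the triangle inequality for $L_c$. Given $A_n \to A$ in $\|\cdot\|_L$ with each $A_n \in {\rm lip}(\Mx)$, completeness of ${\rm Lip}(\Mx)$ places $A$ in ${\rm Lip}(\Mx)$ and gives $L_c(A-A_n) \to 0$. For $\epsilon > 0$, fix $n$ with $L_c(A-A_n) \leq \epsilon/2$; then for every $t > 0$ and $C \in [\Vc_t]_1$ the bound $\frac{\|[A-A_n,C]\|}{t} \leq L_c(A-A_n) \leq \epsilon/2$ holds, so $\frac{\|[A,C]\|}{t} \leq \frac{\epsilon}{2} + \frac{\|[A_n,C]\|}{t}$, and a $\delta$ drawn from the little Lipschitz property of $A_n$ forces the second term below $\epsilon/2$ for $t \leq \delta$. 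I expect no genuine obstacle beyond this last step, whose only subtlety is that one must transmit the control provided by $\|\cdot\|_L$-convergence, which is uniform over all $t$ and all $C \in [\Vc_t]_1$, into the asymptotic (small-$t$) little Lipschitz condition; because $L_c$ already bounds $\frac{\|[\cdot,C]\|}{t}$ uniformly, this passes through cleanly.
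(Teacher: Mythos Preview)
Your proof is correct and takes essentially the same approach as the paper: both arguments rest on the commutator identities and the characterization of the little Lipschitz condition as $\frac{1}{t}[A,C] \to 0$ uniformly over $C \in [\Vc_t]_1$ as $t \to 0$. The paper simply packages this via the operator de Leeuw map $\Phi_\alpha(A) = \frac{1}{t}[A,C]$ (Definition \ref{opdeLeeuw}), observing that $A \in {\rm lip}(\Mx)$ iff $\Phi_\alpha(A) \to 0$, from which linearity, the Leibniz rule, self-adjointness, and closedness follow at once; you unpack the same verifications explicitly.
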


\begin{proof}
All of the assertions follow from the observation that $A \in {\rm Lip}(\Mx)$
belongs to ${\rm lip}(\Mx)$ if and only if $\Phi_\alpha(A) \to 0$ as
$t \to 0$, where $\alpha$ ranges over all pairs $(t,C)$ such that
$t > 0$ and $C \in [\Vc_t]_1$, and $\Phi_\alpha(A) = \frac{1}{t}[A,C]$.
\end{proof}

We omit the proofs of the next two results; they are straightforward
adaptations of the proofs of Propositions \ref{altform} and \ref{compo}.

\begin{prop}
Let $\rho$ be a quantum distance function on a von Neumann algebra $\Mx$
and let $A \in \Mx$ be self-adjoint and spectrally little Lipschitz.
Then for every $\epsilon > 0$ there exists $\delta > 0$ such that
$$\frac{d(S,T)}{\rho(P_S(A), P_T(A))} \leq \epsilon$$
for any Borel sets $S,T \subseteq \Rb$ such that $\rho(P_S(A), P_T(A))
\leq \delta$.
\end{prop}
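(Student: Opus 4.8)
The plan is to adapt the proof of Proposition \ref{altform}, simply replacing the global bound by $L_s(A)$ with the quantitative local bound furnished by the spectral little Lipschitz hypothesis. First I would fix $\epsilon > 0$ and invoke the definition of spectrally little Lipschitz to produce $\delta > 0$ such that
$$\frac{b-a}{\rho(P_{(-\infty,a]}(A), P_{[b,\infty)}(A))} \le \epsilon$$
for all $a < b$ with $\rho(P_{(-\infty,a]}(A), P_{[b,\infty)}(A)) \le \delta$, and then claim this same $\delta$ works for the statement involving arbitrary Borel sets. So let $S, T \subseteq \Rb$ be Borel with $\rho(P_S(A), P_T(A)) \le \delta$. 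The degenerate cases are disposed of immediately: if $d(S,T) = 0$ the ratio vanishes, while if $\rho(P_S(A), P_T(A)) = 0$ then, since $A$ is globally spectrally Lipschitz, Proposition \ref{altform} (with $L_s(A) < \infty$) forces $d(S,T) = 0$ as well, so with the convention $\frac{0}{0} = 0$ there is nothing to prove. Thus I may assume both quantities are strictly positive.

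Next, exactly as in Proposition \ref{altform}, I would partition $S = \bigcup S_i$ and $T = \bigcup T_j$ into Borel pieces of diameter at most $d(S,T)$, arranging that all but finitely many of the $P_{S_i}(A)$ and $P_{T_j}(A)$ vanish (possible because $A$ is bounded, so only finitely many pieces meet the spectrum). Since $P_S(A) = \bigvee P_{S_i}(A)$ and $P_T(A) = \bigvee P_{T_j}(A)$, property (iv) of Definition \ref{qdfdef} (in its strengthened join form) produces indices $i, j$ with $\rho(P_{S_i}(A), P_{T_j}(A)) = \rho(P_S(A), P_T(A))$. Because $d(S_i, T_j) \ge d(S,T) > 0$ while each piece has diameter at most $d(S,T)$, the sets $S_i$ and $T_j$ lie on opposite sides of a gap; using the symmetry of $\rho$ (property (iii)) I may assume $a < b$, where $a = \sup S_i$ and $b = \inf T_j$, so that $d(S_i, T_j) = b - a$.

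The crucial step is then to feed this interval pair into the little Lipschitz estimate, which requires checking that its hypothesis applies. Since $P_{S_i}(A) \le P_{(-\infty,a]}(A)$ and $P_{T_j}(A) \le P_{[b,\infty)}(A)$, the monotonicity of $\rho$ coming from property (iv) gives
$$\rho(P_{(-\infty,a]}(A), P_{[b,\infty)}(A)) \le \rho(P_{S_i}(A), P_{T_j}(A)) = \rho(P_S(A), P_T(A)) \le \delta,$$
so $a, b$ satisfy the condition guaranteeing the local bound. Chaining inequalities exactly as in Proposition \ref{altform} then yields
$$\frac{d(S,T)}{\rho(P_S(A), P_T(A))} \le \frac{d(S_i, T_j)}{\rho(P_{S_i}(A), P_{T_j}(A))} \le \frac{b-a}{\rho(P_{(-\infty,a]}(A), P_{[b,\infty)}(A))} \le \epsilon,$$
which is the desired conclusion. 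The only genuine subtlety, and the one place requiring care, is ensuring that $\rho(P_{(-\infty,a]}(A), P_{[b,\infty)}(A))$ remains below $\delta$ so that the \emph{local} hypothesis (rather than just the global finiteness of $L_s(A)$) can be invoked; this is precisely what the monotonicity inequality above secures, and everything else is the bookkeeping already performed in Proposition \ref{altform}.
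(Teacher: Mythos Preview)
Your proof is correct and follows exactly the approach the paper indicates: the paper omits the proof, stating only that it is a ``straightforward adaptation'' of the proof of Proposition~\ref{altform}, and that is precisely what you have carried out. The one extra ingredient you correctly identify and justify---that monotonicity of $\rho$ forces $\rho(P_{(-\infty,a]}(A),P_{[b,\infty)}(A))\le\delta$, so the local little-Lipschitz bound (not merely the global $L_s(A)$) applies---is indeed the only point requiring care beyond the bookkeeping of Proposition~\ref{altform}.
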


\begin{prop}
Let $\rho$ be a quantum distance function on a von Neumann algebra
$\Mx$, let $A \in \Mx$ be self-adjoint and spectrally little Lipschitz,
and let $f: \Rb \to \Rb$ be Lipschitz. Then $f(A)$ is spectrally little
Lipschitz.
\end{prop}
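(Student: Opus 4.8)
The plan is to build directly on the proof of Proposition \ref{compo}. That $f(A)$ is spectrally Lipschitz is immediate from that proposition, so the only thing left is to verify the ``local flatness'' estimate in the definition of spectrally little Lipschitz. The guiding observation, already exploited in Proposition \ref{compo}, is that for $a < b$ the half-line spectral projections of $f(A)$ are Borel spectral projections of $A$: setting $S = f^{-1}((-\infty,a])$ and $T = f^{-1}([b,\infty))$ one has
$$P_{(-\infty,a]}(f(A)) = P_S(A) \qquad\text{and}\qquad P_{[b,\infty)}(f(A)) = P_T(A),$$
and moreover $b - a \leq L(f)\cdot d(S,T)$, since any $x \in S$ and $y \in T$ satisfy $b - a \leq f(y) - f(x) \leq L(f)|x-y|$.

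So fix $\epsilon > 0$. First I would dispose of the degenerate case $L(f) = 0$, in which $f$ is constant and $f(A)$ is a scalar multiple of the identity, hence vacuously spectrally little Lipschitz. Assuming $L(f) > 0$, I would apply the preceding proposition (the Borel-set form of the little Lipschitz condition for $A$) with $\epsilon_0 = \epsilon/L(f)$ in place of $\epsilon$, obtaining $\delta > 0$ such that $d(S,T)/\rho(P_S(A), P_T(A)) \leq \epsilon_0$ whenever $\rho(P_S(A), P_T(A)) \leq \delta$. I claim the same $\delta$ works for $f(A)$. Indeed, suppose $a < b$ and $\rho(P_{(-\infty,a]}(f(A)), P_{[b,\infty)}(f(A))) \leq \delta$. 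By the identities above this says exactly that $\rho(P_S(A), P_T(A)) \leq \delta$, which forces both projections to be nonzero (otherwise $\rho = \infty$ by Definition \ref{qdfdef} (i)), so $S$ and $T$ are nonempty and $d(S,T)$ is a genuine infimum over points. Then
$$\frac{b-a}{\rho(P_{(-\infty,a]}(f(A)), P_{[b,\infty)}(f(A)))} \leq L(f)\cdot\frac{d(S,T)}{\rho(P_S(A),P_T(A))} \leq L(f)\cdot\epsilon_0 = \epsilon,$$
which is precisely the estimate required in the definition of spectrally little Lipschitz.

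The conceptual point, and the only place any care is needed, is that $f^{-1}$ of a half-line is a general Borel set rather than another half-line, so one cannot feed the half-line condition for $A$ directly into the half-line condition for $f(A)$; the correct move is to route the argument through the Borel-set reformulation supplied by the preceding proposition, which is for the little Lipschitz condition exactly the analog of the passage from Definition \ref{speclip} to Proposition \ref{altform}. Beyond recognizing this there is no substantive obstacle: the inequality $b - a \leq L(f)\,d(S,T)$ together with the \emph{exact} equality of the two distances does all the work, and the only bookkeeping is the trivial cases $L(f) = 0$ and $P_S(A)$ or $P_T(A)$ equal to zero, both handled above.
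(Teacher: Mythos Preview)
Your proof is correct and follows exactly the approach the paper indicates: the paper omits the proof, noting only that it is a ``straightforward adaptation of the proofs of Propositions \ref{altform} and \ref{compo},'' and your argument is precisely that adaptation---routing through the Borel-set reformulation of the little Lipschitz condition (the preceding proposition) and then repeating the estimate from Proposition \ref{compo} with the additional $\delta$-threshold bookkeeping.
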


\begin{prop}
Let $\rho$ be a quantum distance function on a von Neumann algebra $\Mx$
and let $A,\tilde{A} \in \Mx$ be self-adjoint and spectrally little Lipschitz.
Then their spectral join and meet are also spectrally little Lipschitz.
\end{prop}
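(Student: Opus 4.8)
The plan is to treat the spectral join $A \vee \tilde A$ as the main case and deduce the meet from it. First I would dispose of the easy half: since $L_s$ is a quantum Lipschitz gauge (Theorem \ref{abstractlip}), Lemma \ref{joinmeet} gives $L_s(A \vee \tilde A), L_s(A \wedge \tilde A) \le \max\{L_s(A),L_s(\tilde A)\} < \infty$, so both are spectrally Lipschitz and only the local flatness ("little") condition needs proof.

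For the join, the key input is the estimate already established in the verification of property (iii) in the proof of Theorem \ref{abstractlip}. Using $P_{(-\infty,a]}(A\vee\tilde A) = P_{(-\infty,a]}(A)\wedge P_{(-\infty,a]}(\tilde A)$, the inclusion $P_{[b,\infty)}(A\vee\tilde A) \le P_{(b-\eta,\infty)}(A)\vee P_{(b-\eta,\infty)}(\tilde A)$, the monotonicity of $\rho$, and property (iv) of Definition \ref{qdfdef} together with the symmetry (iii), one gets, for every $\eta \in (0,b-a)$,
$$\rho(P_{(-\infty,a]}(A\vee\tilde A),\, P_{[b,\infty)}(A\vee\tilde A)) \ \ge\ \min\{\rho_A(\eta),\rho_{\tilde A}(\eta)\},$$
where $\rho_A(\eta) = \rho(P_{(-\infty,a]}(A),P_{[b-\eta,\infty)}(A))$ and $\rho_{\tilde A}(\eta)$ is defined analogously. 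Next, given $\epsilon > 0$ I would choose $\delta_A,\delta_{\tilde A} > 0$ from the little Lipschitz hypotheses on $A$ and $\tilde A$ (with target ratio $\epsilon$). Applying the little Lipschitz condition of $A$ at the levels $a < b-\eta$ yields the dichotomy that either $\rho_A(\eta) > \delta_A$ or $\rho_A(\eta) \ge ((b-\eta)-a)/\epsilon$, hence in all cases $\rho_A(\eta) \ge \min\{\delta_A,((b-\eta)-a)/\epsilon\}$, and likewise for $\tilde A$. Substituting into the displayed estimate and letting $\eta \to 0$ (the relevant quantity increases as $\eta\downarrow 0$) produces
$$\rho(P_{(-\infty,a]}(A\vee\tilde A),\, P_{[b,\infty)}(A\vee\tilde A)) \ \ge\ \min\Big\{\delta_A,\delta_{\tilde A},\frac{b-a}{\epsilon}\Big\}.$$
Setting $\delta = \frac{1}{2}\min\{\delta_A,\delta_{\tilde A}\}$, whenever the left side is $\le \delta$ the minimum on the right is forced to be $(b-a)/\epsilon$, so $(b-a)/\rho \le \epsilon$; this is exactly the spectral little Lipschitz condition for $A\vee\tilde A$.

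The meet then follows formally: since $P_{(-\infty,a]}(-A) = P_{[-a,\infty)}(A)$ and $P_{[b,\infty)}(-A) = P_{(-\infty,-b]}(A)$, the symmetry of $\rho$ shows that $A \mapsto -A$ preserves the spectral little Lipschitz property (relabelling the levels $(a,b)\mapsto(-b,-a)$, which preserves $b-a$). Hence $-A$ and $-\tilde A$ are spectrally little Lipschitz, the join case applies to them, and $A\wedge\tilde A = -\big((-A)\vee(-\tilde A)\big)$ is spectrally little Lipschitz as well.

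The hard part will be the one-sided mismatch of the spectral projections feeding into $\rho$: because $P_{[b,\infty)}(A\vee\tilde A)$ is only controlled by the open spectral projections $P_{(b-\eta,\infty)}$ of the individual operators, the argument must carry the auxiliary slack $\eta$ throughout and then remove it in the limit, checking that replacing $b-a$ by $(b-\eta)-a$ is harmless. The only genuinely delicate bookkeeping is keeping the "either the distance already exceeds $\delta$, or the little Lipschitz bound kicks in" dichotomy uniform in $\eta$; once that is in hand, the remainder is a direct transcription of the property (iii) computation in Theorem \ref{abstractlip}.
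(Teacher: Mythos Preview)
Your argument is correct and follows essentially the same route as the paper: both rely on the inequality from the proof of property (iii) in Theorem \ref{abstractlip}, observe that the term achieving the minimum on the right has denominator no larger than that on the left so that the little Lipschitz hypothesis applies, and reduce the meet to the join via $A\wedge\tilde A = -\big((-A)\vee(-\tilde A)\big)$. Your treatment of the slack parameter $\eta$ and the dichotomy is more explicit than the paper's terse parenthetical, but the underlying idea is identical.
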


\begin{proof}
The statement about joins follows from the inequality
\begin{eqnarray*}
&&\frac{b-a}{\rho(P_{(-\infty, a]}(A \vee \tilde{A}), P_{[b,\infty)}(A \vee \tilde{A}))}\cr
&&\phantom{\limsup}\leq
\max\left\{\frac{b-a}{\rho(P_{(-\infty, a]}(A),P_{[b-\epsilon,\infty)}(A))},
\frac{b-a}{\rho(P_{(-\infty, a]}(\tilde{A}),P_{[b-\epsilon,\infty)}(\tilde{A}))}\right\}
\end{eqnarray*}
established in the course of showing that $L_s$ satisfies property (iii)
of Definition \ref{abspeclip} in the proof of Theorem \ref{abstractlip}.
(Whichever term on the right dominates the left side must have a smaller
denominator, so the spectral little Lipschitz condition can be applied.)
The statement about meets can either be proven similarly or reduced to
the statement about joins via the identity $A \wedge \tilde{A} =
-(-A \vee -\tilde{A})$.
\end{proof}

\begin{prop}\label{sclit}
Let $\Vb$ be a quantum pseudometric on a von Neumann algebra
$\Mx \subseteq \Bc(H)$. Then any spectrally little Lipschitz
self-adjoint element of $\Mx$ is commutation little Lipschitz.
\end{prop}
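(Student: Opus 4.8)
The plan is to reduce the local statement to the same commutator estimate, Theorem \ref{lipineq} (a), that already powers the global inequality $L_c \leq L_s$, but now applied with a rescaled threshold. First I would record that a spectrally little Lipschitz operator is in particular spectrally Lipschitz, hence commutation Lipschitz by Corollary \ref{spcomcor}; so membership in ${\rm Lip}(\Mx)$ is automatic and it remains only to verify the local flatness condition defining ${\rm lip}(\Mx)$.

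Fix $\epsilon > 0$. Since $A$ is spectrally little Lipschitz, I would choose $\delta > 0$ so that $b - a \leq \epsilon\cdot\rho(P_{(-\infty,a]}(A), P_{[b,\infty)}(A))$ whenever $a < b$ and $\rho(P_{(-\infty,a]}(A), P_{[b,\infty)}(A)) \leq \delta$. The key linking observation is that for projections $P,Q \in \Mx$ and $C \in \Vc_t$ one has $PCQ = 0$ whenever $\rho(P,Q) > t$; this is immediate from the formula $\rho(P,Q) = \inf\{D(B): PBQ \neq 0\}$ together with $D(C) \leq t$. I would use this in contrapositive form: if $t \leq \delta$, $C \in [\Vc_t]_1$, and $P_{(-\infty,a]}(A)CP_{[b,\infty)}(A) \neq 0$, then $\rho(P_{(-\infty,a]}(A), P_{[b,\infty)}(A)) \leq t \leq \delta$, and hence by the choice of $\delta$ we get $b - a \leq \epsilon t$.

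Consequently $P_{(-\infty,a]}(A)CP_{[b,\infty)}(A) = 0$ for all $a < b$ with $b - a > \epsilon t$. Applying Theorem \ref{lipineq} (a) with $\epsilon t$ playing the role of its threshold then yields $\|[A,C]\| \leq \epsilon t\|C\| \leq \epsilon t$, so $\|[A,C]\|/t \leq \epsilon$. Since this holds for every $t \leq \delta$ and every $C \in [\Vc_t]_1$, it is exactly the commutation little Lipschitz condition, and we conclude $A \in {\rm lip}(\Mx)$.

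I do not expect a serious obstacle here, since the argument is a single rescaling of the existing commutator bound. The only point requiring care is keeping the two scales straight—the filtration threshold $t$ and the spectral gap $b - a$—so that the flatness factor $\epsilon$ ends up multiplying $t$ (via the threshold $\epsilon t$ fed into Theorem \ref{lipineq} (a)) rather than being absorbed into $t$ itself. One should also note that for $A \in \Mx$ the cited theorem applies directly; the stabilized version would be invoked only if one wished to phrase the conclusion for self-adjoint $A \in \Mx \overline{\otimes} \Bc(l^2)$.
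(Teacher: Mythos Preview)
Your proof is correct and follows essentially the same approach as the paper: choose $\delta$ from the spectral little Lipschitz condition, observe that $C \in [\Vc_t]_1$ with $t \leq \delta$ forces $P_{(-\infty,a]}(A)CP_{[b,\infty)}(A) = 0$ whenever $b - a > \epsilon t$, and then invoke Theorem~\ref{lipineq} (a) with threshold $\epsilon t$ to get $\|[A,C]\| \leq \epsilon t$. Your explicit invocation of Corollary~\ref{spcomcor} to first secure membership in ${\rm Lip}(\Mx)$ is a small but welcome clarification that the paper's proof leaves implicit.
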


\begin{proof}
Let $A \in \Mx$ be self-adjoint and spectrally little Lipschitz.
Given $\epsilon > 0$, find $\delta > 0$ witnessing the spectral
little Lipschitz condition. Fix $0 \leq t \leq \delta$ and $C \in [\Vc_t]_1$.
Suppose $P_{(-\infty, a]}(A)CP_{[b,\infty)}(A) \neq 0$; then
$\rho(P_{(-\infty, a]}(A), P_{[b,\infty)}(A)) \leq t \leq \delta$,
so the spectral little Lipschitz condition implies that
$t \geq \rho(P_{(-\infty, a]}(A), P_{[b,\infty)}(A)) \geq (b-a)/\epsilon$.
This shows that if $b - a > \epsilon t$ then
$P_{(-\infty, a]}(A)CP_{[b,\infty)}(A) = 0$,
and so Theorem \ref{lipineq} (a) yields $\|[A,C]\| \leq \epsilon t$.
We conclude that $A$ is commutation little Lipschitz.
\end{proof}

\begin{coro}\label{aall}
Let $X$ be a set, let $d$ be a pseudometric on $X$, let
$\Mx \cong l^\infty(X)$ be the von Neumann algebra of bounded
multiplication operators on $l^2(X)$, and let $\Vb_d$ be the
quantum pseudometric on $\Mx$ corresponding to $d$ (Proposition \ref{aa}).
If $f \in l^\infty(X)$ is real-valued then $M_f$ is spectrally little
Lipschitz if and only if $M_f$ is commutation little Lipschitz if and
only if for every $\epsilon > 0$ there exists
$\delta > 0$ such that
$$d(x,y) \leq \delta\quad\Rightarrow\quad
\frac{|f(x) - f(y)|}{d(x,y)} \leq \epsilon.$$
\end{coro}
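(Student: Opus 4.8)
The plan is to prove the three conditions equivalent by establishing the cycle: spectrally little Lipschitz $\Rightarrow$ commutation little Lipschitz $\Rightarrow$ the pointwise condition $\Rightarrow$ spectrally little Lipschitz. The first implication is immediate from Proposition \ref{sclit}, so the real work lies in the other two. Throughout I would use two explicit facts about the atomic abelian case. First, since $f$ is real-valued, the spectral projections of $M_f$ are multiplication operators: $P_{(-\infty,a]}(M_f) = M_{\chi_{S_a}}$ and $P_{[b,\infty)}(M_f) = M_{\chi_{T_b}}$, where $S_a = \{x : f(x) \le a\}$ and $T_b = \{x : f(x) \ge b\}$. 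Second, for any subsets $S,T \subseteq X$ one has $\rho(M_{\chi_S}, M_{\chi_T}) = \inf\{d(x,y) : x \in S,\ y \in T\}$; this follows from Definition \ref{projdist} together with the displacement-gauge formula $D(A) = \sup\{d(x,y) : \langle Ae_y,e_x\rangle \neq 0\}$ recorded after Proposition \ref{aa}, the point being that the infimizing operator can be taken to be a single rank one $V_{xy}$ with $D(V_{xy}) = d(x,y)$. Combining these gives $\rho(P_{(-\infty,a]}(M_f), P_{[b,\infty)}(M_f)) = \inf\{d(x,y) : f(x) \le a,\ f(y) \ge b\}$, which converts the spectral little Lipschitz condition into a statement purely about $d$ and $f$.

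For the implication commutation little Lipschitz $\Rightarrow$ pointwise, the key computation is $[M_f, V_{xy}] = (f(x) - f(y))V_{xy}$, so that $\|[M_f, V_{xy}]\| = |f(x) - f(y)|$ since $\|V_{xy}\| = 1$. Given $\epsilon > 0$, let $\delta > 0$ witness the commutation little Lipschitz condition. For any $x,y$ with $0 < d(x,y) \le \delta$, set $t = d(x,y)$; then $V_{xy} \in [\Vc_t^d]_1$ because $d(x,y) \le t$, and the hypothesis yields $|f(x)-f(y)|/d(x,y) = \|[M_f,V_{xy}]\|/t \le \epsilon$, which is precisely the pointwise estimate. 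The degenerate case $d(x,y) = 0$ is handled by noting that commutation Lipschitzness forces $M_f \in \Vc_0'$, hence $[M_f,V_{xy}] = (f(x)-f(y))V_{xy} = 0$, i.e.\ $f(x) = f(y)$ whenever $d(x,y) = 0$, so the ratio is $0/0 = 0$ there.

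For the implication pointwise $\Rightarrow$ spectrally little Lipschitz, I would first observe that the pointwise condition makes $f$ Lipschitz: taking $\delta$ for $\epsilon = 1$ gives $|f(x)-f(y)| \le d(x,y)$ when $d(x,y) \le \delta$, while for $d(x,y) > \delta$ boundedness gives $|f(x)-f(y)|/d(x,y) \le 2\|f\|_\infty/\delta$; hence $L(f) < \infty$, and $M_f$ is spectrally Lipschitz by Corollary \ref{aasplp}. For the little condition, fix $\epsilon > 0$, take $\delta$ from the pointwise hypothesis, and consider $a < b$ with $r := \rho(P_{(-\infty,a]}(M_f), P_{[b,\infty)}(M_f)) < \delta$. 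By the formula for $\rho$ above there are pairs $(x,y)$ with $f(x) \le a$, $f(y) \ge b$, and $d(x,y)$ arbitrarily close to $r$; for such pairs with $d(x,y) < \delta$ we have $b - a \le f(y) - f(x) \le |f(x)-f(y)| \le \epsilon\, d(x,y)$, and letting $d(x,y) \to r$ gives $b - a \le \epsilon r$. Thus $(b-a)/r \le \epsilon$ whenever $r < \delta$, which is the spectral little Lipschitz condition (using any threshold below $\delta$).

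There is no deep obstacle here; the argument is routine once the two structural facts — the shape of the spectral projections of $M_f$ and the identity $\rho(M_{\chi_S},M_{\chi_T}) = \inf\{d(x,y)\}$ — are established, and these are straightforward consequences of Proposition \ref{aa} and Definition \ref{projdist}. The only points requiring mild care are the $0/0$ degeneracies along the zero-distance equivalence, resolved by the remark that a commutation Lipschitz $M_f$ lies in $\Vc_0'$, and the boundary matching between $r < \delta$ and $r \le \delta$ in the limiting step, handled by shrinking the threshold. I expect the matrix-element computation of $[M_f,V_{xy}]$ and the explicit evaluation of $\rho$ to be the two places where one must be most careful to keep the bookkeeping correct.
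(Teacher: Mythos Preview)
Your proposal is correct and follows essentially the same cycle of implications as the paper's proof: pointwise little Lipschitz $\Rightarrow$ spectrally little Lipschitz $\Rightarrow$ commutation little Lipschitz (via Proposition~\ref{sclit}) $\Rightarrow$ pointwise little Lipschitz, with the last step carried out via the rank-one operators $V_{xy}$ and the identity $\|[M_f,V_{xy}]\| = |f(x)-f(y)|$. The only cosmetic difference is that the paper invokes Corollary~\ref{agree} at the outset to reduce to the case $L(f)<\infty$, whereas you derive Lipschitzness of $f$ from the pointwise condition directly; both are fine.
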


\begin{proof}
Let $f \in l^\infty(X)$ be real-valued. By Corollary \ref{agree} we may
assume $f$ is Lipschitz. First, suppose $f$ is little Lipschitz, i.e.,
it satisfies the $\epsilon$-$\delta$ condition stated in the proposition.
Let $\epsilon > 0$ and find $\delta > 0$ satisfying this condition.
Then if $a,b \in \Rb$, $a < b$, satisfy $\rho(P_{(-\infty,a]}(M_f),
P_{[b,\infty)}(M_f)) < \delta$, we can find sequences $\{x_n\}
\subseteq f^{-1}((-\infty,a])$ and $\{y_n\} \subseteq f^{-1}([b,\infty))$
such that $d(x_n,y_n) \to \rho(P_{(-\infty,a]}(M_f), P_{[b,\infty)}(M_f))$
and $d(x_n,y_n) \leq \delta$ for all $n$.
Then $|f(x_n) - f(y_n)| \geq b-a$ for all $n$, so
$$\frac{b-a}{\rho(P_{(-\infty,a]}(M_f), P_{[b,\infty)}(M_f))} \leq
\limsup \frac{|f(x_n) - f(y_n)|}{d(x_n,y_n)} \leq \epsilon.$$
This verifies the spectral little Lipschitz condition for $M_f$.

Next, if $M_f$ is spectrally little Lipschitz then it is commutation
little Lipschitz by Proposition \ref{sclit}.

Finally, suppose $M_f$ is commutation little Lipschitz, let $\epsilon > 0$,
and find $\delta > 0$ satisfying the commutation little Lipschitz
condition. For any $x,y \in X$ with $t = d(x,y) \leq \delta$, the operator
$V_{xy}$ then belongs to $[\Vc_t]_1$ with $t \leq \delta$, so
$$\frac{|f(x) - f(y)|}{d(x,y)} =
\frac{\|[M_f,V_{xy}]\|}{t} \leq \epsilon.$$
This shows that $f$ is little Lipschitz.
\end{proof}

Finally, we note that just as in the abelian case, little Lipschitz
operators are abundant when the underlying metric space is H\"older.
This result is a straightforward consequence of the definitions of
spectral and commutation little Lipschitz operators, together with
the fact that if $0 < \alpha < 1$ then $t/t^\alpha \to 0$ as $t \to 0$.

\begin{prop}
Let $\Vb$ be a quantum pseudometric on a von Neumann algebra $\Mx
\subseteq \Bc(H)$ and let $0 < \alpha < 1$. Let ${\rm lip}^\alpha(\Mx)$
denote the little Lipschitz space relative to the H\"older quantum
pseudometric $\Vb^\alpha$ (Section \ref{hold}). Then
${\rm Lip}(\Mx) \subseteq {\rm lip}^\alpha(\Mx)$. Any self-adjoint
element of $\Mx$ that is spectrally Lipschitz relative to $\Vb$ will
be spectrally little Lipschitz relative to $\Vb^\alpha$.
\end{prop}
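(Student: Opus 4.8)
The plan is to reduce both assertions to two bookkeeping identities relating $\Vb$ and $\Vb^\alpha$, followed by the single elementary observation that a positive power of a small displacement is small. First I would record the two translation facts. By the definition in Section \ref{hold} we have $\Vc_t^\alpha = \Vc_{t^{1/\alpha}}$, so the associated displacement gauge is $D_\alpha(C) = D(C)^\alpha$; and, computing directly from Definition \ref{projdist}, the associated quantum distance function is $\rho_{\Vb^\alpha}(P,Q) = \rho_\Vb(P,Q)^\alpha$ (the $\alpha$-power of the original distance function, which is the quantum counterpart of the classical passage $d \mapsto d^\alpha$; cf.\ the atomic abelian proposition of Section \ref{hold}). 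The only analytic input needed is that for $0 < \alpha < 1$ one has $s^{1-\alpha} \to 0$ as $s \to 0^+$, equivalently $s/s^\alpha \to 0$.

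For the inclusion ${\rm Lip}(\Mx) \subseteq {\rm lip}^\alpha(\Mx)$, let $A \in {\rm Lip}(\Mx)$, so $L_c(A) < \infty$ relative to $\Vb$. I would unwind the commutation little Lipschitz condition relative to $\Vb^\alpha$: a witness $C$ at $\Vb^\alpha$-level $t$ lies in $[\Vc_t^\alpha]_1 = [\Vc_{t^{1/\alpha}}]_1$, hence in the unit ball of $\Vc_s$ for $s = t^{1/\alpha}$ relative to $\Vb$, so that $\|[A,C]\| \leq L_c(A)\,s$ by Definition \ref{comlip}. Therefore
$$\frac{\|[A,C]\|}{t} \leq L_c(A)\,\frac{s}{s^\alpha} = L_c(A)\,s^{1-\alpha},$$
which tends to $0$ as $s \to 0$, that is, as $t = s^\alpha \to 0$. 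This gives the little Lipschitz condition directly; combining it with the trivial bound $\|[A,C]\| \leq 2\|A\|$ for large $s$ shows the corresponding supremum is finite, so $A$ is genuinely commutation Lipschitz relative to $\Vb^\alpha$, and hence $A \in {\rm lip}^\alpha(\Mx)$.

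For the spectral statement, let $A \in \Mx$ be self-adjoint and spectrally Lipschitz relative to $\Vb$, and abbreviate $P_a = P_{(-\infty,a]}(A)$, $Q_b = P_{[b,\infty)}(A)$, and $r = \rho_\Vb(P_a,Q_b)$. Spectral Lipschitzness gives $b - a \leq L_s(A)\,r$ for all $a < b$. Using $\rho_{\Vb^\alpha} = \rho_\Vb^\alpha$, the quotient governing the $\Vb^\alpha$ spectral (little) Lipschitz conditions is
$$\frac{b-a}{\rho_{\Vb^\alpha}(P_a,Q_b)} = \frac{b-a}{r^\alpha} \leq L_s(A)\,r^{1-\alpha},$$
which tends to $0$ as $r \to 0$, equivalently as $\rho_{\Vb^\alpha}(P_a,Q_b) = r^\alpha \to 0$. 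I would conclude that $A$ satisfies the spectral little Lipschitz condition of Definition~(a) in the little Lipschitz subsection relative to $\Vb^\alpha$; finiteness of the $\Vb^\alpha$ spectral Lipschitz number follows by using $b - a \leq 2\|A\|$ to control the quotient on the range where $r$ is bounded away from $0$.

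The main difficulty here is purely bookkeeping rather than genuine: one must correctly propagate both Lipschitz gauges through the reparametrization $t \leftrightarrow t^{1/\alpha}$ and keep straight which exponent appears (a factor $s^{1-\alpha}$ in the displacement variable $s = D(C)$, equivalently $t^{(1-\alpha)/\alpha}$ in the Hölder-level variable $t = s^\alpha$), and separately verify that each relevant quotient is not merely eventually small but globally finite. Once the identity $\rho_{\Vb^\alpha} = \rho_\Vb^\alpha$ (and its gauge-level counterpart $D_\alpha = D^\alpha$) is in hand, both claims collapse to the single inequality stating that the $\Vb^\alpha$-Lipschitz ratio is bounded by the $\Vb$-Lipschitz constant times a positive power of the displacement, which vanishes in the small-scale limit.
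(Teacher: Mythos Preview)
Your argument is correct and is exactly the approach the paper indicates: the paper omits the proof entirely, noting only that it follows from the definitions together with the fact that $t/t^\alpha \to 0$ as $t \to 0$, which is precisely the observation $s^{1-\alpha} \to 0$ that you use after translating both gauges through the reparametrization $\Vc_t^\alpha = \Vc_{t^{1/\alpha}}$. Your explicit verification that $A$ is genuinely (commutation, respectively spectrally) Lipschitz relative to $\Vb^\alpha$---by combining the small-scale estimate with the crude bound $\|[A,C]\|\leq 2\|A\|$ or $b-a\leq 2\|A\|$ at large scales---fills in a detail the paper leaves implicit.
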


The most significant substantive result about little Lipschitz spaces
states that ${\rm lip}^\alpha(X)^{**} \cong {\rm Lip}^\alpha(X)$ for
any compact metric space $X$ and any $0 < \alpha < 1$. We conjecture
that this remains true for general quantum metrics, with the hypothesis
``$X$ is compact'' modified to ``the closed unit ball of
${\rm Lip}^\alpha(\Mx)$ is compact for the operator norm topology''.

\section{Quantum uniformities}\label{uniformities}

In this brief final chapter we propose a quantum analog of the notion
of a uniform space \cite{Pag}. A classical uniformity on a
set can be defined in terms of a family of relations called ``entourages''.
We can give a natural quantum generalization of this definition which
is representation independent (Theorem \ref{uniftrepindep}) and
effectively reduces to the classical definition in the atomic abelian
case (Proposition \ref{uniftaa}). We find that the basic theory of
uniformities, including their presentability in terms of families of
pseudometrics, generalizes to the quantum setting (Theorem \ref{uniftqpm}),
and we also develop some basic material on
quantum uniform continuity (which, like the Lipschitz condition,
bifurcates into two distinct but related notions). However, we do
not attempt to mine the subject in detail.

\subsection{Basic results}
We start with our definition of a quantum uniformity. It is not
overtly expressed in terms of dual operator bimodules, but we
immediately show that there is an equivalent reformulation in these
terms.

\begin{defi}\label{qudef}
A {\it quantum uniformity} is a family $\Ub$ of dual operator systems
contained in some $\Bc(H)$ that satisfies the following conditions:
\begin{quote}
(i) any dual operator system that contains a member of $\Ub$ belongs
to $\Ub$

\noindent (ii) if $\Uc, \tilde{\Uc} \in \Ub$ then $\Uc \cap \tilde{\Uc} \in \Ub$

\noindent (iii) for every $\Uc \in \Ub$ there exists $\tilde{\Uc} \in \Ub$
such that $\tilde{\Uc}^2 \subseteq \Uc$.
\end{quote}
The elements of $\Ub$ are {\it quantum entourages}. $\Ub$ is a
{\it quantum uniformity on the von Neumann algebra $\Mx \subseteq \Bc(H)$}
if $\Mx' \subseteq \bigcap \Ub$, and it is {\it Hausdorff} if
$\Mx' = \bigcap \Ub$. A subfamily $\Ub_0 \subseteq \Ub$ {\it generates}
$\Ub$ if every member of $\Ub$ contains some member of $\Ub_0$.
\end{defi}

Equivalently, we could work with dual unital operator spaces and require
that $\Uc \in \Ub$ $\Rightarrow$ $\Uc \cap \Uc^* \in \Ub$.

Note that the intersection $\bigcap \Ub$ is always a von Neumann
algebra. (It is clearly a dual operator system,
and it is an algebra by property (iii).) So if $\Ub$ is a quantum
uniformity on the von Neumann algebra $\Mx$ then $\bigcap \Ub$ is a
von Neumann algebra containing $\Mx'$
and we can ensure the Hausdorff property by passing from $\Mx$ to
the commutant of $\bigcap \Ub$ (a possibly smaller von Neumann algebra).

\begin{prop}
Let $\Ub$ be a quantum uniformity on a von Neumann algebra $\Mx
\subseteq \Bc(H)$. Then
$\Ub$ is generated by the subfamily
$$\Ub_0 = \{\Uc \in \Ub: \Uc\hbox{ is a quantum relation on }\Mx\}.$$
\end{prop}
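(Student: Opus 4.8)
The plan is to prove the statement by showing that every quantum entourage $\Uc \in \Ub$ contains a quantum entourage which happens to be a quantum relation on $\Mx$; since $\Ub_0 \subseteq \Ub$, this is exactly what it means for $\Ub_0$ to generate $\Ub$. Given $\Uc \in \Ub$, the natural candidate is the largest W*-bimodule over $\Mx'$ contained in $\Uc$, namely
$$\Uc_0 = \{A \in \Bc(H): BAC \in \Uc \text{ for all } B, C \in \Mx'\}.$$
The work then splits into two parts: first, verifying that $\Uc_0$ is indeed a quantum relation on $\Mx$ lying inside $\Uc$, and second, verifying that $\Uc_0$ belongs to $\Ub$.

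The first part is routine. I would observe that $\Uc_0 \subseteq \Uc$ by taking $B = C = I \in \Mx'$, that $I \in \Uc_0$ because $\Mx' \subseteq \bigcap\Ub \subseteq \Uc$ forces $BC \in \Mx' \subseteq \Uc$, and that $\Uc_0$ is self-adjoint because $\Uc$ and $\Mx'$ are. That $\Uc_0$ is a bimodule over $\Mx'$ is immediate: if $A \in \Uc_0$ and $B_0, C_0 \in \Mx'$, then $B(B_0 A C_0)C = (BB_0)A(C_0C) \in \Uc$ for all $B,C \in \Mx'$, so $B_0 A C_0 \in \Uc_0$. Finally $\Uc_0$ is weak* closed because each map $A \mapsto BAC$ is weak* continuous---indeed $\mathrm{tr}(BAC \cdot T) = \mathrm{tr}(A \cdot CTB)$ with $CTB$ trace class---so $\Uc_0$ is an intersection of preimages of the weak* closed set $\Uc$. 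Thus $\Uc_0$ is a dual operator system and a quantum relation on $\Mx$.

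The main obstacle is showing $\Uc_0 \in \Ub$. By axiom (i) it suffices to exhibit a member of $\Ub$ contained in $\Uc_0$, and here uniformity axiom (iii) together with unitality does the job. I would apply (iii) twice to obtain $\Uc_1, \Uc_2 \in \Ub$ with $\Uc_2^2 \subseteq \Uc_1$ and $\Uc_1^2 \subseteq \Uc$, so that $\Uc_2^4 \subseteq \Uc_1^2 \subseteq \Uc$. The key point is that every entourage is unital, so $I \in \Uc_2$ yields the chain $\Uc_2 \subseteq \Uc_2^2 \subseteq \Uc_2^3 \subseteq \Uc_2^4$; hence $\Uc_2^3 \subseteq \Uc$. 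Since $\Mx' \subseteq \bigcap\Ub \subseteq \Uc_2$, it follows that $\Mx'\Uc_2\Mx' \subseteq \Uc_2^3 \subseteq \Uc$, which says precisely that $\Uc_2 \subseteq \Uc_0$.

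Therefore $\Uc_0$ is a dual operator system containing the member $\Uc_2$ of $\Ub$, so $\Uc_0 \in \Ub$ by axiom (i). As $\Uc_0$ is a quantum relation we have $\Uc_0 \in \Ub_0$ with $\Uc_0 \subseteq \Uc$, completing the proof that $\Ub_0$ generates $\Ub$. The only subtle ingredient is the cube estimate $\Mx'\Uc_2\Mx' \subseteq \Uc_2^3 \subseteq \Uc$; everything else is bookkeeping, and the use of unitality to absorb the lower powers into $\Uc_2^4$ is what makes the two applications of (iii) enough rather than needing to control $\Uc_2^3$ directly.
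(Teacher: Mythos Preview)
Your proof is correct and uses essentially the same idea as the paper: apply axiom (iii) twice to obtain an entourage $\Uc_2$ with $\Uc_2^3 \subseteq \Uc$, then use $\Mx' \subseteq \Uc_2$ to get $\Mx'\Uc_2\Mx' \subseteq \Uc$. The only cosmetic difference is that the paper goes ``up'' --- taking $\overline{\Mx'\Uc_2\Mx'}^{wk^*}$ as the desired quantum relation in $\Ub_0$ --- whereas you go ``down'', defining $\Uc_0$ as the largest $\Mx'$-bimodule inside $\Uc$ and checking $\Uc_2 \subseteq \Uc_0$; the paper's route avoids your verification that $\Uc_0$ is weak* closed and self-adjoint, but the substance is the same.
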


\begin{proof}
Let $\Uc \in \Ub$ and apply property (iii) of Definition \ref{qudef} twice
to obtain $\tilde{\Uc} \in \Ub$ such that $\tilde{\Uc}^3 \subseteq \Uc$.
Then $\overline{\Mx'\tilde{\Uc}\Mx'}^{wk^*}$
is a quantum relation on $\Mx$ that contains $\tilde{\Uc}$, and hence is
a quantum entourage, and it is contained in $\tilde{\Uc}^3 \subseteq \Uc$. 
\end{proof}

Thus, we could just as well define a quantum uniformity on $\Mx$ to be a
family $\Ub$ of quantum relations on $\Mx$ such that
\begin{quote}
(i) $\Mx' \subseteq \Uc = \Uc^*$ for all $\Uc \in \Ub$

\noindent (ii) any quantum relation $\Uc$ that contains a member
of $\Ub$ and satisfies $\Uc = \Uc^*$ belongs to $\Ub$

\noindent (iii) if $\Uc, \tilde{\Uc} \in \Ub$ then
$\Uc \cap \tilde{\Uc} \in \Ub$

\noindent (iv) for every $\Uc \in \Ub$ there exists $\tilde{\Uc} \in \Ub$
such that $\tilde{\Uc}^2 \subseteq \Uc$.
\end{quote}

Given the preceding, the next two results follow from,
respectively, Theorem \ref{repindep} and Proposition \ref{atomiccase}.
Order the quantum uniformities on a von Neumann
algebra by inclusion.

\begin{theo}\label{uniftrepindep}
Let $H_1$ and $H_2$ be Hilbert spaces and let $\Mx_1 \subseteq \Bc(H_1)$
and $\Mx_2 \subseteq \Bc(H_2)$ be isomorphic von Neumann algebras. Then
any isomorphism induces an order preserving 1-1 correspondence between the
quantum uniformities on $\Mx_1$ and the quantum uniformities on $\Mx_2$.
\end{theo}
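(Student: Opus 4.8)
The plan is to reduce everything to the 1-1 correspondence between quantum relations furnished by Theorem \ref{repindep}, working throughout with the reformulation of quantum uniformities in terms of quantum relations given immediately above. So I would fix an isomorphism and let $\Phi$ denote the induced 1-1 correspondence between the quantum relations on $\Mx_1$ and those on $\Mx_2$. By Theorem \ref{repindep}, $\Phi$ respects inclusion, the property of equaling the commutant, the adjoint operation, and the weak* closed product; in particular $\Phi$ is an order isomorphism of the posets of quantum relations carrying $\Mx_1'$ to $\Mx_2'$. I then define the induced map on families by $\Ub \mapsto \Phi(\Ub) = \{\Phi(\Uc): \Uc \in \Ub\}$, and the whole content of the theorem becomes the assertion that this map is an order-preserving bijection of quantum uniformities.

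The central task is to check that $\Phi$ sends each of the four conditions in the quantum-relation reformulation to the corresponding condition for $\Phi(\Ub)$. Conditions (i) and (ii) are immediate, since the statements $\Mx' \subseteq \Uc$, $\Uc = \Uc^*$, and ``$\Uc$ contains a member of $\Ub$'' are each preserved in both directions because $\Phi$ respects inclusion, adjoints, and the commutant (for $\Uc = \Uc^*$ one applies the respecting of $\Vc^* = \Wc$ with $\Vc = \Wc = \Uc$). For condition (iii) I would observe that the intersection of two quantum relations is again a quantum relation and is exactly their meet in the inclusion order; as an order isomorphism $\Phi$ preserves meets, so $\Phi(\Uc \cap \tilde{\Uc}) = \Phi(\Uc) \cap \Phi(\tilde{\Uc})$, and (iii) transfers.

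The step that needs a little care, and which I expect to be the main (if minor) obstacle, is condition (iv), because Theorem \ref{repindep} is phrased via the equality $\overline{\Uc\Vc}^{wk^*} = \Wc$ rather than the inclusion $\tilde{\Uc}^2 \subseteq \Uc$. To bridge this I would set $\Wc = \overline{\tilde{\Uc}\,\tilde{\Uc}}^{wk^*}$, check that it is a quantum relation (the bimodule property $\Mx'\tilde{\Uc}\tilde{\Uc}\Mx' \subseteq \tilde{\Uc}\tilde{\Uc}$ survives weak* closure), and apply Theorem \ref{repindep} to obtain $\overline{\Phi(\tilde{\Uc})\Phi(\tilde{\Uc})}^{wk^*} = \Phi(\Wc)$. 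Since $\Uc$ is weak* closed, $\tilde{\Uc}^2 \subseteq \Uc$ is equivalent to $\Wc \subseteq \Uc$, and order preservation turns this into $\Phi(\Wc) \subseteq \Phi(\Uc)$, i.e. $\Phi(\tilde{\Uc})^2 \subseteq \Phi(\Uc)$. This establishes (iv) for $\Phi(\Ub)$, and hence that $\Phi$ carries quantum uniformities on $\Mx_1$ to quantum uniformities on $\Mx_2$.

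The remaining claims are then formal. Running the same argument with $\Phi^{-1}$, which is a correspondence of the same type by the symmetry of Theorem \ref{repindep}, shows that $\Phi^{-1}$ maps quantum uniformities on $\Mx_2$ back to quantum uniformities on $\Mx_1$; since $\Phi^{-1}(\Phi(\Ub)) = \Ub$, the map $\Ub \mapsto \Phi(\Ub)$ is a bijection with inverse $\Vb \mapsto \Phi^{-1}(\Vb)$. Finally, because $\Phi$ is injective on quantum relations, $\Ub \subseteq \Ub'$ holds if and only if $\Phi(\Ub) \subseteq \Phi(\Ub')$, which gives the order preservation and completes the proof.
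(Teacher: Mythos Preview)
Your proposal is correct and follows essentially the same approach as the paper: the paper simply asserts that the result follows from Theorem \ref{repindep} via the reformulation of quantum uniformities in terms of quantum relations given just before the theorem, and you have spelled out precisely how that reduction works, including the small care needed to pass from the equality form $\overline{\Uc\Vc}^{wk^*} = \Wc$ in Theorem \ref{repindep} to the inclusion $\tilde{\Uc}^2 \subseteq \Uc$ in condition (iv).
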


\begin{prop}\label{uniftaa}
Let $X$ be a set and let $\Mx \cong l^\infty(X)$ be the von Neumann
algebra of bounded multiplication operators on $l^2(X)$. If $\Phi$ is a
uniformity on $X$ then
$$\Ub_\Phi = \{\Uc: \Vc_R \subseteq \Uc\hbox{ for some }R \in \Phi\}$$
($\Vc_R$ as in Proposition \ref{atomiccase}, $\Uc$ ranging over dual
operator systems) is a quantum uniformity
on $\Mx$; conversely, if $\Ub$ is a quantum uniformity on $\Mx$ then
$$\Phi_\Ub = \{U \subseteq X^2:
R_\Uc \subseteq U\hbox{ for some }\Uc \in \Ub\}$$
($R_\Uc$ as in Proposition \ref{atomiccase}) is a uniformity on $X$.
The two constructions are inverse to each other.
\end{prop}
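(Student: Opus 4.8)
The plan is to reduce everything to the bijection $R \leftrightarrow \Vc_R$ of Proposition \ref{atomiccase} between relations on $X$ and quantum relations (i.e.\ $\Mx'$-bimodules, where $\Mx' = \Mx \cong l^\infty(X)$ is maximal abelian) on $\Mx$, together with a short dictionary recording how this bijection meets the operations appearing in the two sets of axioms. First I would record: (a) $R \mapsto \Vc_R$ and $\Vc \mapsto R_\Vc$ are inclusion-preserving and mutually inverse on quantum relations, with $R_{\Vc_R} = R$, while $\Uc \subseteq \Vc_{R_\Uc}$ for \emph{any} subspace $\Uc$; (b) the constructions intertwine adjoint with inverse relation, $\Vc_R^* = \Vc_{R^{-1}}$ and $R_{\Uc^*} = (R_\Uc)^{-1}$, so that a dual operator system is self-adjoint iff its relation is symmetric, and the diagonal $\Delta$ corresponds to $\Vc_\Delta = \Mx = \Mx'$; (c) $\Vc_R \cap \Vc_S = \Vc_{R\cap S}$; and (d) for a bimodule $\Uc$ one has $R_\Uc = \{(x,y): V_{xy} \in \Uc\}$, since $V_{xx}\,\Uc\,V_{yy} = \Cb\,V_{xy}$. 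The two genuinely nontrivial points are the behaviour of composition and the gap between arbitrary dual operator systems and bimodules; the latter is bridged by the already-proved fact that $\Ub$ is generated by its quantum-relation members.

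For the forward construction I would verify conditions (i)--(iii) of Definition \ref{qudef} for $\Ub_\Phi$. Upward closure (i) and intersection-stability (ii) are immediate from (a) and (c): if $\Vc_R \subseteq \Uc$ and $\Vc_S \subseteq \tilde\Uc$ with $R,S \in \Phi$, then $\Vc_{R\cap S} = \Vc_R \cap \Vc_S \subseteq \Uc \cap \tilde\Uc$ with $R \cap S \in \Phi$. The content is condition (iii): given $\Uc \in \Ub_\Phi$ with $\Vc_R \subseteq \Uc$, use the classical halving, symmetry and diagonal axioms to produce a symmetric $V \in \Phi$ with $\Delta \subseteq V$ and $V \circ V \subseteq R$, and set $\tilde\Uc = \Vc_V$, which is then a dual operator system lying in $\Ub_\Phi$. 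The inclusion $\tilde\Uc^2 \subseteq \Uc$ follows from the easy half of the composition dictionary, $\overline{\Vc_V\Vc_V}^{wk^*} \subseteq \Vc_{V\circ V}$: expanding $\langle ABe_w,e_x\rangle = \sum_y \langle Be_w,e_y\rangle\langle Ae_y,e_x\rangle$ shows the coefficient vanishes unless $(x,w) \in V\circ V$, so $\Vc_V\Vc_V \subseteq \Vc_{V\circ V} \subseteq \Vc_R \subseteq \Uc$. Finally $\Mx' = \Vc_\Delta \subseteq \Vc_R \subseteq \Uc$ makes $\Ub_\Phi$ a uniformity on $\Mx$.

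For the reverse construction I would check that $\Phi_\Ub$ satisfies the classical axioms. Upward closure; symmetry (each $\Uc \in \Ub$ is self-adjoint, so $R_\Uc = (R_\Uc)^{-1}$ by (b)); the diagonal condition ($\Mx' \subseteq \Uc$ forces every $V_{xx} \in \Uc$, hence $\Delta \subseteq R_\Uc$); and finite intersections ($R_{\Uc\cap\tilde\Uc} \subseteq R_\Uc \cap R_{\tilde\Uc}$ with $\Uc\cap\tilde\Uc \in \Ub$) are routine. The one delicate axiom is halving, and here the cancellation in the coefficient sum above blocks the naive inclusion $R_\Uc \circ R_\Uc \subseteq R_{\Uc^2}$; this is precisely where I would invoke the generation result. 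Given $U \in \Phi_\Ub$ with $R_\Uc \subseteq U$, pick $\tilde\Uc \in \Ub$ with $\tilde\Uc^2 \subseteq \Uc$ by Definition \ref{qudef}(iii), then replace $\tilde\Uc$ by a quantum relation $\tilde\Uc_0 \in \Ub$ with $\tilde\Uc_0 \subseteq \tilde\Uc$, so $\tilde\Uc_0^{\,2} \subseteq \tilde\Uc^2 \subseteq \Uc$. For the bimodule $\tilde\Uc_0$, fact (d) gives $V_{xy},V_{yz} \in \tilde\Uc_0 \Rightarrow V_{xz} = V_{xy}V_{yz} \in \tilde\Uc_0^{\,2}$, whence $R_{\tilde\Uc_0} \circ R_{\tilde\Uc_0} \subseteq R_{\tilde\Uc_0^{\,2}} \subseteq R_\Uc \subseteq U$, and $V := R_{\tilde\Uc_0} \in \Phi_\Ub$ witnesses halving.

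It remains to show the constructions are mutually inverse. The identity $\Phi_{\Ub_\Phi} = \Phi$ is direct: if $U \in \Phi$ then $R := U \cap U^{-1} \in \Phi$ is symmetric with $R_{\Vc_R} = R \subseteq U$, so $U \in \Phi_{\Ub_\Phi}$; conversely $U \in \Phi_{\Ub_\Phi}$ yields $\Uc \in \Ub_\Phi$ with $R_\Uc \subseteq U$ and $R \in \Phi$ with $\Vc_R \subseteq \Uc$, whence $R = R_{\Vc_R} \subseteq R_\Uc \subseteq U$ and $U \in \Phi$. For $\Ub_{\Phi_\Ub} = \Ub$, the inclusion $\Ub_{\Phi_\Ub} \subseteq \Ub$ uses, for $\Wc \supseteq \Vc_U$ with $R_\Uc \subseteq U$ and $\Uc \in \Ub$, that $\Uc \subseteq \Vc_{R_\Uc} \subseteq \Vc_U \subseteq \Wc$ together with upward closure of $\Ub$ applied to the dual operator system $\Vc_{R_\Uc}$. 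The reverse inclusion $\Ub \subseteq \Ub_{\Phi_\Ub}$ is the main obstacle, and is exactly where the reflexivity gap ($\Vc_{R_\Wc} \supsetneq \Wc$ for a non-bimodule $\Wc$) would otherwise destroy injectivity: for $\Wc \in \Ub$ I would once more use the generation result to choose a quantum relation $\Uc' \in \Ub$ with $\Uc' \subseteq \Wc$, so that $U := R_{\Uc'} \in \Phi_\Ub$ and $\Vc_U = \Vc_{R_{\Uc'}} = \Uc' \subseteq \Wc$, exhibiting $\Wc \in \Ub_{\Phi_\Ub}$. The only analytic input anywhere is the weak* closure underlying (a) and the composition dictionary; everything else is lattice-theoretic bookkeeping driven by Proposition \ref{atomiccase}.
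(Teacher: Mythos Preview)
Your proposal is correct and follows essentially the same approach the paper has in mind: the paper merely states that the result ``follows from Proposition \ref{atomiccase}'' together with the immediately preceding proposition (that $\Ub$ is generated by its bimodule members), and you have written out precisely those details, invoking the generation result at exactly the two places where the gap between general dual operator systems and $\Mx'$-bimodules would otherwise cause trouble (the halving axiom for $\Phi_\Ub$ and the inclusion $\Ub \subseteq \Ub_{\Phi_\Ub}$).
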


Finally, we show that every quantum uniformity arises from a family of
quantum pseudometrics.

\begin{defi}\label{assocqu}
Let $\{\Vb_\lambda\}$ with $\Vb_\lambda = \{\Vc_t^\lambda\}$
be a family of quantum pseudometrics on a von Neumann
algebra $\Mx \subseteq \Bc(H)$. The {\it associated quantum uniformity}
on $\Mx$ is the family of dual operator systems $\Uc \subseteq \Bc(H)$
such that
$$\Vc^{\lambda_1}_\epsilon \cap \cdots \cap \Vc^{\lambda_n}_\epsilon
\subseteq \Uc$$
for some $\epsilon > 0$, some $n \in \Nb$, and some $\lambda_1, \ldots,
\lambda_n$. Thus, it is the smallest quantum uniformity that contains
$\Vc^\lambda_t$ for every $\lambda$ and every $t > 0$.
\end{defi}

\begin{theo}\label{uniftqpm}
Every quantum uniformity on a von Neumann algebra $\Mx \subseteq \Bc(H)$
is the quantum uniformity associated to some family of quantum
pseudometrics on $\Mx$.
\end{theo}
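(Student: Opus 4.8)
The plan is to prove the two inclusions implicit in Definition \ref{assocqu}: I will produce a family of quantum pseudometrics, indexed by the quantum entourages of $\Ub$, each of whose filtration sets $\Vc^\Uc_t$ (for $t>0$) lies in $\Ub$, and which together are fine enough to sit inside every prescribed entourage. By the proposition preceding Theorem \ref{uniftrepindep}, $\Ub$ is generated by its members that are quantum relations on $\Mx$ (Definition \ref{quantrel}), and since both $\Ub$ and any associated quantum uniformity are upward closed (Definition \ref{qudef} (i)), it suffices to treat a quantum relation entourage $\Uc \in \Ub$ and to exhibit a quantum pseudometric $\Vb^\Uc$ with each $\Vc^\Uc_t \in \Ub$ and with $\Vc^\Uc_{t_0} \subseteq \Uc$ for some $t_0 > 0$.

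First I would build a descending chain. Using Definition \ref{qudef} (iii) twice, together with the fact that each entourage is unital (so $\Wc \subseteq \overline{\Wc^2}^{wk^*}$), I can extract quantum relation entourages $\Uc_0 \supseteq \Uc_1 \supseteq \Uc_2 \supseteq \cdots$ with $\Uc_0 = \Bc(H)$, $\Uc_1 = \Uc$, and $\overline{\Uc_{n+1}^3}^{wk^*} \subseteq \Uc_n$ for all $n$, each $\Uc_n$ a weak* closed self-adjoint unital bimodule over $\Mx'$. Following a noncommutative version of Frink's metrization lemma, I then assign to each product of entourage elements the dyadic ``weight'' $\sum_i 2^{-n_i}$ and set, for $s>0$,
$$W(s) = \overline{{\rm span}}^{wk^*}\Big\{A_1\cdots A_k : k \geq 1,\ A_i \in \Uc_{n_i},\ \textstyle\sum_{i=1}^k 2^{-n_i} \leq s\Big\},$$
with $\Vc^\Uc_t = \bigcap_{s>t} W(s)$. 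Each $W(s)$ is a dual operator system (self-adjoint because the $\Uc_n$ are; unital because $I \in \Uc_n$ for every $n$) and a bimodule over $\Mx'$, so each $\Vc^\Uc_t$ is a quantum relation. Condition (ii) of Definition \ref{filt} (a) is immediate, and condition (i) follows from $W(s)W(s') \subseteq W(s+s')$, which rests on the routine fact that for weak* closed subspaces the weak* closed span of the set of products is the product operator space; thus $\Vb^\Uc = \{\Vc^\Uc_t\}$ is a W*-filtration, and $\Mx' \subseteq \Vc^\Uc_0$ since every $C \in \Mx'$ lies in $\Uc_n$ with weight $2^{-n} \to 0$. Hence $\Vb^\Uc$ is a quantum pseudometric on $\Mx$.

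The two comparisons between $\Vb^\Uc$ and the chain are the technical core. The easy (lower) bound is that $\Uc_n \subseteq \Vc^\Uc_{2^{-n}}$, because each $A \in \Uc_n$ is a one-factor product of weight $2^{-n}$. The hard (upper) bound is the quantum Frink lemma: if $A_i \in \Uc_{n_i}$ with $\sum_i 2^{-n_i} \leq 2^{-m}$ then $A_1\cdots A_k \in \Uc_{m-1}$. I would prove this by induction on $k$, splitting the product at the first place where the accumulated weight exceeds $2^{-(m+1)}$; the left and right blocks then each have weight at most $2^{-(m+1)}$ and the isolated middle factor lies in $\Uc_m$, so by the inductive hypothesis the three blocks all lie in $\Uc_m$ and their product lies in $\overline{\Uc_m^3}^{wk^*} \subseteq \Uc_{m-1}$. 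Since $\Uc_{m-1}$ is a weak* closed subspace, passing to spans and weak* closures gives $W(s) \subseteq \Uc_{m-1}$ whenever $s < 2^{-m}$, and therefore $\Vc^\Uc_t \subseteq \Uc_{m-1}$ for $t < 2^{-m}$. I expect this induction — and the bookkeeping needed to make the ``weak* closed span of products'' manipulations rigorous — to be the main obstacle; the triple condition $\overline{\Uc_{n+1}^3}^{wk^*}\subseteq \Uc_n$ is exactly what the three-block split consumes.

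The theorem then follows formally. Given any $t>0$, choose $n$ with $2^{-n} \leq t$; the lower bound gives $\Uc_n \subseteq \Vc^\Uc_t$, and upward closure of $\Ub$ forces $\Vc^\Uc_t \in \Ub$, so every filtration set of every $\Vb^\Uc$ is a quantum entourage and, using closure under finite intersection (Definition \ref{qudef} (ii)), the quantum uniformity associated to $\{\Vb^\Uc\}$ is contained in $\Ub$. Conversely, the upper bound with $m=2$ yields $\Vc^\Uc_{1/8} \subseteq \Uc_1 = \Uc$, so $\Uc$ contains a member of the generating family and hence lies in the associated quantum uniformity; by upward closure, every dual operator system lying above a quantum relation entourage is captured as well. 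Thus $\Ub$ equals the quantum uniformity associated to the family $\{\Vb^\Uc\}$.
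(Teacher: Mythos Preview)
Your proposal is correct and follows essentially the same route as the paper: both construct, from a given entourage, a descending chain $\{\Uc_n\}$ with $\Uc_{n+1}^3 \subseteq \Uc_n$, define the filtration via weak* closed spans of products weighted by $\sum 2^{-n_i}$, and prove the key containment by the three-block induction (Frink's metrization lemma). The only differences are cosmetic: the paper proves the sharper equality $\Wc_{2^{-n}} = \Uc_n$ (so $\Vc_t \subseteq \Uc_1$ for any $t<1$) whereas you lose one index and land at $\Vc^\Uc_{1/8} \subseteq \Uc_1$, and the paper indexes its family by \emph{all} quantum pseudometrics whose filtration sets are entourages rather than by the specific $\Vb^\Uc$ you build --- but neither difference affects the argument.
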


\begin{proof}
Let $\Ub$ be a quantum uniformity on $\Mx$ and let $\Fc$ be the
family of all quantum pseudometrics $\Vb$ on $\Mx$ with the property
that $\Vc_t$ is a quantum entourage for all $t > 0$. We claim that
$\Ub$ is the quantum uniformity associated to $\Fc$. The inclusion
$\supseteq$ is easy because $\Vc_\epsilon^{\lambda_1} \cap
\cdots \cap \Vc_\epsilon^{\lambda_n}$ is a quantum entourage for
all $\epsilon > 0$ and all $\Vb_{\lambda_1}, \ldots, \Vb_{\lambda_n}
\in \Fc$.

To prove the reverse inclusion, let $\Uc_1 \in \Ub$; we will find
a quantum pseudometric $\Vb \in \Fc$ such that $\Vc_t \subseteq \Uc_1$
for some $t > 0$. To do this, first find a sequence $\{\Uc_n\}$ of
quantum entourages such that $\Uc_{n+1}^3 \subseteq \Uc_n$ for all $n$.
For each $s > 0$ define
$$\Wc_s = \overline{\rm span}^{wk*}\left\{A_1\cdots A_k: k \in \Nb
\hbox{ and }A_i \in \Uc_{n_i}
\hbox{ ($1 \leq i \leq k$) where }\sum_{i=1}^k 2^{-n_i} \leq s\right\},$$
and then define a W*-filtration $\Vb$ of $\Bc(H)$ by setting
$\Vc_t = \bigcap_{s > t} \Wc_s$ for all $t \geq 0$. It is straightfoward
to check that $\Vb$ is a quantum pseudometric on $\Mx$.
We claim that $\Wc_{2^{-n}} = \Uc_n$ for all $n$. It is clear that
$\Uc_n \subseteq \Wc_{2^{-n}}$. For the reverse inclusion,
fix $A_1\cdots A_k \in \Wc_{2^{-n}}$; we want to show that
$A_1\cdots A_k \in \Uc_n$. If $k = 1$ the assertion is trivial, so
we may inductively assume it holds for all $n$ and all smaller values of $k$.
Suppose $k \geq 2$ and split the product up into three segments
$A_1 \cdots A_{j_1}$, $A_{j_1+1}\cdots A_{j_2}$, and $A_{j_2+1} \cdots A_k$
such that the corresponding sums
$\sum_1^{j_1} 2^{-n_i}$, $\sum_{j_1+1}^{j_2} 2^{-n_i}$, and
$\sum_{j_2+1}^k 2^{-n_i}$ are each at most $2^{-n-1}$. Then each of the
three subproducts is in $\Uc_{n+1}$ by the induction hypothesis, and hence the
entire product is in $\Uc_{n+1}^3 \subseteq \Uc_n$. This completes the
proof of the claim.

It follows that $\Vb \in \Fc$ (since for each $t > 0$, $\Vc_t$ contains
$\Wc_{2^{-n}} = \Uc_n$ for any $n$ such that $2^{-n} \leq t$) and that
$\Vc_t \subseteq \Wc_1 \subseteq \Uc_1$ for any $t < 1$, as desired.
\end{proof}

\subsection{Uniform continuity}

The natural morphisms between uniform spaces are the uniformly
continuous maps. As with the Lipschitz condition, in the quantum
setting we have both a spectral version and a commutator version
of this notion.

\begin{defi}\label{uc}
Let $\Ub$ be a quantum uniformity on a von Neumann algebra
$\Mx \subseteq \Bc(H)$.

\noindent (a) A self-adjoint operator $A \in \Mx$ is {\it spectrally
uniformly continuous} if for every $\epsilon > 0$ there exists
$\Uc \in \Ub$ such that
$$P_{(-\infty, a]}(A)\Uc P_{[b,\infty)}(A) = 0$$
for all $a,b \in \Rb$, $a < b$, such that $b-a > \epsilon$.

\noindent (b) An operator $A \in \Mx$ is {\it commutation uniformly
continuous} if for every $\epsilon > 0$ there exists $\Uc \in \Ub$
such that
$$\|[A,C]\| \leq \epsilon$$
for every $C \in [\Uc]_1$. We let $UC(\Mx)$ be the set of commutation
uniformly continuous operators in $\Mx$, with the inherited operator
norm.
\end{defi}

This generalizes the atomic abelian case; see Corollary \ref{quaa}
below.

For quantum uniformities arising from quantum pseudometrics we
can characterize spectral and commutation uniform continuity
directly in terms of the W*-filtration.

\begin{prop}\label{qufilt}
Let $\Mx \subseteq \Bc(H)$ be a von Neumann algebra equipped with a
quantum pseudometric $\Vb$, and let $\Ub$ be the quantum uniformity
generated by the quantum relations $\Vc_t$ for $t > 0$.

\noindent (a) A self-adjoint operator $A \in \Mx$ is spectrally uniformly
continuous relative to $\Ub$ if and only if for every $\epsilon > 0$ there
exists $\delta > 0$ such that
$$\rho(P_{(-\infty, a]}(A), P_{[b,\infty)}(A)) \geq \delta$$
for every $a, b \in \Rb$, $a < b$, with $b - a > \epsilon$.

\noindent (b) An operator $A \in \Mx$ is commutation uniformly
continuous relative to $\Ub$ if and only if for every $\epsilon > 0$
there exists $\delta > 0$ such that $\|[A,C]\| \leq \epsilon$
for every $C \in [\Vc_\delta]_1$.
\end{prop}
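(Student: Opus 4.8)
The plan is to reduce both parts to a single structural fact about the generated uniformity together with the defining formula for $\rho$ on projections in $\Mx$. By Definition \ref{assocqu}, specialized to the one-element family $\{\Vb\}$, the quantum uniformity $\Ub$ generated by the $\Vc_t$ $(t>0)$ consists of exactly those dual operator systems $\Uc$ for which $\Vc_{\epsilon_1}\cap\cdots\cap\Vc_{\epsilon_n}\subseteq\Uc$ for some $\epsilon_i>0$. The first thing I would record is monotonicity of the filtration: since each $\Vc_t$ is unital, $\Vc_s=\Vc_s I\subseteq\Vc_s\Vc_{t-s}\subseteq\Vc_t$ whenever $s\le t$. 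Consequently $\Vc_{\epsilon_1}\cap\cdots\cap\Vc_{\epsilon_n}=\Vc_{\min_i\epsilon_i}$, so the membership criterion simplifies to the clean statement that $\Uc\in\Ub$ if and only if $\Vc_\delta\subseteq\Uc$ for some $\delta>0$. Monotonicity, combined with the formula $\rho(P,Q)=\inf\{t:PAQ\neq 0\text{ for some }A\in\Vc_t\}$ for projections $P,Q\in\Mx$ (Definition \ref{projdist}), also yields the equivalence $P\Vc_\delta Q=0\iff\rho(P,Q)\ge\delta$ (the boundary value being harmless): if $P\Vc_\delta Q=0$ then $P\Vc_t Q=0$ for all $t\le\delta$ by monotonicity, forcing $\rho(P,Q)\ge\delta$, and conversely $\rho(P,Q)\ge\delta$ makes $P\Vc_t Q=0$ for every $t<\delta$.

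For part (a), writing $P=P_{(-\infty,a]}(A)$ and $Q=P_{[b,\infty)}(A)$, which lie in $\Mx$ because $A\in\Mx$, I would argue as follows. For the forward implication, fix $\epsilon>0$ and use spectral uniform continuity to obtain $\Uc\in\Ub$ with $P\Uc Q=0$ whenever $b-a>\epsilon$; choosing $\delta>0$ with $\Vc_\delta\subseteq\Uc$ gives $P\Vc_\delta Q\subseteq P\Uc Q=0$, so $\rho(P,Q)\ge\delta$ for all such $a,b$, which is the desired condition. For the converse, fix $\epsilon>0$, take the $\delta$ provided by the hypothesis, and set $\Uc=\Vc_{\delta/2}\in\Ub$; then $\rho(P,Q)\ge\delta>\delta/2$ for $b-a>\epsilon$ forces $P\Vc_{\delta/2}Q=0$, i.e. $P\Uc Q=0$, establishing spectral uniform continuity with the same $\epsilon$.

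Part (b) is essentially immediate from the same membership criterion, since passing between $\Uc$ and $\Vc_\delta$ only affects unit balls by inclusion. For the forward implication, given $\epsilon>0$ choose $\Uc\in\Ub$ with $\|[A,C]\|\le\epsilon$ for all $C\in[\Uc]_1$, then pick $\delta>0$ with $\Vc_\delta\subseteq\Uc$; since $[\Vc_\delta]_1\subseteq[\Uc]_1$, the same bound holds for all $C\in[\Vc_\delta]_1$. For the converse, the $\delta$ from the hypothesis lets us take $\Uc=\Vc_\delta\in\Ub$ directly, whose unit ball is $[\Vc_\delta]_1$. I do not expect any genuine obstacle here: the entire content of the proposition is the explicit description $\Uc\in\Ub\iff\Vc_\delta\subseteq\Uc$ of the generated uniformity (which rests on filtration monotonicity) plus the translation between the vanishing condition $P\Vc_\delta Q=0$ and the distance inequality $\rho(P,Q)\ge\delta$; the only point requiring care is keeping the quantifier on $\delta$ uniform over all admissible pairs $a<b$, which the single-$\Uc$ formulation of uniform continuity already guarantees.
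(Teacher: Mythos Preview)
Your proof is correct and follows exactly the straightforward route the paper has in mind (the paper itself just says ``The proof of this proposition is straightforward''): describe $\Ub$ explicitly via monotonicity of the filtration as $\{\Uc:\Vc_\delta\subseteq\Uc\text{ for some }\delta>0\}$, and then translate back and forth between $P\Vc_\delta Q=0$ and the distance condition. One small remark: the ``equivalence'' $P\Vc_\delta Q=0\iff\rho(P,Q)\ge\delta$ is not literally an equivalence at the boundary (from $\rho\ge\delta$ you only get $P\Vc_t Q=0$ for $t<\delta$, since the filtration is right- rather than left-continuous), but you already handle this correctly by passing to $\Vc_{\delta/2}$ in the converse of part~(a), so nothing in the argument is affected.
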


The proof of this proposition is straightforward.

Next we observe that, just as for Lipschitz conditions, spectral
uniform continuity is stronger than commutation uniform continuity.
This result follows immediately from Theorem \ref{lipineq} (a).

\begin{theo}\label{quspcom}
Let $\Mx \subseteq \Bc(H)$ be a von Neumann algebra equipped with
a quantum uniformity $\Ub$ and let $A \in \Mx$ be self-adjoint.
If $A$ is spectrally uniformly continuous then it is commutation
uniformly continuous.
\end{theo}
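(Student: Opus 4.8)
The plan is to derive the statement directly from Theorem \ref{lipineq} (a), which is exactly the tool that converts an off-diagonal spectral vanishing condition into a norm bound on a commutator. First I would fix $\epsilon > 0$ and invoke the hypothesis of spectral uniform continuity at this same value of $\epsilon$: by Definition \ref{uc} (a) there exists a quantum entourage $\Uc \in \Ub$ such that $P_{(-\infty, a]}(A)\Uc P_{[b,\infty)}(A) = 0$ for all $a,b \in \Rb$ with $a < b$ and $b - a > \epsilon$. The key reading of this is that the vanishing holds for every individual operator $C \in \Uc$, not merely for the operator system $\Uc$ interpreted as a subspace.

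Next I would take an arbitrary contraction $C \in [\Uc]_1$. Since $C \in \Uc$, we have $P_{(-\infty, a]}(A)C P_{[b,\infty)}(A) = 0$ for all $a < b$ with $b - a > \epsilon$. This is precisely the hypothesis of Theorem \ref{lipineq} (a) applied with $t = \epsilon$, so that theorem yields $\|[A,C]\| \leq \epsilon\|C\| \leq \epsilon$. As $C \in [\Uc]_1$ was arbitrary, the single entourage $\Uc$ witnesses the commutation uniform continuity condition at $\epsilon$; and since $\epsilon > 0$ was arbitrary, $A$ is commutation uniformly continuous in the sense of Definition \ref{uc} (b). This completes the argument.

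There is essentially no substantive obstacle here beyond correctly aligning the threshold parameters on the two sides. All the real work — passing from the spectral (off-diagonal vanishing) condition to a genuine bound on the commutator norm — is already packaged in Theorem \ref{lipineq} (a), whose own proof rests on the Browder--Sinclair identification of the norm with the spectral radius for Hermitian elements of a complex unital Banach algebra. The only point worth checking is that one may legitimately use the same $\epsilon$ on both the spectral and commutation sides; this works because Theorem \ref{lipineq} (a) delivers the clean scaling $\|[A,C]\| \le t\|C\|$, so choosing $t = \epsilon$ and using $\|C\| \le 1$ gives exactly the bound $\epsilon$ required by Definition \ref{uc} (b). Since $A$ is assumed self-adjoint, the hypotheses of Theorem \ref{lipineq} (a) are met without further ado.
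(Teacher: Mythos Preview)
Your proposal is correct and takes essentially the same approach as the paper, which simply remarks that the result follows immediately from Theorem \ref{lipineq} (a). Your write-up just makes explicit the one-line unpacking: given $\epsilon>0$, take the entourage $\Uc$ from spectral uniform continuity and apply Theorem \ref{lipineq} (a) with $t=\epsilon$ to each $C\in[\Uc]_1$.
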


\begin{coro}\label{quaa}
Let $X$ be a set, let $\Phi$ be a uniformity on $X$, let
$\Mx \cong l^\infty(X)$ be the von Neumann algebra of bounded
multiplication operators on $l^2(X)$, and let $\Ub_\Phi$ be the quantum
uniformity on $\Mx$ corresponding to $\Phi$ (Proposition \ref{uniftaa}).
If $f \in l^\infty(X)$ is real-valued then $M_f$ is spectrally uniformly
continuous if and only if $M_f$ is commutation uniformly continuous if
and only if for every $\epsilon > 0$ there exists $R \in \Phi$ such
that $(x,y) \in R$ implies $|f(x) - f(y)| \leq \epsilon$.
\end{coro}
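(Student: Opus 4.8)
The plan is to prove the three conditions equivalent via the cycle: condition (3) (the classical uniform continuity of $f$) implies spectral uniform continuity, spectral uniform continuity implies commutation uniform continuity, and commutation uniform continuity implies (3). The middle implication is immediate from Theorem \ref{quspcom}, so only the two outer implications need work. Both rest on the explicit description of $\Ub_\Phi$ through the quantum relations $\Vc_R = \overline{\rm span}^{wk^*}\{V_{xy}: (x,y) \in R\}$ (Propositions \ref{atomiccase} and \ref{uniftaa}) together with the elementary identities $M_f V_{xy} = f(x)V_{xy}$ and $V_{xy}M_f = f(y)V_{xy}$, which hold because $f$ is real-valued. This mirrors the Lipschitz analogue in Corollary \ref{aall}, with Theorem \ref{quspcom} playing the role that Proposition \ref{sclit} plays there.

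For the passage from commutation uniform continuity to (3), I would fix $\epsilon > 0$ and use Definition \ref{uc} (b) to obtain $\Uc \in \Ub_\Phi$ with $\|[M_f,C]\| \leq \epsilon$ for all $C \in [\Uc]_1$. By the definition of $\Ub_\Phi$ there is an entourage $R \in \Phi$ with $\Vc_R \subseteq \Uc$. For any $(x,y) \in R$ the rank one operator $V_{xy}$ lies in $\Vc_R \subseteq \Uc$ and satisfies $\|V_{xy}\| = 1$, so $V_{xy} \in [\Uc]_1$; the identities above give $[M_f,V_{xy}] = (f(x)-f(y))V_{xy}$, whence $|f(x)-f(y)| = \|[M_f,V_{xy}]\| \leq \epsilon$. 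This is exactly condition (3).

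For the passage from (3) to spectral uniform continuity, I would fix $\epsilon > 0$, choose $R \in \Phi$ with $(x,y) \in R \Rightarrow |f(x)-f(y)| \leq \epsilon$, and replace $R$ by a symmetric entourage contained in it (e.g.\ $R \cap R^{-1}$), which still contains the diagonal since every entourage does. Then $\Vc_R$ is self-adjoint and contains $I$, so it is a dual operator system lying in $\Ub_\Phi$. Writing $P_{(-\infty,a]}(M_f) = M_p$ and $P_{[b,\infty)}(M_f) = M_q$ with $p = \chi_{f^{-1}((-\infty,a])}$ and $q = \chi_{f^{-1}([b,\infty))}$, weak* continuity of $B \mapsto M_pBM_q$ and the identity $M_pV_{xy}M_q = p(x)q(y)V_{xy}$ show that $M_p\Vc_RM_q$ is the weak* closed span of those $V_{xy}$ with $(x,y)\in R$, $f(x)\leq a$, and $f(y)\geq b$. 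When $b - a > \epsilon$ there is no such pair, since $(x,y)\in R$ forces $|f(x)-f(y)|\leq\epsilon$ while $f(x)\leq a$ and $f(y)\geq b$ would force $f(y)-f(x)\geq b-a>\epsilon$. Hence $P_{(-\infty,a]}(M_f)\,\Vc_R\,P_{[b,\infty)}(M_f) = 0$ for all such $a<b$, giving spectral uniform continuity.

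There is no deep obstacle here; the argument is a direct transcription of the classical characterization of uniform continuity. The one point requiring genuine care is that both Definition \ref{uc} (a) and the definition of $\Ub_\Phi$ quantify over \emph{dual operator systems}, so in the step from (3) to spectral continuity one must produce an actual member of $\Ub_\Phi$ that annihilates the relevant spectral projections, rather than merely the span $\Vc_R$, which for a general entourage need be neither unital nor self-adjoint. Passing to the symmetric reflexive entourage $R \cap R^{-1}$ makes $\Vc_R$ itself a dual operator system and resolves this, after which the remaining verifications are routine.
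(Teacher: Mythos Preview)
Your proposal is correct and follows essentially the same route as the paper: the same three-step cycle (3) $\Rightarrow$ spectral $\Rightarrow$ commutation $\Rightarrow$ (3), invoking Theorem \ref{quspcom} for the middle step and testing against the rank-one operators $V_{xy}$ for the outer two. Your explicit symmetrization of $R$ to ensure $\Vc_R$ is a genuine dual operator system fills in a detail the paper leaves implicit, and your direct proof of the last implication is simply the contrapositive of the paper's version.
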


\begin{proof}
Suppose that $f$ is uniformly continuous in the sense stated in the
corollary, let $\epsilon > 0$, and find an entourage $R$ witnessing
uniform continuity of $f$. Then $\Vc_R \in \Ub_\Phi$, and $(x,y) \in R$
implies $|f(x) - f(y)| \leq \epsilon$, so that
$P_{(-\infty, a]}(M_f)V_{xy} P_{[b,\infty)}(M_f) = 0$ whenever $b - a >
\epsilon$, for every $(x,y) \in R$.
Since $\Vc_R$ is generated by $\{V_{xy}: (x,y) \in R\}$, this
shows that $P_{(-\infty, a]}(M_f)\Vc_R P_{[b,\infty)}(M_f) = 0$
whenever $b - a > \epsilon$, and this demonstrates that $M_f$ is
spectrally uniformly continuous.

Spectral uniform continuity implies commutation uniform continuity
by Theorem \ref{quspcom}.

Finally, if $f$ is not uniformly continuous then there exists $\epsilon > 0$
such that for every entourage $R \in \Phi$ there is a pair
$(x,y) \in R$ with $|f(x) - f(y)| > \epsilon$. Then the operator
$V_{xy}$ belongs to $[\Vc_R]_1$, and we have $\|[M_f,V_{xy}]\| =
|f(x) - f(y)| > \epsilon$. Since every quantum entourage contains a
quantum entourage of the form $\Vc_R$, this shows that $M_f$ is not
commutation uniformly continuous. So commutation uniform continuity of
$M_f$ implies uniform continuity of $f$.
\end{proof}

Next we look at algebra and lattice properties of spectral and
commutation uniform continuity.

\begin{prop}
Let $\Mx \subseteq \Bc(H)$ be a von Neumann algebra equipped with a
quantum uniformity $\Ub$ and let $A, \tilde{A} \in \Mx$ be self-adjoint
and spectrally uniformly continuous. Then their spectral join and meet
are also spectrally uniformly continuous.
\end{prop}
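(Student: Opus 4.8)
The plan is to prove the statement for the spectral join directly, and then to deduce it for the spectral meet from the identity $A \wedge \tilde{A} = -\bigl((-A) \vee (-\tilde{A})\bigr)$. Throughout I use the spectral-projection identities for the join that are recorded in the proof of Theorem \ref{abstractlip}: for all $a, c \in \Rb$,
$$P_{(-\infty, a]}(A \vee \tilde{A}) = P_{(-\infty, a]}(A) \wedge P_{(-\infty, a]}(\tilde{A}), \qquad P_{(c, \infty)}(A \vee \tilde{A}) = P_{(c, \infty)}(A) \vee P_{(c, \infty)}(\tilde{A}),$$
the second being the definition of the spectral join and the first following from it by taking complements. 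The spectral uniform continuity condition is Definition \ref{uc} (a).

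For the join, fix $\epsilon > 0$, set $\epsilon' = \epsilon/2$, and use spectral uniform continuity of $A$ and of $\tilde{A}$ to find $\Uc_1, \Uc_2 \in \Ub$ with $P_{(-\infty, c]}(A)\Uc_1 P_{[d, \infty)}(A) = 0$ and $P_{(-\infty, c]}(\tilde{A})\Uc_2 P_{[d, \infty)}(\tilde{A}) = 0$ whenever $d - c > \epsilon'$. Put $\Uc = \Uc_1 \cap \Uc_2 \in \Ub$ (Definition \ref{qudef} (ii)); I claim $\Uc$ witnesses spectral uniform continuity of $A \vee \tilde{A}$ at level $\epsilon$. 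So let $a < b$ with $b - a > \epsilon$, write $P = P_{(-\infty, a]}(A \vee \tilde{A})$ and $Q = P_{[b, \infty)}(A \vee \tilde{A})$, and choose $\delta > 0$ with $b - \delta - a > \epsilon'$. By the identities above $P \leq P_{(-\infty, a]}(A)$ and $P \leq P_{(-\infty, a]}(\tilde{A})$, while $Q \leq P_{(b - \delta, \infty)}(A \vee \tilde{A}) = Q_1 \vee Q_2$ with $Q_1 = P_{(b-\delta,\infty)}(A) \leq P_{[b-\delta,\infty)}(A)$ and $Q_2 = P_{(b-\delta,\infty)}(\tilde{A}) \leq P_{[b-\delta,\infty)}(\tilde{A})$.

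The heart of the argument is then that for every $C \in \Uc$ one has $P C Q_1 = 0$, since $P = P\,P_{(-\infty,a]}(A)$, $Q_1 = P_{[b-\delta,\infty)}(A)\,Q_1$, and $(b-\delta)-a > \epsilon'$ forces $P_{(-\infty,a]}(A) C P_{[b-\delta,\infty)}(A) = 0$; symmetrically $P C Q_2 = 0$. To pass from these to $P C Q = 0$ I will use the elementary fact that $PCQ_1 = PCQ_2 = 0$ implies $PC(Q_1 \vee Q_2) = 0$: the range of $Q_1 \vee Q_2$ is the closure of ${\rm ran}(Q_1) + {\rm ran}(Q_2)$, $C$ carries each summand into ${\rm ran}(P)^\perp$, and $C$ is continuous, so $C\,{\rm ran}(Q_1 \vee Q_2) \perp {\rm ran}(P)$. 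Since $Q \leq Q_1 \vee Q_2$, this gives $PCQ = PC(Q_1 \vee Q_2)Q = 0$, as needed. This is the step I expect to require the most care, chiefly because of the half-open/closed interval bookkeeping: the upper bound on $Q$ must be taken over the \emph{open} interval $(b-\delta,\infty)$ so that the two-operator join identity applies, which is exactly why the buffer $\delta$ and the choice $\epsilon' < \epsilon$ are introduced.

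Finally, for the meet I will first note that $B$ is spectrally uniformly continuous if and only if $-B$ is. Indeed $P_{(-\infty,a]}(-B) = P_{[-a,\infty)}(B)$ and $P_{[b,\infty)}(-B) = P_{(-\infty,-b]}(B)$, so the condition for $-B$ reads $P_{[c,\infty)}(B)\,\Uc\,P_{(-\infty,d]}(B) = 0$ for $c - d > \epsilon$; since every quantum entourage satisfies $\Uc = \Uc^*$, taking adjoints (and using that $C \mapsto C^*$ permutes $\Uc$) converts this into $P_{(-\infty,d]}(B)\,\Uc\,P_{[c,\infty)}(B) = 0$, which is precisely the condition for $B$ with the same entourage. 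Applying this equivalence, the join result just proved, and then the equivalence once more, the hypothesis that $A$ and $\tilde{A}$ are spectrally uniformly continuous yields that $(-A) \vee (-\tilde{A})$ is spectrally uniformly continuous, hence so is $A \wedge \tilde{A} = -\bigl((-A) \vee (-\tilde{A})\bigr)$. This completes the plan.
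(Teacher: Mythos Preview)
Your proof is correct and follows essentially the same route as the paper's: intersect the two witnessing entourages, use the spectral-projection identities for the join, and then deduce the meet case from $A \wedge \tilde{A} = -\bigl((-A)\vee(-\tilde{A})\bigr)$. The only difference is cosmetic: the paper works directly with the open-interval form $P_{(b,\infty)}$ on the right (which is equivalent to the closed form in Definition~\ref{uc}), whereas you introduce the buffer $\delta$ and $\epsilon' = \epsilon/2$ to handle the same half-open/closed bookkeeping explicitly.
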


\begin{proof}
Let $\epsilon > 0$ and find quantum entourages $\Uc$ and $\tilde{\Uc}$
such that
$$P_{(-\infty, a]}(A)\Uc P_{(b,\infty)}(A) =
P_{(-\infty, a]}(\tilde{A})\tilde{\Uc} P_{(b,\infty)}(\tilde{A}) = 0$$
for all $a,b \in \Rb$ with $b - a > \epsilon$. Then
$$P_{(-\infty,a]}(A \vee \tilde{A}) =
P_{(-\infty,a]}(A) \wedge P_{(-\infty,a]}(\tilde{A})$$
and
$$P_{(b,\infty)}(A \vee \tilde{A}) =
P_{(b,\infty)}(A) \vee P_{(b,\infty)}(\tilde{A}),$$
so
$$P_{(-\infty,a]}(A \vee \tilde{A})(\Uc \cap \tilde{\Uc})
P_{(b,\infty)}(A \vee \tilde{A}) =0.$$
This shows that $A \vee \tilde{A}$ is uniformly continuous. The
fact that $A \wedge \tilde{A}$ is uniformly continuous can either
be proven analogously or inferred from the equality
$A \wedge \tilde{A} = -((-A) \vee (-\tilde{A}))$.
\end{proof}

\begin{prop}\label{ucalg}
Let $\Mx \subseteq \Bc(H)$ be a von Neumann algebra equipped with a
quantum uniformity $\Ub$. Then $UC(\Mx)$ is a unital C*-algebra.
\end{prop}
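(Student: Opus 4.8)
The plan is to verify that $UC(\Mx)$ is a norm-closed, self-adjoint, unital subalgebra of $\Mx$. Since $\Mx$ is a C*-algebra and $UC(\Mx)$ carries the inherited operator norm, the C*-identity $\|A^*A\| = \|A\|^2$ holds automatically on $UC(\Mx)$, and norm-closedness inside the complete space $\Mx$ gives completeness; so these verifications suffice to conclude that $UC(\Mx)$ is a unital C*-algebra. All of the algebraic closure properties will follow by combining elementary commutator identities with two structural features of the quantum uniformity $\Ub$: it is closed under finite intersections (Definition \ref{qudef} (ii)), and each of its members is an operator system, hence self-adjoint.

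First I would treat the algebraic structure. For scalars, $[aA,C] = a[A,C]$ gives the result immediately. For sums, the identity $[A + B, C] = [A,C] + [B,C]$ together with the entourage $\Uc = \Uc_A \cap \Uc_B \in \Ub$ (where $\Uc_A, \Uc_B$ witness commutation uniform continuity of $A$ and $B$ at scale $\epsilon/2$) shows $A + B \in UC(\Mx)$, using $[\Uc]_1 \subseteq [\Uc_A]_1 \cap [\Uc_B]_1$. For adjoints, note that $[A^*,C] = -[A,C^*]^*$, so $\|[A^*,C]\| = \|[A,C^*]\|$; since any entourage satisfies $\Uc = \Uc^*$, we have $C^* \in [\Uc]_1$ whenever $C \in [\Uc]_1$, and the same $\Uc$ that works for $A$ works for $A^*$. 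For products, the Leibniz identity $[AB,C] = A[B,C] + [A,C]B$ yields $\|[AB,C]\| \leq \|A\|\,\|[B,C]\| + \|B\|\,\|[A,C]\|$, and again intersecting suitably chosen entourages for $A$ and $B$ establishes $AB \in UC(\Mx)$. The unit lies in $UC(\Mx)$ because $[I,C] = 0$ for every $C$.

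The one step requiring a genuine limiting argument --- and hence the main (though still routine) obstacle --- is norm-closedness. Given a sequence $\{A_n\} \subseteq UC(\Mx)$ with $A_n \to A$ in operator norm, $A$ lies in $\Mx$ since $\Mx$ is norm-closed. To see $A \in UC(\Mx)$, fix $\epsilon > 0$, choose $n$ with $\|A - A_n\| \leq \epsilon/4$, and pick $\Uc \in \Ub$ witnessing commutation uniform continuity of $A_n$ at scale $\epsilon/2$. For any $C \in [\Uc]_1$ one has $\|[A,C]\| \leq \|[A - A_n, C]\| + \|[A_n,C]\| \leq 2\|A - A_n\| + \epsilon/2 \leq \epsilon$, so $\Uc$ also witnesses commutation uniform continuity of $A$. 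This proves $UC(\Mx)$ is closed, completing the argument.
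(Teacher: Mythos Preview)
Your proof is correct and is precisely the routine verification the paper has in mind; the paper itself simply states that the proof is routine and gives no further detail. Every step you outline---closure under scalars, sums, adjoints, products via the commutator identities, plus the $\epsilon$-argument for norm-closedness---is exactly what one would expect.
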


The proof of this proposition is routine.

The sum of two spectrally uniformly continuous operators need not be
spectrally uniformly continuous. Indeed, this is the case for the
operators constructed in Example \ref{countersum}, as one can easily
check using the characterization of spectral uniform continuity given
in Proposition \ref{qufilt}. Since $\tilde{A}$ and $\tilde{B}$ are both
spectrally Lipschitz, it follows that they are spectrally uniformly
continuous, but their sum fails spectrally uniform continuity
because $\rho(P_{(-\infty, 1/2]}(\tilde{A} + \tilde{B}),
P_{[1,\infty)}(\tilde{A} + \tilde{B})) = 0$.

Recall that a quantum uniformity $\Ub$ on a von Neumann algebra $\Mx$
is Hausdorff if $\Mx = \bigcap \Ub$ (Definition \ref{qudef}). We now
show that under this hypothesis $UC(\Mx)$ is weak* dense in $\Mx$; this
result is analogous to, and easily deduced from, the corresponding result
about weak* density of ${\rm Lip}(\Mx)$ in $\Mx$ (Proposition
\ref{lipdensity}).

\begin{prop}\label{ucdense}
Let $\Ub$ be a Hausdorff quantum uniformity on a von Neumann algebra
$\Mx \subseteq \Bc(H)$. Then $UC(\Mx)$ is weak* dense in $\Mx$.
\end{prop}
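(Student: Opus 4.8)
The plan is to run the argument of Proposition \ref{lipdensity} almost verbatim, using the Hausdorff hypothesis together with the representation of quantum uniformities by families of quantum pseudometrics (Theorem \ref{uniftqpm}) to supply the single ingredient --- an operator of finite Lipschitz number that fails to commute with a prescribed $C$ --- on which that proof turns. First I would record the structural reduction. By Proposition \ref{ucalg} the set $UC(\Mx)$ is a unital C*-subalgebra of $\Mx$, so its weak* closure $\overline{UC(\Mx)}^{wk^*}$ is a von Neumann subalgebra of $\Mx$. By the double commutant theorem it therefore suffices to prove $UC(\Mx)' \subseteq \Mx'$; equivalently, given any $C \in \Bc(H) - \Mx'$ I must exhibit an element of $UC(\Mx)$ that does not commute with $C$.

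Fix such a $C$. Since $\Ub$ is Hausdorff we have $\bigcap \Ub = \Mx'$, so $C$ lies outside some quantum entourage, which we may take to be a quantum relation on $\Mx$. Now I would invoke (the proof of) Theorem \ref{uniftqpm}: applied to this entourage it produces a quantum pseudometric $\Vb = \{\Vc_s\}$ on $\Mx$ each of whose levels $\Vc_s$ ($s > 0$) is a quantum entourage of $\Ub$, together with a $t > 0$ for which $\Vc_t$ is contained in that entourage; in particular $C \notin \Vc_t$. From here the construction in Proposition \ref{lipdensity} applies to $\Vb$ without change: Proposition \ref{recover} yields projections $P, Q \in \Mx \overline{\otimes} \Bc(l^2)$ with $\rho(P,Q) > 0$ and $P(C \otimes I)Q \neq 0$; Lemma \ref{distfn} (with $R = P$, $c = \rho(P,Q)$) produces a self-adjoint $A \in \Mx \overline{\otimes} \Bc(l^2)$ with $L_s(A) \leq 1$, $PA = 0$, and $AQ = \rho(P,Q)\cdot Q$, so that $P[A, C \otimes I]Q = -\rho(P,Q)\, P(C \otimes I)Q \neq 0$; and compressing by a suitable rank one projection $P_0 \in \Bc(l^2)$ gives $B = (I \otimes P_0)A(I \otimes P_0) = B_0 \otimes P_0$ with $[B_0, C] \neq 0$ and, by Theorem \ref{lipineq} (b), $L_c(B_0) \leq 1$ with respect to $\Vb$.

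It remains only to connect finiteness of $L_c(B_0)$ relative to $\Vb$ with uniform continuity relative to $\Ub$. This is where the choice of $\Vb$ in the previous step pays off: because every $\Vc_s$ with $s > 0$ is a quantum entourage, and because $\|[B_0, C']\| \leq L_c(B_0)\, s \leq s$ for all $C' \in [\Vc_s]_1$, the entourage $\Vc_\epsilon$ witnesses commutation uniform continuity of $B_0$ at scale $\epsilon$ for every $\epsilon > 0$ (cf.\ Proposition \ref{qufilt} (b)). Hence $B_0 \in UC(\Mx)$ and $[B_0, C] \neq 0$, completing the reduction. The argument is essentially routine given the machinery already developed; the only genuine step beyond Proposition \ref{lipdensity} is the reduction of the Hausdorff uniformity to a single quantum pseudometric whose filtration consists of entourages, which is exactly what Theorem \ref{uniftqpm} delivers, so I expect no serious obstacle.
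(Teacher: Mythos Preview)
Your proposal is correct and follows essentially the same approach as the paper: reduce to $UC(\Mx)' \subseteq \Mx'$ via the double commutant theorem, use the Hausdorff hypothesis to get $C$ outside some entourage, invoke Theorem \ref{uniftqpm} to obtain a quantum pseudometric $\Vb$ whose levels $\Vc_s$ ($s>0$) are entourages with $C \notin \Vc_t$ for some $t>0$, and then produce a commutation Lipschitz (hence uniformly continuous) operator not commuting with $C$. The only cosmetic difference is that the paper packages the last step by citing Proposition \ref{lipdensity} directly (applied to $\Vb$ as a quantum metric on $\Vc_0'$, noting $C \notin \Vc_0$), whereas you unpack that proof in full; both are valid.
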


\begin{proof}
We must show that $UC(\Mx)' \subseteq \Mx'$. Thus let $C \in \Bc(H) -
\Mx'$; we must find an operator in $UC(\Mx)$ that does not commute
with $C$.

Since $\Ub$ is Hausdorff and $C \not\in \Mx'$, we must have
$C \not\in \Uc$ for some quantum entourage $\Uc$. By
Theorem \ref{uniftqpm} there is a quantum pseudometric
$\Vb$ on $\Mx$ such that every $\Vc_t$ is a quantum entourage and
$\Vc_t \subseteq \Uc$ for some $t > 0$. Then $C \not\in \Vc_0$, so by
Proposition \ref{lipdensity} there is an operator $B \in \Vc_0' \subseteq \Mx$
that is commutation Lipschitz relative to $\Vb$, and hence commutation
uniformly continuous relative to $\Ub$, and does not commute
with $C$. 
\end{proof}

We conclude with a simple result about commutation uniform continuity
in the quantum tori. Recall that on a compact space every continuous
function is uniformly continuous.

\begin{prop}
Let $\hbar \in \Rb$ and let $d$ be a translation invariant metric on
$\Tb^2$ that is equivalent to the flat
Euclidean metric. Equip $W^*(U_\hbar,V_\hbar)$
with the quantum metric $\Vb_0$ defined in Theorem \ref{qtori} (b). Then
$UC(W^*(U_\hbar,V_\hbar)) = C^*(U_\hbar, V_\hbar)$.
\end{prop}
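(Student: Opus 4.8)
The plan is to prove the two inclusions separately, using throughout that $UC(W^*(U_\hbar,V_\hbar))$ is a unital C*-algebra (Proposition \ref{ucalg}), hence closed in operator norm and stable under the C*-operations. The quantum uniformity in force is the one generated by the quantum relations $\Vc^0_t$, $t > 0$, so that spectral and commutation uniform continuity can be tested directly against the $\Vc^0_t$ by Proposition \ref{qufilt}. I would record at the outset the key consequence of the hypotheses: by Theorem \ref{qtori} (b) we have $M_{e^{i(mx+ny)}} \in \Vc^0_t$ precisely when $d((0,0),(x,y)) \leq t$; since $d$ is equivalent to the Euclidean metric, $d((0,0),(x,y)) \to 0$ as $(x,y) \to (0,0)$, and since $\Tb^2$ is compact the two metrics are in fact uniformly equivalent.

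For the inclusion $UC(W^*(U_\hbar,V_\hbar)) \subseteq C^*(U_\hbar,V_\hbar)$ I would argue exactly as in the first part of Proposition \ref{qtorilip}. If $A$ is commutation uniformly continuous then, given $\epsilon > 0$, Proposition \ref{qufilt} (b) supplies $\delta > 0$ with $\|[A,C]\| \leq \epsilon$ for all $C \in [\Vc^0_\delta]_1$. Each $M_{e^{i(mx+ny)}}$ is unitary and lies in $\Vc^0_t$ with $t = d((0,0),(x,y))$, hence in $[\Vc^0_\delta]_1$ once $(x,y)$ is close enough to the origin. Since $\|A - \theta_{x,y}(A)\| = \|[A,M_{e^{i(mx+ny)}}]\|$, this gives $\|A - \theta_{x,y}(A)\| \to 0$ as $(x,y) \to (0,0)$, whence $A \in C^*(U_\hbar,V_\hbar)$ by (\cite{W5}, Proposition 6.6.5).

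For the reverse inclusion it suffices, since $UC$ is a unital C*-algebra, to show $U_\hbar, V_\hbar \in UC$; by symmetry I treat only $U_\hbar$, and since $UC$ is a linear self-adjoint space and $U_\hbar$ is the sum of its real and imaginary parts, it is enough to show that every self-adjoint element $A$ of $C^*(U_\hbar) \cong C(\Tb)$ is commutation uniformly continuous. By Theorem \ref{quspcom} it is enough to show such $A$ is spectrally uniformly continuous. Here I would follow the geometric set-up of Proposition \ref{qtorilip}: after conjugating by $M_{e^{i\hbar mn/2}}$ and passing to the $L^2(\Tb^2)$ picture, $A$ becomes multiplication by a continuous function $f$ of the first variable, while the generators $M_{e^{i(mx+ny)}}$ of $\Ec_0(S_\delta)$ become translations by vectors lying in $S_\delta$. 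Given $\epsilon > 0$, uniform continuity of $f$ (automatic because $\Tb$ is compact) yields $\eta > 0$ with $|f(x) - f(x')| < \epsilon$ whenever $|x - x'| \leq \eta$; choosing $\delta$ so small that every $v \in S_\delta$ has Euclidean length $\leq \eta$ (possible because $S_\delta$ shrinks to $\{0\}$), one checks that $P_{(-\infty,a]}(A)\,\Ec_0(S_\delta)\,P_{[b,\infty)}(A) = 0$ whenever $b - a > \epsilon$. As in Proposition \ref{qtorilip}, the spectral projections of $A$ commute with $U_{-\hbar}$ and $V_{-\hbar}$, so this upgrades to $P_{(-\infty,a]}(A)\,\Vc^0_\delta\,P_{[b,\infty)}(A) = 0$ for $b - a > \epsilon$, which is exactly the spectral uniform continuity condition of Proposition \ref{qufilt} (a).

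The main obstacle is this reverse inclusion, specifically the verification that every continuous (not merely Lipschitz) self-adjoint element of $C^*(U_\hbar)$ is spectrally uniformly continuous. This is where the hypotheses are genuinely weaker than in Proposition \ref{qtorilip}: we assume only that $d$ is topologically equivalent to the Euclidean metric, not bi-Lipschitz to it. The point that makes the argument go through is compactness of $\Tb^2$, which simultaneously promotes continuity of $f$ to uniform continuity and forces the $d$-balls $S_\delta$ to contract uniformly to the origin in the Euclidean sense. I would take care to confirm that these two uses of compactness combine to deliver the single $\delta$ demanded by spectral uniform continuity, uniformly over all spectral cut levels $a < b$.
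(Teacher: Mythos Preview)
Your proposal is correct and follows essentially the same approach as the paper's proof: both directions are argued identically, reducing the reverse inclusion to spectral uniform continuity of self-adjoint elements of $C^*(U_\hbar)$ via the conjugation by $M_{e^{i\hbar mn/2}}$ and the $L^2(\Tb^2)$ picture, and then passing from $\Ec_0(S_\delta)$ to $\Vc^0_\delta$ using commutation of the spectral projections with $U_{-\hbar}$ and $V_{-\hbar}$. Your explicit remarks on how compactness of $\Tb^2$ upgrades both the continuity of $f$ and the metric equivalence to their uniform versions are a welcome elaboration of a point the paper leaves implicit.
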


\begin{proof}
The proof is similar to, but slightly simpler than, the proof of
Proposition \ref{qtorilip}. If $A \in W^*(U_\hbar, V_\hbar)$ is
commutation uniformly continuous then we must have
$$\|A - \theta_{x,y}(A)\| = \|[A, M_{e^{i(mx + ny)}}]\| \to 0$$
as $(x,y) \to (0,0)$, and this implies that $A \in C^*(U_\hbar, V_\hbar)$
by (\cite{W5}, Proposition 6.6.5).

Conversely, by Proposition \ref{ucalg}, to establish that every
operator in $C^*(U_\hbar, V_\hbar)$ is commutation uniformly
continuous it will suffice to show this for $U_\hbar$ and $V_\hbar$.
We will prove that the real and imaginary parts of $U_\hbar$
(actually, any self-adjoint element of $C^*(U_\hbar)$) are
spectrally uniformly continuous, and hence commutation uniformly
continuous by Theorem \ref{quspcom}. The analogous statements for
$V_\hbar$ are proven similarly.

Let $A \in C^*(U_\hbar)$ be self-adjoint. As in the proof of
Proposition \ref{qtorilip}, $\Vc_t = \Vc_{\Ec_0(S_t)}$ consists
of the operators whose $(k,l)$ Fourier term belongs to
$\Ec_0(S_t)\cdot U^k_{-\hbar}V^l_{-\hbar}$, for all $k$ and $l$.

Now conjugate $\Bc(l^2(\Zb^2))$ by the operator $M_{e^{i\hbar mn/2}}$.
Then $U_\hbar$ will still commute with both $U_{-\hbar}$ and
$V_{-\hbar}$, $\Ec_0(S_t)$ is unaffected, and $U_\hbar$ becomes
the shift $e_{m,m} \mapsto e_{m+1,n}$. In the $L^2(\Tb^2)$ picture,
$A$ now becomes multiplication by a continuous (hence uniformly continuous)
function in the first variable and the operators $M_{e^{i(mx + ny)}}$ with
$(x,y) \in S_t$, which generate $\Ec_0(S_t)$, become translations by vectors
of length at most $t$. Thus given $\epsilon > 0$ we can find $\delta > 0$
such that
$$P_{(-\infty, a]}(A)\Ec_0(S_\delta)P_{[b,\infty)}(A) = 0$$
for any $a,b \in \Rb$, $a < b$, such that $b - a > \epsilon$.
But since the spectral projections of $A$ commute with $U_{-\hbar}$
and $V_{-\hbar}$, this implies that
$$P_{(-\infty, a]}(A)\Vc_\delta P_{[b,\infty)}(A) = 0$$
for any $a,b \in \Rb$, $a < b$, such that $b - a > \epsilon$.
Thus, we have shown that $A$ is spectrally uniformly continuous,
as claimed.
\end{proof}

\bigskip
\bigskip


\begin{thebibliography}{aaaaaaaa}

\bibitem{Ake}
C.\ Akemann, The general Stone-Weierstrass problem, {\it J.\ Funct.\
Anal.\ \bf 4} (1969), 277-294.

\bibitem{Ble}
D.\ P.\ Blecher, Tensor products of operator spaces II,
{\it Canad.\ Math.\ J.\ \bf 44} (1992), 75-90.

\bibitem{BD1}
F.\ F.\ Bonsall and J.\ Duncan, {\it Numerical Ranges of Operators
on Normed Spaces and of Elements of Normed Algebras}, Cambridge
University Press, 1971.

\bibitem{BD2}
{---------}, {\it Numerical Ranges II}, Cambridge University Press,
1973.

\bibitem{Bur}
R.\ B.\ Burckel, {\it An Introduction to Classical Complex Analysis},
vol.\ I, 1979.

\bibitem{CGT}
J.\ Cheeger, M.\ Gromov, and M.\ Taylor, Finite propagation speed,
kernel estimates for functions of the Laplace operator, and the geometry
of complete Riemannian manifolds, {\it J.\ Differential Geo.\ \bf 17}
(1982), 15-53.

\bibitem{Cno}
J.\ Cnops, {\it An Introduction to Dirac Operators on Manifolds}, 2002.

\bibitem{Con}
A.\ Connes, {\it Noncommutative Geometry}, Academic Press, 1994.

\bibitem{CL}
A.\ Connes and J.\ Lott, The metric aspect of noncommutative geometry,
in {\it New Symmetry Principles in Quantum Field Theory (Carg\`ese, 1991)},
Plenum Press (1992), 53-93.

\bibitem{DSW}
R.\ Duan, S.\ Severini, and A.\ Winter, Zero-error communication via
quantum channels, non-commutative graphs and a quantum Lov\'asz function,
arXiv:1002.2514.

\bibitem{ER}
E.\ G.\ Effros and Z.-J.\ Ruan, On approximation properties for
operator spaces, {\it Internat.\ J.\ Math.\ \bf 1} (1990),
163-187.

\bibitem{Ham}
M.\ Hamana, Tensor products for monotone complete C*-algebras I,
{\it Japan J.\ Math.\ \bf 8} (1982), 259-283.

\bibitem{H}
F.\ Hirsch, Intrinsic metrics and Lipschitz functions,
{\it J.\ Evol.\ Equ.\ \bf 3} (2003), 11-25.

\bibitem{H1}
{---------}, Measurable metrics, intrinsic metrics and Lipschitz functions,
{\it Current trends in potential theory} (2005), 47-61.

\bibitem{H2}
{---------}, Measurable metrics and Gaussian concentration,
{\it Forum Math.\ \bf 18} (2006), 345-363. 

\bibitem{Kli}
W.\ Klingenberg, {\it Riemannian Geometry} (second edition), 1995.

\bibitem{KLV}
E.\ Knill, R.\ Laflamme, and L.\ Viola, Theory of quantum error correction
for general noise, {\it Phys.\ Rev.\ Lett.\ \bf 84} (2000), 2525-2528,
arXiv:quant-ph/9908066.

\bibitem{Pag}
W.\ Page, {\it Topological Uniform Structures}, 1978.

\bibitem{Pal}
T.\ Palmer, {\it Banach Algebras and the General Theory of
$*$-Algebras}, Vol.\ 2, Cambridge University Press, 2001.

\bibitem{Rie1}
M.\ Rieffel, Continuous fields of C*-algebras coming from group
cocycles and actions, {\it Math.\ Ann.\ \bf 283} (1989), 631-643.

\bibitem{Rie2}
{---------}, Noncommutative tori---a case study of noncommutative
differentiable manifolds, in {\it Geometric and Topological
Invariants of Elliptic Operators} (1990), 191-211.

\bibitem{Rie3}
{---------}, {\it Gromov-Hausdoff Distance for Quantum Metric
Spaces}, Mem.\ Amer.\ Math.\ Soc.\ {\bf 168}, 2004, arXiv:math.OA/0011063.

\bibitem{Rie4}
{---------}, Leibniz seminorms for ``Matrix algebras converge to the
sphere'', manuscript, arXiv:math.OA/0707.3229.

\bibitem{Roe}
J.\ Roe, {\it Elliptic Operators, Topology and Asymptotic Methods},
Longman, 1988.

\bibitem{Rua}
Z.-J.\ Ruan, On the predual of dual algebras, {\it J.\ Operator
Theory \bf27} (1993), 179-192.

\bibitem{Sch}
R.\ Schrader, Finite propagation speed and causal free quantum
fields on networks, {\it J.\ Phys.\ A: Math.\ Theor.\ \bf 42} (2009),
495401.

\bibitem{Tak}
M.\ Takesaki, {\it Theory of Operator Algebras I}, Springer, 1979.

\bibitem{Vas}
A.\ Vasy, Geometric optics and the wave equation on manifolds with
corners, in {\it Recent Advances in Differential Equations and
Mathematical Physics} (2006), 315-333.

\bibitem{W0}
N.\ Weaver, Nonatomic Lipschitz spaces, {\it Studia Math.\ \bf 115}
(1995), 277-289.

\bibitem{W1}
{---------}, Weak*-closed derivations from $C[0,1]$ into $L^\infty[0,1]$,
{\it Canad.\ Math.\ Bull.\ \bf 39} (1996), 367-375.

\bibitem{W2}
{---------}, Lipschitz algebras and derivations of von Neumann
algebras, {\it J.\ Funct.\ Anal.\ \bf 139} (1996), 261-300.

\bibitem{W4}
{---------}, {\it Lipschitz Algebras}, World Scientific, 1999.

\bibitem{W3}
{---------}, Lipschitz algebras and derivations II: exterior
differentiation, {\it J.\ Funct.\  Anal.\ \bf 178} (2000), 64-112,
arXiv:math.FA/9807096

\bibitem{W5}
{---------}, {\it Mathematical Quantization}, CRC Press, 2001.

\bibitem{W6}
{---------}, Quantum relations, manuscript, arXiv:math.OA/1005.0354.

\end{thebibliography}
\end{document}